\documentclass[bezier,amstex, oneside, reqno]{amsart}
\usepackage{amsmath, amssymb, amsthm, amscd, verbatim, euscript}
\usepackage[dvips]{graphicx}
\usepackage{epsfig}
\usepackage{color}
\begingroup\makeatletter\ifx\SetFigFont\undefined%
\gdef\SetFigFont#1#2#3#4#5{%
  \reset@font\fontsize{#1}{#2pt}%
  \fontfamily{#3}\fontseries{#4}\fontshape{#5}%
  \selectfont}%
\fi\endgroup%

\def\ie{\emph{i.e., }}

\newtheorem*{theorem}{Theorem}
\newtheorem*{theoremA}{Theorem A}
\newtheorem*{theoremB}{Theorem B}
\newtheorem*{theoremC}{Theorem C}
\newtheorem*{theoremD}{Theorem D}

\newtheorem*{propertyA}{Property $\mathcal A$}
\newtheorem*{propertyB}{Property $\mathcal A'$}
\newtheorem{q}{Question}
\newtheorem{con}{Conjecture}
\newtheorem{lemma}{Lemma}[section]
\newtheorem{proposition}{Proposition}

\newtheorem*{reglemma}{Regularity Lemma}
\newtheorem{definition}{Definition}

\theoremstyle{remark}
\newtheorem*{remark}{Remark}

\begin{document}
\author{Andrey Gogolev}
\title[Smooth conjugacy of Anosov systems]{Smooth conjugacy of Anosov diffeomorphisms on higher dimensional tori}
\date{September 26, 2008}

\begin{abstract}
Let $L$ be a hyperbolic automorphism of\/ $\mathbb T^d$, $d\ge3$. We
study the smooth conjugacy problem in a small $C^1$-neighborhood
$\mathcal U$ of\/ $L$.

The main result establishes $C^{1+\nu}$ regularity of the conjugacy
between two Anosov systems with the same periodic eigenvalue data.
We assume that these systems are $C^1$-close to an irreducible
linear hyperbolic automorphism $L$ with simple real spectrum and
that they satisfy a natural transitivity assumption on certain
intermediate foliations.

We elaborate on the example of de la Llave of two Anosov systems on
$\mathbb T^4$ with the same constant periodic eigenvalue data that
are only H\"older conjugate. We show that these examples exhaust all
possible ways to perturb $C^{1+\nu}$ conjugacy class without
changing periodic eigenvalue data. Also we generalize these examples
to majority of reducible toral automorphisms as well as to certain
product diffeomorphisms of\/ $\mathbb T^4$ $C^1$-close to the
original example.
\end{abstract}

 \maketitle

\tableofcontents

\section{Introduction and statements}

Consider an Anosov diffeomorphism $f$ of a compact smooth manifold.
Structural stability asserts that if a diffeomorphism $g$ is $C^1$
close to $f$, then $f$ and $g$ are topologically conjugate, \ie $$h
\circ f = g \circ h.$$ The conjugacy $h$ is unique in the
neighborhood of identity. It is known that $h$ is
H\"older-continuous.

There are simple obstructions to the smoothness of\/ $h$. Namely,
if\/ $x$ is a periodic point of\/ $f$ with period $p$, that is,
$f^p(x)=x$, then $g^p(h(x))=h(x)$. If\/ $h$ were differentiable,
then
\begin{equation*}
Df^p(x)=\left(Dh(x)\right)^{-1}Dg^p(h(x))Dh(x),
\end{equation*}
\ie $Df^p(x)$ and $Dg^p(h(x))$ are conjugate. We see that every
periodic point carries a modulus of smooth conjugacy.

Suppose that for every periodic point $x$ of period $p$, the
differentials of the return maps $Df^p(x)$ and $Dg^p(h(x))$ are
conjugate. Then we say that the \emph{periodic data} (p.~d.) of\/
$f$ and $g$ coincide.

\begin{q}\label{question}
Suppose that the p.~d. coincide. Is then $h$ differentiable? If it
is, how smooth is it?
\end{q}

\subsection{Positive answers}
\label{positive} We describe situations when the p.~d. form a full
set of moduli of\/ $C^1$ conjugacy.

The only surface  that supports Anosov diffeomorphisms is the
two-di\-men\-si\-on\-al torus. For Anosov diffeomorphisms of\/
$\mathbb{T}^2$, the complete answer to Question \ref{question} was
given by de la Llave, Marco and Moriy\'on.

\begin{theorem}[\cite{LMM},~\cite{L}]
Let $f$ and $g$ be topologically conjugate $C^r$, $r>1$, Anosov
diffeomorphisms of \,$\mathbb{T}^2$ with coinciding p.~d. Then the
conjugacy $h$ is $C^{r-\varepsilon}$, where $\varepsilon>0$ is
arbitrarily small.
\end{theorem}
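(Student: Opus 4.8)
The plan is to establish that $h$ is of class $C^{r-\varepsilon}$ along the stable foliation of $f$ and along its unstable foliation --- handling the two one-dimensional directions separately --- and then to assemble the genuine two-dimensional regularity by a regularity lemma for transverse foliations. For the setup, recall that since $f$ and $g$ are $C^r$ Anosov diffeomorphisms of the surface $\mathbb{T}^2$, each has one-dimensional stable and unstable foliations $W^s$, $W^u$ with uniformly $C^r$ leaves, and every Anosov diffeomorphism of $\mathbb{T}^2$ is topologically transitive. The relation $h\circ f=g\circ h$ and the uniqueness of local stable and unstable manifolds force $h(W^s_f(x))=W^s_g(h(x))$ and $h(W^u_f(x))=W^u_g(h(x))$, so $h$ restricts to homeomorphisms $h^s_x\colon W^s_f(x)\to W^s_g(h(x))$ and $h^u_x\colon W^u_f(x)\to W^u_g(h(x))$ that intertwine the contracting (resp.\ expanding) restrictions of $f$ and $g$ to leaves. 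It suffices to prove that $h^s$ and $h^u$ are uniformly $C^{r-\varepsilon}$.

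\emph{A cohomological equation along stable leaves.} Fix $C^r$ unit-speed parametrizations of the stable leaves of $f$ and of $g$; in these coordinates the restrictions of $f$ and $g$ to stable leaves become base-point dependent contractions of $\mathbb{R}$ fixing the origin, with derivatives at the origin $a(x)=\|Df|_{E^s_f}(x)\|$ and $b(y)=\|Dg|_{E^s_g}(y)\|$. Differentiating $h\circ f=g\circ h$ along a stable leaf shows that, once it is known to exist, the leafwise derivative $\rho(x):=Dh^s_x(x)$ obeys
\begin{equation*}
\log\rho(f(x))-\log\rho(x)=\log b(h(x))-\log a(x).
\end{equation*}
For a periodic point $f^p(x)=x$ one has $\prod_{i=0}^{p-1}a(f^ix)=|\lambda^s_f(x)|$ and $\prod_{i=0}^{p-1}b(g^ih(x))=|\lambda^s_g(h(x))|$, the moduli of the stable eigenvalues of the return maps; since the periodic data of $f$ and $g$ coincide these are equal, so the right-hand side of the displayed equation has vanishing sums over every periodic orbit of $f$. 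The function $\psi:=(\log b)\circ h-\log a$ is H\"older on $\mathbb{T}^2$ (a H\"older function of the H\"older map $h$), and $f$ is transitive, so the Liv\v sic theorem furnishes a H\"older solution $u$; an \emph{a priori regularity} argument then identifies a normalization of $e^{u}$ with the genuine leafwise derivative of $h$ along stable leaves. Hence $h^s$ is $C^1$ with H\"older continuous derivative along stable leaves.

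\emph{Bootstrap, and the unstable direction.} The regularity of $h^s$ is now improved by an induction internal to the stable direction. The cocycle $a=\|Df|_{E^s_f}\|$ is $C^{r-1}$ \emph{along stable leaves} (the leaves being $C^r$), so $\psi$ is $C^{\min(r-1,k)}$ along stable leaves whenever $h^s$ is $C^k$ along stable leaves; and the solution of a cohomological equation over the contracting dynamics is as smooth along stable leaves as its right-hand side, since the series $u(x)-u(y)=-\sum_{n\ge0}\bigl(\psi(f^nx)-\psi(f^ny)\bigr)$ (for $x,y$ on a common stable leaf) converges together with all its leafwise derivatives by bounded distortion. Feeding the current regularity of $h^s$ back in therefore gains essentially one derivative per step and delivers $h^s\in C^{r-\varepsilon}$ along stable leaves. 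Crucially this scheme never requires the subbundle $E^s_f$ to be smooth transverse to $W^s_f$, only along $W^s_f$, so no bunching hypothesis intervenes. The unstable direction is symmetric: run the same argument for $f^{-1},g^{-1}$ with the cocycle $\|Df|_{E^u_f}\|$, which is $C^{r-1}$ along unstable leaves, obtaining $h^u\in C^{r-\varepsilon}$ along unstable leaves.

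\emph{Assembling, and the main obstacle.} Finally apply the regularity lemma for transverse foliations (the de la Llave--Marco--Moriy\'on / Journ\'e lemma): a function on a surface that is uniformly $C^{r-\varepsilon}$ along the leaves of two transverse continuous foliations with uniformly $C^r$ leaves is itself $C^{r-\varepsilon}$. With the foliations $W^s_f,W^u_f$ and each coordinate of a lift of $h$ to $\mathbb{R}^2$ this yields $h\in C^{r-\varepsilon}$. The crux --- and the step I expect to be the main obstacle --- is the leafwise bootstrap: one must develop the Liv\v sic regularity theory along the contracting and expanding dynamics with uniform bounded-distortion estimates, quantify how composition with the not-yet-smooth $h$ degrades regularity at each stage, and control the accumulated $\varepsilon$-losses (which, together with the integer-exponent subtlety in the regularity lemma, account for the $\varepsilon$ in the statement). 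By contrast the Liv\v sic existence step and the closing application of the regularity lemma are comparatively routine.
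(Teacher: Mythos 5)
Since the paper cites this theorem from \cite{LMM} and \cite{L} without reproving it, there is no internal argument to compare against; your sketch correctly reproduces the strategy of those works (leafwise cohomological equation from the periodic data, Livshitz, bootstrap, Journ\'e assembly), so in that sense it is the ``same approach.'' The place where the outline is lightest is exactly where you flag it: passing from the Livshitz solution $e^{u}$ to the assertion that $h$ is genuinely $C^{1}$ along stable leaves with that derivative is the step that actually consumes the periodic-data hypothesis. It is carried out by first proving $h$ is uniformly Lipschitz along stable leaves (from the equality of stable return eigenvalues at periodic points, the Anosov closing lemma, and a bounded-distortion estimate), then observing that the a.e.\ leafwise derivative, which exists by Rademacher, satisfies the same cohomological equation and hence coincides with the Livshitz solution along a dense orbit, and finally upgrading to everywhere-differentiability by a transitive-point/continuity argument; calling this ``an a priori regularity argument'' is accurate but elides the content. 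One small correction to your accounting of where the $\varepsilon$ is lost: besides the non-integer H\"older exponent demanded by Journ\'e's lemma, the leaves of $W^{s}_{f}$ and $W^{u}_{f}$ are only $C^{r}$ (not $C^{\infty}$), so the leafwise cocycles $\log a,\log b$ are only $C^{r-1}$ along leaves and the leaf regularity available to the assembly lemma is capped at $C^{r}$; the bootstrap therefore saturates strictly below $r$, which together with the Journ\'e constraint produces $C^{r-\varepsilon}$.
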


De la Llave~\cite{L} also observed that the answer is negative for
Anosov diffeomorphisms of\/ $\mathbb T^d$, $d\ge4$.  He constructed
two diffeomorphisms with the same p.~d. which are only H\"older
conjugate. We describe this example in Section~\ref{counterexample}.

In dimension three, the only manifold that supports Anosov
diffeomorphisms is the three-di\-men\-si\-on\-al torus. Moreover,
all Anosov diffeomorphisms of\/ $\mathbb T^3$ are topologically
conjugate to linear automorphisms of\/ $\mathbb T^3$. Nevertheless,
the answer to Question~\ref{question} is not known.

\begin{con}
Let $f$ and $g$ be topologically conjugate $C^r$, $r>1$, Anosov
diffeomorphisms of\, $\mathbb T^3$ with coinciding p.~d. Then the
conjugacy $h$ is at least $C^1$.
\end{con}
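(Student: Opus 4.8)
\medskip
\noindent\emph{Towards a proof.} As this is a conjecture, what follows is only the scheme one would try, modeled on the two-dimensional theorem of de la Llave--Marco--Moriy\'on, together with the point at which it stalls.

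One first notes that every hyperbolic automorphism of $\mathbb T^3$ is automatically irreducible: a rational invariant line would split off an integer eigenvalue $n$ with $|n|\ge 2$, forcing the complementary monic integer quadratic factor of the characteristic polynomial to have constant term $\pm n^{-1}\notin\mathbb Z$, which is absurd. Hence the common linear model $L$ of $f$ and $g$ falls into one of two cases: (A) three distinct real eigenvalues, say with a one-dimensional stable and a two-dimensional unstable bundle $E^u=E^{su}\oplus E^{wu}$ which integrates to a strong and a weak unstable subfoliation $W^{su}$, $W^{wu}$ of $W^u$; or (B) one real eigenvalue and a complex conjugate pair, in which case $L$ acts on the two-dimensional unstable bundle as a similarity and that bundle admits no invariant line. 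In case (A), if in addition $f$ is $C^1$-close to $L$ and the intermediate foliations are transitive, the conjecture is exactly the main theorem of this paper; so for (A) it remains to remove those two hypotheses, while (B) needs a separate treatment.

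The scheme consists of the familiar three steps. Step one: show that $h$ carries every dynamically defined foliation of $f$ to the corresponding one of $g$. This is clear for $W^s$, $W^u$ and for the strong subfoliation $W^{su}$, which is characterized by the fastest backward contraction inside $W^u$; but for the weak subfoliation $W^{wu}$ one must exhibit it as a topological-conjugacy invariant before knowing that $h$ is differentiable. Step two: prove that $h$ is uniformly $C^{1+\nu}$ along each one-dimensional invariant foliation. Along $W^s$ and along $W^{su}$ this is the Liv\v{s}ic / non-stationary-linearization argument of de la Llave--Marco--Moriy\'on: genuine exponential domination makes the relevant series converge, and the coinciding periodic eigenvalues force the linearizing cocycles of $f$ and $g$ to be cohomologous, so $h$ is as smooth along the leaf as the leaf itself; in case (B), where $W^u$ cannot be split, one would instead appeal to quasiconformal rigidity of the unstable holonomies. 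Step three: apply Journ\'e's regularity lemma, first inside the leaves of $W^u$ to the transverse pair $W^{su}$, $W^{wu}$ in order to upgrade leafwise smoothness to smoothness along all of $W^u$, and then transversally to the pair $W^s$, $W^u$, concluding that $h$ is globally $C^{1+\nu}$ --- in fact, as in dimension two, $C^{r-\varepsilon}$.

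The hard part is step two along the \emph{weak} unstable foliation, which amounts to removing the transitivity hypothesis of the main theorem. Along $W^{wu}$ the dynamics is neither the fastest contraction nor the fastest expansion, so the extremality estimates that drive the two-dimensional proof are gone: the cohomological equation to be solved is of near-neutral type, $W^{wu}$ is only H\"older as a bundle, with H\"older exponent degenerating as the two unstable eigenvalues of $L$ approach each other, and multiplicative resonances among the Galois-conjugate eigenvalues can obstruct a linear normal form and force a delicate bootstrap. Transitivity of the intermediate foliation is precisely what lets a Liv\v{s}ic-type closing argument stand in for the missing estimates, so proving that transitivity unconditionally --- or finding a replacement --- is the crux. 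Case (B) is a distinct obstacle: one would need to know that in the perturbative regime $f|_{W^u}$ is uniformly quasiconformal, which is not evidently implied by the coinciding periodic data. A resolution of both points would establish the conjecture.
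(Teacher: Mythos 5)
The statement is labeled a conjecture in the paper; no proof is offered, only the two supporting partial results you correctly identify (Theorem A / \cite{GG} for the real-spectrum, $C^1$-local, transitive case, and \cite{KS} for the one-real-two-complex case, which moreover only conjugates $f$ to the linear model $L$ rather than to a general $g$). Your write-up is honest about this and accurate as far as it goes: irreducibility of every hyperbolic automorphism of $\mathbb T^3$ is right, the (A)/(B) dichotomy matches the two cited results, and the three-step scheme --- matching of dynamically defined foliations, leafwise $C^{1+\nu}$ via a Liv\v{s}ic/affine-structure argument, then a Journ\'e bootstrap first inside $W^u$ and then across the transverse pair $W^s$, $W^u$ --- is precisely how the paper proceeds in case (A). You also put your finger on the genuine obstructions: in (A), removing Property $\mathcal A$ (transitivity of the intermediate expanding foliation) and the $C^1$-locality to $L$, where the paper itself poses robust transitivity as an open question; in (B), the fact that coinciding periodic data between two nonlinear systems is not obviously enough to produce the invariant conformal structure on $E^u$ that the \cite{KS}-type argument needs. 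Since the paper contains no proof of this conjecture, there is nothing further to compare; the one place I would sharpen your remarks is that under the $C^1$-locality hypothesis the weak unstable foliation \emph{is} shown to be a topological conjugacy invariant (Lemma~\ref{weak_match}, which uses only the partial hyperbolicity rates, not the periodic data), so the difficulty there is specifically about leaving the perturbative regime, not about the foliation matching per se.
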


There are partial results that support this conjecture.

\begin{theorem}[\cite{GG}] Let $L$ be a hyperbolic automorphism of\/ $\mathbb T^3$ with real eigenvalues. Then there exists a $C^1$-neighborhood $\mathcal U$ of\/ $L$ such that any $f$ and $g$ in $\mathcal U$ having the same p.~d. are $C^{1+\nu}$ conjugate.
\end{theorem}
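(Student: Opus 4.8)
The plan is to reconstruct $Dh$ one invariant direction at a time: along each one--dimensional invariant foliation the coinciding periodic data will force $h$ to be leafwise $C^{1+\nu}$, and then Journ\'e's lemma glues these pieces into global $C^{1+\nu}$ regularity. First, the structural setup. After replacing $L$ by $L^{-1}$ if necessary (this changes neither the conjugacy nor the periodic data), assume $L$ has a one--dimensional stable eigenline $E^s_L$, eigenvalue $\lambda_1$ with $|\lambda_1|<1$, and a two--dimensional unstable eigenspace splitting as $E^{wu}_L\oplus E^{su}_L$ according to the unstable eigenvalues $1<|\lambda_2|<|\lambda_3|$. Shrinking $\mathcal U$, every $f\in\mathcal U$ is Anosov and topologically conjugate to $L$, and since $L$ has definite spectral gaps and dominated splittings are $C^1$--open, $f$ carries a $Df$--invariant splitting $T\mathbb T^3=E^s_f\oplus E^{wu}_f\oplus E^{su}_f$ into one--dimensional H\"older subbundles $C^0$--close to the linear ones. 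This produces the stable foliation $W^s_f$, the unstable foliation $W^u_f$, the strong unstable foliation $W^{su}_f\subset W^u_f$ tangent to the fastest line $E^{su}_f$ (this exists because $E^u_f$ splits with a gap), and --- this is the delicate point --- a center--stable foliation $W^{cs}_f$ tangent to $E^s_f\oplus E^{wu}_f$, inside whose leaves $f$ acts as a two--dimensional hyperbolic map whose unstable foliation consists of the weak unstable curves $W^{wu}_f$ tangent to $E^{wu}_f$; likewise for $g$. The conjugacy $h$ (the one near the identity, $h\circ f=g\circ h$) carries each of these dynamically defined foliations of $f$ to the corresponding one of $g$ and conjugates the restricted dynamics leaf by leaf.

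Next, the leafwise smoothing. Along the one--dimensional foliation $W^s_f$ the map $f$ restricts to a $C^r$ contraction, so the nonstationary Sternberg linearization equips each leaf with a canonical $C^r$ affine coordinate --- unique up to affine maps, continuous along the leaf and H\"older transverse to it --- in which $f$ acts affinely, and similarly for $g$ on $W^s_g$. The hypothesis that the periodic data of $f$ and $g$ coincide says exactly that the multiplier cocycles $\log\|Df|_{E^s_f}\|$ and $\log\|Dg|_{E^s_g}\|$ have equal sums over corresponding periodic orbits, so by Liv\v{s}ic's theorem they are cohomologous over $h$; this forces $h$ to intertwine the two affine structures, so $h\colon W^s_f\to W^s_g$ is affine in the linearizing coordinates, in particular $C^r$ with uniformly bounded, H\"older--continuous leafwise derivative. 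The same argument applied to $f^{-1}$ along $W^{su}_f$, and to $f^{-1}$ restricted to the leaves of $W^{cs}_f$ along the weak unstable curves $W^{wu}_f$, shows that $h$ is $C^{1+\nu}$ along $W^{su}_f$ and along $W^{wu}_f$. This pins down the restrictions of $Dh$ to $E^s_f$, $E^{su}_f$ and $E^{wu}_f$ as continuous sections.

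It remains to glue. The three one--dimensional foliations $W^s_f$, $W^{wu}_f$, $W^{su}_f$ have uniformly $C^1$ leaves with H\"older--continuous tangents, are pairwise transverse, and span $T\mathbb T^3$. Applying the H\"older version of Journ\'e's lemma first to the pair $W^{wu}_f$, $W^{su}_f$ (transverse subfoliations of $W^u_f$) shows that $h$ is $C^{1+\nu}$ along the two--dimensional leaves of $W^u_f$; applying it again to $W^s_f$ and $W^u_f$ shows that $h$ is globally $C^{1+\nu}$ on $\mathbb T^3$. One obtains only $C^{1+\nu}$ here --- rather than the $C^{r-\varepsilon}$ of the surface case --- because the exponent $\nu$ is capped by the H\"older regularity of the invariant bundles and of the holonomies fed into Journ\'e's lemma.

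The step carrying the real weight is the existence and $h$--invariance of the intermediate foliation $W^{cs}_f$ (equivalently, of the weak unstable curves $W^{wu}_f$). Unlike $W^s_f$ and $W^{su}_f$, the bundle $E^{wu}_f$ is the \emph{middle} term of a dominated splitting and is not a priori uniquely integrable. I would produce $W^{cs}_f$ by a graph transform: iterating $f$ backward contracts the cone field around the plane field $E^s_f\oplus E^{wu}_f$ with rate governed by $|\lambda_2|/|\lambda_3|<1$, so disks tangent to this cone converge to invariant surfaces; uniqueness of the limit organizes them into a foliation, their leaves are $C^{1+\nu'}$ because they are tangent to the H\"older plane field $E^s_f\oplus E^{wu}_f$, and the dynamical characterization --- again hinging on $|\lambda_2|<|\lambda_3|$ --- together with $C^0$--proximity of $h$ to the identity gives $h$--invariance. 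This is precisely where one uses $d=3$, the simple real spectrum of $L$, and the smallness of $\mathcal U$: the absence of repeated moduli or resonances lets each of the three lines be treated by an independent one--dimensional argument, and in higher dimension these intermediate foliations, or the matching along them, can fail --- which is the origin of de la Llave's H\"older--only examples taken up in the rest of the paper.
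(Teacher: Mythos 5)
Your approach correctly identifies the available tools --- the one--dimensional affine structures along invariant foliations, the Liv\v{s}ic theorem driven by the coincidence of periodic data, and Journ\'e's lemma --- and this is indeed the toolbox used in~\cite{GG}. But the argument has a genuine gap, and it is located precisely where you declare things unproblematic.

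In your first paragraph you assert that ``$h$ carries each of these dynamically defined foliations of $f$ to the corresponding one of $g$,'' including $W_f^{su}$; in the second paragraph you apply the Liv\v{s}ic/affine-structure argument along $W_f^{su}$, which presupposes $h(W_f^{su})=W_g^{su}$; and in the last paragraph you say that ``unlike $W_f^s$ and $W_f^{su}$,'' the bundle $E_f^{wu}$ is the delicate one. This is exactly backwards. The matchings $h(W_f^s)=W_g^s$, $h(W_f^u)=W_g^u$, $h(W_f^{cs})=W_g^{cs}$, and hence $h(W_f^{wu})=W_g^{wu}$, do follow from topological conjugacy, because each of these foliations is characterized by a \emph{slowest allowed growth rate} under forward or backward iteration, a property that survives composition with a H\"older conjugacy that is a quasi-isometry at large scales (the mechanism of Lemma~\ref{weak_match}). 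The strong unstable foliation $W_f^{su}$, by contrast, is characterized by the \emph{fastest} rate of contraction under $f^{-1}$, i.e.\ by an estimate on small distances, and H\"older continuity of $h$ is far too weak to preserve such an estimate: if $d(f^{-n}x,f^{-n}y)\asymp\lambda_3^{-n}$, then H\"older control gives only $d(g^{-n}h(x),g^{-n}h(y))\lesssim\lambda_3^{-\theta n}$, which can easily exceed $\lambda_2^{-n}$. In other words, $h(W_f^{su})=W_g^{su}$ is \emph{not} a consequence of topological conjugacy, and indeed the de la Llave example recalled in Section~\ref{counterexample} is a situation (on $\mathbb T^4$) with identical periodic data where the strong unstable foliations are \emph{not} matched by the conjugacy and $h$ is only H\"older. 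So the claim you treat as automatic is the actual content of the theorem.

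The correct logical order, which is that of~\cite{GG} and of the present paper's Theorem~A (Lemmas~\ref{conj_is_C1}, \ref{step1}, \ref{step2}), is: first use the p.~d.\ along $W_f^{wu}$ together with the affine structure to prove that $h$ is $C^{1+\nu}$ \emph{along $W_f^{wu}$} (this part of your argument is fine, since $h(W_f^{wu})=W_g^{wu}$ is already available); \emph{then}, using this leafwise $C^{1+\nu}$ regularity, run a delicate comparison argument on the universal cover (the paper's Lemma~\ref{step1}, Section~\ref{proof_step1}) to conclude $h(W_f^{su})=W_g^{su}$; and only then apply the p.~d.\ along $W_f^{su}$ with another Liv\v{s}ic/transitive-point argument to get $C^{1+\nu}$ along $W_f^{su}$. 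By contrast, your worry in the last paragraph about the existence and $h$-invariance of $W_f^{cs}$ (equivalently of $W_f^{wu}$) is a non-issue: integrability of the intermediate bundle is handled by Lemma~\ref{integr1}/\ref{integr2}, and $h$-invariance is handled by Lemma~\ref{weak_match}, both by soft arguments not using periodic data. So the proof as written proves less than you think: without the step establishing $h(W_f^{su})=W_g^{su}$ it only yields $C^{1+\nu}$ regularity along the two-dimensional foliation $W_f^{cs}$ and cannot be promoted to global $C^{1+\nu}$ regularity by Journ\'e's lemma.
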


\begin{theorem}[\cite{KS}] Let $L$ be a hyperbolic automorphism of\/ $\mathbb T^3$ that has one real and two complex eigenvalues. Then any $f$ sufficiently $C^1$ close to $L$ that has the same p.~d. as $L$ is $C^\infty$ conjugate to $L$.
\end{theorem}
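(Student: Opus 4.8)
The plan is to prove that the structural-stability conjugacy $h$, uniquely determined near the identity by $h\circ f=L\circ h$, is a $C^\infty$ diffeomorphism; I take $f\in C^\infty$, the $C^1$-closeness only bounding the size of the perturbation. Replacing $f$ and $L$ by their inverses if needed, I may assume the real eigenvalue $\lambda$ of $L$ satisfies $|\lambda|<1$, so $E^s_L$ is one-dimensional and $E^u_L$ two-dimensional; since $|\det L|=1$, the complex eigenvalues have modulus $|\lambda|^{-1/2}$, and under an identification $E^u_L\cong\mathbb C$ the map $L|_{E^u_L}$ is multiplication by a complex number $\mu$ --- hence it is \emph{conformal}, and this is the feature of the spectrum that the whole argument exploits. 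For $f$ in a small $C^1$-neighborhood of $L$ one has the splitting $E^s_f\oplus E^u_f$ with the same dimensions $1$ and $2$, and $h$ carries $W^s_f,W^u_f$ to $W^s_L,W^u_L$. First I would handle the stable direction: on a stable leaf $h$ conjugates the $C^\infty$ contraction $f|_{W^s_f}$ to $z\mapsto\lambda z$, and since $E^s_f$ is a line, $Df^p|_{E^s_f(x)}$ at a $p$-periodic point $x$ equals the scalar $\lambda^p$ by the coincidence of periodic data. Thus $\log|Df|_{E^s_f}|-\log|\lambda|$ is a H\"older cocycle with trivial periodic data, hence a Liv\v{s}ic coboundary with transfer function smooth along $W^s_f$; equivalently the leafwise contraction has constant, resonance-free periodic data, and the de la Llave--Marco--Moriy\'on linearization argument makes $h|_{W^s_f}$ uniformly $C^\infty$ along stable leaves.

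The core of the proof is the unstable direction. I would study the derivative cocycle $A(x)=Df|_{E^u_f(x)}\in GL(E^u_f)$ over $(\mathbb T^3,f)$. Since the periodic data of $f$ agrees with that of $L$, at every $p$-periodic $x$ the return matrix $A^p(x)$ is conjugate to the conformal scalar $\mu^p\,\mathrm{Id}$. Moreover $A$ is \emph{fiber bunched}: the distortion $\|A(x)\|\,\|A(x)^{-1}\|$ is close to $1$ because $f$ is $C^1$-close to the conformal map $L$, while the base dynamics is hyperbolic, so the fiber-bunching inequality relating this distortion to the base contraction rates holds. I would then apply the Kalinin--Sadovskaya conformality criterion for fiber-bunched $GL(2,\mathbb R)$-cocycles: a fiber-bunched cocycle whose periodic data is conjugate to conformal matrices preserves a H\"older-continuous conformal structure. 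This produces a H\"older field $x\mapsto\tau(x)$ of conformal structures on the fibers of $E^u_f$ such that $Df(x)$ sends $\tau(x)$ to $\tau(fx)$.

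Next I would promote $\tau$ to something smooth along unstable leaves and linearize there. On a leaf $W=W^u_f(x)$ the field $\tau$ is a continuous complex structure $J$ on the $2$-manifold $W$ for which each $f\colon W^u_f(x)\to W^u_f(fx)$ is holomorphic. Writing $J$ via its Beltrami coefficient, $Df$-invariance becomes a functional equation whose $n$-step iterate is controlled by the contraction of $f^{-1}$ along $W$, so a $C^r$-section type bootstrap shows $J$ is $C^\infty$ along $W^u_f$. Uniformizing $(W^u_f(x),J)\cong\mathbb C$ normalized at $x$, the maps $\phi_{fx}\circ f\circ\phi_x^{-1}$ are complex-affine with linear part a $\mathbb C^*$-valued cocycle over $f$ whose periodic products equal $\mu^p$; one more use of Liv\v{s}ic writes this cocycle as $\mu\cdot u(fx)/u(x)$ with $u$ smooth along $W^u_f$, and rescaling $\phi_x$ by $u(x)$ yields a conjugacy from $f|_{W^u_f}$ to $z\mapsto\mu z=L|_{W^u_L}$ that is $C^\infty$ along unstable leaves. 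Since $\mu$ is non-real of modulus $>1$ there are no resonances, so this leafwise conjugacy is essentially unique, and it follows that $h$ itself is $C^\infty$ along $W^u_f$.

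Finally, $h$ is uniformly $C^\infty$ along the leaves of the two transverse continuous foliations $W^s_f$ and $W^u_f$, whose leaves are $C^\infty$, so by Journ\'e's regularity lemma --- the same mechanism that underlies the two-dimensional theorem quoted above --- $h$ is $C^\infty$ on $\mathbb T^3$; as $Dh$ intertwines the hyperbolic splittings it is everywhere invertible, so $h$ is a $C^\infty$ diffeomorphism and $f$ is $C^\infty$-conjugate to $L$. I expect the decisive obstacle to be the construction of the invariant conformal structure on $E^u_f$ out of the periodic data --- this is exactly where the assumption of one real and two complex eigenvalues is needed, since it is what makes $E^u_L$ two-dimensional with conformal linear model, and its absence is why the purely real three-dimensional case (two distinct real unstable eigenvalues, no conformal model) is not reachable by this method and remains the open conjecture above. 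The subsequent bootstrap of $\tau$ to smoothness along unstable leaves and the leafwise linearization are technical but follow established $C^r$-section and linearization techniques.
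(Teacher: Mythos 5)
This theorem is quoted from Kalinin--Sadovskaya \cite{KS} and is not proved in the present paper, so there is no internal proof to compare against. Your sketch is a faithful reconstruction of the strategy in \cite{KS}: the decisive step is extracting an invariant H\"older conformal structure on $E^u_f$ from the conformal periodic data via the fiber-bunched-cocycle conformality criterion, which is then bootstrapped to smoothness along unstable leaves, uniformized and linearized leafwise, and combined with the one-dimensional Liv\v{s}ic/de la Llave--Marco--Moriy\'on linearization on stable leaves through Journ\'e's regularity lemma.
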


In higher dimensions, not much is known. In recent years, much
progress has been made
(see~\cite{Lconformal1,KS2,Lconformal2,F,S,KS}) in the case when the
stable and unstable foliations carry invariant conformal structures.
To ensure existence of these conformal structures one has to at
least assume that every periodic orbit has only one positive and one
negative Lyapunov exponent. This is a very restrictive assumption on
the p.~d.

In contrast to the above, we will study the smooth-conjugacy problem
in the proximity of a hyperbolic automorphism $L\colon\mathbb
T^d\to\mathbb T^d$ with a simple spectrum. Namely, with the
exception of Theorem B, we will always assume that the eigenvalues
of\/ $L$ are real and have different absolute values. For the sake
of notation we assume that the eigenvalues of\/ $L$ are positive.
This is not restrictive.

Let $l$ be the dimension of the stable subspace of\/ $L$ and $k$ be
the dimension of the unstable subspace of\/ $L$, so $k+l=d$.
Consider the $L$-invariant splitting
$$T\mathbb T^d=F_l\oplus F_{l-1}\oplus\ldots \oplus F_1\oplus E_1\oplus E_2\oplus\ldots \oplus E_k$$
along the eigendirections with corresponding eigenvalues
$$\mu_l<\mu_{l-1}<\ldots <\mu_1<1<\lambda_1<\lambda_2<\ldots <\lambda_k.$$
Let $\mathcal U$ be a $C^1$-neighborhood of\/ $L$. The precise
choice of\/ $\mathcal U$ is described in Section~\ref{scheme}. The
theory of partially hyperbolic dynamical systems guarantees that for
any $f$ in $\mathcal U$ the invariant splitting survives (e.~g.
see~\cite{P}); that is,
$$T\mathbb T^d=F_l^f\oplus F_{l-1}^f\oplus\ldots \oplus F_1^f\oplus E_1^f\oplus E_2^f\oplus\ldots \oplus E_k^f.$$
We will see in Section~\ref{scheme} that these
one-di\-men\-si\-on\-al invariant distributions integrate uniquely
to foliations $U_{l}^f$, $U_{l-1}^f$,... $U_1^f$, $V_1^f$,
$V_2^f$,... $V_k^f$.

Given a foliation $\mathcal F$ on $\mathbb T^d$ and an open set $B$,
define
$$
\mathcal F(B)=\bigcup_{y\in B}\mathcal F(y).
$$
We will assume that $f$ has the following property:
\begin{propertyA}
For every $x \in \mathbb T^d$ and every open ball $B \ni x$,
\begin{equation*}
\overline{ U_{l-1}^f(B)}=\overline{ U_{l-2}^f(B)}=\ldots= \overline{
U_{1}^f(B)}= \overline{ V_{1}^f(B)}=\overline{V_{2}^f(B)}=\ldots
=\overline{V_{k-1}^f(B)}=\mathbb T^d.
\end{equation*}
\end{propertyA}
We discuss this property in Section~\ref{minimality}.

\begin{theoremA}
Let $L$ be a hyperbolic automorphism of \,$\mathbb T^d, d\ge 3$,
with a simple real spectrum. Assume that the characteristic
polynomial of\/ $L$ is irreducible over $\mathbb Z$. There exists a
$C^1$-neighborhood $\mathcal U \subset \text{\upshape
Diff}^r(\mathbb T^d)$, $r\ge 2$, of\/ $L$ such that any $f \in
\mathcal U$ satisfying Property $\mathcal A$ and any $g \in \mathcal
U$ with the same p.~d. are $C^{1+\nu}$ conjugate.
\end{theoremA}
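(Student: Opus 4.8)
The plan is to extract the conjugacy $h$ from structural stability, match up the invariant one-dimensional foliations of $f$ and $g$, upgrade $h$ to a $C^{1+\nu}$ map along each of these foliations — starting from the extremal ones and working inward — and finally glue the leafwise regularity together by repeated use of Journ\'e's regularity lemma. Structural stability produces a bi-H\"older $h$, unique near the identity, with $h\circ f=g\circ h$; after shrinking $\mathcal U$ its H\"older exponent may be taken as close to $1$ as needed. The partial flags $F_l^f\subset F_l^f\oplus F_{l-1}^f\subset\cdots$ and their unstable analogues are characterized purely by the exponential rates at which $Df^{\pm n}$ contracts, and $h$ distorts these rates only by its H\"older exponent; hence, the gaps in the spectrum of $L$ being positive and surviving in $\mathcal U$, the map $h$ carries each $i$-fast stable (resp.\ unstable) foliation of $f$ onto that of $g$, and intersecting neighbouring flags gives $h(U_i^f(x))=U_i^g(h(x))$ and $h(V_j^f(x))=V_j^g(h(x))$ for all $i,j,x$. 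Simplicity of the spectrum is used here; irreducibility of the characteristic polynomial enters through the fact that in the linear model no rational invariant subspace splits off, so that the flag foliations of $L$ are minimal, a feature exploited both in matching the foliations and in the inductive step below. Since all the leaves involved are $C^r$ by Section~\ref{scheme}, it now makes sense to speak of the regularity of $h$ along leaves.

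Next I would show that $h$ is $C^{1+\nu}$ along the strongest stable foliation $U_l^f$ and, symmetrically, along the strongest unstable foliation $V_k^f$. On a leaf of $U_l^f$ the map $f$ is a $C^r$ contraction of a curve, and the logarithm of its leafwise derivative is a H\"older function whose Birkhoff sums over periodic orbits coincide — this is part of the hypothesis on periodic data — with those of the corresponding function for $g$ along $U_l^g$. By the Liv\v{s}ic theorem the two functions are cohomologous over $f$, and the one-dimensional theory of $C^{1+\nu}$-conjugacy of hyperbolic maps (which underlies the theorem of de la Llave, Marco and Moriy\'on quoted above) then makes $h$ a $C^{1+\nu}$ diffeomorphism along each leaf of $U_l^f$, depending H\"older-continuously on the leaf. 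These extremal cases use neither Property $\mathcal A$ nor irreducibility.

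The core of the argument is a finite induction that adjoins the intermediate foliations $U_{l-1}^f,\dots,U_1^f$ and $V_{k-1}^f,\dots,V_1^f$ one at a time, working inward from the extremes. Assume $h$ is already $C^{1+\nu}$ along every one-dimensional foliation treated so far; by iterating Journ\'e's lemma, $h$ is then $C^{1+\nu}$ along the integral foliation of any flag built from them — in particular along a fast-stable foliation $U_l^f\oplus\cdots\oplus U_{i+1}^f$ one dimension short of the one we want. To adjoin $U_i^f$ one examines the leafwise derivative of $h$ along $U_i^f$ as a candidate cocycle over $f$: the coincidence of periodic data pins this cocycle down along every periodic orbit, while the transverse regularity accumulated so far (via the adjacent flags and Journ\'e's lemma) is precisely what makes the candidate well defined and H\"older in the first place. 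Property $\mathcal A$ is then used decisively: minimality of the foliation $U_i^f$ provides a dense leaf, so a Liv\v{s}ic-type argument carried out \emph{along $U_i^f$-leaves} rather than along orbits upgrades ``coboundary on periodic orbits'' to an honest cohomological equation; feeding this back in shows $h$ is $C^{1+\nu}$ along $U_i^f$. The unstable foliations are handled symmetrically.

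Once $h$ is $C^{1+\nu}$ along all $d$ one-dimensional foliations, iterating Journ\'e's lemma along the stable flag makes $h$ $C^{1+\nu}$ along $W^s_f$, likewise along $W^u_f$, and a last application to the transverse pair $(W^s_f,W^u_f)$, whose tangent bundles span $T\mathbb T^d$, yields that $h$ is globally $C^{1+\nu}$. I expect the inductive step to be the main obstacle, for two intertwined reasons: one must break the apparent circularity whereby the leafwise derivative of $h$ along $U_i^f$ is only visibly H\"older after enough transverse regularity is already in hand, so the ordering of the induction and its interaction with Journ\'e's lemma require care; and one must run Liv\v{s}ic-type closing along the leaves of a foliation, which is exactly where Property $\mathcal A$ is indispensable — de la Llave's $\mathbb T^4$ examples show that without such a transitivity hypothesis the conclusion is false.
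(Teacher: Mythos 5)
There is a genuine gap, and it is located exactly at the second paragraph, where you assert that the topological conjugacy $h$ automatically carries each one-dimensional foliation $U_i^f$ to $U_i^g$ and $V_j^f$ to $V_j^g$ because the flags are ``characterized purely by the exponential rates at which $Df^{\pm n}$ contracts,'' with the Hölder exponent of $h$ supposedly absorbing the distortion. This is false. The topological conjugacy preserves the \emph{weak} flags $W_1^f\subset W_2^f\subset\cdots$ (indeed a simple growth argument on the universal cover, where $h-\mathrm{Id}$ is bounded, gives this without any assumption on periodic data), but it does \emph{not} in general preserve the fast flags $W^f(i,k)$, and hence does not preserve the individual one-dimensional foliations obtained by intersecting. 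The Hölder exponent of $h$ is pinned exactly at the critical ratio of logarithms of the rates, not above it: in de la Llave's example on $\mathbb T^4$ the conjugacy $h(x,y)=(x+\psi(y)v,y)$ has $\psi$ Hölder in the $u$-direction with exponent precisely $\log\lambda/\log\mu$, and $\psi$ typically fails to be Lipschitz there even for $\tilde L$ arbitrarily $C^1$-close to $L$. That failure is equivalent to $h$ \emph{not} mapping the strong unstable foliation of $\tilde L$ to that of $L$. So the matching you take for free is exactly the delicate point.

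This error propagates into the third paragraph, which you present as the base case of the induction. You begin with the \emph{strongest} stable and unstable foliations $U_l^f$ and $V_k^f$, but it is precisely the strong (fast) foliations that are not preserved by the bare conjugacy, so the Livšic argument along $U_l^f$-leaves does not even have a well-posed starting point: the putative target foliation $U_l^g$ need not be $h(U_l^f)$. The paper's proof runs the induction in the opposite direction, starting from the \emph{weak} one-dimensional foliation $V_1^f=W_1^f$, which \emph{is} matched automatically (Lemma~\ref{weak_match}). Regularity of $h$ along $V_1^f$ (Lemma~\ref{conj_is_C1}) then feeds, together with the periodic data and Property $\mathcal A$, into a genuinely geometric argument (Lemma~\ref{step1}, using Lemma~\ref{abcd} and Lemma~\ref{monotonicity}) which shows $h(V_2^f)=V_2^g$; only after this matching is a Livšic-plus-transitive-point argument invoked to upgrade regularity along $V_2^f$ (Lemma~\ref{step2}), and so on. The two steps — matching the next one-dimensional foliation, then proving regularity along it — must be interleaved and must proceed from weak to strong, because each matching step uses the regularity established so far (in the estimates~(\ref{qwe2})). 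Your proposal also attributes to Property $\mathcal A$ a role in a ``Livšic-type argument carried out along $U_i^f$-leaves,'' whereas in the paper Property $\mathcal A$ enters the \emph{matching} lemma (to propagate an orientation constraint on the intersection of $U=h^{-1}(V_m^g)$ with $W^f(i+1,m)$ across the whole torus via tubular minimality of $V_i^f$), not the Livšic step. The final Journ\'e bootstrapping in your last paragraph is correct and matches the paper, but it cannot be reached by the route you describe.
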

\newpage
\begin{remark}\mbox{}
\begin{enumerate}
\item We will see in Section~\ref{minimality} that irreducibility
of the characteristic polynomial of\/ $L$ is necessary for $f$ to
satisfy $\mathcal A$. Formally, we could have omitted the
irreducibility assumption above. Theorem B below shows that the
irreducibility of\/ $L$ is a necessary assumption for the conjugacy
to be $C^1$. We believe that Theorem A holds when $L$ is irreducible
without assuming that $f$ satisfies $\mathcal A$.
\item $\nu$ is a small positive number. It is possible to estimate
$\nu$ from below in terms of the eigenvalues of\/ $L$ and the size
of\/ $\mathcal U$.
\item  Obviously an analogous result holds on finite factors of tori. But we do not know how to prove it on
nilmanifolds. The problem is that for an algebraic Anosov
automorphism of a nilmanifold, various intermediate distributions
may happen to be nonintegrable.
\end{enumerate}
\end{remark}

Theorem A is a generalization of the theorem from~\cite{GG} quoted
above. Our method does not lead to higher regularity of the
conjugacy (see the last section of~\cite{GG} for an explanation).
Nevertheless we conjecture that the situation is the same as in
dimension two.

\begin{con}
In the context of Theorem A one can actually conclude that $f$ and
$g$ are $C^{r-\varepsilon}$ conjugate, where $\varepsilon$ is an
arbitrarily small positive     number.
\end{con}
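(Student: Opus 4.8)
The plan is to bootstrap from the conclusion of Theorem A -- that the topological conjugacy $h$ is $C^{1+\nu}$ -- to $C^{r-\varepsilon}$ by the scheme of de la Llave, Marco and Moriy\'on used in the $\mathbb T^2$ theorem quoted above: prove that $h$ is uniformly $C^{r-\varepsilon}$ along each of the one-dimensional invariant foliations $U_l^f,\dots,U_1^f,V_1^f,\dots,V_k^f$, and then recombine these leafwise regularities with the Journ\'e regularity lemma. Since $h\in C^{1+\nu}$, the derivative $Dh$ is defined and H\"older continuous, and differentiating $h\circ f=g\circ h$ shows that $Dh$ conjugates the invariant splittings: $Dh(F_i^f)=F_i^g$ and $Dh(E_j^f)=E_j^g$ (the ordering is preserved because $\mathcal U$ is small and the rates are distinct). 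Hence $h$ carries each leaf of $U_i^f$ (resp.\ $V_j^f$) onto a leaf of $U_i^g$ (resp.\ $V_j^g$) by a $C^{1+\nu}$ diffeomorphism intertwining the $C^r$ one-dimensional restrictions $f|_{U_i^f}$ and $g|_{U_i^g}$.

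Along the extreme foliations $U_l^f$ and $V_k^f$, whose leaves are uniformly $C^r$, the leafwise regularity of $h$ should be raised to $C^{r-\varepsilon}$ exactly as in dimension two: on a periodic leaf the return map of $f$ is a $C^r$ contraction (resp.\ expansion) of a one-dimensional manifold, hence linearizable with only an $\varepsilon$ loss; coincidence of periodic eigenvalue data forces the linearizing multipliers of $f$ and $g$ to agree at every period, and a Liv\v{s}ic-type argument over the Anosov base upgrades this to: the scalar multiplier cocycle of $Df$ along $U_l^f$ is cohomologous to the one of $Dg$ along $U_l^g$ transported by $h$, with transfer function $C^{r-\varepsilon}$ along the leaves. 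Running the non-stationary linearizations of $f|_{U_l^f}$ and $g|_{U_l^g}$ simultaneously then presents $h$ on each leaf as a composition of $C^{r-\varepsilon}$ maps, uniformly. The same statement must be obtained for the slow foliations $U_{l-1}^f,\dots,U_1^f$ and $V_1^f,\dots,V_{k-1}^f$ -- exactly those figuring in Property $\mathcal A$.

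Granting uniform $C^{r-\varepsilon}$ regularity of $h$ along all $d$ one-dimensional foliations, the global conclusion follows by a finite chain of applications of Journ\'e's lemma. Each subbundle $F_l^f\oplus\dots\oplus F_i^f$ is integrable with uniformly $C^r$ leaves, so one glues $U_l^f$ and $U_{l-1}^f$ to get $h\in C^{r-\varepsilon}$ along the foliation tangent to $F_l^f\oplus F_{l-1}^f$, then adds $U_{l-2}^f$, and continues until $h$ is $C^{r-\varepsilon}$ along the full stable foliation $W^s_f$; symmetrically along $W^u_f$; and a final application of Journ\'e to the transverse pair $(W^s_f,W^u_f)$ gives $h\in C^{r-\varepsilon}$ on $\mathbb T^d$. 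Since there are only finitely many steps, the arbitrarily small losses are absorbed into $\varepsilon$.

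The obstacle is concentrated in the leafwise step for the slow foliations. The method underlying Theorem A (and \cite{GG}) controls $h$ along those directions only at the $C^1$ level, and by a non-dynamical, accessibility-type argument that inherently cannot do better: the cohomological equation for the leafwise derivative of $h$ along a slow foliation has a potential built from $Dh$ along the other subbundles together with the defining data of that foliation, all merely H\"older, so the solution is merely H\"older and the estimate does not iterate. A de la Llave-type dynamical argument along a slow leaf, by contrast, requires matching the leafwise linearizations on transversally nearby leaves, and so needs transverse (holonomy) regularity of the slow foliations of $f$ and $g$ beyond H\"older, which is itself an open problem; moreover, unlike the two-dimensional case where only the absolutely continuous stable and unstable foliations enter, here the leafwise regularities along different foliations feed back into one another rather than decoupling, so the bootstrap must be run simultaneously across all $d$ foliations. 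Supplying the missing transverse regularity of the slow foliations and organizing this simultaneous bootstrap is precisely what would turn the conjecture into a theorem.
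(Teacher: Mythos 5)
This statement is stated in the paper as a \emph{conjecture}: the author explicitly says that the method of Theorem A ``does not lead to higher regularity of the conjugacy'' and refers to the last section of~\cite{GG} for the explanation, so there is no proof in the paper for you to be compared against. Your text, by your own admission in the final paragraph, is not a proof either: it is a program (leafwise $C^{r-\varepsilon}$ regularity along all $d$ one-dimensional invariant foliations, then finitely many applications of Journ\'e's lemma) whose decisive step is left open. The gap is exactly where you locate it, and it is not a small one. For the intermediate (slow) foliations $U_{l-1}^f,\dots,U_1^f,V_1^f,\dots,V_{k-1}^f$ the de la Llave--Marco--Moriy\'on bootstrap needs to compare non-stationary linearizations on transversally nearby leaves, and this requires transverse (holonomy) regularity of these foliations better than H\"older; such regularity is not known and is itself a hard open problem, since these foliations subfoliate the unstable foliation and are in general not even absolutely continuous (as the paper stresses when it avoids them in the proof of Lemma~\ref{step2}). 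The Liv\v{s}ic step you invoke also only yields a H\"older transfer function from H\"older data, so the claimed $C^{r-\varepsilon}$ leafwise regularity of the transfer function along slow leaves is asserted, not derived. Hence the proposal does not establish the statement; it restates the conjecture together with a plausible but unexecuted strategy, and correctly identifies the missing ingredient that keeps it a conjecture.

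A smaller caution about the part you treat as routine: even granting leafwise $C^{r-\varepsilon}$ regularity along each one-dimensional foliation, the iterated Journ\'e gluing requires at each stage a pair of transverse foliations with \emph{uniformly} smooth leaves inside the relevant integral manifolds, and H\"older continuity of the leafwise derivatives on the whole manifold; for the intermediate foliations this uniformity is again tied to the transverse regularity problem above, so the ``finite chain of Journ\'e applications'' cannot be decoupled from the main difficulty as cleanly as your second paragraph suggests.
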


Simple examples of diffeomorphisms that possess Property $\mathcal
A$ include $f=L$ and any $f\in\mathcal U$ when $\max(k,l)\le 2$ (see
Section~\ref{minimality}). In addition, we construct a $C^1$-open
set of Anosov diffeomorphisms of\/ $\mathbb T^5$ and $\mathbb T^6$
close to $L$ that have Property $\mathcal A$. It seems that this
construction can be extended to any dimension.

We describe this open set when $l=2$ and $k=3$. Given $f\in\mathcal
U$, denote by $D_f^{wu}$ the derivative of\/ $f$ along $V_1^f$.
Choose $f\in\mathcal U$ in such a way that
\begin{equation*}
\forall x\neq x_0,\;\;\;  D_f^{wu}(x)>D_f^{wu}(x_0),
\end{equation*}
where $x_0$ is a fixed point of\/ $f$. Then any diffeomorphism
sufficiently $C^1$ close to $f$ satisfies Property $\mathcal A$.

\subsection{When the coincidence of periodic data is not sufficient}
\label{negative} First let us briefly describe the counterexample of
de la Llave.

Let $L\colon\mathbb T^4\to\mathbb T^4$ be an automorphism of product
type,
\begin{equation}
\label{L} L(x,y)=(Ax, By), \;\;(x,y)\in\mathbb T^2\times\mathbb T^2,
\end{equation}
where $A$ and $B$ are Anosov automorphisms. Let $\lambda$,
$\lambda^{-1}$ be the eigenvalues of\/ $A$ and $\mu$, $\mu^{-1}$ the
eigenvalues of\/ $B$. We assume that $\mu>\lambda>1$. Consider
perturbations of the form
\begin{equation}
\label{tildeL} \tilde L=(Ax+\vec \varphi(y), By),
\end{equation}
where $\vec\varphi\colon\mathbb T^2\to\mathbb R^2$ is a $C^1$-small
$C^r$-function, $r>1$. Obviously the p.~d. of\/ $L$ and $\tilde L$
coincide. We will see in Section~\ref{counterexample} that the
majority of perturbations~(\ref{tildeL}) are only H\"older conjugate
to $L$. The following theorem is a simple generalization of this
counterexample.

\begin{theoremB}
Let  $L\colon\mathbb T^d\to \mathbb T^d$  be a hyperbolic
automorphism such that the characteristic polynomial of\/ $L$
factors over\, $\mathbb Q$. Then there exist
$C^\infty$-diffeomorphisms $\tilde L\colon\mathbb T^d\to\mathbb T^d$
and $\hat L\colon\mathbb T^d\to\mathbb T^d$ arbitrarily\,
$C^1$-close to $L$ with the same p.~d. such that the conjugacy
between $\tilde L$ and $\hat L$ is not Lipschitz.
\end{theoremB}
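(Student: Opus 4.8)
The plan is to reproduce de la Llave's skew-product counterexample, using the factorization of the characteristic polynomial to manufacture the required skew-product structure directly on $\mathbb T^d$. First I would extract an invariant fibration. Since the characteristic polynomial of $L$ is reducible over $\mathbb Q$, there is a proper nonzero $L$-invariant rational subspace $V\subset\mathbb R^d$; set $\Lambda=V\cap\mathbb Z^d$, a primitive sublattice because $V$ is rational. Then $L$ descends to a toral automorphism $B$ of the factor torus $\mathbb T^{d-m}=\mathbb R^d/(V+\mathbb Z^d)$, $m=\dim V$, giving a fibration $\mathbb T^m\hookrightarrow\mathbb T^d\xrightarrow{\ \pi\ }\mathbb T^{d-m}$ with $\pi\circ L=B\circ\pi$ whose fibers carry the automorphism $A=L|_{\mathbb T^m}$. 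Because $L^{-1}$ also preserves $V$ and $\mathbb Z^d$, the integer matrix $A$ satisfies $\det A=\pm1$, so $A$ is itself a hyperbolic automorphism with both stable and unstable directions, and the same holds for $B$. Writing points as $(x,y)\in V\oplus W$ for a complement $W$ of $V$, one has $L(x,y)=(Ax+Cy,By)$, with $C=0$ when the characteristic polynomials of $A$ and $B$ are coprime. For a $C^\infty$, $C^1$-small $\vec\varphi\colon\mathbb T^{d-m}\to V$ I would take
\[
\tilde L(x,y)=(Ax+Cy+\vec\varphi(y),\,By),\qquad \hat L=L
\]
(or, if one prefers two genuine perturbations, $\hat L(x,y)=(Ax+Cy+\vec\psi(y),By)$). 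These are $C^\infty$ diffeomorphisms $C^1$-close to $L$, hence Anosov, and $\tilde L$ is a skew product over $B$.

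Second, I would check that the periodic data are not disturbed. Along any periodic orbit of $\tilde L$ of period $n$, the differential $D\tilde L^n$ is block upper triangular with diagonal blocks $A^n$ and $B^n$, the same as for $DL^n$. When the characteristic polynomials of $A$ and $B$ are coprime, $\operatorname{spec}(A^n)\cap\operatorname{spec}(B^n)=\varnothing$, so a block upper triangular matrix is conjugate to the corresponding block diagonal one; hence $D\tilde L^n$ is conjugate to $\operatorname{diag}(A^n,B^n)=DL^n$ at the matched orbit, and the periodic data of $\tilde L$ and $\hat L=L$ coincide. In the remaining (non-coprime) case, one instead takes $\vec\varphi$ whose derivative is a twisted coboundary, $D\vec\varphi(y)=AS(y)B^{-1}-S(By)$ for a smooth matrix-valued $S$; the off-diagonal correction then telescopes along each period-$n$ orbit to $A^n\bigl(S(y_0)B^{-1}\bigr)-\bigl(S(y_0)B^{-1}\bigr)B^n$, which is removed by a conjugation, so the periodic data stay fixed.

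Third, I would pin down the conjugacy. The structural-stability conjugacy $h$ between $\tilde L$ and $\hat L=L$ is unique near the identity, and a map of the form $h(x,y)=(x+\beta(y),y)$ conjugates $\tilde L$ to $L$ exactly when $\beta(By)-A\beta(y)=-\vec\varphi(y)$. Decomposing $\beta$ and $\vec\varphi$ along the stable and unstable subspaces of $A$ and solving by iterating forward in the $A$-unstable directions and backward in the $A$-stable directions gives absolutely convergent series, in particular $\beta^u(y)=\sum_{j\ge0}(A|_{V^u})^{-(j+1)}\vec\varphi^u(B^jy)$; thus $\beta$, and with it $h$, exists and is H\"older, and by uniqueness it is the conjugacy between $\tilde L$ and $\hat L$.

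Finally — and this is the crux — I would choose $\vec\varphi$ so that $\beta$ fails to be Lipschitz. A short case analysis on the locations of the dominant eigenvalues shows that one can always arrange the fibration so that the fiber automorphism $A$ has an unstable eigenvalue of modulus $\rho$ no larger than the spectral radius $\sigma$ of the base automorphism $B$ (choose $V$ minimal and, if necessary, interchange the roles of $V$ and its complement; equality $\rho=\sigma$ occurs e.g. in the conformal case). Then take a nonzero $v\in V^u$ with a component along a slowest unstable eigendirection of $A$, a nonzero $n_0\in\mathbb Z^{d-m}$ with a component along a dominant eigendirection of $B^{\mathsf T}$, and $\vec\varphi(y)=2\varepsilon\,\mathrm{Re}\bigl(v\,e^{2\pi i\langle n_0,y\rangle}\bigr)$ with $\varepsilon>0$ small. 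Since $\vec\varphi^s\equiv0$ we have $\beta=\beta^u$ and
\[
\beta(y)=2\varepsilon\,\mathrm{Re}\Bigl(\,\sum_{j\ge0}(A|_{V^u})^{-(j+1)}v\,e^{2\pi i\langle(B^{\mathsf T})^jn_0,\ y\rangle}\Bigr),
\]
a Hadamard-lacunary trigonometric series: the frequencies $(B^{\mathsf T})^jn_0$ are pairwise distinct (the only $B^{\mathsf T}$-periodic integer vector is $0$) with $|(B^{\mathsf T})^jn_0|\asymp\sigma^j$, while $|(A|_{V^u})^{-(j+1)}v|\asymp\rho^{-j}$. Since $\rho\le\sigma$, the products $|(A|_{V^u})^{-(j+1)}v|\cdot|(B^{\mathsf T})^jn_0|\asymp(\sigma/\rho)^j$ do not tend to $0$, so by the classical description of the Lipschitz class — and, at the borderline $\rho=\sigma$, of the Zygmund class $\Lambda_*$ — for lacunary series, $\beta$, and hence $h$, is not Lipschitz. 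I expect the main difficulty to lie in this last, harmonic-analytic step: showing that the coefficient/frequency imbalance genuinely obstructs Lipschitz regularity, which requires the lacunarity of $\bigl((B^{\mathsf T})^jn_0\bigr)_j$ and the exclusion of accidental coincidences among the frequencies (related to ratios of eigenvalues being roots of unity); a secondary technical point is producing, in the non-coprime case, a perturbation with twisted-coboundary derivative that still contains the single Fourier mode exploited above.
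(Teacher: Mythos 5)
Your approach differs from the paper's in one structural respect and agrees in most others. The paper first passes to a finite cover $\tilde{\mathbb T}^d=\mathbb R^d/\mathrm{span}_{\mathbb Z}\{\tilde e_i\}$ on which the lift $M$ becomes a genuine block-diagonal product $N(x,y)=(Ax,By)$, constructs the perturbation $\tilde N$ there, and then checks (by restricting the Fourier support of $\vec\varphi$) that $\tilde N$ descends through the finite-to-one projection $\pi$ to $\mathbb T^d$. You instead work directly on $\mathbb T^d$, using a rational $L$-invariant subspace $V=\ker P_1(M)$ and a rational complement to write $L(x,y)=(Ax+Cy,By)$ as a skew product; this avoids the covering step and the descent check. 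Your case analysis over where to place the dominant eigenvalues, the solution of the cohomological equation, and the lacunary-series obstruction to Lipschitz regularity are all in the same spirit as the paper's distributional computation (Propositions 1--3 of Section 2), which compares the forward and backward representations of $\psi_u$ and extracts the condition $\sum_{k\in\mathbb Z}(\mu/\lambda)^k\varphi_u(B^ky)=0$.

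There is, however, a genuine gap in your treatment of the non-coprime case, i.e.\ when the characteristic polynomial has a repeated irreducible factor (the paper's case $\lambda=\mu$), and this is precisely the case in which the statement forces one to allow $\hat L\neq L$. Your proposed fix --- choose $\vec\varphi$ with $D\vec\varphi(y)=AS(y)B^{-1}-S(By)$ for a smooth $S$, so that the off-diagonal contribution to $D\tilde L^n$ telescopes and the periodic data match those of $L$ --- is self-defeating. If $D\vec\varphi$ has this form with $S$ smooth and $\mathbb T^{d-m}$-periodic, then (integrating mode by mode, and noting that the constant mode $S_c$ must satisfy $AS_cB^{-1}=S_c$ for $\vec\varphi$ to be periodic) one has $\vec\varphi(y)=A\sigma(y)-\sigma(By)+c$ for a smooth $\sigma$ with $D\sigma=S(\cdot)B^{-1}$ and a constant $c$. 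Plugging this into $\beta(By)-A\beta(y)=-\vec\varphi(y)$ shows that the unique H\"older solution is $\beta=\sigma+\beta_c$ with $\beta_c$ constant; in particular $\beta$ is $C^\infty$ and the conjugacy $h$ is a diffeomorphism, which is exactly what you need to rule out. One can see the same collapse concretely: if $S(y)=S_0e^{2\pi i\langle n_0,y\rangle}$ produces $\vec\varphi(y)=v\,e^{2\pi i\langle n_0,y\rangle}-A^{-1}v\,e^{2\pi i\langle B^{\mathsf T}n_0,y\rangle}$, then the series $\beta(y)=\sum_{j\ge0}A^{-(j+1)}\vec\varphi(B^jy)$ telescopes to the single smooth mode $A^{-1}v\,e^{2\pi i\langle n_0,y\rangle}$. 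The paper's resolution is different and it is the one you should adopt: take $\tilde L$ with $\vec\varphi$ not a twisted coboundary (so that a Jordan block appears in $D\tilde L^n$ at periodic points, which destroys the match with $L$), and take $\hat L$ with $\xi=2\vec\varphi$; then $\tilde L$ and $\hat L$ both have a Jordan block at every periodic point with the \emph{same} normal form, so their periodic data coincide, while the conjugacy between them satisfies the cohomological equation with right-hand side $\xi-\vec\varphi=\vec\varphi$, to which your lacunary argument (or the paper's Proposition~3) applies and yields non-Lipschitz regularity.
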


\begin{remark}
In the majority of cases, one can take $\hat L=L$.  The need to take
$\tilde L$ and $\hat L$ both different from $L$ appears, for
instance, when $L(x,y)=(Ax,Ay)$. It was shown in~\cite{Lconformal1}
that the p.~d. form a complete set of moduli for the
smooth-conjugacy problem to $L$. This is a remarkable phenomenon due
to the invariance of conformal structures on the stable and unstable
foliations. Nevertheless we still have a counterexample if we move a
little bit away from $L$.
\end{remark}

Next we study the smooth conjugacy problem in the neighborhood
of~(\ref{L}) assuming that $\mu>\lambda>1$. We show that the
perturbations~(\ref{tildeL}) exhaust all possibilities. Before
formulating the result precisely let us move to a slightly more
general setting. Let $A$ and $B$ be as in~(\ref{L}) with
$\mu>\lambda>1$. Consider the Anosov diffeomorphism
\begin{equation}
\label{Lgeneral} L(x,y)=(Ax, g(y)), \;\;(x,y)\in\mathbb
T^2\times\mathbb T^2,
\end{equation}
where $g$ is an Anosov diffeomorphism sufficiently $C^1$-close to
$B$, so $L$ can be treated as a partially hyperbolic diffeomorphism
with the automorphism $A$ acting in the central direction. Consider
perturbations of the form
\begin{equation}
\label{tildeLgeneral} \tilde L=(Ax+\vec \varphi(y), g(y)).
\end{equation}
As before, it is obvious that the p.~d. of\/ $L$ and $\tilde L$
coincide. In Section~\ref{moduli} we will see that $L$ and $\tilde
L$ with nonlinear $g$ also provide a counterexample to Question~1.

\begin{theoremC} Given $L$ as in~(\ref{Lgeneral}) with $\mu>\lambda>1$, there exists a $C^1$-neighborhood $\mathcal U \subset \text{\upshape Diff}^r(\mathbb T^4)$, $r\ge 2$, of\/ $L$ such that any $f \in \mathcal U$ that has the same p.~d. as $L$ is $C^{1+\nu}$-conjugate, $\nu > 0$, to a diffeomorphism $\tilde L$ of type~(\ref{tildeLgeneral}).
\end{theoremC}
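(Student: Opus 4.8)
The plan is to build a $C^{1+\nu}$ conjugacy of $f$ to a diffeomorphism of type~(\ref{tildeLgeneral}) directly, rather than by trying to improve the (only H\"older) structural-stability conjugacy $h$ between $L$ and $f$ --- which in general \emph{cannot} be made smooth, by Theorem B applied to a suitable $\tilde L$. The ingredients will be the persistence of the compact normally hyperbolic central fibration of $L$, the theorem of de la Llave, Marco and Moriy\'on on $\mathbb T^2$ together with its one-dimensional non-stationary counterpart from~\cite{GG}, and Journ\'e's lemma.

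\emph{Step 1 (partial hyperbolicity, the central fibration, the splitting of the periodic data).} Shrinking $\mathcal U$, every $f\in\mathcal U$ is partially hyperbolic with a two-dimensional central bundle $E^c_f=U^f_1\oplus V^f_1$ close to $T\mathbb T^2_x$, a strong-stable bundle $E^{ss}_f$ with contraction rate near $\mu^{-1}$ and a strong-unstable bundle $E^{uu}_f$ with expansion rate near $\mu$; the inequalities $\mu^{-1}<\lambda^{-1}<\lambda<\mu$ supply the domination and make $f$ $1$-normally hyperbolic with a definite gap. Then $E^{ss}_f,E^{uu}_f$ integrate to foliations $W^{ss}_f,W^{uu}_f$, and --- since the central foliation of $L$ consists of the compact tori $\mathbb T^2\times\{y\}$ and is normally hyperbolic --- Hirsch--Pugh--Shub theory shows that $f$ is dynamically coherent, that $W^c_f$ is a $C^1$ (in fact $C^{1+\nu}$ for some $\nu>0$, by the bunching $\mu>\lambda$) foliation by compact $C^r$ tori $C^1$-close to those of $L$, and that $\pi_f\colon\mathbb T^4\to N_f:=\mathbb T^4/W^c_f$ is a $C^{1+\nu}$ torus bundle over a surface $N_f$ diffeomorphic to $\mathbb T^2$, on which $f$ induces an Anosov diffeomorphism $\bar f$ that is $C^1$-close to $g$. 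As $\mu>\lambda$, the center-stable, center-unstable and central foliations are dynamically characterized, hence preserved by any conjugacy. Finally, for a periodic point $p=h(q)$ of period $n$ with $q=(q_1,q_2)$ periodic for $L$, the matrix $Df^n(p)$ is conjugate to $A^n\oplus Dg^n(q_2)$, so: (a) the restriction of $Df^n(p)$ to $E^c_f$ is conjugate to the constant matrix $A^n$ and the splitting $U^f_1\oplus V^f_1$ carries the constant eigenvalues $\lambda^{-n},\lambda^n$; and (b) the strong-stable and strong-unstable eigenvalues of $Df^n(p)$ equal the eigenvalues of $Dg^n(q_2)$. In particular the normal derivative of $f^n$ along a periodic central leaf is conjugate to $Dg^n(q_2)$, so $\bar f$ has the same periodic data as $g$.

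\emph{Step 2 (reduction to a skew product over $g$, and non-stationary linearization of the fibres).} Straightening $\pi_f$ by a $C^{1+\nu}$ diffeomorphism of $\mathbb T^4$ conjugates $f$ to a skew product $(x,y)\mapsto(\Phi'_y(x),\bar f(y))$ with standard fibres $\mathbb T^2\times\{y\}$. Since $\bar f$ and $g$ are $C^{1+\nu}$ Anosov diffeomorphisms of $\mathbb T^2$, topologically conjugate (both being $C^1$-close to $B$) and with the same periodic data, the theorem of~\cite{LMM,L} gives a $C^{1+\nu}$ conjugacy $\bar h$ from $\bar f$ to $g$; conjugating by $(x,y)\mapsto(x,\bar h(y))$ one reaches $f_1(x,y)=(\Phi_y(x),g(y))$, a skew product over $g$ whose fibre maps $\Phi_y\colon\mathbb T^2\to\mathbb T^2$ are $C^1$-close to $A$ and depend $C^{1+\nu}$-ly on $y$. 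In each fibre the cocycle $\{\Phi_y\}$ has a pair of transverse invariant one-dimensional foliations $\mathcal W^-_y,\mathcal W^+_y$ --- the integrals of $U^f_1$ and $V^f_1$ --- along which it contracts, resp. expands, with the \emph{constant} periodic data $\lambda^{\mp n}$, by Step~1(a). Applying the non-stationary one-dimensional linearization of~\cite{GG} along $\mathcal W^+_{(\cdot)}$ and along $\mathcal W^-_{(\cdot)}$ over the base $g$ produces a $C^{1+\nu}$ family $\{\eta_y\}$ of $C^{1+\nu}$ diffeomorphisms of $\mathbb T^2$ in whose coordinates $\Phi_y$ is affine with constant linear part, necessarily $A$; conjugating $f_1$ by $(x,y)\mapsto(\eta_y(x),y)$ then yields $\tilde L(x,y)=(Ax+\vec\varphi(y),g(y))$ for a $C^{1+\nu}$ function $\vec\varphi\colon\mathbb T^2\to\mathbb R^2$, a diffeomorphism of type~(\ref{tildeLgeneral}). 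The translation $\vec\varphi$ need not be a coboundary: this is precisely de la Llave's obstruction and the reason one can only conclude conjugacy to some $\tilde L$, not to $L$.

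\emph{Step 3 (regularity, and the main difficulty).} Let $H$ be the composite conjugacy from $f$ to $\tilde L$ (the unique one near the identity). By construction $H$ is uniformly $C^{1+\nu}$ along the central leaves $W^c_f$. It carries $W^{ss}_f$ and $W^{uu}_f$ to the smooth one-dimensional strong foliations of $\tilde L$, and --- since $\tilde L$ restricted to a strong leaf is conjugate, via the projection to the $y$-coordinate, to $g$ restricted to a leaf of $W^s_g$, resp. $W^u_g$, and using Step~1(b) --- it intertwines one-dimensional contractions and expansions with matching periodic data; the one-dimensional case of~\cite{LMM,L} then makes $H$ uniformly $C^{1+\nu}$ along $W^{ss}_f$ and $W^{uu}_f$. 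Journ\'e's lemma, applied inside the three-dimensional leaves of $W^{cs}_f$ to the transverse sub-foliations $W^{ss}_f,W^c_f$ and likewise inside $W^{cu}_f$, gives $H\in C^{1+\nu}$ along $W^{cs}_f$ and $W^{cu}_f$; a last application to the globally transverse pair $W^{ss}_f,W^{cu}_f$ (of complementary dimensions $1$ and $3$) yields $H\in C^{1+\nu}(\mathbb T^4)$, the smallness of $\mathcal U$ providing the bunching needed throughout. The routine parts are Step~1 (persistence of compact normally hyperbolic foliations) and the Journ\'e gluing; the hard part will be Step~2 --- promoting the leafwise, periodic-orbit linearizations supplied by the $\mathbb T^2$ theorem to a genuine non-stationary normal form, i.e. a family $\{\eta_y\}$ globally defined on the fibres, $C^{1+\nu}$ in $y$, simultaneously rectifying both invariant one-dimensional foliations over the transitive Anosov base $g$ and leaving $\vec\varphi$ of the claimed regularity. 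What makes this work is exactly that the weak central periodic data of $f$ is the \emph{constant} data $\lambda^{\pm n}$, which forces the linear part of $\Phi_y$ to be constant and lets the construction close up.
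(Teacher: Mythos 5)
Your plan is structurally different from the paper's (quotient by the central foliation, straighten the bundle, then linearize the fibres over the quotient Anosov base), whereas the paper constructs an explicit transverse foliation $S$ of tori and defines $h$ from the intersection pattern of $W_f^c$ and $S$, using a pulled-back leafwise metric on $W_f^{ws}$, $W_f^{wu}$. Unfortunately your Step~1 contains a gap that undercuts the rest.

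You assert that Hirsch--Pugh--Shub theory together with ``the bunching $\mu>\lambda$'' makes $W_f^c$ a $C^{1+\nu}$ foliation and $\pi_f\colon\mathbb T^4\to N_f$ a $C^{1+\nu}$ bundle. That is not what the hypothesis $\mu>\lambda>1$ buys you. Normal hyperbolicity ($\mu>\lambda$) gives persistence of the compact central foliation and dynamical coherence, but $C^1$ regularity of the central holonomy (equivalently, of $\pi_f$) requires a genuine center-bunching inequality of the rough form $\mu>\lambda^2$, relating the strong rate to the \emph{ratio} of central rates; $\mu>\lambda$ alone is not enough. For $\lambda<\mu<\lambda^2$ the holonomy of $W_f^c$ for a generic $f\in\mathcal U$ is only H\"older, and there is then no $C^{1+\nu}$ quotient surface $N_f$ nor a $C^{1+\nu}$ skew-product normal form to hand to the LMM theorem in Step~2. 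The paper has to work hard precisely at this point: Lemma~\ref{central_holonomy} (smoothness of the $W_f^c$-holonomy) is \emph{not} a bunching statement but rests on the p.~d. hypothesis through the Livshitz theorem --- see Lemma~\ref{HxisC1}, whose proof solves a cohomological equation for the strong unstable Jacobians --- and on the translation-invariance Lemma~\ref{technical}, which in turn uses the leafwise smoothness of $H$ along $W_f^c$ from Proposition~\ref{central_smoothness}. Your argument never routes the periodic data through the strong-direction regularity, so it loses exactly the information that makes the holonomy smooth.

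A second, smaller, issue: the ``non-stationary one-dimensional linearization of~\cite{GG}'' that your Step~2 leans on is not a citable result; \cite{GG} proves $C^{1+\nu}$ conjugacy of two given Anosov diffeomorphisms of $\mathbb T^3$, not the existence of a family $\{\eta_y\}$, simultaneously $C^{1+\nu}$ in $x$ and $y$, rectifying both invariant line fields of a fibered cocycle over an Anosov base. You flag this as ``the hard part'' and justify it heuristically by the constancy of the central periodic data, but that heuristic is the theorem to be proved, not a lemma to be invoked. The paper circumvents the need for such a fibered normal form: by pulling back the flat leafwise metric via $H$ to get distances $d^{ws}$, $d^{wu}$ under which $f$ acts affinely, by constructing the transverse foliation $S$ so that the resulting coordinate change $h$ is a leafwise \emph{isometry}, and by deducing the form $\tilde L=(Ax+\vec\varphi(y),g(y))$ from that isometry property directly --- with the smoothness of $\vec\varphi$ coming for free once $h$ is known to be $C^{1+\nu}$ (one application of Journ\'e to the complementary $2$-foliations $W_f^c$ and $S$), rather than from an abstract fibre-linearization.

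Your Step~3 (the Journ\'e gluing) is fine as a pattern, but it is downstream of the two gaps above. To repair the argument you would need to replace the HPS/bunching appeal with a Livshitz-type proof that the central holonomy is $C^{1+\nu}$ (this is Lemmas~\ref{technical}, \ref{HxisC1}, \ref{central_holonomy} in the paper), and to replace the unproved non-stationary linearization with an explicit construction of the fibre coordinate change --- at which point you will have essentially reproduced the paper's Section~7.
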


\subsection{Additional moduli of\/ $C^1$ conjugacy in the neighborhood of the counterexample of de la Llave}
Let $L$ be given by~(\ref{L}) with $\mu>\lambda>1$ and let $\mathcal
U$ be a small $C^1$-neighborhood of\/ $L$. It is useful to think of
diffeomorphisms from $\mathcal U$ as partially hyperbolic
diffeomorphisms with two-dimensional central foliations. Consider
$f, g\in \mathcal U$, $h\circ f=g\circ h$. According to the
celebrated theorem of Hirsch, Pugh and Shub~\cite{HPS}, the
conjugacy $h$ maps the central foliation of\/ $f$ into the central
foliation of\/ $g$.

Assume that the p.~d. of\/ $f$ and $g$ are the same. We will show
that $h$ is $C^{1+\nu}$ along the central foliation. As described
above, it can still happen that $h$ is not a $C^1$-diffeomorphism.
This means that the conjugacy is not differentiable in the direction
transverse to the central foliation. The geometric reason for this
is a mismatch between the strong stable (unstable) foliations of\/
$f$ and $g$ --- the conjugacy $h$ does not map the strong stable
(unstable) foliation of\/ $f$ into the strong stable (unstable)
foliation of\/ $g$.

Motivated by this observation, we now introduce additional moduli
of\/ $C^1$-differen\-tiable conjugacy. Roughly speaking, these
moduli measure the tilt of the strong stable (unstable) leaves when
compared to the model~(\ref{L}).

We define these moduli precisely. Let $W_L^{ss}$, $W_L^{ws}$,
$W_L^{wu}$ and $W_L^{su}$ be the foliations by straight lines along
the eigendirections with eigenvalues $\mu^{-1}$, $\lambda^{-1}$,
$\lambda$ and $\mu$ respectively. For any $f\in\mathcal U$ these
invariant foliations survive. We denote them by $W_f^{ss}$,
$W_f^{ws}$, $W_f^{wu}$ and $W_f^{su}$. We will also write $W_f^s$
and $W_f^u$ for two-dimensional stable and unstable foliations.

Let $h_f$ be the conjugacy to the linear model, $h_f\circ f=L\circ
h_f$. Then
\begin{equation}
\label{h_f_mathches} h_f(W_f^\sigma)=W_L^\sigma,\;\;\; \sigma=s, u,
ws, wu.
\end{equation}

Fix orientations of\/ $W_L^\sigma$, $\sigma=ss, ws, wu, su$. Then
for every $x\in\mathbb T^4$ there exists a unique
orientation-preserving isometry $\EuScript I^\sigma(x)\colon
W_L^\sigma(x)\to\mathbb R$, $\EuScript I^\sigma(x)(x)=0$,
$\sigma=ss, ws, wu, su$.

Define $\Phi_f^u\colon\mathbb T^4\times \mathbb R\to\mathbb R$ by
the formula
$$
\Phi_f^u(x,t)=\EuScript I^{wu}\big(\EuScript
I^{su}(x)^{-1}(t)\big)\big(h_f(W_f^{su}(h_f^{-1}(x))\big)\cap
W_L^{wu}(\EuScript I^{su}(x)^{-1}(t))\big).
$$
The geometric meaning is transparent and illustrated on
Figure~\ref{1_moduli}. The image of the strong unstable manifold
$h_f(W_f^{su}(h_f^{-1}(x)))$ can be viewed as a graph of the
function $\Phi_f^u(x,\cdot)$ over $W_L^{su}(x)$. Analogously we
define $\Phi_f^s\colon\mathbb T^4\times \mathbb R\to\mathbb R$.


\begin{figure}[htbp]
\begin{center}
\begin{picture}(0,0)%
\includegraphics{1_moduli.pstex}%
\end{picture}%
\setlength{\unitlength}{3947sp}%
\begingroup\makeatletter\ifx\SetFigFont\undefined%
\gdef\SetFigFont#1#2#3#4#5{%
\reset@font\fontsize{#1}{#2pt}%
\fontfamily{#3}\fontseries{#4}\fontshape{#5}%
\selectfont}%
\fi\endgroup%
\begin{picture}(6896,2313)(3739,-2518)
\put(7651,-1411){\makebox(0,0)[lb]{\smash{{\SetFigFont{12}{14.4}{\rmdefault}{\mddefault}{\updefault}{\color[rgb]{0,0,0}$\Phi_f^u(x,t)$}%
}}}}
\put(6826,-361){\makebox(0,0)[lb]{\smash{{\SetFigFont{12}{14.4}{\rmdefault}{\mddefault}{\updefault}{\color[rgb]{0,0,0}$W_L^{wu}(\tilde x)$}%
}}}}
\put(4051,-1786){\makebox(0,0)[lb]{\smash{{\SetFigFont{12}{14.4}{\rmdefault}{\mddefault}{\updefault}{\color[rgb]{0,0,0}$x$}%
}}}}
\put(8251,-811){\makebox(0,0)[lb]{\smash{{\SetFigFont{12}{14.4}{\rmdefault}{\mddefault}{\updefault}{\color[rgb]{0,0,0}$h_f(W_f^{su}(h_f^{-1}(x))$}%
}}}}
\put(8026,-2161){\makebox(0,0)[lb]{\smash{{\SetFigFont{12}{14.4}{\rmdefault}{\mddefault}{\updefault}{\color[rgb]{0,0,0}$W_L^{su}(x)$}%
}}}}
\put(6601,-1861){\makebox(0,0)[lb]{\smash{{\SetFigFont{12}{14.4}{\rmdefault}{\mddefault}{\updefault}{\color[rgb]{0,0,0}$\tilde x$}%
}}}}
\end{picture}%

\end{center}
\caption{The geometric meaning of\/ $\Phi_f^u$. Here $\tilde
x=\EuScript I^{su}(x)^{-1}(t)$.}\label{1_moduli}
\end{figure}


Clearly, $\Phi_f^{s/u}$ are moduli of\/ $C^1$-conjugacy. Indeed,
assume that $f$ and $g$ are $C^1$ conjugate by $h$. Then
$h(W_f^{su})=h(W_g^{su})$ and $h(W_f^{ss})=h(W_g^{ss})$ since strong
stable and unstable foliations are characterized by the speed of
convergence which is preserved by $C^1$-conjugacy. Hence
$\Phi_f^{s/u}=\Phi_g^{s/u}$.

It is possible to choose a subfamily of these moduli in an efficient
way. We say that $f$ and $g$ from $\mathcal U$ have the same
\emph{strong unstable foliation moduli} if
\begin{equation}
\label{moduli1} \exists t\neq 0 \;\;\;\text{such that}\;\;\;\forall
x\in\mathbb T^4,\; \;\;\; \Phi_f^u(x,t)=\Phi_g^u(x,t)
\end{equation}
or
\begin{equation}\label{moduli2}
\exists x\in \mathbb T^4\;\;\;\text{and}\;\;\exists
I=(a,b)\subset\mathbb R\;\;\;\text{such that}\;\;\;\forall t\in
I\;\;\; \Phi_f^u(x,t)=\Phi_g^u(x,t).
\end{equation}
The definition of the \emph{strong stable foliation moduli} is
analogous.

\begin{theoremD}
Given $L$ as in~(\ref{L}) with $\mu>\lambda>1$, there exists a
$C^1$-neighborhood $\mathcal U \subset \text{\upshape
Diff}^r(\mathbb T^4)$, $r\ge 2$, of\/ $L$ such that  if\/ $f,
g\in\mathcal U$ have the same p.~d. and the same strong unstable and
strong stable foliation moduli, then $f$ and $g$ are $C^{1+\nu}$
conjugate.
\end{theoremD}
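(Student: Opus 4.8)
The plan is to reduce Theorem D to Theorem C together with the matching of strong stable/unstable foliations, exploiting the partially hyperbolic structure with two-dimensional central foliation $W^c_f = W^{ws}_f \oplus W^{wu}_f$. By Theorem C, both $f$ and $g$ are $C^{1+\nu}$-conjugate to diffeomorphisms $\tilde L_f$ and $\tilde L_g$ of type (\ref{tildeLgeneral}); moreover, as indicated in the preceding subsection, the conjugacy $h$ between $f$ and $g$ is automatically $C^{1+\nu}$ along the central foliation, so the only failure of differentiability can occur transverse to $W^c$, and this is detected precisely by the mismatch of the strong stable and strong unstable foliations. Thus the first step is to show that the hypothesis on the foliation moduli $\Phi^u_f = \Phi^u_g$ and $\Phi^s_f = \Phi^s_g$ (in the efficient sense (\ref{moduli1})--(\ref{moduli2})) forces $h(W^{su}_f) = W^{su}_g$ and $h(W^{ss}_f) = W^{ss}_g$.

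\medskip

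The second step is to upgrade the weak equality of moduli given by (\ref{moduli1}) or (\ref{moduli2}) to the full equality $\Phi^u_f(x,t) = \Phi^u_g(x,t)$ for all $x$ and $t$, and likewise for $\Phi^s$. The key observation is that the function $\Phi^{s/u}_f(x,\cdot)$, describing the strong unstable leaf as a graph over $W^{su}_L(x)$, satisfies a cohomological/functional equation coming from the invariance of $W^{su}_f$ under $f$ and the conjugacy intertwining $f$ with the linear model $L$. Concretely, applying $L$ contracts the $W^{su}$-direction by $\mu^{-1}$ is false --- $W^{su}$ is the strong \emph{unstable}, expanded by $\mu$ --- so iterating $L^{-n}$ and using that the weak unstable $W^{wu}$ expands only by $\lambda < \mu$ one gets a renormalization recursion that propagates agreement of $\Phi_f$ and $\Phi_g$ from one value of $t$ (or one interval) to all of $\mathbb R$, and from one base point $x$ to the whole orbit, hence (by density of orbits, i.e.\ transitivity of $f$, together with continuity of $\Phi$) to all of $\mathbb T^4$. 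The same argument applied to $W^{ss}$ under $f^{-1}$ handles the stable side. Care is needed because $\Phi$ is defined via $h_f$, which is only H\"older a priori; one must check that the renormalization is nonetheless well-defined and that the propagation is valid at the H\"older level.

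\medskip

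Once $\Phi^u_f \equiv \Phi^u_g$ and $\Phi^s_f \equiv \Phi^s_g$, the third step is purely geometric: these identities mean that under the common linearizations $h_f, h_g$ the images of the strong stable and strong unstable foliations of $f$ and $g$ coincide inside $\mathbb T^4$, hence $h = h_g^{-1} \circ h_f$ maps $W^{su}_f$ to $W^{su}_g$ and $W^{ss}_f$ to $W^{ss}_g$. Now $h$ maps each of the four one-dimensional foliations $W^{ss}, W^{ws}, W^{wu}, W^{su}$ of $f$ to the corresponding foliation of $g$ (the weak ones being handled already inside the central foliation by Theorem C and the HPS central-foliation matching). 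Restricting $h$ to each leaf gives a conjugacy between two one-dimensional Anosov-type dynamics with coinciding periodic data; by the Liv\v{s}ic-theoretic / de la Llave--Marco--Moriy\'on argument in dimension one (as used in \cite{LMM}, \cite{GG}), $h$ is $C^{1+\nu}$ along each of these four foliations, with derivative H\"older-continuous in the transverse directions. Finally, I would invoke a Journ\'e-type regularity lemma: a function that is uniformly $C^{1+\nu}$ along two transverse Hölder foliations whose union of tangent spaces spans is $C^{1+\nu}$; iterating this across the four foliations $W^{ss}, W^{ws}, W^{wu}, W^{su}$, whose tangent directions span $T\mathbb T^4$, yields that $h$ is globally $C^{1+\nu}$.

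\medskip

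The main obstacle is the second step: promoting the a priori weak equality of moduli (one value of $t$, or one short interval $I$ at one point $x$) to global equality. This requires a genuine renormalization/self-similarity argument for the strong-foliation graphs, and one has to control it despite the conjugacies $h_f, h_g$ being merely Hölder; the needed input is that the weak unstable expansion rate $\lambda$ is strictly smaller than the strong unstable rate $\mu$, which makes the graph $\Phi_f(x,\cdot)$ rigid under the induced dynamics, plus transitivity of $f$ to spread pointwise agreement over $\mathbb T^4$. Everything else --- Theorem C, the central-foliation regularity, the one-dimensional Liv\v{s}ic argument, and the Journ\'e gluing --- is by now standard machinery.
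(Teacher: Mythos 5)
Your overall architecture agrees with the paper's: establish $C^{1+\nu}$-differentiability of $h$ along the two-dimensional central foliation, show that the modulus hypothesis forces $h(W_f^{su})=W_g^{su}$ and $h(W_f^{ss})=W_g^{ss}$, upgrade to $C^{1+\nu}$ along the strong foliations by a periodic-data/transitive-point argument, and assemble with Journ\'e's lemma. (The paper establishes the central regularity via Proposition~\ref{central_smoothness} rather than by invoking Theorem~C, but this is a cosmetic difference.) The genuine gap is in your Step~2, the ``renormalization'' propagating the a priori partial agreement of the moduli to full agreement over $\mathbb T^4$. The functional equation you describe --- pushing the graph $\Phi^u(x,\cdot)$ forward by $L$ --- only propagates agreement \emph{along the $su$-leaf} and \emph{along the orbit of $x$}. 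In case~(\ref{moduli2}) the hypothesis concerns one particular $x$ which need not have a dense orbit, so ``density of orbits, i.e.\ transitivity of $f$'' does not apply; and in case~(\ref{moduli1}), $L^n$-expansion only hits the set of $t$-values $\{\mu^n t_0\}$, not all of~$\mathbb R$. Neither of these gives you any way to move transversally between different base leaves.

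The essential ingredient you are missing is the shift-invariance statement that the paper extracts as Lemma~\ref{last} (a four-dimensional analogue of Claim~1 in \cite{GG}): once $W_f^{su}$ and $U=h^{-1}(W_g^{su})$ intersect at a pair $(a,b)$, the corresponding intersection pattern is preserved under sliding along the weak unstable foliation $W_f^{wu}$. This is exactly the rigidity that lets one spread agreement from a single arc $\EuScript C\subset W_f^{su}(x)$ to the tube $\EuScript S=\bigcup_{a\in\EuScript C}W_f^{wu}(a)$, after which iterating $f^n$ expands $\EuScript C$ and density of $W_f^{wu}$-leaves in unstable leaves finishes the job. (Case~(\ref{moduli1}) is handled by first producing an equally spaced sequence on the $su$-leaf of a fixed point, contracting by $f^{-n}$ to get density in that one leaf, and then again invoking the tube argument.) Lemma~\ref{last} is itself nontrivial --- it is proved by a Livshitz/affine-structure comparison along weak leaves and uses the central smoothness of Proposition~\ref{central_smoothness} --- so it is not something that can be avoided by a routine recursion. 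Without it, your Step~2 does not close.
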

\begin{remark}
In this case $C^{1+\nu}$-differentiability is in fact the optimal
regularity.
\end{remark}

\subsection{Organization of the paper and a remark on terminology}
In Section~2 we describe the counterexample of de la Llave in a way
that allows us to generalize it to Theorem~B in Section~3.
Sections~2 and~3 are independent of the rest of the paper.

In Sections~4 and~5 we discuss Property $\mathcal A$ and construct
examples of diffeomorphisms that satisfy Property $\mathcal A$.
These sections are self-contained.

Section 6 is devoted to the proof of our main result, Theorem A. It
is self-contained but in number of places we refer to~\cite{GG},
where the three-dimensional version of Theorem A was established.

Theorem C is proved in Section 7. It is independent of the rest of
the paper with the exception of a reference to
Proposition~\ref{central_smoothness}.

The proof of Theorem D appears in Section 8 and relies on some
technical results from~\cite{GG}.

Throughout the paper we will prove that various maps are
$C^{1+\nu}$-differentiable. This should be understood in the usual
way: the map is $C^1$-differentiable and the derivative is
H\"older-continuous with some positive exponent $\nu$. The number
$\nu$ is not the same in different statements.

When we say that a map is $C^{1+\nu}$-differentiable along a
foliation $\mathcal F$, we mean that restrictions of the map to the
leaves of\/ $\mathcal F$ are $C^{1+\nu}$-differentiable and the
derivative is a H\"older-continuous function on the manifold, not
only on the leaf.

\subsection {Acknowledgements}
The author is grateful to Anatole Katok for numerous discussions,
advice, and for introducing him to this problem. Many thanks go to
Misha Guysinsky and Dmitry Scheglov for useful discussions. The
author also would like to thank the referees for providing helpful
suggestions and pointing out errors. It was pointed out that tubular
minimality of a foliation is equivalent to its transitivity. All
these suggestions led to a better exposition.

\section{The counterexample on $\mathbb T^4$}
\label{counterexample}

Here we describe the example of de la Llave of two Anosov
diffeomorphisms of\/ $\mathbb T^4$ with the same p.~d. that are only
H\"older conjugate. Understanding of the example is important for
the proof of Theorem B.

Recall that we start with an automorphism $L\colon\mathbb
T^4\to\mathbb T^4$ such that
$$L(x,y)=(Ax, By), \;\;(x,y)\in\mathbb T^2\times\mathbb T^2,$$
where $A$ and $B$ are Anosov automorphisms, $Av=\lambda v$, $A\tilde
v=\lambda^{-1}\tilde v$, $Bu=\mu u$, $B\tilde u=\mu^{-1}\tilde u$.
We assume that $\mu\ge\lambda>1$.

To simplify computations we consider a special perturbation of the
form
$$\tilde L=(Ax+\varphi(y)v, By).$$
We look for the conjugacy $h$ of the form
\begin{equation}
\label{conj} h(x, y)=(x+\psi(y)v, y).
\end{equation}

The conjugacy equation $h\circ\tilde L=L\circ h$ transforms into a
cohomological equation on $\psi$
\begin{equation}
\label{psi} \varphi(y)+\psi(By)=\lambda \psi(y).
\end{equation}
Let us solve for $\psi$ using the recurrent formula
$$\psi(y)=\lambda^{-1}\varphi(y)+\lambda^{-1}\psi(By).$$
We get a continuous solution to~(\ref{psi}),
\begin{equation}
\label{psisolution} \psi(y)=\lambda^{-1}\sum_{k\ge 0}
\lambda^{-k}\varphi(B^k y).
\end{equation}
Hence the conjugacy is indeed given by the formula~(\ref{conj}).

In the following proposition we denote by the subscript $u$ the
partial derivative in the direction of\/ $u$.

\begin{proposition}
\label{proposition1} Assume that $\mu>\lambda>1$. Then function
$\psi$ is Lipschitz in the direction of\/ $u$ if and only if
\begin{equation}
\label{condition} \sum_{k\in\mathbb Z}
\left(\frac\mu\lambda\right)^k \varphi_u(B^k y)=0,
\end{equation}
\ie the series on the left converges in the sense of distribution
convergence and the limit is equal to zero.
\end{proposition}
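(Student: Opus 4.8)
The plan is to study the distributional partial derivative $\partial_u\psi$, which exists unconditionally as a distribution on $\mathbb T^2$, and to reinterpret~\eqref{condition} as an explicit formula for it. Differentiating the cohomological equation~\eqref{psi} in the direction of $u$, using $Bu=\mu u$ (so that $\partial_u(\psi\circ B)=\mu\,(\partial_u\psi)\circ B$), gives in $\mathcal D'$
\[
\varphi_u+\mu\,(\partial_u\psi)\circ B=\lambda\,\partial_u\psi .
\]
The same term-by-term differentiation applied to the uniformly convergent series~\eqref{psisolution}, together with $\partial_u(\varphi\circ B^k)=\mu^k\,\varphi_u\circ B^k$, shows that the formal derivative series $\lambda^{-1}\sum_{k\ge0}(\mu/\lambda)^k\varphi_u(B^k\cdot)$ \emph{always} converges in $\mathcal D'$, with sum $\partial_u\psi$. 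On the other hand the complementary ``negative half''
\[
g:=-\lambda^{-1}\sum_{k\le-1}(\mu/\lambda)^k\varphi_u(B^k\cdot)=-\lambda^{-1}\sum_{j\ge1}(\lambda/\mu)^j\varphi_u(B^{-j}\cdot)
\]
converges \emph{uniformly} — here the hypothesis $\mu>\lambda$ enters through $\lambda/\mu<1$ — to a continuous function, and a direct computation shows that $g$ solves the same twisted equation $\varphi_u+\mu\,g\circ B=\lambda g$. Since the negative partial sums of the two-sided series converge uniformly to $-\lambda g$, condition~\eqref{condition} is \emph{equivalent} to the identity $\partial_u\psi=g$.

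It remains to show that $\psi$ is Lipschitz in the direction of $u$ if and only if $\partial_u\psi=g$. Suppose first that $\partial_u\psi=g$. Then the distributional $u$-derivative of the continuous function $\psi$ is the bounded continuous function $g$; slicing along lines in the direction of $u$ and using the fundamental theorem of calculus gives $\psi(y+su)-\psi(y)=\int_0^s g(y+\tau u)\,d\tau$, so $\psi$ is $C^1$ along such lines with $u$-derivative bounded by $\|g\|_\infty$ uniformly, hence Lipschitz in the direction of $u$. Conversely, suppose $\psi$ is Lipschitz in the direction of $u$; then $\partial_u\psi$ is represented by an $L^\infty$ function, so $w:=\partial_u\psi-g\in L^\infty$ and, subtracting the two twisted equations, $\mu\,w\circ B=\lambda w$, i.e.\ $w=(\lambda/\mu)\,w\circ B^{-1}$, whence $w=(\lambda/\mu)^N\,w\circ B^{-N}$ for every $N$. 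Since $w\in L^\infty$ and $B^{-N}$ preserves Lebesgue measure, $\big|\langle w\circ B^{-N},\phi\rangle\big|=\big|\int w(B^{-N}y)\phi(y)\,dy\big|\le\|w\|_\infty\|\phi\|_{L^1}$ uniformly in $N$, so $\big|\langle w,\phi\rangle\big|\le(\lambda/\mu)^N\|w\|_\infty\|\phi\|_{L^1}\to0$ for every test function $\phi$. Hence $w=0$, i.e.\ $\partial_u\psi=g$, which is~\eqref{condition}.

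The step requiring the most care is the identification $\partial_u\psi=g$ in the converse direction: one is asserting uniqueness of an $L^\infty$ solution of $\varphi_u+\mu\,w\circ B=\lambda w$, and its forward Neumann iteration diverges because $\mu/\lambda>1$. The point is precisely to iterate \emph{backward}, where the contraction factor $\lambda/\mu<1$ appears and pins down the solution among distributions of order zero. It is also worth recording, although it is not needed for this statement, that $g$ is actually H\"older continuous: the $C^\nu$-seminorm of $(\lambda/\mu)^j\varphi_u(B^{-j}\cdot)$ is $O\big((\lambda\mu^{\nu-1})^j\big)$, and $\lambda\mu^{\nu-1}<1$ once $\nu<1-\log\lambda/\log\mu$, so the series for $g$ converges in $C^\nu$ for small $\nu>0$; this is the regularity that feeds the $C^{1+\nu}$ statements elsewhere in the paper.
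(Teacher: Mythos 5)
Your proof is correct and takes essentially the same route as the paper: term-by-term differentiation of the uniformly convergent series for $\psi$ gives the unconditional distributional formula $\partial_u\psi=\lambda^{-1}\sum_{k\ge0}(\mu/\lambda)^k\varphi_u(B^k\cdot)$, and the converse exploits the contraction $\lambda/\mu<1$ by iterating backward. Your repackaging via the auxiliary continuous solution $g$ of the twisted cohomological equation, together with the $L^\infty$-uniqueness argument for $w=\partial_u\psi-g$, is a slightly cleaner organization of the same idea; it makes explicit the passage from a bounded continuous distributional $u$-derivative to $u$-Lipschitzness, and incidentally sidesteps a sign slip in the paper's display~(\ref{psiu2}), whose right-hand side should read $-\lambda^{-1}\sum_{k<0}\lambda^{-k}\mu^k\varphi_u(B^k)$ for consistency with~(\ref{psiu1}) and~(\ref{condition}).
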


\begin{proof}
First assume~(\ref{condition}). Let us consider the
series~(\ref{psisolution}) as a series of distributions that
converge to $\psi$. Then, as a distribution, $\psi_u$ is obtained by
differentiating~(\ref{psisolution}) termwise:
\begin{equation}
\label{psiu1} \psi_u=\lambda^{-1}\sum_{k\ge 0} \lambda^{-k}\mu^k
\varphi_u(B^k).
\end{equation}
Applying~(\ref{condition}), we get
$$\psi_u=\lambda^{-1}\sum_{k<0} \lambda^{-k}\mu^k \varphi_u(B^k).$$
Since $\mu>\lambda$ the above series converges and the distribution
is regular. Hence $\psi$ is differentiable in the direction of\/
$u$.

Now assume that $\psi$ is $u$-Lipschitz. By
differentiating~(\ref{psi}), we get a cohomological equation on
$\psi_u$,
$$\varphi_u(x)+\mu\psi_u(By)=\lambda\psi_u(y),$$
that is satisfied on a $B$-invariant set of full measure. We solve
it using the recurrent formula
$$\psi_u(y)=-\frac1\mu \varphi_u(B^{-1}y)+\frac\lambda\mu\psi_u(B^{-1}y).$$
Hence
\begin{equation}
\label{psiu2}
\psi_u=\lambda^{-1}\sum_{k<0}\lambda^{-k}\mu^k\varphi_u(B^k).
\end{equation}
On the other hand we know that as a distribution $\psi_u$ is given
by~(\ref{psiu1}). Combining~(\ref{psiu1}) and~(\ref{psiu2}) we get
the desired equality~(\ref{condition}).
\end{proof}

If\/ $\mu=\lambda$ then the argument above works only in one
direction. We will see that in this case $L$ and $\tilde L$ do not
provide a counterexample since the p.~d. are different.

\begin{proposition}
\label{proposition2} Assume that $\mu=\lambda$.
Then~(\ref{condition}) is a necessary assumption for $\psi$ to be
Lipschitz in the direction of\/ $u$.
\end{proposition}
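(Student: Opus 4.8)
The plan is to repeat the distributional computation from the proof of Proposition~\ref{proposition1}, keeping only the implication that does survive when $\mu=\lambda$. First I would assume that $\psi$ is Lipschitz in the direction of $u$, so that $\psi_u$ exists as an $L^\infty$ function and, by Rademacher, almost everywhere. Differentiating the cohomological equation~(\ref{psi}) in the $u$-direction gives
\begin{equation*}
\varphi_u(y)+\mu\psi_u(By)=\lambda\psi_u(y),
\end{equation*}
valid on a $B$-invariant set of full measure. When $\mu=\lambda$ this reads $\psi_u(By)-\psi_u(y)=-\lambda^{-1}\varphi_u(y)$ on that set, which is a coboundary-type equation with trivial "weights."

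Next I would extract the consequence~(\ref{condition}). On the one hand, viewing the series~(\ref{psisolution}) as a convergent series of distributions and differentiating termwise exactly as in~(\ref{psiu1}), we have $\psi_u=\lambda^{-1}\sum_{k\ge 0}(\mu/\lambda)^k\varphi_u(B^k)=\lambda^{-1}\sum_{k\ge 0}\varphi_u(B^k)$ as a distribution (the partial sums converge in $\mathcal D'$ because $\psi$ is being differentiated). On the other hand, iterating the recurrence $\psi_u(y)=-\mu^{-1}\varphi_u(B^{-1}y)+\lambda\mu^{-1}\psi_u(B^{-1}y)$ backward, which with $\mu=\lambda$ becomes $\psi_u(y)=\psi_u(B^{-1}y)-\lambda^{-1}\varphi_u(B^{-1}y)$, yields the formal expression $\psi_u=\lambda^{-1}\sum_{k<0}\varphi_u(B^k)$; this too must represent the same distribution $\psi_u$. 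Subtracting, or rather matching the two one-sided expansions against $\psi_u$, forces $\sum_{k\in\mathbb Z}(\mu/\lambda)^k\varphi_u(B^k y)=0$ in the distributional sense, which is~(\ref{condition}).

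The delicate point, and the one I expect to be the main obstacle, is the backward iteration step: when $\mu=\lambda$ the factor $\lambda/\mu=1$ is not contracting, so unlike in Proposition~\ref{proposition1} the backward series $\sum_{k<0}\varphi_u(B^k)$ need not converge a priori, and the equation $\varphi_u+\mu\psi_u(B\cdot)=\lambda\psi_u$ holds only almost everywhere, not everywhere. I would handle this by working entirely in the sense of distributions: since $\psi_u\in L^\infty$ defines a genuine distribution and the termwise-differentiated forward series~(\ref{psiu1}) converges to it in $\mathcal D'$, the partial sums of the backward expansion are forced to converge to $\psi_u$ minus a telescoping remainder, and passing to the limit identifies $\psi_u$ with $\lambda^{-1}\sum_{k<0}\varphi_u(B^k)$ as a distribution. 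Equating the two representations of the single object $\psi_u$ then gives~(\ref{condition}) as a distributional identity, which is exactly the meaning asserted in the statement. The reason the converse fails here — and why, as the text notes, $L$ and $\tilde L$ do not give a counterexample when $\mu=\lambda$ — is that regularity of the resulting series can no longer be read off from geometric decay, so condition~(\ref{condition}) does not feed back to produce a Lipschitz $\psi$; but that asymmetry is not needed for the present one-directional claim.
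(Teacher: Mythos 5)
Your overall strategy mirrors the paper's: differentiate the cohomological equation, write the forward distributional series~(\ref{2psiu1}), iterate the recurrence backward, and match the two expressions. The genuine gap is precisely at the step you flag as delicate. Iterating backward with $\mu=\lambda$ gives the finite telescoping identity
\begin{equation*}
\psi_u \;=\; -\lambda^{-1}\sum_{-N\le k<0}\varphi_u(B^k) \;+\; \psi_u\circ B^{-N},
\end{equation*}
and passing to the limit requires the remainder $\psi_u\circ B^{-N}$ to tend to $0$ in $\mathcal D'$. You assert that the partial sums ``are forced to converge'' and that passing to the limit ``identifies $\psi_u$'' with the one-sided sum, but nothing in the distributional framework forces this: $\psi_u\circ B^{-N}$ is just another $L^\infty$ function, bounded but with no a priori reason to converge weakly. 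The missing ingredient is a dynamical one. The paper supplies it explicitly: $B$ is mixing and $\psi_u$ has zero mean (being the $u$-derivative of a periodic function), so $\psi_u\circ B^{-N}\rightharpoonup 0$ in $\mathcal D'$. Without invoking this (or at least some form of ergodicity of $B$), the backward limit does not exist and the argument halts. A secondary issue is the sign: the telescope gives $\psi_u = -\lambda^{-1}\sum_{k<0}\varphi_u(B^k)$, and this minus sign is what makes the match with the forward series yield $\sum_{k\in\mathbb Z}\varphi_u(B^k)=0$ rather than a difference of half-sums; with the plus sign as written the resulting condition is not~(\ref{condition}).
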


\begin{proof}
As in the proof of Proposition 1, viewed as distribution, $\psi_u$
is given by
\begin{equation}
\label{2psiu1} \psi_u=\lambda^{-1}\sum_{k\ge 0}  \varphi_u(B^k).
\end{equation}
Assume that $\psi$ is $u$-Lipschitz. Then, analogously
to~(\ref{psiu2}), we get
\begin{equation}
\label{2psiu2} \psi_u=\lambda^{-1}\sum_{-N\le
k<0}\varphi_u(B^k)+\psi(B^N).
\end{equation}
Note that in the sense of distributions, $\psi(B^N)\to 0$ as
$N\to\infty$ since $B$ is mixing. Hence, as a distibution, $\psi_u$
is given by
\begin{equation}
\label{2psiu3} \psi_u=\lambda^{-1}\sum_{k<0}\varphi_u(B^k).
\end{equation}
Combining~(\ref{2psiu1}) and~(\ref{2psiu3}), we
get~(\ref{condition}).
\end{proof}

By rewriting condition~(\ref{condition}) in terms of Fourier
coefficients of\/ $\varphi$, one can see that it is an infinite
codimension condition.  Moreover, one can easily construct functions
that do not satisfy~(\ref{condition}); one only needs to make sure
that some Fourier coefficients of the sum~(\ref{condition}) are
nonzero. For instance, for any $\varepsilon>0$ and positive integer
$p$, the function
\begin{equation}
\label{phiexample} \varphi(y)=\varphi(y_1, y_2)=\varepsilon\sin(p
\pi y_1)
\end{equation}
works. Thus the corresponding $\tilde L$ is not $C^1$-conjugate to
$L$. Note that $\tilde L$ may be chosen arbitrarily close to $L$.

\begin{remark}\mbox{}
\begin{enumerate}
\item Perturbations of the general type~(\ref{tildeL}) can be treated analogously by decomposing $\vec\phi=\phi_1 v+\phi_2 \tilde  v$.
\item The assumption $\mu\ge\lambda>1$ is crucial in this construction.
\item By choosing appropriate $\lambda$ and $\mu$, one can get any desired regularity of the conjugacy (see~\cite{L} for details). For example, if\/ $\mu^2>\lambda>\mu>1$, the conjugacy is $C^1$ but not $C^2$.
\end{enumerate}
\end{remark}

From now on let us assume that $\mu=\lambda$. As we remarked in the
introduction, $L$ and $\tilde L$ do not provide a counterexample.
Indeed, the derivative of\/ $\tilde L$ in the basis $\{v, u, \tilde
v,\tilde u\}$ is
$$
\begin{pmatrix}
\lambda & \varphi_u & 0 & \varphi_{\tilde u} \\
0 & \lambda & 0 & 0\\
0 & 0 &  \lambda^{-1} & 0\\
0 & 0 & 0 &  \lambda^{-1}
\end{pmatrix}.
$$
Let $x$ be a periodic point, $\tilde L^p(x)=x$. Then the derivative
of the return map at $x$ is
\begin{equation}
\label{Jordan_block}
\begin{pmatrix}
\lambda^p & \lambda^{p-1}\sum_{y\in\mathcal O(x)}\varphi_u(y) & 0 & * \\
0 & \lambda^p & 0 & 0\\
0 & 0 &  \lambda^{-p} & 0\\
0 & 0 & 0 &  \lambda^{-p}
\end{pmatrix}.
\end{equation}
We see that it is likely to have a Jordan block while $L$ is
diagonalizable. Hence $L$ and $\tilde L$ have different p.~d.

It is still easy to construct a counterexample in a neighborhood
of\/ $L$. Let
$$
\hat L=(Ax+\xi(y)v, By)
$$
and let
$$
h(x, y)=(x+\psi(y)v, y)
$$
be the conjugacy between $\tilde L$ and $\hat L$
\begin{proposition}
The condition
\begin{equation*}
\sum_{k\in\mathbb Z} (\xi-\varphi)_u(B^k y)=0,
\end{equation*}
is necessary for $\phi$ to be Lipschitz in the direction of $u$.
 \label{propos3}
\end{proposition}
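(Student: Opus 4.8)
The plan is to mimic the proof of Proposition~\ref{proposition1}, but now comparing $\tilde L$ and $\hat L$ directly rather than comparing each of them to $L$. First I would write down the conjugacy equation $h\circ\tilde L=\hat L\circ h$ with $h(x,y)=(x+\psi(y)v,y)$, which, exactly as in~(\ref{psi}), becomes the cohomological equation
\begin{equation*}
\varphi(y)+\psi(By)=\lambda\psi(y)+\xi(y),
\end{equation*}
that is, $\psi(By)-\lambda\psi(y)=(\xi-\varphi)(y)$. The point is that this is formally the same equation as~(\ref{psi}) with $\varphi$ replaced by $-(\xi-\varphi)$, so the continuous solution is again given by a geometric-type series as in~(\ref{psisolution}).

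Next I would differentiate in the direction of $u$. Since $\mu=\lambda$, differentiating the cohomological equation yields
\begin{equation*}
\varphi_u(y)+\lambda\,\psi_u(By)=\lambda\,\psi_u(y)+\xi_u(y),
\end{equation*}
valid on a $B$-invariant set of full measure, i.e. $\psi_u(By)-\psi_u(y)=\lambda^{-1}(\xi-\varphi)_u(y)$. Viewing $\psi_u$ as a distribution and summing forward gives one expression; assuming $\psi$ is $u$-Lipschitz and summing backward gives another, and as in Proposition~\ref{proposition2} the boundary term $\psi(B^N)$ tends to $0$ in the sense of distributions because $B$ is mixing. Equating the two distributional expressions for $\psi_u$ forces $\sum_{k\in\mathbb Z}(\xi-\varphi)_u(B^k y)=0$, which is the claimed condition. (Note the statement in the excerpt writes ``$\phi$'' where it presumably means $\psi$; I would phrase the conclusion for the conjugacy function $\psi$.)

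I do not expect any serious obstacle here: the argument is a word-for-word adaptation of Proposition~\ref{proposition2}, the only change being the nonzero right-hand side $\xi-\varphi$ in the cohomological equation. The one place requiring a small amount of care is the justification that the relevant series converge as distributions and that termwise differentiation of~(\ref{psisolution}) is legitimate with $\varphi$ replaced by $\xi-\varphi$, together with the vanishing of the boundary term $\psi(B^N)$ under the mixing of $B$ — but all of this is already established in the proofs of Propositions~\ref{proposition1} and~\ref{proposition2} and carries over verbatim. Hence the proof is essentially a one-line reduction: apply Proposition~\ref{proposition2} to the perturbation determined by $\xi-\varphi$.
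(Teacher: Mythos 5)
Your proposal is correct and matches the paper's approach: the paper itself proves Proposition~\ref{propos3} by simply remarking that the argument is identical to that of Proposition~\ref{proposition2}, which is exactly the reduction you make explicit by observing that the conjugacy equation $h\circ\tilde L=\hat L\circ h$ is~(\ref{psi}) with $\varphi$ replaced by $\varphi-\xi$. Your derivation of the cohomological equation $\psi(By)-\lambda\psi(y)=(\xi-\varphi)(y)$, its $u$-differentiated form, and the forward/backward summation with the mixing argument killing the boundary term are all accurate, and you are also right that the statement's ``$\phi$'' is a typo for $\psi$.
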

The proof of Proposition~\ref{propos3} is exactly the same as the
one of Proposition~\ref{proposition2}.

Now take $\varphi$ that does not satisfy~(\ref{condition}) as before
and take $\xi=2\varphi$. Then obviously the condition of
Proposition~\ref{propos3} is not satisfied. Hence $h$ is not
Lipschitz. By looking at~(\ref{Jordan_block}) it is obvious that our
choice of\/ $\xi$ guarantees that the Jordan normal forms of the
derivatives of the return maps at periodic points of\/ $\tilde L$
and $\hat L$ are the same.

\begin{remark}
Due to the special choice of\/ $\xi$ it was easy to ensure that the
p.~d. of\/ $\tilde L$ and $\hat L$ are the same. We could have taken
a different and somewhat more general approach. It is possible to
show that for many choices of\/ $\varphi$, the sum that appears over
the diagonal in~(\ref{Jordan_block}) is nonzero for every periodic
point $x$. All of the corresponding diffeomorphisms will have the
same p.~d. with a Jordan block at every periodic point.
\end{remark}

\section{Proof of Theorem B}

Here we consider $L\colon\mathbb T^d\to\mathbb T^d$ with a reducible
characteristic polynomial. We show how to construct $\tilde L$  and
$\hat L$ with the same p.~d. which are not Lipschitz conjugate.

Assume that all real eigenvalues of\/ $L$ are positive. Otherwise we
may consider $L^2$. Let $M\colon\mathbb R^d\to \mathbb R^d$ be the
lift of\/ $L$ and let $\{e_1, e_2, \ldots e_d\}$ be the canonical
basis, so $\mathbb T^d=\mathbb R^d/\text{span}_\mathbb Z\{e_1,
e_2,\ldots e_d\}$.

It is well known that the characteristic polynomial of\/ $M$ factors
over $\mathbb Z$ into the product of polynomials irreducible over
$\mathbb Q$:
$$P(x)=P_1(x)P_2(x)\ldots P_r(x),\;\; r\ge 2.$$
Let $\lambda$ be the eigenvalue of\/ $M$ with the smallest absolute
value which is greater than one. Without loss of generality we
assume that $P_1(\lambda)=0$.

Let $V_i$ be the invariant subspace that corresponds to the roots
of\/ $P_i$. Then $\dim V_i=\deg P_i$ and it is easy to show that
$$V_i=\text{Ker}(P_i(M)).$$

Matrices of\/ $P_i(M)$ have integer entries. Hence there is a basis
$\{\tilde e_1,\tilde e_2,\ldots \tilde e_d\}$, $\tilde e_i\in
\text{span}_{\mathbb Z}\{e_1, e_2,\ldots e_d\}, i=1,\ldots d$, such
that the matrix of\/ $M$ in this basis has integer entries and is of
block diagonal form with blocks corresponding to the invariant
subspaces $V_i, i=1,\ldots r$.

We consider projection of\/ $M$ to $\tilde {\mathbb T}^d=\mathbb
R^d/\text{span}_{\mathbb Z}\{\tilde e_1,\tilde e_2,\ldots \tilde
e_d\}$. Denote by $N$ the induced map on $\tilde {\mathbb T}^d$. We
have the following commutative diagram, where $\pi$ is a
finite-to-one projection.

$$
\begin{CD}
\mathbb R^d @>M>> \mathbb R^d\\
@VVV @VVV\\
\tilde {\mathbb T}^d @>N>> \tilde {\mathbb T}^d\\
@V\pi VV @V\pi VV\\
\mathbb T^d @>L>> \mathbb T^d
\end{CD}
\medskip
$$
Notice that $N$ has the form $N(x,y)=(Ax, By)$, $(x, y)\in \mathbb
T^{\deg P_1}\times\mathbb T^{d-\deg P_1}$. Let $\mu$ be an
eigenvalue of\/ $B$. By construction, $\lambda$,
$|\lambda|\le|\mu|$, is an eigenvalue of\/ $A$.

With certain care, the  construction of Section~\ref{counterexample}
can be applied to $N$. We have to distinguish the following cases:
\begin{enumerate}
\item $\lambda$ and $\mu$ are real.
\item $\lambda$ is real and $\mu$ is complex.
\item $\lambda$ is complex and $\mu$ is real.
\item $\lambda$ and $\mu$ are complex.
\end{enumerate}

Assume that $|\lambda|<|\mu|$. Then we take $\hat L=L$.

In the first case the construction of Section~\ref{counterexample}
applies straightforwardly. We use a function of the
type~(\ref{phiexample}) to produce  $\tilde N$. Now we only need to
make sure that $\tilde N$ can be projected to a map $\tilde
L\colon\mathbb T^d\to\mathbb T^d$. Since $\pi$ is a finite-to-one
covering map this can be achieved by choosing suitable $p$
in~(\ref{phiexample}).

Other cases require heavier calculations but follow the same scheme
as Proposition~1. We outline the construction in case 4, which can
appear, for instance, if\/ $A$ and $B$ are hyperbolic automorphisms
of four-dimensional tori without real eigenvalues.

Let $V_A=\text{span}\{v_1, v_2\}$ be the two-dimensional
$A$-invariant subspace corresponding to $\lambda$ and
$V_B=\text{span}\{u_1, u_2\}$ be the two-dimensional $B$-invariant
subspace corresponding to $\mu$. Then $A$ acts on $V_A$ by
multiplication by $|\lambda| R_A$ and $B$ acts on $V_B$ by
multiplication by $|\mu|R_B$, where $R_A$ and $R_B$ are rotation
matrices expressed in the bases $\{v_1, v_2\}$ and $\{u_1, u_2\}$,
respectively.

We are following the construction from the previous section. Let
$$
\tilde N(x,y)=(Ax+\vec\varphi(y)\vec v,
By)\stackrel{\mathrm{def}}{=}(Ax+\varphi_1(y)v_1+\varphi_2(y)v_2,
By).
$$
Then we look for a conjugacy of the form
$$
h(x,y)=(x+\vec\psi(y)\vec v,
y)\stackrel{\mathrm{def}}{=}(x+\psi_1(y)v_1+\psi_2(y)v_2, y).
$$
The conjugacy equation $h\circ\tilde N=N\circ h$ transforms into
\begin{equation}
\label{coh_eq} \vec\varphi(y)\vec v+\vec\psi(By)\vec
v=|\lambda|R_A\vec\psi(y).
\end{equation}
Solving for $\vec\psi$ gives
$$
\vec\psi(y)=\sum_{k\ge0}|\lambda|^{-k-1}R_A^{-k-1}\vec\varphi(B^ky),
$$
which we would like to differentiate along the directions $u_1$ and
$u_2$. We use the formula
$$
\vec\varphi(By)_{\vec u}=
\begin{pmatrix}
\varphi_1(By)_{u_1} & \varphi_1(By)_{u_2} \\
\varphi_2(By)_{u_1} & \varphi_2(By)_{u_2}
\end{pmatrix}
=|\mu|
\begin{pmatrix}
(\varphi_1)_{u_1} & (\varphi_1)_{u_2}\\
(\varphi_2)_{u_1} & (\varphi_2)_{u_2}
\end{pmatrix}
(By)R_B=\vec\varphi_{\vec u}(By)R_B
$$
to get that, as a distribution,
$$
\vec\psi_{\vec u}=\sum_{k\ge
0}|\lambda|^{-k-1}|\mu|^kR_A^{-k-1}\vec\varphi_{\vec u}(B^k)R_B^k.
$$

Now we assume that $\vec\psi$ is Lipschitz and we
differentiate~(\ref{coh_eq}) along the directions $u_1$ and $u_2$:
$$
\vec\varphi_{\vec u}(y)+|\mu|\vec\psi_{\vec
u}(By)R_B=|\lambda|R_A\vec\psi_{\vec u}(y).
$$
Hence, by the recurrent formula,
$$
\vec\psi_{\vec u}=\sum_{k<
0}|\lambda|^{-k-1}|\mu|^kR_A^{-k-1}\vec\varphi_{\vec u}(B^k)R_B^k.
$$
Combining the expressions for $\vec\psi_{\vec u}$, we get
$$
\sum_{k\in\mathbb Z}|\lambda|^{-k}|\mu|^kR_A^{-k}\vec\varphi_{\vec
u}(B^k)R_B^k=0.
$$
Using Fourier decompositions, one can find functions $\vec\varphi$
that do not satisfy the condition above. One also needs to make sure
that the choice of\/ $\vec\varphi$ allows one to project $\tilde N$
down to $\tilde L$. We omit this analysis since it is routine.

This is a contradiction and therefore $\vec\psi$ (and hence $h$) is
not Lipschitz.

If\/ $|\lambda|=|\mu|$ but $\lambda\neq\mu$, then the scheme above
still works. Obviously, extra Jordan blocks do not appear in the
normal forms at periodic points of\/ $\tilde L$.

Finally, the case $\lambda=\mu$ must be treated separately. We use
the same trick as in Section~2 to find $\tilde L$ and $\hat L$ with
the same p.~d. that are only H\"older conjugate. This trick also
works well in the case of complex eigenvalues; we omit the details.

\section{On the Property $\mathcal A$}
\subsection{Transitivity versus minimality}
\label{minimality} Here we discuss Property $\mathcal A$. Let
$\mathcal F$ be a foliation of a compact manifold $M$. As usually
$\mathcal F(x)$ stands for the leaf of\/ $\mathcal F$ that contains
$x$ and $\mathcal F(x, R)$ stands for the ball of radius $R$
centered at $x$ inside of\/ $\mathcal F(x)$.

\begin{definition}
The foliation $\mathcal F$ is called \emph{minimal} if every leaf
of\/ $\mathcal F$ is dense in $M$.
\end{definition}
\begin{definition}
The foliation $\mathcal F$ is called \emph{transitive} if there
exists a leaf of\/ $\mathcal F$ that is dense in $M$.
\end{definition}
\begin{definition}
The foliation $\mathcal F$ is called \emph{tubularly minimal} if for
every $x$ and every open ball $B\ni x$,
$$
\overline{\bigcup_{y\in B} \mathcal F(y)}=M.
$$
\end{definition}
Property $\mathcal A$ simply requires the foliations $U_{l-1}^f$,
$U_{l-2}^f$,... $U_1^f$, $V_1^f$, $V_2^f$,... $V_{k-1}^f$ to be
tubularly minimal. We introduce the following related property:
\begin{propertyB}
The foliations $U_{l-1}^f$, $U_{l-2}^f$,... $U_1^f$, $V_1^f$,
$V_2^f$,... $V_{k-1}^f$ are minimal.
\end{propertyB}

\begin{proposition}
\label{tranz} The foliation $\mathcal F$ is transitive if and only
if it is tubularly minimal.\footnote{We would like to thank the
referee for pointing out this fact.}
\end{proposition}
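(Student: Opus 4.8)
The plan is to prove the two implications separately; the direction ``transitive $\Rightarrow$ tubularly minimal'' is immediate from the definitions, while ``tubularly minimal $\Rightarrow$ transitive'' is a Baire category argument. For the easy direction, suppose $\ell=\mathcal F(x_0)$ is a dense leaf, fix $x\in M$ and an open ball $B\ni x$. Since $\ell$ is dense it meets $B$, say at a point $z$; then $\ell=\mathcal F(z)\subseteq\mathcal F(B)$, so $\overline{\mathcal F(B)}\supseteq\overline{\ell}=M$. Hence $\mathcal F$ is tubularly minimal, and this uses nothing but the definitions.

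For the reverse direction, I would fix a countable base $\{U_n\}_{n\ge1}$ of the topology of $M$ and set $W_n:=\mathcal F(U_n)=\bigcup_{y\in U_n}\mathcal F(y)$. The first step is to check that each $W_n$ is open: given $w\in W_n$, join $w$ to a point of $U_n$ by a path inside the leaf $\mathcal F(w)$, cover the path by finitely many foliation charts, and propagate an open transversal neighbourhood plaque-by-plaque; this shows $W_n$ contains a neighbourhood of $w$. The second step is that $W_n$ is dense: if $B\subseteq U_n$ is any open ball then $W_n\supseteq\mathcal F(B)$, and $\overline{\mathcal F(B)}=M$ by tubular minimality. Since $M$, being a compact manifold, is a Baire space, $W:=\bigcap_{n\ge1}W_n$ is dense, in particular nonempty; any $x\in W$ satisfies $\mathcal F(x)\cap U_n\neq\emptyset$ for every $n$, and as $\{U_n\}$ is a base this means $\mathcal F(x)$ is dense, so $\mathcal F$ is transitive.

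The only point needing any care --- and thus the main (and rather mild) obstacle --- is the openness of the saturation $W_n=\mathcal F(U_n)$: one must make the chart-propagation argument precise for a foliation that is only continuous (as the intermediate foliations $U_i^f$, $V_i^f$ in general are), but since that argument uses only the local product structure of a foliated chart and not any smoothness of the holonomy, it goes through verbatim. Everything else in the proof is formal.
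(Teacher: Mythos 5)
Your proof is correct and follows the same route as the paper's: the easy implication is definitional, and the converse is the Baire category argument applied to the saturations $\mathcal F(B_n)$ of a countable base. You are a bit more careful than the paper in flagging and sketching the openness of $\mathcal F(U_n)$ (which comes from the local product structure of the foliation, not from tubular minimality itself), but the substance is identical.
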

\begin{proof}
Transitivity obviously implies tubular minimality.

Assume that $\mathcal F$ is tubularly minimal. Let $\{B_n, n\ge 1\}$
be a countable basis for the topology of\/ $M$. By the definition of
tubular minimality, the sets $\mathcal F(B_n)$ are open and dense in
$M$. Hence by the Baire Category Theorem, the set
$$
B=\bigcap_{n\ge 1}\mathcal F(B_n)
$$
is nonempty and for every $x\in B$ the leaf\/ $\mathcal F(x)$ is
dense in $M$.
\end{proof}

\begin{remark}
We define Property $\mathcal A$ in terms of tubular minimality
rather than transitivity since we need denseness of the tubes for
the proof of Theorem A.
\end{remark}

A priori, transitivity is weaker than minimality. Hence, a priori,
Property $\mathcal A$ is weaker than Property $\mathcal A'$.

If, in Theorem A, we had required $f$ to satisfy Property $\mathcal
A'$ instead of Property $\mathcal A$, then the induction procedure
that we use (the first induction step) is much simpler.  The proof
of this step, assuming only Property $\mathcal A$, requires a much
more lengthy and delicate argument. It is not clear to us what the
relationship is between Properties $\mathcal A$ and $\mathcal A'$;
they may be equivalent. Thus, we will first provide a proof of
Theorem A assuming that $f$ has Property $\mathcal A'$, then we will
present a separate proof of this first induction step (namely
Lemma~\ref{step1}) that uses only Property $\mathcal A$.

Minimality of a foliation can be characterized similarly to tubular
transitivity.
\begin{proposition}
\label{minimality_proposition} The foliation $\mathcal F$ is minimal
if and only if for every $x$ and every open ball $B\ni x$,
$$
{\bigcup_{y\in B} \mathcal F(y)}=M.
$$
\end{proposition}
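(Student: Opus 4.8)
The plan is simply to unwind the definitions; in contrast with Proposition~\ref{tranz}, no appeal to the Baire Category Theorem will be needed, precisely because the condition here involves the honest union $\bigcup_{y\in B}\mathcal F(y)$ rather than its closure. First I would record the trivial observation that the condition ``$\bigcup_{y\in B}\mathcal F(y)=M$'' does not depend on the distinguished center $x$, so the hypothesis ``for every $x$ and every open ball $B\ni x$'' is the same as ``$\mathcal F(B)=M$ for every open ball $B$''.

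For the forward implication I would assume that $\mathcal F$ is minimal and fix an open ball $B$ together with an arbitrary point $z\in M$. By minimality the leaf $\mathcal F(z)$ is dense, so it meets $B$; choosing $y\in\mathcal F(z)\cap B$ one has $\mathcal F(y)=\mathcal F(z)\ni z$, hence $z\in\bigcup_{y\in B}\mathcal F(y)$. Since $z$ was arbitrary, $\mathcal F(B)=M$.

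For the converse I would assume that $\mathcal F(B)=M$ for every open ball $B$, fix a leaf $\ell=\mathcal F(z)$, and take an arbitrary open ball $B$. Since $z\in M=\mathcal F(B)$, there is $y\in B$ with $z\in\mathcal F(y)$, so $\mathcal F(y)=\ell$ and therefore $y\in\ell\cap B\neq\emptyset$. Thus $\ell$ meets every open ball and is dense, and since $\ell$ was an arbitrary leaf, $\mathcal F$ is minimal.

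I do not expect any genuine obstacle here: the argument is a direct translation between the two formulations, parallel to the proof of Proposition~\ref{tranz} but with the closures removed. The only point worth a sentence is the remark in the first paragraph that the saturation condition $\bigcup_{y\in B}\mathcal F(y)=M$ is independent of the choice of center of $B$.
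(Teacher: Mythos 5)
Your proof is correct, and since the paper explicitly omits the argument as ``simple,'' your direct unwinding of the definitions (using that two points on the same leaf have the same leaf, in both directions) is exactly the intended one. Nothing to add.
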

The proof is simple, so we omit it. As a corollary  the foliation
$\mathcal F$ is minimal if and only if for every $x$ and every open
ball $B\ni x$, there exists a number $R$ such that
\begin{equation}
\label{finite_A} {\bigcup_{y\in B} \mathcal F(y, R)}=M.
\end{equation}
This is the property which we will actually use in the proof of the
induction step~1.

\subsection{Examples of diffeomorphisms that satisfy Property $\mathcal A$}
\begin{proposition}
\label{denseness} Assume that $L$ is irreducible. Then the
foliations $U_j^L$, $V_i^L$, $j=1\ldots l$, $i=1\ldots k$, are
minimal.
\end{proposition}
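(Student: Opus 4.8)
The plan is to reduce the statement to a classical fact about linear foliations of tori and then invoke the irreducibility of $L$. Let $M\colon\mathbb R^d\to\mathbb R^d$ be the lift of $L$. For each index $j$ the foliation $U_j^L$ is the linear foliation of $\mathbb T^d=\mathbb R^d/\mathbb Z^d$ whose leaves are the images of the affine lines parallel to the eigenline $\ell=\mathbb R w$, where $w\in\mathbb R^d$ is an eigenvector of $M$ with eigenvalue $\mu_j$; the same holds for each $V_i^L$ with the appropriate eigenvector. Because this foliation is invariant under every translation of $\mathbb T^d$, and translations are homeomorphisms carrying leaves to leaves, all leaves are dense as soon as the leaf through $0$ is dense; so it is enough to establish the latter. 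By the Kronecker description of leaf closures, the closure of the leaf through $0$ is the subtorus obtained by projecting the smallest \emph{rational} subspace $W\subseteq\mathbb R^d$ containing $\ell$ (here a subspace is called rational if it is spanned by integer vectors, equivalently $W=(W\cap\mathbb Q^d)\otimes_{\mathbb Q}\mathbb R$). Thus it suffices to prove $W=\mathbb R^d$.

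First, $W$ is $M$-invariant. Indeed $M(W)$ is again a rational subspace, since $M$ has integer entries and hence maps $\mathbb Z^d$ into itself, and $M(W)$ contains $Mw=\mu_j w$, hence contains $w$ because $\mu_j\neq0$; by the minimality of $W$ we get $W\subseteq M(W)$, and comparing dimensions gives $M(W)=W$. Now both $M$ and $W$ are defined over $\mathbb Q$, so the characteristic polynomial of the restriction $M|_W$ lies in $\mathbb Q[x]$, is monic of degree $\dim W$, and divides the characteristic polynomial $P$ of $M$ (as $W$ is $M$-invariant). Since $0\neq w\in W$ we have $\dim W\geq1$, so this divisor is nonconstant; as $P$ is irreducible over $\mathbb Z$, hence, being monic, over $\mathbb Q$, it follows that the characteristic polynomial of $M|_W$ equals $P$ and therefore $\dim W=\deg P=d$. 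Hence $W=\mathbb R^d$, the leaf of $U_j^L$ through $0$ is dense, and by homogeneity $U_j^L$ is minimal; the argument for each $V_i^L$ is identical.

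The one substantive point is the observation in the second paragraph that the smallest rational subspace containing an eigenvector is automatically $M$-invariant --- once this is in hand, irreducibility of $P$ forbids any proper rational subspace through an eigenline, which is precisely the obstruction to minimality of a linear foliation. Everything else (homogeneity of linear foliations, and the identification of a leaf closure with the subtorus of the smallest rational subspace containing the leaf) is standard, so I do not expect any real difficulty.
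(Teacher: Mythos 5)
Your proof is correct and takes essentially the same route as the paper's: identify the closure of a leaf with a subtorus coming from a rational $M$-invariant subspace, note that this subspace corresponds to a rational factor of the characteristic polynomial, and invoke irreducibility. The only real difference is that you spell out why the minimal rational subspace containing the eigenline is $M$-invariant (the paper takes this for granted, implicitly using $L$-invariance of the foliation and the fixed point $0$), which is a helpful clarification but not a change of method.
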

\begin{proof}
Denote by $\mathcal F$ one of the foliations under consideration.
Since $\mathcal F$ is a foliation by straight lines, the closure of
a leaf\/ $\mathcal F(x)$ is a subtorus of\/ $\mathbb T^d$. This
subtorus lifts to a rational invariant subspace of\/ $\mathbb R^d$.
The invariant subspace corresponds to a rational factor of the
characteristic polynomial of\/ $L$, but we assumed that it is
irreducible over $\mathbb Q$. Hence the invariant subspace is the
whole of\/ $\mathbb R^d$ and the subtorus is the whole $\mathbb
T^d$.
\end{proof}
So we can see that the conclusion of Theorem A holds at least for
$f=L$.

We will see in Section~\ref{scheme} that for any $f\in\mathcal U$,
the foliations $U_1^f$ and $V_1^f$ are minimal. Hence the conclusion
of Theorem A holds for any $f\in\mathcal U$ if\/ $\max(k,l)\le 2$.

It is easy to construct $f\neq L$ that satisfies $\mathcal A$ when
$k=3$ and $l=2$ since we only have to worry about the foliation
$V_2^f$. We let $f=s\circ L$ where $s$ is any small shift along
$V_2^f$. Clearly $V_2^f=V_2^L$ and hence $f$ satisfies $\mathcal A$.

Questions about robust minimality of the foliations $U_{l-1}^f$,
$U_{l-2}^f$,... $U_1^f$, $V_1^f$, $V_2^f$,... $V_{k-1}^f$ arise
naturally. Robust minimality of strong stable and strong unstable
foliations of partially hyperbolic systems has received some
attention in the literature due to its intimate connection with
robust transitivity; see~\cite{Ma} and the more recent
papers~\cite{BDU,PujalS}, where robust minimality of the \emph{full}
expanding foliation is established under some assumptions. We do not
have this luxury in our setting: the expanding foliations that we
are intrested in subfoliate the full unstable foliation. A
representative problem here is the following.
\begin{q} Let $L\colon\mathbb T^3\to \mathbb T^3$ be a hyperbolic
linear automorphism with real spectrum
$\lambda_1<1<\lambda_2<\lambda_3$. Consider the one-dimensional
strong unstable foliation. Is it true that this foliation is
robustly minimal? In other words, is it true that for any $f$
sufficiently $C^1$-close to $L$ the strong unstable foliation of\/
$f$ is minimal?
\end{q}

In addition to the simple examples above, in the next section we
construct a $C^1$-open set of diffeomorphisms that possess Property
$\mathcal A$. The following statement can be obtained by applying
the construction and the arguments of the next section in the
context of Question 2.

\begin{proposition}
Let $L$ be as in Question 2. Then there exists a $C^1$-open set
$\mathcal U$ $C^1$-close to $L$ such that for every $f\in\mathcal U$
the strong unstable foliation of\/ $f$ is transitive.
\end{proposition}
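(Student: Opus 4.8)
The plan is to transplant the construction and arguments of the next section to the configuration $l=1$, $k=2$ on $\mathbb T^3$, the relevant one-dimensional foliation now being the strong unstable foliation $V_2^f$ (tangent to the $\lambda_3$-eigendirection of Question~2). By Proposition~\ref{tranz}, transitivity of $V_2^f$ is equivalent to its tubular minimality, so it suffices to produce a $C^1$-open set $\mathcal U$ on which $V_2^f$ is tubularly minimal. Two facts will be at our disposal once $\mathcal U$ is small enough: the foliations $U_1^f$ and $V_1^f$ are minimal for every $f\in\mathcal U$ (Section~\ref{scheme}), and hence so is the two-dimensional unstable foliation $W_f^u$, since $V_1^f(x)\subset W_f^u(x)$ and $V_1^f(x)$ is already dense; likewise $W_f^s=U_1^f$ is minimal.

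For the construction, let $D_f^{wu}$ denote the derivative of $f$ along the weak unstable foliation $V_1^f$. Since $D_L^{wu}\equiv\lambda_2$ is constant, a $C^1$-small perturbation of $L$ produces a diffeomorphism $f_0$ with a fixed point $x_0$ such that $D_{f_0}^{wu}$ attains its global minimum over $\mathbb T^3$ only at $x_0$: concretely, one digs a shallow ``dip'' of $D_{f_0}^{wu}$ supported in a small neighborhood $N\ni x_0$, so that $D_{f_0}^{wu}(x)>D_{f_0}^{wu}(x_0)$ for $x\ne x_0$ and $D_{f_0}^{wu}\ge D_{f_0}^{wu}(x_0)+\delta$ off $N$, for some $\delta>0$. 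Let $\mathcal U$ be a small $C^1$-neighborhood of $f_0$. For $f\in\mathcal U$ the fixed point $x_0$ and all the invariant foliations persist, and $D_f^{wu}$ keeps the same quantitative shape --- its slowest values are localized near the continuation $x_0(f)$, with a definite gap away from a fixed neighborhood of $x_0(f)$. This is exactly the open condition on which the argument of the next section operates.

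It then remains to run that argument to conclude, for every $f\in\mathcal U$, that $V_2^f$ has a dense leaf --- equivalently, by Proposition~\ref{tranz}, that it is tubularly minimal. Starting from a small box $B$ around $x_0(f)$ and a weak unstable segment $\sigma\subset V_1^f(x_0(f))\cap B$ centered at $x_0(f)$ (so that the expanding segments $f^n(\sigma)$ increase to the dense leaf $V_1^f(x_0(f))$), one shows that the ``slow--fast'' mismatch forced by the localized slow expansion at $x_0(f)$ makes the $V_2^f$-saturation $V_2^f(B)$ accumulate on a leaf of $V_1^f$, and hence --- $V_1^f$ being minimal --- on all of $\mathbb T^3$; equivalently, one verifies tubular minimality directly through the covering and distortion estimates of the next section, in which the uniqueness of the minimum of $D_f^{wu}$ at the fixed point is the decisive input. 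I expect this last step to be the only genuine difficulty. A soft (Baire-category) argument gives merely that $\bigcup_n f^n\big(V_2^f(B)\big)=\mathbb T^3$, which is strictly weaker than density of $V_2^f(B)$ itself; one really needs the quantitative control of the next section, where the localized slow expansion at $x_0(f)$ is precisely what excludes a transverse Cantor-type obstruction to density. Everything else --- persistence of the invariant foliations under $C^1$-perturbation, minimality of $U_1^f$, $V_1^f$ and $W_f^u$, and the reduction via Proposition~\ref{tranz} --- is routine.
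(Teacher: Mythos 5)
Your plan correctly identifies the strategy: transplant the construction and arguments of Section~\ref{example} to the $l=1$, $k=2$ setting of Question~2, with the pair $(V_1^f,V_2^f)$ playing the role that $(V_1^f,V_2^f)$ plays on $\mathbb T^5$. The reduction via Proposition~\ref{tranz}, the ``dip'' construction producing a diffeomorphism $f_0$ whose derivative $D_{f_0}^{wu}$ attains its unique global minimum at a fixed point, and the observation that a $V_2^f$-tube which contains arbitrarily long $V_1^f$-segments must be dense (since $V_1^f$ is uniformly minimal) are all in line with the paper.

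However, the core of your sketch is structurally incomplete, and this is not a detail one can defer to ``the covering and distortion estimates of the next section.'' You establish density of the tube $V_2^f(B)$ only for the specific ball $B$ centered at (the continuation of) $x_0$. Tubular minimality requires $\overline{V_2^f(B)}=\mathbb T^3$ for \emph{every} open ball $B$, and density of a single tube does not imply transitivity. The proof of Proposition~\ref{example_prop} addresses this by producing --- via the specification property --- a dense set $\Lambda$ of periodic points that spend a prescribed large fraction of their orbit inside the small ball $B_0$ around the fixed point (this is the set defined in~(\ref{Lambda})), and then carrying out the affine-distance distortion estimate along $V_2^g$-segments chosen by the iterated ``stay outside $B_1$'' construction, for an arbitrary center $\tilde x_0\in\Lambda$. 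The same device is what makes the argument work for an arbitrary $g$ in the open set; as the paper stresses at the start of the proof of Proposition~\ref{example_prop}, one does \emph{not} know that the minimum of $D_g^{wu}$ sits at the continued fixed point, which your step claiming that $D_f^{wu}$ ``keeps the same quantitative shape'' glosses over. Without the specification-based $\Lambda$ and the accompanying choice of the nested balls $B_0\subset B_1$ with the ratio condition~(\ref{jacobian_ratio}), what you have sketched is the informal illustration preceding the proof of Proposition~\ref{example_prop}, not the proof itself, and it does not yield transitivity.
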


\section{An example of an open set of diffeomorphisms with Property $\mathcal A$}
\label{example}

Let $L\colon\mathbb T^5\to \mathbb T^5$ be a hyperbolic automorphism
as in Theorem A, $l=2, k=3$, and let $\mathcal U$ be a
$C^1$-neighborhood of\/ $L$ chosen as in Section~\ref{scheme}.

Recall that $D_f^{wu}$ stands for the derivative of\/ $f\in\mathcal
U$ along $V_1^f$. Choose $f\in\mathcal U$ in such a way that
\begin{equation}
\label{derivative_condition} \forall x\neq x_0\;\;\;
D_f^{wu}(x)>D_f^{wu}(x_0),
\end{equation}
where $x_0$ is a fixed point of\/ $f$.
\begin{proposition}
\label{example_prop} There exists a $C^1$-neighborhood
$\tilde{\mathcal U}$ of\/ $f$ such that any diffeomorphism $g\in
\tilde{\mathcal  U}$ has Property $\mathcal A$.
\end{proposition}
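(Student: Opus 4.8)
The plan is to show that the single derivative inequality~(\ref{derivative_condition}) forces the weak unstable foliation $V_1^g$ to be tubularly minimal for every $g$ in a small $C^1$-neighborhood of $f$, and that tubular minimality of the remaining intermediate foliations $U_1^g$ and $V_2^g$ comes essentially for free. Recall from Section~\ref{scheme} that for every $g\in\mathcal U$ the foliations $U_1^g$ (the strong stable foliation) and $V_2^g$ (the strong unstable foliation) are automatically minimal, hence tubularly minimal by Proposition~\ref{tranz}. Since $l=2$ and $k=3$, Property $\mathcal A$ requires exactly tubular minimality of $U_1^g$, $V_1^g$, and $V_2^g$. So the entire issue reduces to the weak unstable foliation $V_1^g$, and the task is to propagate the tubular-minimality statement from $f$ to a neighborhood using~(\ref{derivative_condition}).

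First I would analyze why~(\ref{derivative_condition}) makes the holonomy of $V_1^f$ spread around. The idea is the standard distortion-type mechanism: because $x_0$ is the point of minimal expansion along $V_1^f$, iterating forward by $f$ stretches every $V_1^f$-segment not sitting over $x_0$ faster than it stretches the segment over $x_0$; combined with the transverse contraction/expansion structure of the other subbundles, one shows that the forward images $f^n(V_1^f(B,R))$ of a tube over an open ball $B$ eventually wrap around and become $\varepsilon$-dense in $\mathbb T^5$ for every $\varepsilon$. Equivalently, using the characterization~(\ref{finite_A}) and the fact (from Section~\ref{minimality}) that it suffices to produce one leaf with dense $V_1$-saturated tube, one argues that the $V_1^f$-holonomy, driven past the bottleneck at $x_0$, cannot remain trapped in any proper subtorus — so $V_1^f$ is transitive, i.e. tubularly minimal. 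This is the part the statement in the text says is worked out "by applying the construction and the arguments of the next section," so I would carry out that construction in detail here: fix $g$ near $f$, pick an arbitrary open ball $B$, iterate the tube $V_1^g(B)$ forward, and track how the images, controlled by the derivative bound which persists under $C^1$-perturbation, become dense.

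The $C^1$-openness is then built in from the start: every quantity used — the gap $D_f^{wu}(x)-D_f^{wu}(x_0)$ on the complement of a neighborhood of $x_0$, the hyperbolicity rates of the other subbundles, the continuity of the splitting $T\mathbb T^5 = U_2^g\oplus U_1^g\oplus V_1^g\oplus V_2^g\oplus V_3^g$ and of its integral foliations — varies continuously in the $C^1$-topology on $\mathcal U$. So the density estimate obtained for $f$ survives, with slightly worse constants, for all $g$ in a sufficiently small $C^1$-neighborhood $\tilde{\mathcal U}\subset\mathcal U$ of $f$, and for each such $g$ all three relevant foliations $U_1^g$, $V_1^g$, $V_2^g$ are tubularly minimal. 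That is precisely Property $\mathcal A$ for $g$, proving the proposition.

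The main obstacle is the density argument for $V_1$ itself: $V_1$ is a one-dimensional subfoliation of the two-dimensional unstable foliation, so one cannot invoke the known robust-minimality results for the full expanding foliation (as the text explicitly notes in the discussion preceding Question~2). One must genuinely exploit the bottleneck hypothesis~(\ref{derivative_condition}) — the point being that although $V_1$-segments over generic points grow, without the inequality they might all grow at the same exponential rate as the segment over the fixed point and thus never gain the relative stretching needed to escape a subtorus. Turning "$x_0$ is the unique minimum of $D_f^{wu}$" into a quantitative statement that forward iterates of tubes equidistribute — and doing so uniformly in $g$ — is the heart of the matter; the reductions to the other two foliations and the perturbative bookkeeping are routine by comparison.
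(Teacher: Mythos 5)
Your proposal goes off the rails at the very first reduction, because of a misidentification of the foliations involved. With $l=2$ and $k=3$ on $\mathbb T^5$, Property $\mathcal A$ asks for tubular minimality of $U_1^g$, $V_1^g$, $V_2^g$. But $U_1^g$ is the \emph{weak} stable foliation (tangent to $F_1^g$, the slowest contracting direction), not the strong stable one ($U_2^g$ would be strong stable here). Likewise $V_1^g$ is the weak unstable foliation and $V_2^g$ is the \emph{intermediate} unstable foliation, not the strong one ($V_3^g$). The two foliations that come for free are precisely the weak ones $U_1^g$ and $V_1^g$: by Lemma~\ref{weak_match} the conjugacy to $L$ carries the weak flag $W_1^g$ to the linear irrational foliation $W_1^L=V_1^L$, whose leaves are dense by irreducibility (Lemma~\ref{dense_leaf}, Proposition~\ref{denseness}), so $V_1^g$ is minimal; same for $U_1^g$ on the stable side. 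The foliation you actually need to control is $V_2^g$, the intermediate one inside the unstable flag, and this is exactly the foliation for which no soft argument is available. So the assertion ``the entire issue reduces to the weak unstable foliation $V_1^g$'' inverts the roles of the easy and the hard foliations, and the rest of the proposal chases the wrong object.

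This also causes you to misread the mechanism by which the derivative condition~(\ref{derivative_condition}) is used. That condition controls $D_f^{wu}$, the derivative along $V_1^f$, but its purpose is not to make $V_1^g$ tubularly minimal (it already is). The paper's argument shows that a thin $V_2^g$-tube $\EuScript T=\bigcup_{y\in B}V_2^g(y)$ lifted to $\mathbb R^5$ must contain arbitrarily long connected segments of $V_1^g$-leaves: one compares the $V_1$-distance between the two ``boundary'' $V_2$-curves of the tube near the base ball with the corresponding distance far down the tube, pulls back by $f^{-N}$ so that the transverse $V_2$-spread becomes negligible, and then the product $\prod D_f^{wu}(f^{-i}(x))/D_f^{wu}(x_0)$ blows up because the orbit of $x$ spends a definite proportion of time outside a neighborhood of $x_0$, where $D_f^{wu}>D_f^{wu}(x_0)+\delta$ — this is the content of~(\ref{example_est}). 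Once the tube is shown to contain arbitrarily long $V_1$-segments, uniform minimality of $V_1^g$ (a soft fact) implies the tube is dense. Your picture of ``iterating the $V_1$-tube forward until it equidistributes'' isn't what happens, and doesn't address $V_2$ at all.

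Finally, the passage from $f$ to a neighbor $g$ is not just a continuity bookkeeping step. For $g\neq f$ one has no idea where the minimum of $D_g^{wu}$ is, and in particular it need not be at the fixed point. The paper's actual proof handles this by introducing the set $\Lambda$ of periodic points in~(\ref{Lambda}) whose orbits spend a controlled proportion of time inside a small ball $B_0$ around $x_0$, showing $\Lambda$ is dense via the specification property, and running the expansion estimate (with the pair of constants $m_0,m_1,M$ and the exponent $q$ from~(\ref{jacobian_ratio}) and~(\ref{q})) at base points in $\Lambda$ only. The claim that ``every quantity varies continuously so the estimate survives with slightly worse constants'' skips the genuine issue, which is precisely how to do the estimate at an arbitrary point when the minimizing point of the derivative has moved.
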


\begin{remark}
A similar example can be constructed on $\mathbb T^6$ with $l=3$,
$k=3$. We only need to do the trick described below for both the
stable and unstable manifolds of the fixed point $x_0$.
\end{remark}

Before proving the proposition let us briefly explain the idea
behind the proof. We know that $U_1^g$ and $V_1^g$ are minimal.
Hence we only need to show that the foliation $V_2^g$ is tubularly
minimal, \ie for every $x\in \mathbb T^5$  and every open ball $B\ni
x$
\begin{equation}
\label{to_prove} \overline{\bigcup_{y\in B} V_2^g(y)}=\mathbb T^5.
\end{equation}
To illustrate the idea we take $g=f$ and $x=x_0$. We work on the
universal cover $\mathbb R^5$ with lifted foliations. Let
\begin{equation}
\label{tube} \EuScript T\stackrel{\mathrm{def}}{=} \bigcup_{y\in
B}V_2^f(y)\subset\mathbb R^5,
\end{equation}
which is an open tube.

We show that $\EuScript T$ contains arbitrarily long connected
pieces of the leaves of\/ $V_1^f$ as shown on Figure~\ref{2_tube}.
It will then follow that $\EuScript T$ is dense in $\mathbb T^5$.
Indeed, the foliation $V_1^f$ is not just minimal but uniformly
minimal: for any $\varepsilon>0$ there exists $R>0$ such that
$\forall z\in\mathbb T^5$ $V_1^f(z, R)$ is $\varepsilon$-dense in
$\mathbb T^5$. This property follows from the fact that $V_1^f$ is
conjugate to the linear foliation $V_1^L$.

Pick $y_0\in B\cap V_1^f(x_0)$ close to $x_0$. Let $x\in V_2^f(x_0)$
be a point far away in the tube $\EuScript T$ and $y=V_1^f(x)\cap
V_2^f(y_0)$. To show that $\EuScript T$ contains arbitrarily long
pieces of leaves of\/ $V_1^f$ we prove that $d_1^f(x,y)$ (recall
that $d_i^f$ is the Riemannian distance along $V_i^f$) is an
unbounded function of\/ $x$.


\begin{figure}[htbp]
\begin{center}

\begin{picture}(0,0)%
\includegraphics{2_tube.pstex}%
\end{picture}%
\setlength{\unitlength}{3947sp}%
\begingroup\makeatletter\ifx\SetFigFont\undefined%
\gdef\SetFigFont#1#2#3#4#5{%
\reset@font\fontsize{#1}{#2pt}%
\fontfamily{#3}\fontseries{#4}\fontshape{#5}%
\selectfont}%
\fi\endgroup%
\begin{picture}(6175,2755)(2764,-3260)
\put(3226,-3211){\makebox(0,0)[lb]{\smash{{\SetFigFont{12}{14.4}{\rmdefault}{\mddefault}{\updefault}{\color[rgb]{0,0,0}$B$}%
}}}}
\put(2851,-2641){\makebox(0,0)[lb]{\smash{{\SetFigFont{12}{14.4}{\rmdefault}{\mddefault}{\updefault}{\color[rgb]{0,0,0}$y_0$}%
}}}}
\put(3301,-2911){\makebox(0,0)[lb]{\smash{{\SetFigFont{12}{14.4}{\rmdefault}{\mddefault}{\updefault}{\color[rgb]{0,0,0}$x_0$}%
}}}}
\put(7126,-811){\makebox(0,0)[lb]{\smash{{\SetFigFont{12}{14.4}{\rmdefault}{\mddefault}{\updefault}{\color[rgb]{0,0,0}$x$}%
}}}}
\put(6451,-1861){\makebox(0,0)[lb]{\smash{{\SetFigFont{12}{14.4}{\rmdefault}{\mddefault}{\updefault}{\color[rgb]{0,0,0}$\EuScript T$}%
}}}}
\put(3901,-661){\makebox(0,0)[lb]{\smash{{\SetFigFont{12}{14.4}{\rmdefault}{\mddefault}{\updefault}{\color[rgb]{0,0,0}$V_1^f(x)$}%
}}}}
\put(4726,-2761){\makebox(0,0)[lb]{\smash{{\SetFigFont{12}{14.4}{\rmdefault}{\mddefault}{\updefault}{\color[rgb]{0,0,0}$V_1^f(x_0)$}%
}}}}
\put(5761,-1636){\makebox(0,0)[lb]{\smash{{\SetFigFont{12}{14.4}{\rmdefault}{\mddefault}{\updefault}{\color[rgb]{0,0,0}$V_2^f(x_0)$}%
}}}}
\put(4789,-1411){\makebox(0,0)[lb]{\smash{{\SetFigFont{12}{14.4}{\rmdefault}{\mddefault}{\updefault}{\color[rgb]{0,0,0}$V_2^f(y_0)$}%
}}}}
\put(5176,-781){\makebox(0,0)[lb]{\smash{{\SetFigFont{12}{14.4}{\rmdefault}{\mddefault}{\updefault}{\color[rgb]{0,0,0}$y$}%
}}}}
\end{picture}%

\end{center}
\caption{The tube $\EuScript T$ contains arbitrarily long pieces of
leaves of\/ $V_1^f$.}\label{2_tube}
\end{figure}

We make use of the affine structure on $V_1^f$. We refer
to~\cite{GG} for the definition of affine distance-like function
$\tilde d_1$.  Recall the following crucial properties of\/ $\tilde
d_1$:
\begin{itemize}
\item[(D1)] $\tilde d_1(x,y)=d_1^f(x,y)+o(d_1^f(x,y))$,
\item[(D2)] $\tilde d_1(f(x),f(y))=D_f^{wu}(x)\tilde d_1(x,y)$,
\item[(D3)] $\forall K>0 \;\exists C>0$ such that
\begin{equation*}
\frac1C\tilde d_1(x,y)\le d_1^f(x,y)\le C\tilde d_1(x,y)
\end{equation*}
whenever $d_1(x, y)<K$.
\end{itemize}

Using Property (D3), we can see that it is enough to show that
$\tilde d_1(x,y)$ is unbounded.

Given $x$ as above, pick $N$ large enough such that the ratio
$$\tilde d_1(f^{-N}(x),f^{-N}(y))/\tilde d_1(x_0,f^{-N}(y_0))$$ is
close to $1$ as shown in Figure~\ref{3_tube_expanding}. This is
possible since $V_2^f$ contracts exponentially faster than $V_1^f$
under the action of\/ $f^{-1}$. It is not hard to see that, given a
large number $n$, we can pick $x$ (and $N$ correspondingly) far
enough from $x_0$ such that at least $n$ points from the orbit $\{x,
f^{-1}(x),\ldots f^{-N}(x)\}$ lie outside of\/ $B$. For such a point
$z=f^{-i}(x)$ that is not in $B$,
$$
D_f^{wu}(z)\ge D_f^{wu}(x_0)+\delta,
$$
where $\delta>0$ depends only on the size of\/ $B$.


\begin{figure}[htbp]
\begin{center}

\begin{picture}(0,0)%
\includegraphics{3_tube_expanding.pstex}%
\end{picture}%
\setlength{\unitlength}{3947sp}%
\begingroup\makeatletter\ifx\SetFigFont\undefined%
\gdef\SetFigFont#1#2#3#4#5{%
\reset@font\fontsize{#1}{#2pt}%
\fontfamily{#3}\fontseries{#4}\fontshape{#5}%
\selectfont}%
\fi\endgroup%
\begin{picture}(5949,4818)(1189,-4348)
\put(1585,-511){\makebox(0,0)[lb]{\smash{{\SetFigFont{12}{14.4}{\rmdefault}{\mddefault}{\updefault}{\color[rgb]{0,0,0}$y_0$}%
}}}}
\put(1261,-2536){\makebox(0,0)[lb]{\smash{{\SetFigFont{12}{14.4}{\rmdefault}{\mddefault}{\updefault}{\color[rgb]{0,0,0}$f^{-N}(y_0)$}%
}}}}
\put(2551,-2611){\makebox(0,0)[lb]{\smash{{\SetFigFont{12}{14.4}{\rmdefault}{\mddefault}{\updefault}{\color[rgb]{0,0,0}$f^{-N}(y)$}%
}}}}
\put(1765,-3886){\makebox(0,0)[lb]{\smash{{\SetFigFont{12}{14.4}{\rmdefault}{\mddefault}{\updefault}{\color[rgb]{0,0,0}$x_0$}%
}}}}
\put(2551,-4036){\makebox(0,0)[lb]{\smash{{\SetFigFont{12}{14.4}{\rmdefault}{\mddefault}{\updefault}{\color[rgb]{0,0,0}$f^{-N}(x)$}%
}}}}
\put(3751,-2311){\makebox(0,0)[lb]{\smash{{\SetFigFont{12}{14.4}{\rmdefault}{\mddefault}{\updefault}{\color[rgb]{0,0,0}$f^N$}%
}}}}
\put(1651,-1111){\makebox(0,0)[lb]{\smash{{\SetFigFont{12}{14.4}{\rmdefault}{\mddefault}{\updefault}{\color[rgb]{0,0,0}$x_0$}%
}}}}
\put(1726, 14){\makebox(0,0)[lb]{\smash{{\SetFigFont{12}{14.4}{\rmdefault}{\mddefault}{\updefault}{\color[rgb]{0,0,0}$V_1^f(x_0)$}%
}}}}
\put(5851,314){\makebox(0,0)[lb]{\smash{{\SetFigFont{12}{14.4}{\rmdefault}{\mddefault}{\updefault}{\color[rgb]{0,0,0}$V_1^f(x)$}%
}}}}
\put(3526,-1057){\makebox(0,0)[lb]{\smash{{\SetFigFont{12}{14.4}{\rmdefault}{\mddefault}{\updefault}{\color[rgb]{0,0,0}$V_2^f(x_0)$}%
}}}}
\put(3301,-361){\makebox(0,0)[lb]{\smash{{\SetFigFont{12}{14.4}{\rmdefault}{\mddefault}{\updefault}{\color[rgb]{0,0,0}$V_2^f(y_0)$}%
}}}}
\put(6076,-1486){\makebox(0,0)[lb]{\smash{{\SetFigFont{12}{14.4}{\rmdefault}{\mddefault}{\updefault}{\color[rgb]{0,0,0}$x$}%
}}}}
\put(5941,-193){\makebox(0,0)[lb]{\smash{{\SetFigFont{12}{14.4}{\rmdefault}{\mddefault}{\updefault}{\color[rgb]{0,0,0}$y$}%
}}}}
\end{picture}%

\end{center}
\caption{Illustration to the argument. Quadrilateral in the box is
much smaller then the one outside.}\label{3_tube_expanding}
\end{figure}


Using (D2), we get
\begin{align}
\label{example_est} \frac{\tilde d_1(x,y)}{\tilde d_1(x_0,
y_0)}&=\prod_{i=1}^N\frac{D_f^{wu}(f^{-i}(x))}{D_f^{wu}(x_0)}\cdot\frac{\tilde
d_1(f^{-N}(x),f^{-N}(y))}{\tilde
d_1(x_0, f^{-N}(y_0))}\\
&\ge \left(\frac{D_f^{wu}(x_0)+\delta}{D_f^{wu}(x_0)}\right)^n
\cdot\frac{\tilde d_1(f^{-N}(x),f^{-N}(y))}{\tilde d_1(x_0,
f^{-N}(y_0))},\nonumber
\end{align}
which is an arbitrarily large number. Hence $\tilde d_1(x,y)$ is
arbitrarily large and we are done.

\begin{remark}
Although Proposition~\ref{example_prop} deals with a pretty special
situation we believe that the picture on Figure~\ref{2_tube} is
generic. To be more precise, we think that for any $g\in \mathcal U$
the following alternative holds. Either $V_2^g$ is conjugate to the
linear foliation $V_2^L$ or there exists a dense set $\Lambda$ such
that for any $x\in\Lambda$ and any $B\ni x$ the tube
$$
\bigcup_{y\in B}V_2^f(y)\subset\mathbb R^5
$$
contains arbitrarily long connected pieces of the leaves of\/
$V_1^g$.
\end{remark}

\begin{proof}[Proof of Proposition~\ref{example_prop}]
The argument is more delicate than the one presented above since we
do not know that the minimum of the derivative is achieved at $x_0$.

Let $B_0$ be a small ball around $x_0$ and $B_1\supset B_0$ a bigger
ball. Condition~(\ref{derivative_condition}) guarantees that we can
choose them in such a way that
$$
m_0<D_f^{wu}(x_0)< \sup_{x\in B_0}D_f^{wu}(x)<m_1<M<\min_{x\notin
B_1}D_f^{wu}(x),
$$
with $m_0$, $m_1$ and $M$ satisfying
\begin{equation}
\label{jacobian_ratio} \frac{Mm_0^{q-1}}{m_1^q} >1,
\end{equation}
where $q$ is an integer that depends only on the size of\/ $\mathcal
U$ and the size of\/ $B_1$. After that we choose $\tilde{\mathcal
U}\subset\mathcal U$ so the fixed point of\/ $g$ (that corresponds
to $x_0$) is inside of\/ $B_0$ and the property above persists.
Namely,
\begin{equation}
\label{jacobians} \forall g\in\tilde{\mathcal U}\;\;\;
m_0<\inf_{x\in B_0}D_g^{wu}(x)<\sup_{x\in
B_0}D_g^{wu}(x)<m_1<M<\min_{x\notin B_1}D_f^{wu}(x).
\end{equation}

Note that provided that $f$ is sufficiently $C^1$-close to $L$ and
the ball $B_1$ is small enough, any piece of a leaf of\/ $V_2^g$
outside of\/ $B_1$ that starts and ends on the boundary of\/ $B_1$
cannot be homotoped to a point keeping the endpoints on the
boundary. This is a minor technical detail that makes sure that the
picture shown on Figure~\ref{4_ab}a does not occur. Thus there is a
lower bound $R$ on the lengths of pieces of leaves of\/ $V_2^g$
outside of\/ $B_1$ with endpoints on the boundary of\/ $B_1$.
Obviously, there is also an upper bound $r$ on the lengths of pieces
of leaves of\/ $V_2^g$ inside $B_1$.


\begin{figure}[htbp]
\begin{center}

\begin{picture}(0,0)%
\includegraphics{4_ab.pstex}%
\end{picture}%
\setlength{\unitlength}{3947sp}%
\begingroup\makeatletter\ifx\SetFigFont\undefined%
\gdef\SetFigFont#1#2#3#4#5{%
\reset@font\fontsize{#1}{#2pt}%
\fontfamily{#3}\fontseries{#4}\fontshape{#5}%
\selectfont}%
\fi\endgroup%
\begin{picture}(5349,3667)(-161,-2222)
\put(151,164){\makebox(0,0)[lb]{\smash{{\SetFigFont{12}{14.4}{\rmdefault}{\mddefault}{\updefault}{\color[rgb]{0,0,0}$B_1$}%
}}}}
\put(2176,1289){\makebox(0,0)[lb]{\smash{{\SetFigFont{12}{14.4}{\rmdefault}{\mddefault}{\updefault}{\color[rgb]{0,0,0}$V_2^g$}%
}}}}
\put(976,-2161){\makebox(0,0)[lb]{\smash{{\SetFigFont{12}{14.4}{\rmdefault}{\mddefault}{\updefault}{\color[rgb]{0,0,0}$(a)$}%
}}}}
\put(4706,-211){\makebox(0,0)[lb]{\smash{{\SetFigFont{12}{14.4}{\rmdefault}{\mddefault}{\updefault}{\color[rgb]{0,0,0}$I_1$}%
}}}}
\put(3767,-2161){\makebox(0,0)[lb]{\smash{{\SetFigFont{12}{14.4}{\rmdefault}{\mddefault}{\updefault}{\color[rgb]{0,0,0}$(b)$}%
}}}}
\put(3721,-1561){\makebox(0,0)[lb]{\smash{{\SetFigFont{12}{14.4}{\rmdefault}{\mddefault}{\updefault}{\color[rgb]{0,0,0}$V_2^g(\tilde x_0)$}%
}}}}
\put(3481,-649){\makebox(0,0)[lb]{\smash{{\SetFigFont{12}{14.4}{\rmdefault}{\mddefault}{\updefault}{\color[rgb]{0,0,0}$B_1$}%
}}}}
\put(3829,-961){\makebox(0,0)[lb]{\smash{{\SetFigFont{12}{14.4}{\rmdefault}{\mddefault}{\updefault}{\color[rgb]{0,0,0}$\tilde x_0$}%
}}}}
\end{picture}%

\end{center}
\caption{(a) does not occur if\/ $B$ is sufficiently small;  (b)
choice of\/ $I_1$.}\label{4_ab}
\end{figure}

It is enough to check~(\ref{to_prove}) for a dense set $\Lambda$ of
points $x\in\mathbb T^5$. We take $\Lambda$ to be a subset of the
set of periodic points of\/ $g$
\begin{equation}
\label{Lambda} \Lambda=\{p:\;\ D_{f^{n(p)}}^{wu}(p)\le m_1^{n(p)}\},
\end{equation}
where $n(p)$ stands for the period of\/ $p$. The set $\Lambda$
consists of periodic points that spend a large but fixed percentage
of time inside of\/ $B_0$. It is fairly easy to show that $\Lambda$
is dense in $\mathbb T^5$. The proof is a trivial corollary of the
specification property (e.~g. see~\cite{KH}).

So we fix $\tilde x_0\in\Lambda$, a small ball $B$ centered at
$\tilde x_0$ and $y_0\in B\cap V_1^g(x_0)$ close to $\tilde x_0$.
Our goal now is to find $x\in V_2^g(\tilde x_0)$ far in the tube
$\EuScript T$ defined by~(\ref{tube}) for which we can carry out
estimates similar to~(\ref{example_est}).

We will be working with pieces of leaves of\/ $V_2^g$. Given a piece
$I$ with endpoints $z_1$ and $z_2$ let $|I|=d_2^g(z_1, z_2)$. Let
$q$ be a number such that for any piece $I$, $|I|=R$, we have
\begin{equation}
\label{q} |g^q(I)|>2R+r.
\end{equation}
Notice that $q$ can be chosen to be independent of\/ $g$ and depends
only on $\tilde\beta_2$, $R$ and~$r$.

Pick $I_1\subset V_2^g(\tilde x_0)$, $|I_1|=R$, $I_1\cap
B_1=\emptyset$, as close to $\tilde x_0$ as possible if\/ $\tilde
x_0\in B_1$ (see Figure~\ref{4_ab}b) or passing through $\tilde x_0$
if\/ $\tilde x_0\notin B_1$. Given $I_i$, $i\ge 1$ we choose
$I_{i+1}\subset f^q(I_i)$, $|I_{i+1}|=R$, $I_{i+1}\cap
B_1=\emptyset$. Condition~(\ref{q}) guarantees that such choice is
possible.

We fix $N$ large and take $x\in I_{Nq}\subset V_2^g(\tilde x_0)$.
Let $y=V_1^g(x)\cap V_2^g(y_0)$ as before. The construction of the
sequence $\{I_i,i\ge 1\}$ ensures that the points $f^{-qi}(x)$,
$i=0,\ldots N-1$, are outside $B_1$. This fact together
with~(\ref{jacobians}) and~(\ref{Lambda}) allows to carry out the
following estimate:
\begin{align*}
\frac{\tilde d_1(x,y)}{\tilde d_1(\tilde x_0,
y_0)}&=\prod_{i=1}^{Nq}\frac{D_g^{wu}(g^{-i}(x))}{D_g^{wu}(g^{-i}(\tilde
x_0))}\cdot\frac{\tilde d_1(f^{-Nq}(x),f^{-Nq}(y))}{\tilde
d_1(\tilde x_0, f^{-Nq}(y_0))}\\
&\ge \frac{M^Nm_0^{N(q-1)}}{m_1^{Nq}} \cdot\frac{\tilde
d_1(f^{-Nq}(x),f^{-Nq}(y))}{\tilde d_1(\tilde x_0, f^{-Nq}(y_0))}.
\end{align*}
The affine-like distance ratio on the right is bounded away from $0$
independently of\/ $N$ since $f^{-Nq}(x)\in I_1$, while the
coefficient in front of it is arbitrarily large according
to~(\ref{jacobian_ratio}). Hence $\tilde d_1^g(x,y)$ is arbitrarily
large and the projection of the tube $\EuScript T$ is dense in
$\mathbb T^5$.
\end{proof}

\section{Proof of Theorem A}

For reasons explained in Section 4 we first prove Theorem A assuming
that $f$ has Property $\mathcal A'$. The only place where we use
Property $\mathcal A'$ is in the proof of Lemma~\ref{step1}. In
Section~\ref{step1_revisited} we give another proof of
Lemma~\ref{step1} that uses Property $\mathcal A$ only.
\subsection{Scheme of the proof of Theorem A}
\label{scheme}

Recall the notation from~\ref{positive} for the $L$-invariant
splitting
$$T\mathbb T^d=F_l\oplus F_{l-1}\oplus\ldots \oplus F_1\oplus E_1\oplus E_2\oplus\ldots \oplus E_k$$
along the eigendirections with corresponding eigenvalues
$$\mu_l<\mu_{l-1}<\ldots <\mu_1<1<\lambda_1<\lambda_2<\ldots <\lambda_k.$$
We choose a neighborhood $\mathcal U$ in such a way that, for any
$f$ in $\mathcal U$, the invariant splitting survives:
$$T\mathbb T^d=F_l^f\oplus F_{l-1}^f\oplus\ldots \oplus F_1^f\oplus E_1^f\oplus E_2^f\oplus\ldots \oplus E_k^f,$$
with
\begin{equation}
\label{angles} \measuredangle(F_i,F_i^f)<\frac\pi2,
\;\measuredangle(E_j,E_j^f)<\frac\pi2,\;\; i=1,\ldots l,\;
j=1,\ldots k
\end{equation}
and $f$ is partially hyperbolic in the strongest sense; that is,
there exist $C>0$ and constants
$$\alpha_l<\tilde\alpha_{l-1}<\alpha_{l-1}<\ldots<\tilde\alpha_1<\alpha_1<1<\tilde\beta_1<\beta_1<\ldots<\tilde\beta_k$$
independent of the choice of\/ $f$ in $\mathcal U$ such that for
$n>0$
\begin{multline}
\label{phd}
\|D(f^n)(x)(v)\|\le C\alpha_l^n\|v\|,\;\;\; v\in F_l^f(x),\\
\shoveleft{\frac1C\tilde\alpha_{l-1}^n\|v\|\le\|D(f^n)(x)(v)\|\le C\alpha_{l-1}^n\|v\|,\;\;\; v\in F_{l-1}^f(x),}\\
\ldots\\
\shoveleft{\frac1C\tilde\alpha_{1}^n\|v\|\le\|D(f^n)(x)(v)\|\le C\alpha_{1}^n\|v\|,\;\;\; v\in F_{1}^f(x),}\\
\shoveleft{\frac1C\tilde\beta_{1}^n\|v\|\le\|D(f^n)(x)(v)\|\le C\beta_{1}^n\|v\|,\;\;\; v\in E_{1}^f(x),}\\
\ldots\\
\shoveleft{\frac1C\tilde\beta_{k}^n\|v\|\le\|D(f^n)(x)(v)\|,\;\;\;
v\in E_k^{f}(x).\hfill}
\end{multline} Equivalently, the Mather spectrum
of\/ $f$ does not contain $1$ and has $d$ connected components.

Such a choice is possible --- see Theorem~1 in~\cite{JPL}. This
theorem also guarantees that $C^1$-size of $\mathcal U$ is rather
large.

We show that the choice of\/ $\mathcal U$ guarantees unique
integrability of intermediate distributions. From now on, for the
sake of concreteness, we work with unstable distributions and
foliations.

For a given $f\in \mathcal U$ let $E^f(i,j)=E_i^f\oplus
E_{i+1}^f\oplus\ldots\oplus E_j^f$, $i\le j$.
\begin{lemma}
\label{integr1} For any $f$ in $\mathcal U$ distribution $E^f(1,1),
E^f(1,2),\ldots E^f(1,k)$ are uniquely integrable.
\end{lemma}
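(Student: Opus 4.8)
The plan is to prove unique integrability of each $E^f(1,j)$, $j=1,\dots,k$, by induction on $j$. For $j=1$ the distribution $E^f(1,1)=E_1^f$ is one-dimensional, hence uniquely integrable to the foliation $V_1^f$ (a one-dimensional continuous distribution is always integrable, and the integral curves are unique because $E_1^f$ is the slowest-expanding direction, so the standard graph-transform/contraction argument applies). The model case is that $V_1^f$ is conjugate to the linear foliation $V_1^L$ via structural stability. So the base of the induction is essentially classical partial-hyperbolicity theory.

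For the inductive step, assume $E^f(1,j-1)$ is uniquely integrable with integral foliation $\mathcal{W}_{j-1}^f$, and consider $E^f(1,j)=E^f(1,j-1)\oplus E_j^f$. The key structural point is the spectral gap built into the choice of $\mathcal U$: on the bundle $E^f(1,j)$ the map $f$ is partially hyperbolic \emph{relative to} $E^f(1,j-1)$, in the sense that $E^f(1,j-1)$ is dominated by $E_j^f$ (the rates satisfy $\beta_{j-1}<\tilde\beta_j$ from~(\ref{phd})). This is exactly the setting of the Hirsch--Pugh--Shub theory of normally hyperbolic / dominated invariant foliations: the distribution $E^f(1,j)$ is tangent to a unique $f$-invariant foliation $\mathcal{W}_j^f$ whose leaves are $C^1$ and which subfoliates coherently (each leaf of $\mathcal{W}_{j-1}^f$ lies inside a leaf of $\mathcal{W}_j^f$). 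Concretely I would run the graph transform on the space of candidate integral manifolds: a local integral manifold of $E^f(1,j)$ through $x$ is sought as a graph over $E^f(1,j)(x)$ inside a chart, $f$ acts on such graphs by a graph transform which is a contraction in the $C^0$ (and then $C^1$) topology precisely because of the domination $\beta_{j-1}<\tilde\beta_j<\beta_j$ against the complementary directions; the unique fixed section gives the unique local integral manifold, and these piece together to a global foliation $\mathcal{W}_j^f$. Uniqueness — i.e. that \emph{every} curve (more generally every integral manifold, even merely $C^1$) tangent to $E^f(1,j)$ lies in a leaf — follows from the same contraction: any such manifold is a fixed point of the graph transform after iterating forward and using the spectral bounds to kill the error. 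One should also record that these foliations are $f^{-1}$-invariant and absolutely continuous, though for the statement only integrability and uniqueness are needed.

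The main obstacle, and the place requiring real care rather than citation, is \textbf{uniqueness} as opposed to mere existence: existence of an invariant foliation tangent to a dominated subbundle is standard HPS, but in general dominated subbundles need not be \emph{uniquely} integrable (there are classical examples). What saves us here is the specific linear structure: $f\in\mathcal U$ is $C^1$-close to $L$, so one compares with the linear foliation $E(1,j)$ of the algebraic model, for which unique integrability is obvious, and transports this via the $C^1$-proximity. Equivalently, one shows that the holonomy maps and the leaves vary continuously and that the leaf through a point is forced by the forward-iteration argument: if $\gamma$ is any $C^1$ curve tangent to $E^f(1,j)$ through $x$, then $f^n(\gamma)$ grows at rate controlled by $\beta_j$ while transverse separation from the true leaf $\mathcal{W}_j^f(x)$ is controlled below by $\tilde\beta_{j+1}>\beta_j$ (or, at the top level $j=k$, by the fact that there is no faster unstable direction and one instead uses the stable directions under $f^{-n}$) — the contradiction forces $\gamma\subset\mathcal{W}_j^f(x)$. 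I would structure the write-up so that the inductive step is one application of a graph-transform lemma with the bookkeeping of which two consecutive rates in the list $\alpha_l<\cdots<\tilde\beta_k$ provide the required gap, and then invoke this lemma $k$ times; the analogous statement for stable distributions $F^f(j,l)$ is symmetric and I would simply remark on it rather than repeat it.
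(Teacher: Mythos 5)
Your approach is inductive and rests on Hirsch--Pugh--Shub (HPS) graph-transform machinery, whereas the paper's proof is a short, non-inductive contradiction in growth rates. You correctly identify that unique integrability of dominated subbundles is the crux, but the gap in your write-up is the sentence ``existence of an invariant foliation tangent to a dominated subbundle is standard HPS.'' This is not correct. What HPS gives for free is unique integrability of the \emph{fast} unstable bundles $E^f(i,k)$ (the $W^f(i,k)$ that the paper invokes in the proof of Lemma~\ref{integr2}), because they dominate both of their complements. The bundles $E^f(1,j)$, $j<k$, are the \emph{slow}, center-like part of $E^u$: the complement splits as the stable bundle (dominated) plus the fast unstable bundle (dominating), and such center bundles need not integrate at all. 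The paper itself points this out in the remark after Theorem~A: for algebraic Anosov automorphisms of nilmanifolds the intermediate distributions can be nonintegrable, so the assertion of Lemma~\ref{integr1} is specifically a torus / $C^1$-close-to-linear phenomenon and must be proved, not cited. Your base case has the same soft spot: a continuous one-dimensional distribution is integrable by Peano, but uniqueness does not come ``for free'' from $E_1^f$ being the slowest unstable direction.

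The paper's proof supplies exactly the ingredient your sketch is missing. Assuming $E^f(1,i)$ fails to be (uniquely) integrable, one produces inside a single unstable leaf two paths between the same pair of endpoints $a_0\ne a_m$: a concatenation $\tilde\tau$ of pieces tangent to slow directions $E_p^f$, $p\le i$, and a curve $\omega$ tangent to a fast direction $E_q^f$, $q>i$. Pushing forward by $f^n$, $\operatorname{length}(f^n(\tilde\tau))\le\beta_i^n\operatorname{length}(\tilde\tau)$, while the angle condition~(\ref{angles}) makes curves tangent to $E_q^f$ quasi-isometric in $\mathbb R^d$, so $d(f^n(a_0),f^n(a_m))\ge c^{-1}\tilde\beta_{i+1}^n\operatorname{length}(\omega)$; since $\tilde\beta_{i+1}>\beta_i$ these bounds are incompatible for large $n$. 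Your ``forward-iteration'' paragraph is close in spirit but presupposes a ``true leaf'' $\mathcal W_j^f(x)$ to compare against --- which is precisely the existence you have not established --- and never isolates the quasi-isometry coming from~(\ref{angles}), without which the rate $\tilde\beta_{i+1}$ does not translate into a lower bound on ambient separation. To carry your graph-transform program through honestly, you would have to show it produces a foliation \emph{tangent to} $E^f(1,j)$ and that it absorbs arbitrary $C^1$ curves tangent to $E^f(1,j)$; the paper's one-shot growth-rate argument sidesteps both the induction and that bookkeeping.
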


Let $W_1^f\subset W_2^f\subset\ldots \subset W_k^f$ be the
corresponding flag of weak unstable foliations. The last foliation
in the flag is the unstable foliation $W^f=W_k^f$.

\begin{lemma}
\label{integr2} For any $f$ in $\mathcal U$ and $i\le j$, the
distribution $E(i, j)$ is uniquely integrable.
\end{lemma}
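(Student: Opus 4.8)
The plan is to reduce the statement to the unique integrability of a strong unstable distribution for an auxiliary system and then to quote the classical unstable manifold theorem. Fix $i\le j$. If $i=1$ the claim is Lemma~\ref{integr1}, so assume $i\ge 2$. Since $E(i,j)\subset E^f(1,j)$ and, by Lemma~\ref{integr1}, $E^f(1,j)$ is uniquely integrable to the foliation $W_j^f$, every connected integral manifold of $E(i,j)$ through a point $x$ is contained in the single leaf $W_j^f(x)$. Thus it is enough to prove that the restriction of $E(i,j)$ to each leaf of $W_j^f$ is uniquely integrable; since unique integrability is a local property, one may work in charts in which a plaque of $W_j^f$ looks like a disk.

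The map $f$ permutes the leaves of $W_j^f$, and along these leaves the tangent bundle splits as $TW_j^f=E^f(1,i-1)\oplus E(i,j)$. By~\eqref{phd} the expansion along $E^f(1,i-1)$ is at most $C\beta_{i-1}^n$, while along $E(i,j)$ it is at least $\tfrac1C\tilde\beta_i^n$; since $\beta_{i-1}<\tilde\beta_i$ and $\tilde\beta_i>1$ (by the choice of constants in Section~\ref{scheme}), the splitting $E^f(1,i-1)\oplus E(i,j)$ is dominated with $E(i,j)$ dominating, and $E(i,j)$ is uniformly expanded. In other words, intrinsically on the leaves of $W_j^f$, the bundle $E(i,j)$ is the strong unstable bundle of a partially hyperbolic system whose center bundle is $E^f(1,i-1)$ and whose stable bundle is trivial. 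The classical strong unstable manifold theorem (the Hirsch--Pugh--Shub graph transform, \cite{HPS}) then gives that $E(i,j)$ restricted to each leaf of $W_j^f$ is uniquely integrable, its integral leaves being the strong unstable manifolds of the intrinsic dynamics. Together with the previous paragraph this proves unique integrability of $E(i,j)$ on all of $\mathbb T^d$.

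For the extreme case $j=k$ the reduction to leaves is unnecessary: $E(i,k)$ is uniformly expanded on $\mathbb T^d$ and, by the same rate comparison, dominates the complementary distribution $F_l^f\oplus\cdots\oplus F_1^f\oplus E_1^f\oplus\cdots\oplus E_{i-1}^f$, so the strong unstable manifold theorem applies directly. When $j<k$ this direct route fails precisely because $E_{j+1}^f,\dots,E_k^f$ expand faster than $E_i^f$; restricting to the leaves of $W_j^f$ is exactly what removes these directions.

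The only real obstacle is bookkeeping rather than ideas: one must check that the domination and expansion rates needed to run the graph transform are the \emph{intrinsic} rates along the leaves of $W_j^f$ and are uniform in the leaf and in $f\in\mathcal U$. This follows from the ambient estimates~\eqref{phd}, the uniform transversality~\eqref{angles}, and the compactness of $\mathbb T^d$ (which give uniform bounds on the geometry of the leaves of $W_j^f$ and a uniform comparison between the ambient and intrinsic metrics at small scales); but this comparison is the step that must be carried out carefully, everything else being a citation of standard hyperbolic graph transform arguments.
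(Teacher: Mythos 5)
Your argument is conceptually sound but takes a genuinely different, and somewhat more laborious, route than the paper. The paper's proof is short: it uses the fact that $E^f(i,k)$ is a strong unstable bundle for $f$ viewed as partially hyperbolic with center $F_l^f\oplus\cdots\oplus E_{i-1}^f$, and hence uniquely integrates to a foliation $W^f(i,k)$ by the standard theory; combined with Lemma~\ref{integr1}, which gives unique integrability of $E^f(1,j)$, one simply sets $W^f(i,j)=W^f(1,j)\cap W^f(i,k)$. Unique integrability of $E^f(i,j)$ is then automatic, since any connected integral manifold of $E^f(i,j)$ is tangent to both $E^f(1,j)$ and $E^f(i,k)$ and hence sits inside a single leaf of each of those two foliations, so it sits in a leaf of the intersection. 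Your proof instead localizes to a leaf of $W_j^f$ and reconstructs the unstable foliation of the leafwise dynamics from scratch by a graph transform. That can be made to work, but the leaves of $W_j^f$ are noncompact immersed submanifolds and $f$ only permutes them, so one is really running a Hadamard--Perron argument for a sequence of maps between noncompact manifolds with uniformly bounded geometry (or, equivalently, invoking the normally-hyperbolic-lamination version of \cite{HPS}); that is precisely the step you flag as ``bookkeeping,'' and it is heavier than you suggest. What is a little frustrating is that you already state, in your remark about the extreme case $j=k$, that $E^f(i,k)$ integrates uniquely by a direct application of the strong unstable manifold theorem; that observation, plus the intersection $E^f(i,j)=E^f(1,j)\cap E^f(i,k)$, is all the paper needs, and it completely bypasses the leafwise graph transform and its attendant noncompactness issues.
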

Denote by $W^f(i,j)$, $i\le j$, the integral foliation of\/
$E^f(i,j)$. Also recall that we denote by $V_1^f, V_2^f,\ldots
V_k^f$ the integral foliations of\/ $E_1^f, E_2^f,\ldots E_k^f$
correspondingly. Notice that $V_i^f=W^f(i,i)$ and $W_i^f=W^f(1,i)$,
$i=1,\ldots k$.

Now we consider $f$ and $g$ as in Theorem A, $h\circ f=g\circ h$.
The conjugacy $h$ maps the unstable (stable) foliation of\/ $f$ into
the unstable (stable) foliation of\/ $g$. Moreover, $h$ preserves
the whole flag of weak unstable (stable) foliations.

\begin{lemma}
\label{weak_match} Fix an $i=1,\ldots k$. Then $h(W_i^f)=W_i^g$.
\end{lemma}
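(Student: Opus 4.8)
\textbf{Proof plan for Lemma~\ref{weak_match}.}
The plan is to characterize the flag of weak unstable foliations dynamically, in a way that is manifestly preserved by the topological conjugacy $h$. Recall that $W_i^f=W^f(1,i)$ is the integral foliation of $E^f(1,i)=E_1^f\oplus\ldots\oplus E_i^f$, and that by the partial hyperbolicity estimates~(\ref{phd}) the splitting is graded by rates: a point $y$ lies in $W_i^f(x)$ precisely when the backward orbit of $y$ converges to the backward orbit of $x$ at exponential rate controlled by $\beta_i$ (more precisely, $\limsup_{n\to\infty}\frac1n\log d(f^{-n}(x),f^{-n}(y))\le -\log\tilde\beta_1$ is automatic on the whole unstable leaf, but the finer statement is that $y\in W_i^f(x)$ iff this backward contraction is \emph{faster} than the weakest rate $\beta_{i+1}$ present in $E_{i+1}^f$, i.e. $\limsup_{n\to\infty}\frac1n\log d(f^{-n}(x),f^{-n}(y))<-\log\beta_{i+1}$ when $i<k$, and $y\in W^f(x)$ for $i=k$). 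This is the standard characterization of the fast-stable subfoliations of an unstable foliation; it follows from the graded Lyapunov estimates together with local uniqueness of the foliations guaranteed by Lemmas~\ref{integr1} and~\ref{integr2}.

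First I would fix $i$ and make this characterization precise: show that for $x,y$ on the same unstable leaf $W^f(x)$, one has $y\in W_i^f(x)$ if and only if the backward orbits separate at rate strictly slower than $\beta_{i+1}$ fails — that is, the backward separation rate is at most $\beta_i$ up to the usual $C$ and $\varepsilon$ slack. The ``only if'' direction is immediate from~(\ref{phd}) applied on the leaf $W_i^f$; the ``if'' direction uses that if $y\notin W_i^f(x)$ then, decomposing the holonomy/connector along the flag, $y$ has a nonzero component in the complementary faster-expanding directions $E^f(i+1,k)$, which forces the backward separation rate to be at least $\tilde\beta_{i+1}>\beta_i$ — here one invokes unique integrability (Lemma~\ref{integr2}) so that this decomposition is unambiguous and the leaves really are the integral manifolds. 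Second, I would observe that $h$ conjugates $f^{-n}$ to $g^{-n}$ and is bi-H\"older: there are constants $C'>0$ and $\theta\in(0,1]$ with $C'^{-1}d(a,b)^{1/\theta}\le d(h(a),h(b))\le C' d(a,b)^{\theta}$. Consequently exponential separation rates are only rescaled by the factor $\theta$ under $h$, not the \emph{ordering} of rates across the spectral gaps — but a crude rescaling by $\theta$ is not enough to separate $\beta_i$ from $\beta_{i+1}$ if $\theta$ is small, so one must instead argue qualitatively: $h$ maps $W^f(x)$ onto $W^g(h(x))$ (already known, this is structural stability of the unstable foliation), and within that leaf $h$ maps the sub-leaf of points with ``strictly faster than the top rate'' backward contraction onto the corresponding sub-leaf for $g$, because being ``strictly subexponentially slower than everything in $E^g(i+1,k)$'' versus ``comparable to something in $E^g(i+1,k)$'' is a H\"older-invariant dichotomy once we use that the relevant rates for $g$ also obey~(\ref{phd}) with the \emph{same} constants $\beta_i,\tilde\beta_i$ (the constants in~(\ref{phd}) were chosen uniformly over $\mathcal U$). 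I would therefore phrase the characterization not by a numerical rate but by the boundedness/summability dichotomy: $y\in W_i^f(x)$ iff $\sum_{n\ge0}\beta_{i+1}^{-n}d(f^{-n}(x),f^{-n}(y))<\infty$ is the \emph{wrong} normalization — rather, iff for every $\rho\in(\beta_i,\tilde\beta_{i+1})$ we have $d(f^{-n}(x),f^{-n}(y))=O(\rho^{-n})$. Since $h$ and $h^{-1}$ are H\"older, $d(f^{-n}(x),f^{-n}(y))\le C\rho^{-n}$ implies $d(g^{-n}(h(x)),g^{-n}(h(y)))\le C' \rho^{-\theta n}$, and then I would iterate $h$ between $f$ and $g$: applying the argument to $h^{-1}$ shows the correspondence is exact once we know $W^g$-leaves are also graded by~(\ref{phd}); the key point is that the \emph{gap} $\tilde\beta_{i+1}/\beta_i>1$ survives, and a bootstrap (first locate $h(y)$ in the unstable leaf of $h(x)$ for $g$, then use that $g$'s own flag is characterized the same way and that $h(y)$ cannot land strictly outside $W_i^g(h(x))$ because $h^{-1}$ would then pull it strictly outside $W_i^f(x)$) closes the loop.

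Concretely the steps are: (1) prove the rate characterization of $W_i^f$ inside $W^f$, using~(\ref{phd}), Lemma~\ref{integr1}, Lemma~\ref{integr2}; (2) record bi-H\"older continuity of $h$; (3) show $h(W^f)=W^g$ (structural stability, essentially already stated); (4) combine: for $y\in W_i^f(x)$, $h(y)\in W^g(h(x))$ and the backward $g$-separation of $h(y)$ from $h(x)$ is slow enough — using that $h^{-1}$ is H\"older and that if $h(y)\notin W_i^g(h(x))$ then $h(y)$ would separate from $h(x)$ under $g^{-n}$ at rate $\ge\tilde\beta_{i+1}$, hence $y=h^{-1}(h(y))$ would separate from $x$ under $f^{-n}$ at rate $\ge\tilde\beta_{i+1}^{\theta}$; choosing the spectral intervals in $\mathcal U$ fine enough relative to the (controlled) H\"older exponent $\theta$ — or, cleaner, running the dichotomy argument symmetrically in $f$ and $g$ so no quantitative comparison of $\theta$ with the gaps is needed — forces $h(y)\in W_i^g(h(x))$; (5) the reverse inclusion by applying (4) to $h^{-1}$, which conjugates $g$ to $f$ and is also bi-H\"older.

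\textbf{Main obstacle.} The delicate point is Step~(4): a priori the H\"older exponent $\theta$ of $h$ could be small, so a naive ``$h$ rescales rates by $\theta$'' estimate would blur the distinction between $W_i$ and $W_{i+1}$. The clean way around this — and the part of the argument I expect to require the most care — is to observe that membership in $W_i^f(x)$ is equivalent to a \emph{qualitative} property (``the backward separation rate is strictly slower than the slowest rate occurring in the complementary bundle $E^f(i+1,k)$'') rather than a precise numerical value, and that this qualitative property, being of the form ``$d(f^{-n}(x),f^{-n}(y))\to0$ strictly faster than $\beta_{i+1}^{-n}$'', is preserved in \emph{one} direction by any H\"older homeomorphism and, crucially, the target foliation $W_i^g$ admits the \emph{same} qualitative description with uniform constants; running this in both directions (for $h$ and $h^{-1}$) pins down $h(W_i^f)=W_i^g$ without ever needing $\theta$ close to $1$. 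One must also be slightly careful that the characterization is about separation \emph{within} a single unstable leaf, where the ambient rates are exactly the $\beta$'s and not confused with stable rates — but this is handled by first invoking $h(W^f)=W^g$.
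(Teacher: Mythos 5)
Your proposed dynamical characterization of $W_i^f$ uses the wrong direction of iteration: backward contraction rates cannot distinguish the flag of weak unstable foliations. Indeed, if $y\in W_i^f(x)$ has a nonzero component in the slowest expanding direction $E_1^f$ --- which is the generic case --- then by~(\ref{phd}) the quantity $d(f^{-n}(x),f^{-n}(y))$ decays at rate between $\beta_1^{-n}$ and $\tilde\beta_1^{-n}$, which is strictly \emph{slower} than $\beta_{i+1}^{-n}$; so your criterion $\limsup_{n\to\infty}\frac1n\log d\left(f^{-n}(x),f^{-n}(y)\right)<-\log\beta_{i+1}$ already fails for such $y$. More to the point, that same slow $E_1^f$-component dominates the backward decay whether or not $y$ also has a nonzero component in $E^f(i+1,k)$, so backward separation rates are simply blind to membership in $W_i^f$. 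The correct characterization is by \emph{forward} growth on the universal cover: $y\in W_i^f(x)$ iff $d(f^n(x),f^n(y))$ grows no faster than $\beta_i^n$, because any component transverse to $E^f(1,i)$ is stretched at rate at least $\tilde\beta_{i+1}^n>\beta_i^n$ and eventually dominates.

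Once that is fixed, your second concern --- that the a priori H\"older exponent of $h$ might be too small to respect the spectral gaps --- is a real concern, but the paper dissolves it with an observation that is much cleaner than the symmetric dichotomy you sketch. The lift of $h$ to $\mathbb R^d$ satisfies $h(x+\vec m)=h(x)+\vec m$ for $\vec m\in\mathbb Z^d$, so $h-\mathrm{id}$ is bounded, and therefore $d(h(x),h(y))\le c_1 d(x,y)$ whenever $d(x,y)\ge 1$: the conjugacy is Lipschitz at scale $\ge 1$. Since forward iterates on the cover eventually separate any two distinct points to distance $\ge 1$, one gets directly
$$
d\left(g^n(h(a)),g^n(h(b))\right)=d\left(h(f^n(a)),h(f^n(b))\right)\le c_2\, d\left(f^n(a),f^n(b)\right)\le c_2 c_3\,\beta_i^n,
$$
which places $h(a)$ in $W_i^g(h(b))$; running the identical estimate for $h^{-1}$ gives the reverse inclusion. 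There is no loss of exponent anywhere, no need to play the rates of $f$ against those of $g$ across a spectral gap, and no dependence on how small the H\"older exponent of $h$ might be. So the structure of your plan (dynamical characterization plus conjugacy-invariance) is the right one, but both key ingredients --- the direction of iteration and the regularity property of $h$ that one should exploit --- need to be replaced.
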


\begin{remark} The proof of this lemma does not use the assumption on the p.~d. We only need $f$ and $g$ to be in
$\mathcal U$.
\end{remark}

Lemmas~\ref{integr1}, \ref{integr2} and \ref{weak_match} can be
proved under a milder assumption. Instead of requiring $f$ and $g$
to be in $\mathcal U$
we can require the following.\\
{\bfseries Alternative Assumption:} $f$ and $g$ are partially
hyperbolic in the sense of (\ref{phd}) with the rate constants
satisfying
$$
\mu_l<\alpha_l<\tilde \alpha_{l-1}<\mu_{l-1}<\alpha_{l-1}<\ldots <
\tilde
\beta_{k-1}<\lambda_{k-1}<\beta_{k-1}<\tilde\beta_k<\lambda_k. \eqno
(\star)
$$
We think that $(\star)$ is actually automatic from~(\ref{phd}).

\begin{remark}
To carry out proofs of the Lemmas above under the Alternative
Assumption one needs to transfer the picture to the linear model by
the conjugacy and use the inequalities $(\star)$ for growth
arguments. This way one uses quasi-isometric foliations by straight
lines of the linear model instead of foliations of\/ $f$ which are a
priori not known to be quasi-isometric.
\end{remark}

\begin{con} Suppose that $f$ is homotopic to $L$ and partially hyperbolic in the strongest sense~(\ref{phd}).
Then the rate constants satisfy $(\star)$.
\end{con}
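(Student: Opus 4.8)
The plan is to reduce $(\star)$ to a single assertion about the derivative cocycle and to apply that assertion to $f$ and to $f^{-1}$. Write $E_j^f$ for the $j$-th unstable one-dimensional $f$-invariant subbundle; the constants $\tilde\beta_j\le\beta_j$ of~(\ref{phd}) bracket the $j$-th unstable component of the Mather spectrum of $f$. All of $(\star)$ follows, once one knows~(\ref{phd}), from the statement that for every $j$ this component contains the number $\lambda_j$ --- that is, $\tilde\beta_j\le\lambda_j\le\beta_j$, with only $\tilde\beta_k\le\lambda_k$ for the top component --- together with the stable analogue. Since $f^{-1}$ is again homotopic to $L^{-1}$ and partially hyperbolic in the sense of~(\ref{phd}), with the rate constants inverted and the stable and unstable parts interchanged, it suffices to prove: \emph{if $g$ is homotopic to a hyperbolic toral automorphism with simple real spectrum and is partially hyperbolic in the sense of~(\ref{phd}), then the $j$-th unstable component of the Mather spectrum of $g$ contains the $j$-th unstable eigenvalue of the linear model.} Applying this to $g=f$ yields the unstable inequalities of $(\star)$, and applying it to $g=f^{-1}$ yields the stable ones.

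To prove this I would use three ingredients. First, the conjugacy $h=h_f$ to the linear model, $h\circ f=L\circ h$; as the conjugacy near the identity it lifts to a homeomorphism $\tilde h$ of $\mathbb R^d$ with $\tilde h-\mathrm{id}$ bounded. Second, the flag-matching Lemma~\ref{weak_match}, applied with $g=L$ (the remark after it shows only membership in $\mathcal U$, i.e. only~(\ref{phd}), is used, and in particular not the conclusion $(\star)$ we are proving): this gives $h(W_j^f)=W_j^L$, where $W_j^L$ is the \emph{affine} foliation of $\mathbb T^d$ tangent to the span $E_L(1,j)$ of the $\lambda_1,\dots,\lambda_j$-eigendirections. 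Third, the fact that each one-dimensional foliation $V_j^f$ (tangent to $E_j^f$) is quasi-isometric on the universal cover. This last point is where care is needed --- leaves of $f$-foliations are in general not known to be quasi-isometric --- but here it is forced by~(\ref{angles}): after shrinking $\mathcal U$ one has $\sup_x\measuredangle(E_j^f(x),E_j)\le\frac\pi2-\varepsilon$, so a lifted leaf of $V_j^f$ is a $C^1$ curve whose tangent stays within angle $\frac\pi2-\varepsilon$ of the line $E_j$ and is therefore a uniform Lipschitz graph over $E_j$.

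Fix $j$ and $x\in\mathbb T^d$, and pick $y\in W_j^f(x)$ so that on the cover $\tilde h(\tilde y)-\tilde h(\tilde x)=t\,e_j$, where $e_j$ is the $\lambda_j$-eigenvector of the lift $M$ of $L$ and $t\neq0$; since $e_j\in E_L(1,j)\setminus E_L(1,j-1)$, this puts $y$ in $W_j^f(x)\setminus W_{j-1}^f(x)$. Because $\tilde h\circ\tilde f=M\circ\tilde h$ and $\tilde h-\mathrm{id}$ is bounded, $\tilde f^n(\tilde y)-\tilde f^n(\tilde x)=M^n(t e_j)+O(1)$, so $\|\tilde f^n(\tilde x)-\tilde f^n(\tilde y)\|=|t|\lambda_j^n+O(1)$; applying the same estimate to the leaf $W_{j-1}^f(\tilde f^n\tilde x)$, which lies within bounded Hausdorff distance of the affine subspace $M^n\tilde h(\tilde x)+E_L(1,j-1)$, shows that the transverse distance $\rho_n:=\mathrm{dist}\big(\tilde f^n(\tilde y),\,W_{j-1}^f(\tilde f^n\tilde x)\big)$ also grows like $\lambda_j^n$: $\rho_n=c\,|t|\lambda_j^n+O(1)$ with $c>0$. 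For the upper bound, domination gives $d_{W_j^f}(f^n x,f^n y)\le C\beta_j^n d_{W_j^f}(x,y)$ (the tangent bundle of $W_j^f$ is $E^f(1,j)$, on which $Df^n$ expands by at most $C\beta_j^n$), and since Euclidean distance is at most leaf distance, $|t|\lambda_j^n+O(1)\le C\beta_j^n d_{W_j^f}(x,y)$, forcing $\lambda_j\le\beta_j$ for $j<k$. For the lower bound, set $w_0=V_j^f(x)\cap W_{j-1}^f(y)$ inside the leaf $W_j^f(x)$ (well defined by the uniform transversality of $V_j^f$ and $W_{j-1}^f$, and $w_0\neq x$ because $y\notin W_{j-1}^f(x)$); $f$-invariance of $V_j^f$ and of $W_{j-1}^f$ gives $f^n w_0=V_j^f(f^n x)\cap W_{j-1}^f(f^n y)$, and integrating $\|Df^n(z)v\|\ge C^{-1}\tilde\beta_j^n\|v\|$, $v\in E_j^f(z)$, along the one-dimensional leaf $V_j^f(x)$ yields $d_{V_j^f}(f^n x,f^n w_0)\ge C^{-1}\tilde\beta_j^n d_{V_j^f}(x,w_0)$. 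Comparing this with $\rho_n$ via the quasi-isometry of $V_j^f$ and the bounded angle between $V_j^f$ and $W_{j-1}^f$, up to a contribution from the subleaf $W_{j-1}^f$ that grows only like $\beta_{j-1}^n$ --- negligible next to $\tilde\beta_j^n$, since $\beta_{j-1}<\tilde\beta_j$ --- one gets $\rho_n\ge C^{-1}\tilde\beta_j^n d_{V_j^f}(x,w_0)$ for large $n$, and together with $\rho_n=c|t|\lambda_j^n+O(1)$ this forces $\tilde\beta_j\le\lambda_j$.

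Since the components of the Mather spectrum are closed and pairwise disjoint, the constants $\tilde\beta_j,\beta_j$ (and their stable counterparts) may be chosen so that all the inequalities in~(\ref{phd}), hence in $(\star)$, are strict; applying the displayed statement to $f$ and to $f^{-1}$ then completes the proof. The step I expect to be the main obstacle is the lower estimate for $\rho_n$: it is exactly where one cannot argue directly with the --- possibly badly distorted --- weak unstable leaves of $f$, and must instead transfer the geometry to the linear model while keeping enough control, through the one-dimensional quasi-isometric foliations $V_j^f$, to recover the \emph{precise} rate $\tilde\beta_j$ rather than only the weaker rate $\tilde\beta_1$ that a crude pull-back argument would produce. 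This is presumably why $(\star)$ appears here only as a conjecture.
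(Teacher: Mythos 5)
The statement is a Conjecture; the paper leaves it open (offering only the informal remark that $(\star)$ ``should be automatic from~(\ref{phd})''), so there is no proof to compare against and the task reduces to auditing your attempt. Your reduction to $\tilde\beta_j\le\lambda_j\le\beta_j$, the appeal to $f^{-1}$ for the stable side, and the passage from non-strict to strict inequalities are all fine, and the upper-bound half of the argument is sound. The genuine gap is in the lower bound, which you yourself flag as the main obstacle. You make the one-dimensional leaves of\/ $V_j^f$ into uniform Lipschitz graphs over $E_j$ (hence quasi-isometric) by invoking~(\ref{angles}), ``after shrinking $\mathcal U$.'' But the conjecture does not place $f$ in $\mathcal U$: its only hypotheses are homotopy to $L$ and~(\ref{phd}). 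The angle bound~(\ref{angles}) is exactly the part of $\mathcal U$-membership that does \emph{not} follow from~(\ref{phd}), and the paper's remark immediately preceding the conjecture singles this out as the problem: the foliations of\/ $f$ ``are a priori not known to be quasi-isometric.'' Without a uniform angle bound a leaf\/ $V_j^f(x)$ can double back arbitrarily in the $E_j$-direction, arc-length can vastly exceed Euclidean displacement, and the step $\rho_n\gtrsim\tilde\beta_j^n\,d_{V_j^f}(x,w_0)$ is unsupported. A secondary issue is your parenthetical claim that the remark after Lemma~\ref{weak_match} shows ``only~(\ref{phd}) is used'': the remark actually says $f,g\in\mathcal U$ is used, which is strictly stronger than~(\ref{phd}); indeed the last step of that lemma's proof --- concluding $h(a)\in W_i^L(h(b))$ from growth $\lesssim\beta_i^n$ --- needs $\beta_i<\lambda_{i+1}$, which is part of what you are trying to prove. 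So appealing to $h(W_j^f)=W_j^L$ in order to produce your chosen $y$ is circular.

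For what it is worth, the inequalities do appear provable from~(\ref{phd}) alone by a softer argument that sidesteps quasi-isometry entirely. The growth bound $d_{W_j^f}(f^na,f^nb)\le C\beta_j^n$, together with $\tilde h-\mathrm{id}$ bounded, gives $h\bigl(W_j^f(x)\bigr)\subset W_{j^*}^L(h(x))$ with $j^*$ the largest index such that $\lambda_{j^*}\le\beta_j$; since the left-hand side is a $j$-dimensional topological manifold, invariance of domain forces $j\le j^*$, hence $\lambda_j\le\beta_j$ --- with no need to first identify which linear leaf $h$ sends $W_j^f(x)$ to. Dually, $W^f(j,k)$ is uniquely integrable under~(\ref{phd}) alone (it is the strong unstable foliation of a dominated splitting), its leaves contract under $f^{-1}$ at rate $\le\tilde\beta_j^{-n}$, so $h\bigl(W^f(j,k)(x)\bigr)\subset W^L(i_0,k)(h(x))$ with $i_0$ the smallest index such that $\lambda_{i_0}\ge\tilde\beta_j$; the same dimension count gives $i_0\le j$ and hence $\tilde\beta_j\le\lambda_j$. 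This uses only the conjugacy, the bounded displacement of its lift, the rates from~(\ref{phd}), and invariance of domain --- no angle condition, no quasi-isometry, and no Lemma~\ref{weak_match}.
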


\begin{remark}
The proofs of Lemmas ~\ref{integr1}, \ref{integr2} and
\ref{weak_match} are the only places where we really need $f$ and
$g$ to be in $\mathcal U$. So, in Theorem A, the assumption that $f,
g\in\mathcal U$ can be substituted by the alternative assumption.
\end{remark}

\begin{lemma}\label{dense_leaf}
A leaf\/ $\;W_1^f(x)$ is dense in $\mathbb T^d$.
\end{lemma}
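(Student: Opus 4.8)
The plan is to transport the statement to the linear model via the conjugacy to $L$ and to invoke the minimality of $W_1^L$ established in Proposition~\ref{denseness}. By structural stability there is a homeomorphism $h_f$, unique near the identity, with $h_f\circ f=L\circ h_f$. Since $\mathcal U$ is a $C^1$-neighborhood of $L$, we have $L\in\mathcal U$, so Lemma~\ref{weak_match} applies to the pair $(f,L)$; taking $g=L$ and $i=1$ it yields
$$
h_f(W_1^f)=W_1^L .
$$
Note that this step needs neither the coincidence of periodic data nor Property $\mathcal A$, only that $f\in\mathcal U$ (cf.\ the remark following Lemma~\ref{weak_match}).

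Next I would use arithmetic of $L$. The characteristic polynomial of $L$ is irreducible over $\mathbb Z$, so Proposition~\ref{denseness} applies and the linear foliation $W_1^L=V_1^L$ is minimal; in particular every leaf of $W_1^L$ is dense in $\mathbb T^d$. Now fix $x\in\mathbb T^d$. Since $h_f$ is a homeomorphism,
$$
W_1^f(x)=h_f^{-1}\big(W_1^L(h_f(x))\big)
$$
is the preimage under a homeomorphism of a dense subset of $\mathbb T^d$, hence dense in $\mathbb T^d$. As $x$ was arbitrary, every leaf of $W_1^f$ is dense; in fact $W_1^f$ is minimal, which is slightly more than what is asserted.

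I expect no genuine obstacle here: the lemma is a one-line consequence of Lemma~\ref{weak_match} and Proposition~\ref{denseness}. If one prefers to avoid quoting Lemma~\ref{weak_match}, one can argue directly: $\overline{W_1^f(x)}$ is closed and $f$-invariant, so $h_f\big(\overline{W_1^f(x)}\big)$ is a closed $L$-invariant set saturated by leaves of the straight-line foliation $W_1^L$, hence a finite union of affine subtori tangent to a rational $L$-invariant subspace of $\mathbb R^d$; irreducibility of $L$ forces that subspace to be all of $\mathbb R^d$, so $\overline{W_1^f(x)}=\mathbb T^d$. Either way, the only genuinely arithmetic input — irreducibility of $L$ — enters at exactly the point where it enters in Proposition~\ref{denseness}.
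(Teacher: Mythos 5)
Your proof is correct and is essentially identical to the paper's: both transport the statement to the linear model via Lemma~\ref{weak_match} (with $g=L$) and then cite the minimality of $W_1^L$ from Proposition~\ref{denseness}. Your remark that one can bypass Lemma~\ref{weak_match} by observing that $h_f(\overline{W_1^f(x)})$ is a closed $L$-invariant set saturated by $W_1^L$-leaves is a nice, equally valid alternative, but the main argument matches the paper exactly.
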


\begin{proof} By Lemma~\ref{weak_match}  the conjugacy between $L$ and $f$ takes the foliation
$W_1^L$ into the foliation $W_1^f$. According to
Proposition~\ref{denseness}, leaves of\/ $W_1^L$ are dense. Hence
leaves of\/ $W_1^f$ are dense.
\end{proof}

Next we describe the inductive procedure which leads to the
smoothness of\/ $h$ along the unstable foliation.
\\
\\
\emph{Induction base.} We know that $h$ takes $W_1^f$ into $W_1^g$.
\begin{lemma}
\label{conj_is_C1} The conjugacy $h$ is $C^{1+\nu}$-differentiable
along $W_1^f$, \ie the restrictions of\/ $h$ to the leaves of\/
$W_1^f$ are differentiable and the derivative is a $C^\nu$-function
on $\mathbb T^d$.
\end{lemma}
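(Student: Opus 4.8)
The plan is to establish $C^{1+\nu}$ regularity of the restriction of $h$ to the one-dimensional leaves of $W_1^f$ by a now-standard non-stationary linearization / Livshitz-type argument, following the scheme of \cite{LMM} and especially the three-dimensional case in \cite{GG}. Since $W_1^f$ and $W_1^g$ are one-dimensional foliations tangent to the weakest unstable direction, and $h$ maps $W_1^f$ into $W_1^g$ (the induction base), we may parametrize leaves by arclength and regard $h$ restricted to each leaf as a homeomorphism of $\mathbb R$ (on the universal cover) that intertwines the leaf dynamics of $f$ and $g$. The key quantitative input is that the periodic data of $f$ and $g$ coincide: the return multipliers of $Df$ and $Dg$ along the weak unstable direction at corresponding periodic points agree.

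First I would set up the affine (non-stationary linearization) structure along $W_1^f$ and $W_1^g$, exactly as in \cite{GG}: using the domination from \eqref{phd}, the restriction of $f$ to a leaf of $W_1^f$ is a $C^r$ expanding-type map whose derivative cocycle $D_f^{wu}$ is H\"older, so there is a H\"older family of $C^{1+\nu}$ coordinates on the leaves (an "affine structure" with associated distance-like function $\tilde d_1$, cf.\ (D1)--(D3) above) in which $f$ acts by affine maps; same for $g$. In these coordinates $h$ becomes a leafwise homeomorphism conjugating two affine actions. Second, I would show that the leafwise derivative of $h$ exists and is given by a convergent product: formally, for $x$ and a point $z$ on the same $W_1^f$-leaf, differentiating the conjugacy relation $h f^n = g^n h$ and using the chain rule gives
\begin{equation*}
\frac{d h|_{W_1^f}}{d z}(x) = \lim_{n\to\infty}\ \prod_{i=0}^{n-1}\frac{D_f^{wu}(f^i x)}{D_g^{wu}(g^i h(x))}.
\end{equation*}
The convergence and H\"older continuity of this infinite product is where the coincidence of periodic data enters: by the Livshitz theorem (the two H\"older cocycles $\log D_f^{wu}$ and $\log D_g^{wu}\circ h$ over the transitive hyperbolic dynamics have equal periodic sums, hence differ by a H\"older coboundary), so the telescoping sum $\sum_{i=0}^{n-1}(\log D_f^{wu}(f^i x) - \log D_g^{wu}(g^i h(x)))$ is uniformly bounded and, after the coboundary correction, converges at a H\"older rate. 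One then upgrades pointwise differentiability to $C^1$ along leaves with H\"older derivative on $\mathbb T^d$ by the usual argument: the candidate derivative is continuous (indeed H\"older) by the above, and a standard mean-value / distortion estimate along leaves shows it is genuinely the derivative; the density of $W_1^f$-leaves (Lemma~\ref{dense_leaf}) and continuity of the data give global H\"older control rather than merely leafwise control.

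The main obstacle — and the reason this is not purely a citation of \cite{LMM} — is verifying the Livshitz coboundary step in the present setting: one needs that $h$ is H\"older (true, by structural stability), that $\log D_f^{wu}$ is H\"older (true, since $f\in\mathrm{Diff}^r$, $r\ge 2$, and the distribution $E_1^f$ is H\"older by domination), and crucially that $f$ restricted to the relevant invariant set is topologically transitive with a dense set of periodic orbits carrying the modulus, so that equal periodic data forces the cohomology class to vanish. Here the transitivity of the flag foliations (ultimately Property $\mathcal A'$ or $\mathcal A$, via the density of $W_1^f$-leaves in Lemma~\ref{dense_leaf}) is what makes the Livshitz machinery applicable along leaves. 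Once the leafwise $C^{1+\nu}$ statement is in hand, the transverse-H\"older-continuity of the derivative follows from the H\"older dependence of the affine structures and of $h$ on the basepoint, completing the induction base. I expect the bookkeeping of the H\"older exponent $\nu$ (which degrades through the coboundary solution and the distortion estimate, and depends on the spectral gaps in \eqref{phd} and the size of $\mathcal U$) to be the only genuinely delicate computation, and I would simply track it as in \cite{GG} without optimizing.
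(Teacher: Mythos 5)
Your outline correctly identifies the main tools that the paper, by referring to Lemma~5 of~\cite{GG}, actually uses: the affine distance-like structure on the one-dimensional leaves, the Livshitz theorem applied to the H\"older cocycles $\log D_f^{wu}$ and $\log D_g^{wu}\circ h$, and the density of\/ $W_1^f$-leaves supplied by Lemma~\ref{dense_leaf}. But your displayed product formula for the leafwise derivative is not correct as a limit. By Livshitz,
$$
\prod_{i=0}^{n-1}\frac{D_f^{wu}(f^i x)}{D_g^{wu}(g^i h(x))}=\frac{P(x)}{P(f^n x)}
$$
for a H\"older transfer function $P$; this is uniformly bounded above and away from zero, but it does \emph{not} converge as $n\to\infty$ --- it oscillates along the orbit of\/ $x$. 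What the telescoping actually proves is that $h$ restricted to $W_1^f$-leaves is uniformly bi-Lipschitz (this is the Lemma~4 argument from~\cite{GG}, reproduced in this paper in the proof of Lemma~\ref{HxisC1}), not that a limiting derivative exists.

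The step you dismiss as ``a standard mean-value / distortion estimate'' is exactly where the genuine content lies and where the sketch does not close. From uniform Lipschitzness one only obtains leafwise differentiability almost everywhere with respect to Lebesgue on each leaf, and the a.e.\ derivative satisfies $Dh(f^n x)/P(f^n x)=Dh(x)/P(x)$; this fixes neither the multiplicative constant nor differentiability at all points, and density of\/ $W_1^f$-leaves alone does not bridge that gap. The proof cited (Lemma~5 of~\cite{GG}; see also Section~\ref{ind_step2} of this paper and the proof of Lemma~\ref{HxisC1}) resolves it by constructing a fully supported ergodic invariant measure of SRB / Pesin--Sinai $u$-Gibbs type with respect to which $W_1^f$ is absolutely continuous, deducing the existence of a transitive point at which $h$ is leafwise differentiable, and running an explicit approximation argument along the dense forward orbit of that point to obtain differentiability everywhere with the derivative $c\,P(\cdot)$, from which the H\"older bound follows. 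Without that measure-theoretic ingredient there is no way to guarantee that the full-measure set of leafwise differentiability meets a dense orbit, so the approximation never gets started.
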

Provided that we have Lemma~\ref{dense_leaf}, the proof of
Lemma~\ref{conj_is_C1} is the same as the proof of Lemma 5
from~\cite{GG}.
\\
\\
\emph{Induction step.} The induction procedure is based on the
following lemmas.
\begin{lemma}
\label{step1} Assume that $h$ is $C^{1+\nu}$-differentiable along
$W_{m-1}^f$ and $h(V_i^f)=h(V_i^g)$, $i=1,\ldots  \;m-1$, $1<m\le
k$. Then $h(V_{m}^f)=V_{m}^g$.
\end{lemma}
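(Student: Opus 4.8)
\textbf{Proof proposal for Lemma~\ref{step1}.}

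The plan is to show that the $h$-image of a leaf $V_m^f(x)$ coincides with the leaf $V_m^g(h(x))$, knowing already that $h$ maps the weak flag $W_{m-1}^f$ to $W_{m-1}^g$ and $W_m^f$ to $W_m^g$ (Lemma~\ref{weak_match}), and that $h$ is $C^{1+\nu}$ along $W_{m-1}^f$ with $h(V_i^f)=V_i^g$ for $i<m$. Since $h(W_m^f)=W_m^g$, the image $h(V_m^f(x))$ is at least a curve inside the leaf $W_m^g(h(x))$. The foliation $V_m^g$ is the fastest-expanding one-dimensional subfoliation inside $W_m^g$, characterized dynamically: $V_m^g(z)$ consists exactly of the points $w\in W_m^g(z)$ for which $d(g^{-n}z,g^{-n}w)$ decays at the fastest rate $\tilde\beta_m^{-n}$ (up to the usual constants), whereas points in $W_{m-1}^g(z)$ contract at rate no faster than $\alpha\beta_{m-1}^{-n}$. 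So it suffices to prove that two points $x,y$ on a common leaf $V_m^f$ have images $h(x),h(y)$ that converge to each other under $g^{-n}$ at the $V_m^g$ rate rather than the slower $W_{m-1}^g$ rate.

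First I would pass to the universal cover and use quasi-isometry of the relevant foliations of $L$ (transported to $f$ and $g$ via the topological conjugacies), so that leafwise distance and ambient distance are comparable along the leaves in question. Then the strategy is a holonomy/growth comparison. Project $h(V_m^f(x))$ into $W_m^g(h(x))$ and decompose the displacement of a nearby point $h(y)$ into its $V_m^g$-component and its $W_{m-1}^g$-component (this decomposition is well-defined because $E^g(1,m-1)$ and $E_m^g$ span $E^g(1,m)$ and the angle between them is bounded below). Iterating $g^{-n}$: the $V_m^f$-leaf through $x$ is contracted by $f^{-n}$ at rate $\tilde\beta_m^{-n}$, hence by topological conjugacy $d(g^{-n}h(x),g^{-n}h(y))$ decays at rate $\asymp\tilde\beta_m^{-n}$; but the $W_{m-1}^g$-component of $g^{-n}h(y)-g^{-n}h(x)$ decays no faster than $\alpha_1^{-n}$ on the stable side and, on the unstable side, at best $\tilde\beta_{m-1}^{-n}$, which is strictly slower than $\tilde\beta_m^{-n}$. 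Comparing, the $W_{m-1}^g$-component must vanish, so $h(y)\in V_m^g(h(x))$; this gives $h(V_m^f(x))\subseteq V_m^g(h(x))$, and the reverse inclusion follows by the same argument applied to $h^{-1}$ (here one uses that $h$ is $C^{1+\nu}$ along $W_{m-1}^f$, so $h^{-1}$ restricted to $W_{m-1}^g$-leaves is at least Lipschitz, keeping the $W_{m-1}$-component estimates controlled in both directions).

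The $C^{1+\nu}$ regularity of $h$ along $W_{m-1}^f$ enters precisely to control the $W_{m-1}$-direction component: without it, a priori only H\"older bounds are available, and the growth comparison would fail because a H\"older-$\theta$ map can inflate the slow-direction displacement by a factor $d^{-(1-\theta)}$ that could overwhelm the rate gap between $\tilde\beta_m$ and $\tilde\beta_{m-1}$. I expect this rate bookkeeping — keeping track of the exact exponents $\tilde\beta_m,\tilde\beta_{m-1},\alpha_1$ and verifying the gap survives the H\"older losses in the conjugacies and the bounded-distortion terms from passing between $d_i^f$ and $\tilde d_i$ — to be the main technical obstacle; the geometric picture itself (the image leaf cannot tilt into slower directions) is straightforward. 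A secondary point needing care is that a priori $h(V_m^f(x))$ is only a topological curve in $W_m^g(h(x))$, so one should argue first that it is an honest leaf-to-leaf map onto $V_m^g(h(x))$ (surjectivity: every point of $V_m^g(h(x))$ is the image of some point of $W_m^f(x)$ by $h(W_m^f)=W_m^g$, and that preimage lies on $V_m^f(x)$ by the same rate argument run backwards).
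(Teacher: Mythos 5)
Your argument breaks down at the step where you transfer the backward contraction rate through $h$. From ``the $V_m^f$-leaf through $x$ is contracted by $f^{-n}$ at rate $\tilde\beta_m^{-n}$'' you conclude ``by topological conjugacy $d(g^{-n}h(x),g^{-n}h(y))$ decays at rate $\asymp\tilde\beta_m^{-n}$.'' But topological conjugacy does not preserve rates: a priori $h$ is only H\"older with some exponent $\theta<1$, so the best available bound is
$$
d(g^{-n}h(x),g^{-n}h(y))=d(h(f^{-n}x),h(f^{-n}y))\le C\,d(f^{-n}x,f^{-n}y)^{\theta}\lesssim\beta_m^{-\theta n},
$$
and there is no reason to expect $\beta_m^{\theta}>\beta_{m-1}$. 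The $C^{1+\nu}$ regularity of $h$ along $W_{m-1}^f$ cannot rescue this: it controls $h$ restricted to $W_{m-1}^f$-leaves, whereas $x$ and $y$ lie on a $V_m^f$-leaf transverse to $W_{m-1}^f$, so the H\"older loss lands precisely on the quantity you need to control, not on the ``slow-direction'' bookkeeping you flag as the obstacle.

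There is also a structural reason a purely local rate comparison must fail: your proof makes no use of Property $\mathcal A$ (tubular minimality of the intermediate foliations), and the conclusion of Lemma~\ref{step1} is false without it. In the de la Llave example $f=\tilde L=(Ax+\varphi(y)v,By)$, $g=L=(Ax,By)$ on $\mathbb T^4$ with $\mu>\lambda>1$ and $m=2$, the conjugacy $h(x,y)=(x+\psi(y)v,y)$ is a leafwise isometry on $W_1^f=V_1^f$ (the $v$-direction lines) and maps $V_1^f$ to $V_1^g$, so every hypothesis you invoke holds; yet $h(V_2^f)\ne V_2^g$ --- this mismatch of strong unstable foliations is exactly what the moduli $\Phi^u$ of Theorem~D measure. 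Property $\mathcal A$ fails there because the leaves of $V_1^{\tilde L}$ sit inside the invariant $2$-tori $\mathbb T^2\times\{y\}$ and the foliation is not transitive. The paper's proof is accordingly global in nature: it studies $U=h^{-1}(V_m^g)$, proves an orientation-coherence statement (Lemma~\ref{abcd}) using the affine distance structure (D1)--(D3) together with the $C^{1+\nu}$ regularity along $W_{m-1}^f$, then uses tubular minimality of $V_i^f$ to propagate that coherence over the whole torus (Lemma~\ref{monotonicity}), and only then closes with a flatness and Lipschitz-holonomy estimate. The global propagation step is the essential missing ingredient in your sketch.
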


\begin{lemma}
\label{step2} Assume that $h(V_{m}^f)=V_{m}^g$ for some $m=1,\ldots
\; k$. Then $h$ is $C^{1+\nu}$-differentiable along $V_m^f$.
\end{lemma}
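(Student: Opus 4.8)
The plan is to imitate the one-dimensional (induction base) argument of Lemma~\ref{conj_is_C1}, which is in turn Lemma~5 of \cite{GG}, working now along the single leaf direction $V_m^f$ rather than $W_1^f$. The key hypothesis $h(V_m^f)=V_m^g$ guarantees that $h$ maps leaves of $V_m^f$ homeomorphically onto leaves of $V_m^g$; since these are one-dimensional foliations with smooth leaves and affine (distance-like) structures coming from the dynamics, $h$ restricted to a leaf is a monotone, hence a.e.\ differentiable, reparametrization. First I would set up the cohomological/functional equation satisfied by the would-be leafwise derivative. Writing $D_f^{(m)}$ for the derivative of $f$ along $V_m^f$ and $D_g^{(m)}$ for the derivative of $g$ along $V_m^g$, the conjugacy relation $h\circ f=g\circ h$ differentiated along $V_m^f$ forces the leafwise derivative $\varphi(x):=D(h|_{V_m^f})(x)$ to satisfy
\begin{equation*}
\varphi(x)\, D_f^{(m)}(x)=D_g^{(m)}(h(x))\,\varphi(f(x)),
\end{equation*}
i.e.\ $\log\varphi$ solves a cohomological equation with coboundary data built from the logarithms of the two leafwise Jacobians. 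Because $f$ and $g$ have the same periodic data, the periodic obstructions to solving this equation vanish (the return-map eigenvalue in the $E_m$ direction is an invariant of the p.~d.), so a Livšic-type argument produces a continuous candidate solution $\varphi$.

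The substantive part is to show that this candidate $\varphi$ is genuinely the leafwise derivative of $h$ and that it is H\"older-continuous \emph{on $\mathbb T^d$}, not merely along leaves. Here I would use the hypothesis that $h$ is already $C^{1+\nu}$ along $W_{m-1}^f$ together with the a priori H\"older regularity of $h$ and of the foliations. The strategy is the standard telescoping estimate: for two nearby points $x,y$ on the same leaf of $V_m^f$, compare $\frac{h(f^n x)-h(f^n y)}{f^n x-f^n y}$ (measured in the affine parameters) at a carefully chosen time $n=n(x,y)$ where $f^n x$ and $f^n y$ are a definite distance apart on the leaf; bound the distortion along the orbit segment using the H\"older continuity of $\log D_f^{(m)}$ and $\log D_g^{(m)}$, and transfer the estimate back with the bounded-distortion (D3)-type property of the affine structure on $V_m^f$. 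The partial hyperbolicity inequalities~(\ref{phd}) — specifically that the $E_m$ direction expands/contracts at a rate strictly separated from all the other rates — give the geometric control needed to make the telescoping converge and to convert the leafwise estimate into transverse H\"older continuity, exactly as in \cite{GG}. The already-established regularity along $W_{m-1}^f$ enters when one needs to move the basepoint transversally within the weak flag; since $V_m^f\subset W_m^f$ and $W_{m-1}^f$ is the complementary direction inside $W_m^f$, controlling $h$ along $W_{m-1}^f$ lets one reduce arbitrary transverse comparisons to comparisons along $V_m^f$ plus a $C^{1+\nu}$ correction.

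The main obstacle I expect is precisely the passage from leafwise regularity to \emph{transverse} H\"older continuity of $\varphi$ — ensuring the derivative is a H\"older function on $\mathbb T^d$ and not just on each leaf. This is where one must exploit the holonomy regularity of the foliations and the rate gaps in~(\ref{phd}) quantitatively, and where the precise value of $\nu$ gets pinned down (and degraded from step to step); it is also the place where the argument genuinely needs the earlier inductive hypothesis $h\in C^{1+\nu}(W_{m-1}^f)$ rather than just continuity of $h$. Since this is essentially the same analysis as in the three-dimensional case, I would carry out the leafwise cohomological solution and the telescoping estimate in detail and then invoke \cite{GG} for the transverse-regularity bookkeeping, noting only the modifications needed because $V_m^f$ sits inside the higher-dimensional weak-unstable leaf $W_m^f$ rather than being the full unstable foliation.
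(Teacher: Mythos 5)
Your cohomological-equation/Liv\v{s}ic framework captures the opening step of the argument, but it does not engage with the heart of the paper's proof, and so has a genuine gap. To verify that the Liv\v{s}ic candidate $\varphi$ is actually the leafwise derivative of $h$ (equivalently, to upgrade from Lipschitz regularity of $h|_{V_m^f}$, which only gives a.e.\ differentiability on leaves, to $C^{1+\nu}$ everywhere), the paper runs a transitive-point argument: find a point with dense forward orbit at which $h$ is leafwise differentiable and spread differentiability by the dynamics. The difficulty, which your proposal does not address, is finding such a point. The one-dimensional intermediate foliation $V_m^f$ is \emph{not} absolutely continuous with respect to the natural invariant measures (volume, SRB), so no off-the-shelf ergodic measure with full support gives a full-measure set of leafwise differentiability points. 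The paper resolves this by constructing a bespoke measure $\mu$: it applies the Pesin--Sinai $u$-Gibbs construction not to $V_m^f$ but to the $m$-dimensional weak flag foliation $W_m^f$, obtaining a measure with respect to which $W_m^f$ is absolutely continuous and $\mu$-a.e.\ point is transitive; it then passes to $V_m^f$ by noting (Ledrappier--Young) that $V_m^f = W^f(m,k)\cap W_m^f$ is a Lipschitz subfoliation inside leaves of $W_m^f$, hence also absolutely continuous with respect to $\mu$. This two-stage measure construction is precisely what makes the higher-dimensional case non-trivial compared to the three-dimensional argument in \cite{GG}, and it is entirely absent from your plan; the telescoping/bounded-distortion estimate you describe only reproves the Lipschitz bound, it does not produce the needed differentiability point.

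A secondary issue: your plan invokes the inductive hypothesis that $h$ is already $C^{1+\nu}$ along $W_{m-1}^f$, and frames the ``main obstacle'' as transverse regularity of $\varphi$ handled through that hypothesis. But Lemma~\ref{step2} is stated, and proved in the paper, using only $h(V_m^f)=V_m^g$ and the coincidence of periodic data along $V_m$; the $W_{m-1}^f$-regularity enters the inductive scheme only in Lemma~\ref{step1} and in the final application of Journ\'e's Regularity Lemma, not here. The transverse H\"older continuity of the derivative comes out of the transitive-point approximation argument and the H\"older continuity of the Liv\v{s}ic transfer function, not from any separate transverse bookkeeping.
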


We also use a regularity result due to Journ\'e.

\begin{reglemma}[\cite{J}]  Let $M_j$ be a manifold and $W_j^s$, $W_j^u$ be continuous transverse foliations
with uniformly smooth leaves, $j=1, 2$. Suppose that $h\colon M_1\to
M_2$ is a homeomorphism that maps $W_1^s$  into $W_2^s$ and $W_1^u$
into $W_2^u$. Moreover, assume that the restrictions of\/ $h$ to the
leaves of these foliations are uniformly $C^{r+\nu}, r\in \mathbb
N,\; 0<\nu<1$. Then $h$ is $C^{r+\nu}$.
\end{reglemma}

\begin{remark}
There are two more methods of proving analytical results of this
flavor besides Journ\'e's. One is due to de la Llave,  Marco,
Moriy\'on and the other one is due to Hurder and Katok
(see~\cite{KN} for a detailed discussion and proofs).  We remark
that we really need Journ\'e's result since the alternative
approaches require foliations to be absolutely continuous while we
apply the Regularity Lemma to various foliations that do not have to
be absolutely continuous.
\end{remark}

Now the inductive scheme can be described as follows. Assume that
$h$ is $C^{1+\nu}$ along $W_{m-1}^f$ for some $m\le k$ and
$h(V_i^f)=h(V_i^g)$, $i=1,\ldots  \;m-1$. By Lemma~\ref{step1} we
have that $h(V_{m}^f)=V_{m}^g$ and by Lemma~\ref{step2} $h$ is
$C^{1+\nu}$ along $V_{m}^f$. Fix a leaf\/ $W_{m}^f(x)$. Leaves of\/
$W_{m-1}^f$ and $V_{m}^f$ subfoliate $W_{m}^f(x)$ and it is clear
that the Regularity Lemma can be applied for $h\colon W_{m}^f(x)\to
W_{m}^g(h(x))$. Hence  $h$ is $C^{1+\nu}$ on every leaf of\/
$W_{m}^f$. H\"older-continuity of the derivative of\/ $h$ in the
direction transverse to $W_{m}^f$ is a direct consequence of
H\"older continuity of the derivatives along $W_{m-1}^f$ and
$V_{m}^f$. We conclude that $h$ is $C^{1+\nu}$-differentiable along
$W_{m}^f$.
\\

By induction  $h$ is $C^{1+\nu}$-differentiable along the unstable
foliation and analogously along the stable foliation. We finish the
proof of Theorem A by applying the Regularity Lemma to stable and
unstable foliations.

\subsection{Proof of the integrability lemmas}
\label{integrability}

In the proofs of Lemmas~\ref{integr1} and~\ref{integr2}, we work
with lifts of maps, distributions and foliations to $\mathbb R^d$.
We use the same notation for lifts as for the objects themselves.

\begin{proof}[Proof of Lemma~\ref{integr1}]
Fix $i < k$. We assume that the distribution $E^f(1,i)$ is not
integrable or it is integrable but not uniquely. In any case it
follows that we can find distinct points $a_0, a_1,\ldots a_m$ such
that
\begin{enumerate}
\item $\{a_1, a_2,\ldots a_m\} \subset W^f(a_0)$,
\item there are smooth curves $\tau_{j}\colon [0, 1]\to W^f(a_0)$, $j=1,\ldots m$, such that $\tau_{j}(0)=a_{j-1}$,
$\tau_{j}(1)=a_j$ and $\dot\tau_{j} \subset E_{p(j)}^{f}$, where
$p(j)\le i$,
\item there are smooth curves $\omega_{j}\colon [0, 1]\to W^f(a_0)$, $j=1,\ldots \tilde m$,
such that $\omega_1(0)=a_0$, $\omega_{\tilde m}(1)=a_m$,
$\omega_j(1)=\omega_{j+1}(0)$ for $j=1\ldots \tilde m-1$ and
$\dot\omega_{j} \subset E_{q(j)}^{f}$ with $q(j)>i$; $q(j_1)\neq
q(j_2)$ if $j_1\neq j_2$.
\end{enumerate}

Assume that $\tilde m=1$ and let $\omega=\omega_1$, $q=q(1)$. The
general case can be established in the same way by working with
$\omega=\omega_{\tilde j}$ where $\tilde j$ is chosen so that
$q(\tilde j)>q(j)$ for $j\neq\tilde j$.

Let $\tilde \tau$ be a piecewise smooth curve obtained by
concatenating $\tau_1$, $\tau_2$,... $\tau_{m-1}$ and $\tau_m$. From
the second property above and~(\ref{phd}), we get the following
rough estimate:
\begin{equation}
\label{es1} \forall n\ge 0\;\;\; \text{length}(f^n(\tilde \tau))\le
\beta_i^n  \text{length}(\tilde \tau).
\end{equation}
Similarly,
\begin{equation}
\label{es2} \forall n\ge 0\;\;\; \text{length}(f^n(\omega))\ge
\tilde \beta_{i+1}^n  \text{length}( \omega).
\end{equation}
Denote by $d(\cdot, \cdot)$ the usual distance in $\mathbb R^d$. It
follows from the assumption~(\ref{angles}) that any curve
$\gamma\colon [0,1]\to \mathbb R^d$ tangent to the distribution
$E_q^f$ is quasi-isometric:
$$
\exists c>0\;\text{such that }\;\;\; \text{length}(\gamma)\le c
\,d(\gamma(0),\gamma(1)).
$$
In particular,
\begin{equation}
\label{es3} \forall n\ge 0\;\;\; d(f^n(a_0),f^n(a_m))\ge\frac1c
\,\text{length}(f^n(\omega)).
\end{equation}
The inequalities~(\ref{es1}),~(\ref{es2}) and~(\ref{es3}) sum up to
a contradiction.
\end{proof}

\begin{proof}[Proof of Lemma~\ref{integr2}]
The theory of partial hyperbolicity guarantees that the
distributions $E^f(i,k)$, $i=1,\ldots k$, integrate uniquely to
foliations $W^f(i,k)$. Let us fix $i$ and $j$, $i<j$, and define
$W^f(i,j)=W^f(1,j)\cap W^f(i,k)$. Obviously $W^f(i,j)$ is an
integral foliation for $E^f(i,j)$. Unique integrability of\/
$E^f(i,j)$ is a direct consequence of the unique integrability of\/
$E^f(1,j)$ and $E^f(i,k)$.
\end{proof}

\subsection{Weak unstable flag is preserved: proof of Lemma~\ref{weak_match}}
\begin{proof}
We continue working on the universal cover. Pick two points $a$ and
$b$, $a \in W_i^f(b)$. Since
\begin{equation}
\label{lift} h(x+\vec m)=h(x)+\vec m,\;\;\vec m\in \mathbb Z^d
\end{equation}
we have that $d(h(x),h(y))\le c_1d(x,y)$ for any $x$ and $y$ such
that $d(x,y)\ge 1$. Hence, for any $n>0$,
\begin{equation*}
d(g^n(h(a)), g^n(h(b)))=d(h(f^n(a)), h(f^n(b)))\le c_2d(f^n(a),
f^n(b))\le c_2c_3\beta_i^n,
\end{equation*}
where $c_2$ and $c_3$ depend on $d(a, b)$. This inequality
guarantees $h(a)\in W_i^g(h(b))$. Since the choice of\/ $a$ and $b$
was arbitrary we conclude that $h(W_i^f)=W_i^g$.
\end{proof}

\subsection{Induction step 1: the conjugacy preserves the foliation $V_m$}
\label{proof_step1} We now prove Lemma~\ref{step1}, which is the key
ingredient in the proof of Theorem A. The proof is based on our idea
from~\cite{GG} but we take a rather different approach in order to
deal with the high dimension of\/ $W^f$. We provide a complete proof
almost without referring to~\cite{GG}. Nevertheless we strongly
encourage the reader to read Section 4.4 of~\cite{GG} first.

The goal is to prove that $h(V_m^f)=V_m^g$. So we will consider the
foliation $U=h^{-1}(V_m^g)$. As in the case for usual foliation,
$U(x)$ stands for the leaf of\/ $U$ passing through $x$ and $U(x,R)$
stands for the local leaf of size $R$. A priori, the leaves of\/ $U$
are just H\"older-continuous curves. Hence the local leaf needs to
be defined with a certain care. One way is to consider the lift of\/
$U$ and define the lift of the local leaf\/ $U(x, R)$ as a connected
component of\/ $x$ of the intersection $U(x)\cap B(x, R)$.

We prove Lemma~\ref{step1} by induction.
\subsubsection*{Induction base.}

We will be working on $m$-dimensional leaves of\/ $W_m^f$. By
Lemma~\ref{weak_match}, $U$ subfoliates $W_m^f$. In other words, for
any $x\in\mathbb T^d$, $U(x)\subset W_m^f(x)$.
\subsubsection*{Induction step.}

\emph{Suppose that  $U$ subfoliates $W^f(i,m)$ for some $i<m$. Then
$U$ subfoliates $W^f(i+1,m)$. }

By induction  $U$ subfoliate $W^f(m,m)=V_m^f$. Hence $U=V_m$.
\medskip

First let us prove several auxiliary claims. Note that all of the
foliations that we are dealing with are oriented and the orientation
is preserved under the dynamics. Denote by $d_j^f$ and $d_j^g$ the
induced distances on the leaves of\/ $V_j^f$ and $V_j^g$
correspondingly, $j=1,\ldots k$.

\begin{lemma}
\label{abcd} Consider a point $a\in \mathbb T^d$. Pick a point $b\in
U(a)$ and let $\tilde b=V_i^f(b)\cap W^f(i+1,m)(a)$. Assume that
$\tilde b\neq b$. Pick a point  $c\in V_i^f(a)$ and let $d=U(c)\cap
W^f(i, m-1)(b)$, $\tilde d=V_i^f(d)\cap W^f(i+1, m)(c)$. Then
$\tilde d\neq d$ and the orientations  of the pairs $(b, \tilde b)$
and $(d, \tilde d)$ in $V_i^f$ are the same.
\end{lemma}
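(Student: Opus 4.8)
The plan is to set up a closed "quadrilateral" of leaf-segments and push it forward under $f^{-n}$, exploiting the different contraction rates in the partially hyperbolic splitting. The four points $a, b, c, d$ are connected as follows: $b\in U(a)$, $c\in V_i^f(a)$, $d = U(c)\cap W^f(i,m-1)(b)$. By the inductive hypothesis of Lemma~\ref{step1} (that $U$ subfoliates $W^f(i,m)$), the leaf $U(a)$ lies in $W^f(i,m)(a)$, and similarly $U(c)\subset W^f(i,m)(c)$; moreover $h(U)=V_m^g$. Since $h$ is already known to be $C^{1+\nu}$ along $W_{m-1}^f\supset V_i^f$ (hypothesis of Lemma~\ref{step1}), and $h$ maps $V_i^f$ into $V_i^g$ by the induction base combined with Lemma~\ref{weak_match} applied in the appropriate flag — actually here we need that $h(V_i^f)=V_i^g$, which is part of the hypothesis $h(V_i^f)=h(V_i^g)$ for $i\le m-1$. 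So the image configuration under $h$ consists of $h(a), h(b)\in V_m^g(h(a))$, $h(c)\in V_i^g(h(a))$, and $h(d)$ determined analogously. First I would observe that $\tilde b\neq b$ is equivalent, after applying $h$, to the statement that $h(b)$ and $V_i^g$-holonomy of $h(a)$ differ, i.e. the $W^f(i+1,m)$-component of the "gap" between $b$ and $U(a)$ is nonzero; this is a hypothesis.

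Next I would transport everything to the universal cover and apply $f^{-n}$. Under $f^{-n}$, directions in $E^f(i+1,m)$ contract at rate roughly $\tilde\beta_{i+1}^{-n}$, while the $V_i^f$-direction $E_i^f$ contracts at the faster rate $\beta_i^{-n}$ (note $\beta_i<\tilde\beta_{i+1}$). So the $V_i^f$-legs of the quadrilateral — the segments $[a,c]$ and $[b,d]$ projected to $V_i^f$ — shrink much faster than the $W^f(i+1,m)$-separation $[b,\tilde b]$. Meanwhile $U$-legs, being inside $W^f(i,m)$, are controlled by $\beta_i$ from below (the weakest expansion among $E_i^f,\dots,E_m^f$ going forward is $\tilde\beta_i$, so backward they are dominated by $\beta_i^{-n}$ as well — actually $U$-legs sit in $W^f(i,m)$ and under $f^{-n}$ their length is $\le$ const$\cdot\beta_i^{-n}$ times the original, using the top rate $\beta_i$ of that sub-bundle... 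I need to be careful: $W^f(i,m)$ contains $E_i^f$ which is the slowest-expanding, so forward the $U$-leaf can expand as slowly as $\tilde\beta_i^n$; backward it contracts at most like $\tilde\beta_i^{-n}$, i.e. slower than the $V_i^f$-legs). The key point: in the limit the rescaled quadrilateral degenerates so that the $V_i^f$-legs become negligible relative to the $W^f(i+1,m)$-gap, forcing the four points to organize themselves into a genuine configuration where $\tilde d$ must lie on the same side of $d$ along $V_i^f$ as $\tilde b$ lies relative to $b$. The orientation statement then follows because $f$ preserves the orientation of $V_i^f$, so the sign of the $V_i^f$-coordinate of $(\tilde b - b)$ is unchanged along the orbit, and a limiting/continuity argument identifies it with the sign for $(\tilde d - d)$.

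More concretely, I would argue by contradiction: suppose $\tilde d = d$, or $\tilde d\neq d$ but with the opposite orientation. Consider the piecewise-smooth closed loop $a\to b$ (along $U$) $\to \tilde b$ (along $V_i^f$) $\to$ back toward $a$ along $W^f(i+1,m)$, and the "parallel" loop through $c, d, \tilde d$. Under $f^{-n}$, quasi-isometry of the $E_i^f$-curves (from~(\ref{angles}), as in the proof of Lemma~\ref{integr1}) lets me compare Euclidean distances with leaf-lengths. The $V_i^f$-separation between the two loops, namely $d_i^f(a,c)$ and $d_i^f(b,d)$, decays like $\beta_i^{-n}$; the $W^f(i+1,m)$-gaps $d(b,\tilde b)$ and $d(d,\tilde d)$ decay no faster than $\tilde\beta_{i+1}^{-n}$ (if nonzero) or are identically zero. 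If the orientations were opposite, then somewhere along the $W^f(i,m-1)$-segment from $b$ to $d$ the $V_i^f$-component of the gap to $W^f(i+1,m)(a)$ would have to change sign, producing a point whose $U$-leaf meets $W^f(i+1,m)(a)$ — but $U$ restricted there is a graph over $W^f(i,m-1)$, and a transversality/uniqueness argument (leaves of $U$ through distinct points of a $W^f(i,m-1)$-leaf are disjoint) rules this out. Thus the orientations agree and in particular $\tilde d\neq d$.

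\textbf{Main obstacle.} The delicate part is making the degeneration argument rigorous given that leaves of $U$ are only H\"older, so "graph over $W^f(i,m-1)$" and the continuity of the $V_i^f$-component of the gap need the $C^{1+\nu}$-regularity of $h$ along $W_{m-1}^f$ (hence along $W^f(i,m-1)$) to upgrade $U$-leaves to $C^{1+\nu}$ along the directions already handled — but $U$ in the $E_m^f$-direction is still only H\"older, which is exactly why we track the $W^f(i+1,m)$-component rather than working with a single smooth transversal. Getting the rate comparison to actually close (ensuring the "fast" leg really is negligible, uniformly, and that the quasi-isometry constants don't blow up) is where the bulk of the careful estimation lives, paralleling Section~4.4 of~\cite{GG} but complicated by $\dim W^f\ge 3$.
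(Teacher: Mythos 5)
Your proposal has the right geometric setup (a quadrilateral pushed under $f^{-n}$, comparing rates) but it contains a sign error in the rate comparison and, more fundamentally, it never identifies the mechanism that makes the paper's proof work.

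First, the rates are backwards. Under $f^{-n}$ the distribution $E_i^f$ contracts by a factor between $\beta_i^{-n}$ and $\tilde\beta_i^{-n}$, while $E^f(i+1,m)$ contracts by a factor between $\beta_m^{-n}$ and $\tilde\beta_{i+1}^{-n}$; since $\tilde\beta_{i+1}>\beta_i$ one has $\tilde\beta_{i+1}^{-n}<\beta_i^{-n}$, so it is the $W^f(i+1,m)$-directions that contract \emph{faster}, not the $V_i^f$-direction. Your claim that ``the $V_i^f$-legs shrink much faster than the $W^f(i+1,m)$-separation'' is the opposite of what happens, and it is precisely the fast contraction of $W^f(i+1,m)$ that the paper exploits: under $f^{-n}$ the $W^f(i+1,m)$-holonomy between $V_i^f$-leaves becomes isometric in the limit.

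Second, and this is the larger gap, your plan never says how the hypothesis $U=h^{-1}(V_m^g)$ and the hypothesis that $h$ is $C^{1+\nu}$ along $V_i^f$ actually enter the estimate. The paper's proof introduces auxiliary points $e=V_i^f(b)\cap W^f(i+1,m)(d)$ and $\tilde e=V_i^f(b)\cap W^f(i+1,m)(\tilde d)$ on a \emph{single} leaf $V_i^f(b)$, and uses the affine distance $\tilde d_i$ together with (D2) to observe that a strict ratio inequality between $\tilde d_i(b,e)$ and $\tilde d_i(b,\tilde e)-\tilde d_i(b,\tilde b)$ persists along $f^{-n}$-orbits. It then shows, using $d\in U(c)$ so that $h(d)\in V_m^g(h(c))$, that $h(e)\in W^g(i+1,m)(h(c))$ — this is where $U=h^{-1}(V_m^g)$ enters — and hence the analogous $W^g(i+1,m)$-holonomy becomes isometric on the $g$-side as well. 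The $C^1$-differentiability of $h$ along $V_i^f$ is then used to transfer the $g$-side isometric limit back to the $f$-side and contradict the persistent strict ratio. Your proposal replaces this with a qualitative ``quadrilateral degenerates'' picture and an intermediate-value argument (``the $V_i^f$-component of the gap would have to change sign''), but no version of the quantitative comparison is present, so the contradiction never actually closes. As it stands the proposal is not a proof.
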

The statement of the lemma when $i=1$ and $m=3$ is illustrated on
Figure~\ref{5_abcd}.

\begin{remark} Since by the induction hypothesis, $h(W^f(i, m-1))=W^g(i, m-1)$, we see that the leaf\/ $U(a)$ intersects each leaf\/ $W^f(i, m-1)(x)$, $x \in W^f(i,m)(a)$ exactly once.
\end{remark}

\begin{proof}
Let $e=V_i^f(b)\cap W^f(i+1,m)(d)$ and $\tilde e=V_i^f(b)\cap
W^f(i+1,m)(\tilde d)$. Obviously $(e, \tilde e)$ has the same
orientation as $(d,\tilde d)$ and also has the advantage of lying on
the leaf\/ $V_i^f(b)$. Therefore, we forget about $(d,\tilde d)$ and
work with $(e, \tilde e)$.


\begin{figure}[htbp]
\begin{center}

\begin{picture}(0,0)%
\includegraphics{5_abcd.pstex}%
\end{picture}%
\setlength{\unitlength}{3947sp}%
\begingroup\makeatletter\ifx\SetFigFont\undefined%
\gdef\SetFigFont#1#2#3#4#5{%
\reset@font\fontsize{#1}{#2pt}%
\fontfamily{#3}\fontseries{#4}\fontshape{#5}%
\selectfont}%
\fi\endgroup%
\begin{picture}(7013,2796)(-359,-1423)
\put(5026,-1261){\makebox(0,0)[lb]{\smash{{\SetFigFont{12}{14.4}{\rmdefault}{\mddefault}{\updefault}{\color[rgb]{0,0,0}$V_1^f(a)$}%
}}}}
\put(5701,164){\makebox(0,0)[lb]{\smash{{\SetFigFont{12}{14.4}{\rmdefault}{\mddefault}{\updefault}{\color[rgb]{0,0,0}$W_2^f(b)$}%
}}}}
\put(5251,989){\makebox(0,0)[lb]{\smash{{\SetFigFont{12}{14.4}{\rmdefault}{\mddefault}{\updefault}{\color[rgb]{0,0,0}$U(c)$}%
}}}}
\put(2017,1139){\makebox(0,0)[lb]{\smash{{\SetFigFont{12}{14.4}{\rmdefault}{\mddefault}{\updefault}{\color[rgb]{0,0,0}$U(a)$}%
}}}}
\put(3751,-1186){\makebox(0,0)[lb]{\smash{{\SetFigFont{12}{14.4}{\rmdefault}{\mddefault}{\updefault}{\color[rgb]{0,0,0}$c$}%
}}}}
\put(4426,-61){\makebox(0,0)[lb]{\smash{{\SetFigFont{12}{14.4}{\rmdefault}{\mddefault}{\updefault}{\color[rgb]{0,0,0}$e$}%
}}}}
\put(3751, 14){\makebox(0,0)[lb]{\smash{{\SetFigFont{12}{14.4}{\rmdefault}{\mddefault}{\updefault}{\color[rgb]{0,0,0}$\tilde e$}%
}}}}
\put(4681,299){\makebox(0,0)[lb]{\smash{{\SetFigFont{12}{14.4}{\rmdefault}{\mddefault}{\updefault}{\color[rgb]{0,0,0}$d$}%
}}}}
\put(4201,1163){\makebox(0,0)[lb]{\smash{{\SetFigFont{12}{14.4}{\rmdefault}{\mddefault}{\updefault}{\color[rgb]{0,0,0}$W^f(2,3)(c)$}%
}}}}
\put(1951,-1186){\makebox(0,0)[lb]{\smash{{\SetFigFont{12}{14.4}{\rmdefault}{\mddefault}{\updefault}{\color[rgb]{0,0,0}$V_1^f(a)$}%
}}}}
\put(2401, 14){\makebox(0,0)[lb]{\smash{{\SetFigFont{12}{14.4}{\rmdefault}{\mddefault}{\updefault}{\color[rgb]{0,0,0}$V_1^f(b)$}%
}}}}
\put(601,-1261){\makebox(0,0)[lb]{\smash{{\SetFigFont{12}{14.4}{\rmdefault}{\mddefault}{\updefault}{\color[rgb]{0,0,0}$a$}%
}}}}
\put(3961,314){\makebox(0,0)[lb]{\smash{{\SetFigFont{12}{14.4}{\rmdefault}{\mddefault}{\updefault}{\color[rgb]{0,0,0}$\tilde d$}%
}}}}
\put(-359,989){\makebox(0,0)[lb]{\smash{{\SetFigFont{12}{14.4}{\rmdefault}{\mddefault}{\updefault}{\color[rgb]{0,0,0}$W^f(2,3)(a)$}%
}}}}
\put(541,-109){\makebox(0,0)[lb]{\smash{{\SetFigFont{12}{14.4}{\rmdefault}{\mddefault}{\updefault}{\color[rgb]{0,0,0}$\tilde b$}%
}}}}
\put(1369,-61){\makebox(0,0)[lb]{\smash{{\SetFigFont{12}{14.4}{\rmdefault}{\mddefault}{\updefault}{\color[rgb]{0,0,0}$b$}%
}}}}
\end{picture}%

\end{center}
\caption{Illustration to Lemma~\ref{abcd} when $i=1$ and
$m=3$.}\label{5_abcd}
\end{figure}

We use an affine structure on the expanding foliation $V_i^f$.
Namely we work with the affine distance-like function $\tilde d_i$.
We refer to~\cite{GG} for the definition. There we define the affine
distance-like function on the weak unstable foliation. The
definition for the foliation $V_i^f$ is the same with obvious
modifications. Recall the crucial properties of\/ $\tilde d_i$:
\begin{itemize}
\item[(D1)] $\tilde d_i(x,y)=d_i^f(x,y)+o(d_i^f(x,y))$,
\item[(D2)] $\tilde d_i(f(x),f(y))=D_f^i(x)\tilde d_i(x,y)$, where $D_f^i$ is the derivative of\/ $f$ along
$V_i^f$.
\item[(D3)] $\forall K>0 \;\exists C>0$ such that
\begin{equation*}
\frac1C\tilde d_i(x,y)\le d_i^f(x,y)\le C\tilde d_i(x,y)
\end{equation*}
whenever $d_i(x, y)<K$.
\end{itemize}

Assume that $(e, \tilde e)$ has orientation opposite to $(b,\tilde
b)$ or $e=\tilde e$. For the sake of concreteness we assume that
these points lie on $V_i^f(b)$ in the order $b, \tilde b, \tilde e,
e$. All other cases can be treated similarly. Then
$$
\tilde d_i(b,e)\ge \tilde d_i(b,\tilde e)>\tilde d_i(b,\tilde
e)-\tilde d_i(b, \tilde b).
$$
\begin{remark}
Notice that $\tilde d_i(b,\tilde e)-\tilde d_i(b, \tilde b)\neq
\tilde d_i(\tilde b,\tilde e)$ since $\tilde d_i$ is neither
symmetric nor additive. The distance $\tilde d_i$ is given by an
integral of a certain density with normalization defined by the
first argument. As long as the first argument (point $b$ in the
above inequality) is the same, all natural inequalities hold.
\end{remark}

Applying (D2), we get that
$$
\forall n>0\;\;\; \frac{\tilde d_i(f^{-n}(b),f^{-n}(e))}{\tilde
d_i(f^{-n}(b),f^{-n}(\tilde e))-\tilde d_i(f^{-n}(b),f^{-n}(\tilde
b))}=c_1>1,
$$
where $c_1$ does not depend on $n$. By property (D1) we can switch
to the usual distance:
\begin{equation}
\label{qwe4} \exists N:\; \forall n>N\;\;\;
\frac{d_i^f(f^{-n}(b),f^{-n}(e))}{d_i^f(f^{-n}(\tilde b),
f^{-n}(\tilde e))}>c_2>1,
\end{equation}
where $c_2$ does not depend on $n$.

Under the action of\/ $f^{-1}$, strong unstable leaves of\/
$W^f(i+1,m)$ contract exponentially faster then weak unstable leaves
of\/ $V_i^f$. Thus
\begin{equation}
\label{qwe3} \forall \varepsilon>0 \;\; \exists N: \;\forall
n>N\;\;\; \left|
\frac{d_i^f(f^{-n}(a),f^{-n}(c))}{d_i^f(f^{-n}(\tilde
b),f^{-n}(\tilde e))}-1 \right|<\varepsilon.
\end{equation}

We have that $h(e)\in W^g(i+1,m)(h(c))$. Indeed, notice that
$${e=V_i^f(b)\cap W^f(i+1,m)(d)}=V_i^f(b)\cap W^f(i+1,m-1)(d)$$ (if\/ $i=m-1$ than we have $e=d$). Thus
\begin{align*}
h(e)&= h(V_i^f(b)\cap W^f(i+1,m-1)(d))=V_i^g(h(b))\cap W^g(i+1,m-1)(h(d))\\
&=V_i^g(h(b))\cap W^g(i+1,m)(h(d))=V_i^g(h(b))\cap W^g(i+1,m)(h(c)),
\end{align*}
where the last equality is justified by the fact that $h(d)\in
V_m^g(h(c))$.  We know also that $h(b)\in W^g(i+1,m)(h(a))$. Hence,
analogously to~(\ref{qwe3}), we have
\begin{equation}
\label{qwe1} \forall \varepsilon>0 \;\; \exists N: \;\forall
n>N\;\;\; \left|
\frac{d_i^g\left(g^{-n}(h(a)),g^{-n}(h(c))\right)}{d_i^g\left(g^{-n}(h(b)),g^{-n}(h(e))\right)}-1
\right|<\varepsilon.
\end{equation}
On the other hand, we also know that $h$ is continuously
differentiable along $V_i^f$. Hence

\begin{align}
\label{qwe2} \forall \varepsilon>0\;\; \exists N:\; \forall
n>N\;\;\;\;\;
\left| \frac{d_i^g\left(g^{-n}(h(a)), g^{-n}(h(c))\right)}{d_i^f(f^{-n}(a), f^{-n}(c))}-D_h^{i}(f^{-n}(a))\right|&<\varepsilon\\
\intertext{and} \;\; \left| \frac{d_i^g\left(g^{-n}(h(b)),
g^{-n}(h(e))\right)}{d_i^g(f^{-n}(b),
f^{-n}(e))}-D_h^{i}(f^{-n}(a))\right|&<\varepsilon.\nonumber
\end{align}

Therefore from~(\ref{qwe1}) and~(\ref{qwe2}) we have
$$
\forall \varepsilon>0 \;\; \exists N: \;\forall n>N\;\;\; \left|
\frac{d_i^f(f^{-n}(a),f^{-n}(c))}{d_i^f(f^{-n}(b),f^{-n}(e))}-1
\right| <\varepsilon,
$$
which we combine with~(\ref{qwe3}) to get
$$
\forall \varepsilon>0 \;\; \exists N: \;\forall n>N\;\;\; \left|
\frac{d_i^f(f^{-n}(b),f^{-n}(e))}{d_i^f(f^{-n}(\tilde
b),f^{-n}(\tilde e))}-1 \right| <\varepsilon.
$$

We have reached a contradiction with~(\ref{qwe4})
\end{proof}

\begin{remark}
By the same argument one can prove that if\/ $b=\tilde b$ then
$d=\tilde d$.
\end{remark}

\begin{lemma}
\label{distance} Consider a weak unstable leaf\/ $W_{m-1}^f(a)$ and
$b\in V_m^f(a)$, $b\ne a$. For any $y\in W_{m-1}^f(a)$, let
$y'=W_{m-1}^f(b)\cap V_m^f(y)$. Then there exist $c_1, c_2>0$ such
that $\forall y\in W_{m-1}^f(a)$ $\;\;\;c_1>d_m^f(y,y')>c_2$.
\end{lemma}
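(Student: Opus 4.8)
The plan is to transfer the configuration to the linear model by the conjugacy $h_f$ (so $h_f\circ f=L\circ h_f$) and to use that $h_f$ is at bounded distance from the identity on the universal cover. By~(\ref{lift}) applied with $g=L$, the map $h_f-\mathrm{id}$ is $\mathbb Z^d$-periodic, hence $\|h_f(z)-z\|\le C_0$ for all $z\in\mathbb R^d$; and by Lemma~\ref{weak_match}, $h_f(W_{m-1}^f)=W_{m-1}^L$ and $h_f(W_m^f)=W_m^L$. Let $p\colon\mathbb R^d\to E_m^L$ be the linear projection along $\bigoplus_{i\ne m}E_i^L\oplus\bigoplus_l F_l^L$, and identify $E_m^L$ with $\mathbb R$; then $p\circ L=\lambda_m p$ and $p$ vanishes on $E^L(1,m-1)$. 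The choice of $\mathcal U$ keeps $E_m^f$ uniformly transverse to $\ker p$, so there are constants $0<\sigma_0\le\Sigma_0$ with $\sigma_0\|v\|\le|p(v)|\le\Sigma_0\|v\|$ for every $v\in E_m^f$; since $p$ restricted to a tangent line of $V_m^f$ never vanishes, integrating along a $V_m^f$-arc gives, for $z'\in V_m^f(z)$,
\[
\sigma_0\, d_m^f(z,z')\le|p(z)-p(z')|\le\Sigma_0\, d_m^f(z,z').
\]
On the other hand $p$ is constant on the leaves of $W_{m-1}^L$ (affine translates of $E^L(1,m-1)$), hence $p\circ h_f$ is constant on the leaves of $W_{m-1}^f$; combined with $|p(z)-p(h_f(z))|\le C_1:=\|p\|\,C_0$ this gives $|p(z)-p(z'')|\le 2C_1$ whenever $z,z''$ lie on a common leaf of $W_{m-1}^f$.

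Two non-degeneracy facts are needed, both proved by the same scheme: a bounded sub-configuration contradicts exponential expansion. First, $V_m^f(y)\cap W_{m-1}^f(b)$ is a single point $y'$, so the statement is meaningful: if $y_1',y_2'$ were two such points, the two displayed estimates would force $d_m^f(y_1',y_2')\le 2C_1/\sigma_0$, and the same bound would persist for $f^Ny_1',f^Ny_2'$ for every $N$ (the configuration is $f$-equivariant), contradicting $d_m^f(f^Ny_1',f^Ny_2')\ge C^{-1}\tilde\beta_m^N d_m^f(y_1',y_2')$ from~(\ref{phd}). Running the same argument with $a,b$ in place of $y_1',y_2'$ shows $b\notin W_{m-1}^f(a)$; since $h_f(b)-h_f(a)\in E^L(1,m)$, this means its $E_m^L$-component is nonzero, i.e. $\delta^*:=|p(h_f(b))-p(h_f(a))|>0$.

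For the upper bound, fix $y\in W_{m-1}^f(a)$. As $h_f(y),h_f(a)$ lie on one leaf of $W_{m-1}^L$ we get $|p(y)-p(h_f(a))|\le C_1$, and similarly $|p(y')-p(h_f(b))|\le C_1$; hence $|p(y)-p(y')|\le 2C_1+\delta^*$, and since $y,y'$ lie on one leaf of $V_m^f$ the left displayed inequality gives $d_m^f(y,y')\le(2C_1+\delta^*)/\sigma_0=:c_1$. For the lower bound, fix $N_0$, depending only on $\delta^*,\lambda_m,C_1$, with $\lambda_m^{N_0}\delta^*-2C_1\ge 1$. Applying $f^{N_0}$ to the configuration and using $p\circ L=\lambda_m p$ replaces $\delta^*$ by $\lambda_m^{N_0}\delta^*$, so the estimate just used yields $|p(f^{N_0}y)-p(f^{N_0}y')|\ge\lambda_m^{N_0}\delta^*-2C_1\ge1$, whence $d_m^f(f^{N_0}y,f^{N_0}y')\ge\Sigma_0^{-1}$ by the right displayed inequality; since $f^{N_0}$ is Lipschitz, $d_m^f$ along $V_m^f$-leaves grows by at most a fixed factor $K_0$, so $d_m^f(y,y')\ge(K_0\Sigma_0)^{-1}=:c_2>0$.

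The ingredients that make this work — boundedness of $h_f-\mathrm{id}$, $f$-invariance of the foliations, and the rate estimates~(\ref{phd}) — are soft, and the genuinely delicate point is only the uniform transversality of $E_m^f$ to $\ker p$ (equivalently, that the $f$-splitting stays uniformly close to the eigenspace splitting of $L$), which is what the choice of $\mathcal U$ must guarantee; once that is granted, the argument above is essentially bookkeeping.
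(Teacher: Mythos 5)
Your proof is correct and rests on exactly the same ingredients the paper uses: work on the universal cover, boundedness of $h_f-\mathrm{id}$ coming from~(\ref{lift}), the fact (Lemma~\ref{weak_match}) that $h_f$ sends $W_{m-1}^f$ and $W_m^f$ to the corresponding linear foliations so that the images $W_{m-1}^L(h_f(a))$ and $W_{m-1}^L(h_f(b))$ are distinct parallel affine subspaces, and uniform transversality of $E_m^f$ to $TW_{m-1}^L$ guaranteed by the choice of $\mathcal U$. The main stylistic difference is that you make the transversality quantitative by introducing the linear projection $p$ onto $E_m^L$, which packages the paper's ``tube'' argument for the upper bound and lets you get the lower bound by iterating $f$ and using $p\circ L=\lambda_m p$ rather than appealing, as the paper does, to the uniform continuity of $h_f$; you also supply the well-definedness of $y'$ and the positivity of $\delta^*$, which the paper leaves implicit. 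In short: same route, a somewhat more explicit bookkeeping.
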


\begin{proof}
We will be working on the universal cover $\mathbb R^d$. We abuse
notation slightly by using the same notation for the lifted objects.
Note that the leaves on $\mathbb R^d$ are connected components of
preimages by the projection map of the leaves on $\mathbb T^d$.

Let $h_f$ be the conjugacy with the linear model, $h_f\circ f=L\circ
h_f$. Lemma~\ref{weak_match} holds for $h_f$:
$h_f(W_{m-1}^f)=W_{m-1}^L$. The leaves $W_{m-1}^L(h_f(a))$ and
$W_{m-1}^L(h_f(b))$ are parallel hyperplanes. Thus the lower bound
follows from the uniform continuity of\/~$h_f$.

It follows from~(\ref{lift}) that $h_f^{-1}-Id$ is bounded. Hence we
can find positive $R$ that depends only on size of\/ $\mathcal U$
such that
$$W_{m-1}^f(a)\subset \text{Tube}_a\stackrel{\mathrm{def}}{=}\bigcup_{x\in B(a, R)}W_{m-1}^L(x)$$
and
$$W_{m-1}^f(b)\subset \text{Tube}_b\stackrel{\mathrm{def}}{=}\bigcup_{x\in B(b, R)}W_{m-1}^L(x).$$

Then, obviously,
$$
d_m^f(y, y')\le \sup\{d_m^f(x,x')\;|\;x\in \text{Tube}_a, x'\in
\text{Tube}_b\cap V_m^f(x)\}.
$$
Assumption~(\ref{angles}) guarantees that $E_m^f$ is uniformly
transverse to $TW_{m-1}^L=E_1^L\oplus E_2^L\oplus\ldots \oplus
E_{m-1}^L$. Thus the supremum above is finite.
\end{proof}
\begin{remark}\mbox{}
\begin{enumerate}
\item Given two points $a, b\in \mathbb R^d$ let $$\hat d(a,b)=\text{distance}(W_{m-1}^L(h_f(a)),W_{m-1}^L(h_f(b))).$$ It is
clear from the proof that constants $c_1$ and $c_2$ can be chosen in
such a way that they depend only on $\hat d(a,b)$.
\item In the proof above we do not use the fact that both $W_{m-1}^f$ and $V_m^f$ are expanding. We only need them to
be transverse. Thus, if we substitute for the weak unstable
foliation $W_{m-1}^f$ some weak stable foliation $\mathcal F$, the
statement still holds.
\item As mentioned earlier the assumption~(\ref{angles}) is crucial only for Lemmas~\ref{integr1}, \ref{integr2} and
\ref{weak_match}. We used this assumption in the proof above only
for convenience. A slightly more delicate argument goes through
without using assumption~(\ref{angles}).
\end{enumerate}
\end{remark}

\begin{proof}[Proof of the induction step.]
We will be working inside of the leaves of\/ $W^f(i,m)$. Assume that
$U$ does not subfoliate $W^f(i+1,m)$. Then there exists a point
$x_0$ and $x_1\in U(x_0)$ close to $x_0$ such that $x_1\notin
W^f(i+1,m)(x_0)$.

We fix an orientation $\mathcal O$ of\/ $U$ and $V_i^f$ that is
defined on pairs of points $(x,y)$, $y\in U(x)$ and $(x,y)$, $y\in
V_i^f(x)$. Although we denote these orientations by the same symbol
it will not cause any confusion since $U$ and $V_i^f$ are
topologically transverse.

For every $(x,y)$, $y\in U(x)$ with $\mathcal O(x,y)=\mathcal
O(x_0,x_1)$, define $$[x,y]=W^f({i+1},m)(x)\cap V_i^f(y).$$ For
instance, in Lemma~\ref{abcd}, $\tilde b=[a,b]$, $\tilde d=[c, d]$.


\begin{figure}[htbp]
\begin{center}

\begin{picture}(0,0)%
\includegraphics{6_bracket.pstex}%
\end{picture}%
\setlength{\unitlength}{3947sp}%
\begingroup\makeatletter\ifx\SetFigFont\undefined%
\gdef\SetFigFont#1#2#3#4#5{%
\reset@font\fontsize{#1}{#2pt}%
\fontfamily{#3}\fontseries{#4}\fontshape{#5}%
\selectfont}%
\fi\endgroup%
\begin{picture}(4181,2296)(56,-1579)
\put(601,-1036){\makebox(0,0)[lb]{\smash{{\SetFigFont{12}{14.4}{\rmdefault}{\mddefault}{\updefault}{\color[rgb]{0,0,0}$x$}%
}}}}
\put(1876, 14){\makebox(0,0)[lb]{\smash{{\SetFigFont{12}{14.4}{\rmdefault}{\mddefault}{\updefault}{\color[rgb]{0,0,0}$y$}%
}}}}
\put(676, 89){\makebox(0,0)[lb]{\smash{{\SetFigFont{12}{14.4}{\rmdefault}{\mddefault}{\updefault}{\color[rgb]{0,0,0}$[x,y]$}%
}}}}
\put(1426,-1186){\makebox(0,0)[lb]{\smash{{\SetFigFont{12}{14.4}{\rmdefault}{\mddefault}{\updefault}{\color[rgb]{0,0,0}$W^f(i+1,m)(x)$}%
}}}}
\put(3376,-286){\makebox(0,0)[lb]{\smash{{\SetFigFont{12}{14.4}{\rmdefault}{\mddefault}{\updefault}{\color[rgb]{0,0,0}$V_i^f(y)$}%
}}}}
\put(3151,464){\makebox(0,0)[lb]{\smash{{\SetFigFont{12}{14.4}{\rmdefault}{\mddefault}{\updefault}{\color[rgb]{0,0,0}$U(x)$}%
}}}}
\end{picture}%

\end{center}
\caption{Definition of\/ $[x,y]$.}\label{6_bracket}
\end{figure}


\begin{lemma}
\label{monotonicity} For every $(x,y)$ as above, either $[x,y]=y$ or
$\mathcal O([x,y],y)=O^+\stackrel{\mathrm{def}}{=}\mathcal
O([x_0,x_1],x_1)$.
\end{lemma}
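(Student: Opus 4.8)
The plan is to prove Lemma~\ref{monotonicity} by a connectedness/continuity argument along the leaf $U(x)$, combined with the rigidity provided by Lemma~\ref{abcd}. Fix $x$ and let $y$ range over the ``positive half'' of the leaf $U(x)$, \ie those $y\in U(x)$ with $\mathcal O(x,y)=\mathcal O(x_0,x_1)$, parametrized continuously by arc length $t\in[0,\infty)$, writing $y=y(t)$, $y(0)=x$. Consider the ``tilt'' $\theta(t)$ which records the position of $[x,y(t)]$ relative to $y(t)$ along $V_i^f(y(t))$, measured with the orientation $\mathcal O$: set $\theta(t)>0$ if $\mathcal O([x,y(t)],y(t))=O^+$, $\theta(t)<0$ if it equals $-O^+$, and $\theta(t)=0$ if $[x,y(t)]=y(t)$. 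Since $U$ is continuous, $V_i^f$ and $W^f(i+1,m)$ are continuous transverse foliations, and the bracket $[x,y]$ is defined as a transverse intersection inside the leaf $W^f(i,m)(x)$, the sign-function $\theta$ depends continuously on $t$; more precisely the actual signed position is continuous. So the claim ``$\theta(t)\ge 0$ for all $t$'' will follow once I rule out $\theta(t)<0$.

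The key step is to show the set $\{t:\theta(t)\le 0\}$ is both open and closed in the set where it could first occur, so that if it is ever negative it must be negative (or zero) in a neighborhood, and then to derive a contradiction with Lemma~\ref{abcd}. Here is the mechanism. Suppose $\theta(t_*)<0$ for some $t_*$, \ie at $a:=x$, $b:=y(t_*)$ the pair $(b,[a,b])$ has orientation $-O^+$, while we know (by the definition of $O^+$, using $x_0,x_1$) the ``seed'' pair has orientation $O^+$. Now apply Lemma~\ref{abcd} with this $a,b$ and with $c$ chosen along $V_i^f(a)$: the lemma says that for every such $c$, the resulting pair $(d,\tilde d)$ — equivalently $(e,\tilde e)$ on $V_i^f(b)$ — has the \emph{same} orientation as $(b,[a,b])$, namely $-O^+$. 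Sliding $c$ along $V_i^f(a)$ and using that $V_i^f$ is minimal (Property $\mathcal A'$, or more precisely that $V_i^f$ has dense leaves as in Lemma~\ref{dense_leaf} for $i=1$ and the analogous minimality for intermediate $i$), the point $d=U(c)\cap W^f(i,m-1)(b)$ ranges over a dense subset of the leaf $U(\,\cdot\,)$ configuration, which forces the sign of the bracket to be $-O^+$ on a dense — hence by continuity on an open — set of base-pairs. But the seed pair $(x_0,x_1)$ has bracket orientation $O^+$ by definition of $O^+$, and $x_1$ is close to $x_0$; running the same density argument from a pair with bracket sign $O^+$ forces an open set of pairs to have bracket sign $O^+$. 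Two dense/open sets with opposite forced signs cannot coexist with a continuous signed tilt function, giving the contradiction. Hence $\theta(t)\ge 0$ everywhere, which is exactly the assertion of the lemma.

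The main obstacle I anticipate is making the ``continuity of the signed tilt $\theta$'' and the ``propagation along $V_i^f$'' rigorous given that leaves of $U$ are only Hölder curves: one cannot naively differentiate, and one must be careful that $[x,y]$ is well defined (this is the content of the Remark after Lemma~\ref{abcd}, that $U(a)$ meets each leaf $W^f(i,m-1)(x)$ exactly once, so the bracket map is single-valued and continuous). The delicate point is to upgrade the pointwise conclusion of Lemma~\ref{abcd} — which compares two specific configurations sharing the point $a$ — into a statement about an open set of configurations, using minimality of $V_i^f$ to move $c$ around and transversality to keep the intersections $U(c)\cap W^f(i,m-1)(b)$ transverse and varying continuously. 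Once that is in place, the openness–closedness dichotomy together with the fixed sign at the seed $(x_0,x_1)$ closes the argument; I expect no further analytic input beyond the properties (D1)--(D3) already invoked in Lemma~\ref{abcd} and the continuity of the foliations.
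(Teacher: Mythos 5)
Your proposal correctly captures the first half of the paper's argument --- using Lemma~\ref{abcd} together with Property~$\mathcal A'$ (minimality of $V_i^f$) to propagate the sign of the bracket from a single pair $(a,b)$ to a dense set of pairs --- but it then jumps to an unjustified conclusion. The crucial point you overlooked is that Lemma~\ref{abcd} propagates orientation only among pairs $(a,b)$ with the \emph{same} $\hat d$-gap (the ``level'' of the pair measured along the transversal $W^f(i,m-1)$-direction is preserved by the $V_i^f$-holonomy used in the lemma). Consequently the dense set you force to have sign $O^+$ consists of pairs at level $a_0=\hat d(x_0,x_1)$, while a hypothetical pair $(\tilde x_0,\tilde x_1)$ with sign $O^-$ forces a dense set of pairs at level $\hat d(\tilde x_0,\tilde x_1)$, which generically differs from $a_0$. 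These two dense sets live in \emph{disjoint} slices of the space of pairs, so the sentence ``Two dense/open sets with opposite forced signs cannot coexist'' is simply false; there is no contradiction at this point, and continuity of the signed tilt $\theta$ along a single leaf $U(x)$ cannot rescue the argument (continuity does not prevent $\theta$ from changing sign as $t$ --- and with it the level --- varies).

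The missing ingredient, which is the heart of the paper's proof, is the \emph{iteration-of-shifts} argument: define the ``shift'' maps $sh$ and $\widetilde{sh}$ of fixed $\hat d$-gaps $a_0$ and $\hat d(\tilde x_0,\tilde x_1)$; perturb $\tilde x_1$ slightly along $U(\tilde x_0)$ so that $N_1 a_0 = N_2\,\hat d(\tilde x_0,\tilde x_1)$ for some integers $N_1,N_2$; observe the additivity property of the bracket orientation (if $y,z\in U(x)$ with $\mathcal O(x,y)=\mathcal O(y,z)$ and $\mathcal O([x,y],y)=\mathcal O([y,z],z)$, then $\mathcal O([x,z],z)$ equals this common sign); and then iterate to reach the same point $sh^{N_1}(z)=\widetilde{sh}^{N_2}(z)$ with two forced, opposite orientations --- the contradiction. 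Without the commensurability perturbation and the additivity observation you cannot compare signs across levels, and the proof does not close.
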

\begin{proof}
Let $a_0=\hat d(x_0,x_1)$ (for definition of\/ $\hat d$ see the
remark after the proof of Lemma~\ref{distance}). The number $a_0$ is
positive since $U(x)$ is transverse to $W_{m-1}^f$.

For any $y\in\mathbb T^d$, there is a unique point $sh(y)\in U(y)$
such that $\hat d(y, sh(y))=a_0$ and $\mathcal O(y,sh(y))=\mathcal
O(x_0,x_1)$.

The leaves of all the foliations that we consider depend
continuously on the point. Therefore we can find a small ball $B$
centered at $x_0$ such that $\forall y\in B$ $[y,sh(y)]\neq sh(y)$
and $\mathcal O([y,sh(y)],sh(y))=O^+$.

Next, let us fix $y\in B$ and choose any $z\in V_i^f(y)$. Apply
Lemma~\ref{abcd} for $a=y$, $b=sh(y)$, $c=z$, $d=sh(z)$ to get that
$[z,sh(z)]\neq sh(z)$ and $\mathcal O([z,sh(z)],sh(z))=\mathcal
O([y,sh(y)],sh(y))=O^+$ as shown on the Figure~\ref{7_shift_pres}.

By Property $\mathcal A$,
$$
\bigcup_{y\in B}V_i^f=\mathbb T^d.
$$
Thus
\begin{equation}
\label{first} \forall z\in \mathbb T^d\;\;\;[z,sh(z)]\neq
sh(z)\;\;\text{and}\;\;\mathcal O([z,sh(z)],sh(z))=O^+.
\end{equation}


\begin{figure}[htbp]
\begin{center}

\begin{picture}(0,0)%
\includegraphics{7_shift_pres.pstex}%
\end{picture}%
\setlength{\unitlength}{3947sp}%
\begingroup\makeatletter\ifx\SetFigFont\undefined%
\gdef\SetFigFont#1#2#3#4#5{%
\reset@font\fontsize{#1}{#2pt}%
\fontfamily{#3}\fontseries{#4}\fontshape{#5}%
\selectfont}%
\fi\endgroup%
\begin{picture}(5414,3513)(-202,-4651)
\put(-74,-4261){\makebox(0,0)[lb]{\smash{{\SetFigFont{12}{14.4}{\rmdefault}{\mddefault}{\updefault}{\color[rgb]{0,0,0}$B$}%
}}}}
\put(751,-4186){\makebox(0,0)[lb]{\smash{{\SetFigFont{12}{14.4}{\rmdefault}{\mddefault}{\updefault}{\color[rgb]{0,0,0}$x_0$}%
}}}}
\put(301,-1561){\makebox(0,0)[lb]{\smash{{\SetFigFont{12}{14.4}{\rmdefault}{\mddefault}{\updefault}{\color[rgb]{0,0,0}$[y,sh(y)]$}%
}}}}
\put(2626,-1636){\makebox(0,0)[lb]{\smash{{\SetFigFont{12}{14.4}{\rmdefault}{\mddefault}{\updefault}{\color[rgb]{0,0,0}$[z,sh(z)]$}%
}}}}
\put(4351,-3961){\makebox(0,0)[lb]{\smash{{\SetFigFont{12}{14.4}{\rmdefault}{\mddefault}{\updefault}{\color[rgb]{0,0,0}$V_i^f(y)$}%
}}}}
\put(1726,-1897){\makebox(0,0)[lb]{\smash{{\SetFigFont{12}{14.4}{\rmdefault}{\mddefault}{\updefault}{\color[rgb]{0,0,0}$sh(y)$}%
}}}}
\put(2941,-3781){\makebox(0,0)[lb]{\smash{{\SetFigFont{12}{14.4}{\rmdefault}{\mddefault}{\updefault}{\color[rgb]{0,0,0}$z$}%
}}}}
\put(-119,-2386){\makebox(0,0)[lb]{\smash{{\SetFigFont{12}{14.4}{\rmdefault}{\mddefault}{\updefault}{\color[rgb]{0,0,0}$W^f(i+1,m)(y)$}%
}}}}
\put(2209,-2386){\makebox(0,0)[lb]{\smash{{\SetFigFont{12}{14.4}{\rmdefault}{\mddefault}{\updefault}{\color[rgb]{0,0,0}$W^f(i+1,m)(z)$}%
}}}}
\put(1561,-1336){\makebox(0,0)[lb]{\smash{{\SetFigFont{12}{14.4}{\rmdefault}{\mddefault}{\updefault}{\color[rgb]{0,0,0}$U(y)$}%
}}}}
\put(469,-3673){\makebox(0,0)[lb]{\smash{{\SetFigFont{12}{14.4}{\rmdefault}{\mddefault}{\updefault}{\color[rgb]{0,0,0}$y$}%
}}}}
\put(3661,-1336){\makebox(0,0)[lb]{\smash{{\SetFigFont{12}{14.4}{\rmdefault}{\mddefault}{\updefault}{\color[rgb]{0,0,0}$U(z)$}%
}}}}
\put(3985,-1705){\makebox(0,0)[lb]{\smash{{\SetFigFont{12}{14.4}{\rmdefault}{\mddefault}{\updefault}{\color[rgb]{0,0,0}$sh(z)$}%
}}}}
\end{picture}%

\end{center}
\caption{Orientation of\/ $([z,sh(z)],sh(z))$ is positive for any
$z$ in the $V_i^f$-tube through the ball $B$. Foliation $W^f(i+1,m)$
is two-di\-men\-si\-on\-al on the picture.}\label{7_shift_pres}
\end{figure}

Now let us assume the contrary of the statement of the lemma.
Namely, assume that there exists $\tilde x_0$ and $\tilde x_1$,
$\tilde x_1\in U(\tilde x_0)$, $\mathcal O(\tilde x_0, \tilde
x_1)=\mathcal O(x_0, x_1)$, such that $[\tilde x_0,\tilde x_1]\neq
\tilde x_1$ and $\mathcal O([\tilde x_0,\tilde x_1], \tilde
x_1)\stackrel{\mathrm{def}}{=}O^-\neq O^+$. By perturbing $\tilde
x_1$ infinitesimally along $U(\tilde x_0)$ we can ensure that
$N_1a_0=N_2\hat d(\tilde x_0,\tilde x_1)$, where $N_1$ and $N_2$ are
some large integer numbers.

For any $y\in\mathbb T^d$ there is a unique point
$\widetilde{sh}(y)\in U(y)$ such that $\hat d(y,
\widetilde{sh}(y))=\hat d(\tilde x_0,\tilde x_1)$ and $\mathcal
O(y,\widetilde{sh}(y))=\mathcal O(\tilde x_0,\tilde x_1)$. Then by
the same argument we show an analog of~(\ref{first}):
\begin{equation}
\label{second} \forall z\in \mathbb
T^d\;\;\;[z,\widetilde{sh}(z)]\neq
\widetilde{sh}(z)\;\;\text{and}\;\;\mathcal
O([z,\widetilde{sh}(z)],\widetilde{sh}(z))=O^-.
\end{equation}

Pick a point $x\in \mathbb T^d$ and $y,z\in U(x)$, $\mathcal
O(x,y)=\mathcal O(y,z)$. Assume that $\mathcal O([x,y],y)=\mathcal
O([y,z],z)$. Then $\mathcal O([x,z],z)=\mathcal O([x,y],y)$. This
obvious property allows us to ``iterate" $sh$ and $\tilde {sh}$.

Choose any $z$ and ``iterate"~(\ref{first}) and~(\ref{second}) $N_1$
and $N_2$ times correspondingly as shown on the
Figure~\ref{8_shift_iterate}.


\begin{figure}[htbp]
\begin{center}

\begin{picture}(0,0)%
\includegraphics{8_shift_iterate.pstex}%
\end{picture}%
\setlength{\unitlength}{3947sp}%
\begingroup\makeatletter\ifx\SetFigFont\undefined%
\gdef\SetFigFont#1#2#3#4#5{%
\reset@font\fontsize{#1}{#2pt}%
\fontfamily{#3}\fontseries{#4}\fontshape{#5}%
\selectfont}%
\fi\endgroup%
\begin{picture}(3627,3318)(214,-2248)
\put(1351,-2086){\makebox(0,0)[lb]{\smash{{\SetFigFont{12}{14.4}{\rmdefault}{\mddefault}{\updefault}{\color[rgb]{0,0,0}$z$}%
}}}}
\put(2701,239){\makebox(0,0)[lb]{\smash{{\SetFigFont{12}{14.4}{\rmdefault}{\mddefault}{\updefault}{\color[rgb]{0,0,0}$sh^3(z)$}%
}}}}
\put(3226,-586){\makebox(0,0)[lb]{\smash{{\SetFigFont{12}{14.4}{\rmdefault}{\mddefault}{\updefault}{\color[rgb]{0,0,0}$V_i^f$}%
}}}}
\put(1726,914){\makebox(0,0)[lb]{\smash{{\SetFigFont{12}{14.4}{\rmdefault}{\mddefault}{\updefault}{\color[rgb]{0,0,0}$W^f(i+1,m)$}%
}}}}
\put(2929,-1921){\makebox(0,0)[lb]{\smash{{\SetFigFont{12}{14.4}{\rmdefault}{\mddefault}{\updefault}{\color[rgb]{0,0,0}$V_i^f(z)$}%
}}}}
\put(421,-61){\makebox(0,0)[lb]{\smash{{\SetFigFont{12}{14.4}{\rmdefault}{\mddefault}{\updefault}{\color[rgb]{0,0,0}$\tilde {sh}^2(z)$}%
}}}}
\put(1726,-1153){\makebox(0,0)[lb]{\smash{{\SetFigFont{12}{14.4}{\rmdefault}{\mddefault}{\updefault}{\color[rgb]{0,0,0}$sh(z)$}%
}}}}
\put(421,-886){\makebox(0,0)[lb]{\smash{{\SetFigFont{12}{14.4}{\rmdefault}{\mddefault}{\updefault}{\color[rgb]{0,0,0}$\tilde {sh}(z)$}%
}}}}
\put(1726,-1711){\makebox(0,0)[lb]{\smash{{\SetFigFont{12}{14.4}{\rmdefault}{\mddefault}{\updefault}{\color[rgb]{0,0,0}$U(z)$}%
}}}}
\end{picture}%

\end{center}
\caption{Illustration to the argument with shifts along $U(z)$.
Foliation $W^f(i+1,m)$ is one-di\-men\-si\-on\-al here, $N_1=3$,
$N_2=2$. The black segments of\/ $V_i^f$ carry known information
about the orientations of\/ $([\cdot,sh(\cdot)],sh(\cdot))$ and
$([\cdot,\widetilde{sh}(\cdot)],\widetilde{sh}(\cdot))$. This
picture is clearly impossible if\/
$sh^{N_1}=\widetilde{sh}^{N_2}$.}\label{8_shift_iterate}
\end{figure}


We get that
$$
\mathcal O([z,sh^{N_1}(z)],sh^{N_1}(z))=O^+\;\;\text{and}\;\;
\mathcal O([z,\widetilde{sh}^{N_2}(z)],\widetilde{sh}^{N_2}(z))=O^-.
$$
To get a contradiction it remains to notice that
$sh^{N_1}=\widetilde{sh}^{N_2}$. Hence the lemma is proved.
\end{proof}

From~(\ref{first}) we see that for any $z\in\mathbb T^d$,
$d_i^f([z,sh(z)],sh(z))>0$. Hence, due to the compactness and
continuity of the function $d_i^f([\cdot,sh(\cdot)],sh(\cdot))$, we
have $\delta<d_i^f([z,sh(z)],sh(z))<\Delta$ for some positive
$\delta$ and $\Delta$. Lemma~\ref{monotonicity} guarantees even
more:
\begin{equation}
\begin{split}
\label{bounded_shift} \forall x\in\mathbb T^d \;\;\text{and}\;\;
y\in U(x),\mathcal O(x,y)=\mathcal
O(x_0,x_1)\;\;\text{such that}\;\; \hat d(x,y)\le a_0,\\
\;\;\text{we have}\;\;\;
d_i^f([x,y],y)<\Delta.\end{split}\end{equation}

From now on it is more convenient to work on the universal cover,
although formally we do not have to do it since we are working
inside of the leaves of\/ $W^f(i,m)$, which are isometric to their
lifts.

Let $x_n=sh^n(x_0)$, $n>0$. For every $n\ge0$, we have $\mathcal
O([x_n,x_{n+1}],x_{n+1})=O^+$ and
$d_i^f([x_n,x_{n+1}],x_{n+1})>\delta$. Lemma~\ref{monotonicity} also
tells us that the leaves of $U$ are monotone with respect to
foliation $W^f(i+1,m)$. Namely, for any $x\in \mathbb T^d$, the
intersection $U(x)\cap {W^f(i+1,m)(x)}$ is a connected piece of\/
$U(x)$.

Denote by $\overline{x_n,x_{n+1}}$ the piece of\/ $U(x_0)$ that lies
between $x_n$ and $x_{n+1}$. We know that for any $n\ge 0$,
$\overline{x_n,x_{n+1}}$ is confined between the leaves
${W^f(i,m-1)(x_n)}$ and ${W^f(i,m-1)(x_{n+1})}$.
Lemma~\ref{monotonicity} guarantees that $\overline{x_n,x_{n+1}}$ is
also confined between ${W^f(i+1,m)(x_n)}$ and
${W^f(i+1,m)(x_{n+1})}$, as shown on Figure~\ref{9_rectangle}. Thus,
it makes sense to measure two different ``dimensions" of\/
$\overline{x_n,x_{n+1}}$. Namely, let $a_n=\hat d(x_n, x_{n+1})$ and
$b_n=d_i^f([x_n,x_{n+1}],x_{n+1})$. As we have remarked earlier
$b_n>\delta>0$ and $a_n=a_0$ by the definition of\/ $\hat d$ and
$sh$.


\begin{figure}[htbp]
\begin{center}

\begin{picture}(0,0)%
\includegraphics{9_rectangle.pstex}%
\end{picture}%
\setlength{\unitlength}{3947sp}%
\begingroup\makeatletter\ifx\SetFigFont\undefined%
\gdef\SetFigFont#1#2#3#4#5{%
\reset@font\fontsize{#1}{#2pt}%
\fontfamily{#3}\fontseries{#4}\fontshape{#5}%
\selectfont}%
\fi\endgroup%
\begin{picture}(6901,5687)(-449,-3267)
\put(2101,2264){\makebox(0,0)[lb]{\smash{{\SetFigFont{12}{14.4}{\rmdefault}{\mddefault}{\updefault}{\color[rgb]{0,0,0}$b_n$}%
}}}}
\put(4801,-811){\makebox(0,0)[lb]{\smash{{\SetFigFont{12}{14.4}{\rmdefault}{\mddefault}{\updefault}{\color[rgb]{0,0,0}$a_n$}%
}}}}
\put(526,-2011){\makebox(0,0)[lb]{\smash{{\SetFigFont{12}{14.4}{\rmdefault}{\mddefault}{\updefault}{\color[rgb]{0,0,0}$x_n$}%
}}}}
\put(3553,887){\makebox(0,0)[lb]{\smash{{\SetFigFont{12}{14.4}{\rmdefault}{\mddefault}{\updefault}{\color[rgb]{0,0,0}$x_{n+1}$}%
}}}}
\put(1651,-1936){\makebox(0,0)[lb]{\smash{{\SetFigFont{12}{14.4}{\rmdefault}{\mddefault}{\updefault}{\color[rgb]{0,0,0}$\overline{x_nx_{n+1}}$}%
}}}}
\put(1876,914){\makebox(0,0)[lb]{\smash{{\SetFigFont{12}{14.4}{\rmdefault}{\mddefault}{\updefault}{\color[rgb]{0,0,0}$V_i^f(x_{n+1})$}%
}}}}
\put(4951,-2461){\makebox(0,0)[lb]{\smash{{\SetFigFont{12}{14.4}{\rmdefault}{\mddefault}{\updefault}{\color[rgb]{0,0,0}$W^f(i,m-1)(x_n)$}%
}}}}
\put(4576,1139){\makebox(0,0)[lb]{\smash{{\SetFigFont{12}{14.4}{\rmdefault}{\mddefault}{\updefault}{\color[rgb]{0,0,0}$W^f(i,m-1)(x_{n+1})$}%
}}}}
\put(1921,1679){\makebox(0,0)[lb]{\smash{{\SetFigFont{12}{14.4}{\rmdefault}{\mddefault}{\updefault}{\color[rgb]{0,0,0}$W^f(i+1,m)(x_{n+1})$}%
}}}}
\put(-449,1751){\makebox(0,0)[lb]{\smash{{\SetFigFont{12}{14.4}{\rmdefault}{\mddefault}{\updefault}{\color[rgb]{0,0,0}$W^f(i+1,m)(x_n)$}%
}}}}
\put(349,887){\makebox(0,0)[lb]{\smash{{\SetFigFont{12}{14.4}{\rmdefault}{\mddefault}{\updefault}{\color[rgb]{0,0,0}$[x_n,x_{n+1}]$}%
}}}}
\end{picture}%

\end{center}
\caption{Piece $\overline{x_nx_{n+1}}$ is ``monotone" with respect
to foliation $W^f(i,m-1)$. By Lemma~\ref{monotonicity}
$\overline{x_nx_{n+1}}$ is also ``monotone" with respect to
$W^f(i+1,m)$: the intersections of\/ $\overline{x_nx_{n+1}}$ with
local leaves of\/ $W^f(i+1,m)$ are points or connected components
of\/ $\overline{x_nx_{n+1}}$. On this picture foliations
$W^f(i,m-1)$ and $W^f(i+1,m)$ are
two-di\-men\-si\-on\-al.}\label{9_rectangle}
\end{figure}

These ``dimensions" behave nicely under the dynamics. Namely,
\begin{equation*}
\forall N>0 \;\;
(f_*)^{-N}(b_n)\stackrel{\mathrm{def}}{=}d_i^f([f^{-N}(x_n),f^{-N}(x_{n+1})],f^{-N}(x_{n+1}))\ge\delta\beta_i^{-N}
\end{equation*}
and
$$
\forall N>0 \;\; (f_*)^{-N}(a_n)\stackrel{\mathrm{def}}{=}\hat
d(f^{-N}(x_n),f^{-N}(x_{n+1}))= a_0\lambda_m^{-N}.
$$
Recall that $\lambda_m>\beta_i$.

The idea now is to show that the leaf\/ $U(f^{-N}(x_0))$ is lying
``too close" to $W^f(i,m-1)(x_0)$ for $N$ large, which would lead to
a contradiction.

Take $N$ large and let $M=\lfloor \lambda_m^N\rfloor$. Then
\begin{align}
\label{v_size} \hat d(f^{-N}(x_0),f^{-N}(x_M))&=\sum_{j=0}^{M-1}\hat d(f^{-N}(x_j),f^{-N}(x_{j+1}))\\
&= \sum_{j=0}^{M-1}(f_*)^{-N}(a_j)= M a_0 \lambda_m^{-N}\le
a_0.\nonumber
\end{align}
The first equality holds since the holonomy along $W^f(i,m-1)$ is
isometric with respect to $\hat d$.

To estimate $d_i^f([f^{-N}(x_0),f^{-N}(x_M)],f^{-N}(x_M))$ in a
similar way we need to have control over holonomies along
$W^f(i+1,m)$.

Fix two small one-di\-men\-si\-on\-al transversals $T(x)\subset
V_i^f(x)$ and $T(y)\subset V_i^f(y)$, $y\in U(x)$ with $\hat
d(x,y)\le a_0$. This condition ensures that the distance between $x$
and $y$ along $W^f(i,m)(x)$ is uniformly bounded from above. To see
this we only need to bound the distance between $h(x)$ and $h(y)$
along $W^g(i,m)(h(x))$. This, in turn, is a direct consequence of
Lemma~\ref{distance} applied to $g$ since $h(y)\in V_m^g(h(x))$.

Consider the holonomy map along $W^f(i+1,m)\;$ $H\colon T(x)\to
T(y)$. This holonomy can be viewed as the holonomy along
$W^f(i+1,k)$. Recall that  $W^f(i+1,k)$ is the fast unstable
foliation. Since $f$ is at least $C^2$-differentiable, $W^f(i+1,k)$
is Lipschitz inside of\/ $W^f(i,k)$. Moreover, since the distance
between $x$ and $y$ is bounded from above, the Lipschitz constant
$C_{Hol}$ of\/ $H$ is uniform in $x$ and $y$. For a proof,
see~\cite{LY}, Section 4.2. They prove that the unstable foliation
is Lipschitz within center-unstable leaves but the proof goes
through for $W^f(i+1,k)$ within the leaves of\/ $W^f(i,k)$.

Let $\tilde x_j=W^f(i+1,m)(f^{-N}(x_j))\cap V_i^f(f^{-N}(x_M))$,
$j=1,\ldots M$. Then
\begin{align*}
d_i^f([f^{-N}(x_0),f^{-N}(x_M)],&f^{-N}(x_M))\\&=
\sum_{j=0}^{M-1}d_i^f(\tilde x_j,\tilde x_{j+1})\\&\ge
C_{Hol}\sum_{j=0}^{M-1}d_i^f([f^{-N}(x_j),f^{-N}(x_{j+1})],f^{-N}(x_{j+1}))\\&=C_{Hol}\sum_{j=0}^{M-1}
(f_*)^{-N}(b_j)\\  &\ge C_{Hol}M\delta \beta_i^{-N}.
\end{align*}


\begin{figure}[htbp]
\begin{center}

\begin{picture}(0,0)%
\includegraphics{10_flatness.pstex}%
\end{picture}%
\setlength{\unitlength}{3947sp}%
\begingroup\makeatletter\ifx\SetFigFont\undefined%
\gdef\SetFigFont#1#2#3#4#5{%
\reset@font\fontsize{#1}{#2pt}%
\fontfamily{#3}\fontseries{#4}\fontshape{#5}%
\selectfont}%
\fi\endgroup%
\begin{picture}(6630,2392)(-503,-1469)
\put(526,-1111){\makebox(0,0)[lb]{\smash{{\SetFigFont{12}{14.4}{\rmdefault}{\mddefault}{\updefault}{\color[rgb]{0,0,0}$f^{-N}(x_1)$}%
}}}}
\put(1501,-811){\makebox(0,0)[lb]{\smash{{\SetFigFont{12}{14.4}{\rmdefault}{\mddefault}{\updefault}{\color[rgb]{0,0,0}$f^{-N}(x_j)$}%
}}}}
\put(2776,-511){\makebox(0,0)[lb]{\smash{{\SetFigFont{12}{14.4}{\rmdefault}{\mddefault}{\updefault}{\color[rgb]{0,0,0}$f^{-N}(x_{j+1})$}%
}}}}
\put(-149,689){\makebox(0,0)[lb]{\smash{{\SetFigFont{12}{14.4}{\rmdefault}{\mddefault}{\updefault}{\color[rgb]{0,0,0}$\tilde x_0$}%
}}}}
\put(451,689){\makebox(0,0)[lb]{\smash{{\SetFigFont{12}{14.4}{\rmdefault}{\mddefault}{\updefault}{\color[rgb]{0,0,0}$\tilde x_1$}%
}}}}
\put(-503,-361){\makebox(0,0)[lb]{\smash{{\SetFigFont{12}{14.4}{\rmdefault}{\mddefault}{\updefault}{\color[rgb]{0,0,0}$a_0$}%
}}}}
\put( 49,-1411){\makebox(0,0)[lb]{\smash{{\SetFigFont{12}{14.4}{\rmdefault}{\mddefault}{\updefault}{\color[rgb]{0,0,0}$f^{-N}(x_0)$}%
}}}}
\put(1651,767){\makebox(0,0)[lb]{\smash{{\SetFigFont{12}{14.4}{\rmdefault}{\mddefault}{\updefault}{\color[rgb]{0,0,0}$\tilde x_j$}%
}}}}
\put(2776,767){\makebox(0,0)[lb]{\smash{{\SetFigFont{12}{14.4}{\rmdefault}{\mddefault}{\updefault}{\color[rgb]{0,0,0}$\tilde x_{j+1}$}%
}}}}
\put(3376,767){\makebox(0,0)[lb]{\smash{{\SetFigFont{12}{14.4}{\rmdefault}{\mddefault}{\updefault}{\color[rgb]{0,0,0}$V_i^f(\tilde x_0)$}%
}}}}
\put(4201,719){\makebox(0,0)[lb]{\smash{{\SetFigFont{12}{14.4}{\rmdefault}{\mddefault}{\updefault}{\color[rgb]{0,0,0}$\tilde x_{M-1}$}%
}}}}
\put(4897,683){\makebox(0,0)[lb]{\smash{{\SetFigFont{12}{14.4}{\rmdefault}{\mddefault}{\updefault}{\color[rgb]{0,0,0}$f^{-N}(x_M)$}%
}}}}
\put(4201,161){\makebox(0,0)[lb]{\smash{{\SetFigFont{12}{14.4}{\rmdefault}{\mddefault}{\updefault}{\color[rgb]{0,0,0}$f^{-N}(x_{M-1})$}%
}}}}
\end{picture}%

\end{center}
\caption{Small rectangles along the leaf\/ $U(f^{-N}(x_0))$ are very
``flat" according to the estimates on $(f_*)^{-N}(b_n)$ and
$(f_*)^{-N}(a_n)$. Together with the Lipschitz property of the
foliation $W^f(i+1,m)$, this provides an estimate from below on the
horizontal size $d_i^f(\tilde x_0,
f^{-N}(x_M))$.}\label{10_flatness}
\end{figure}
The holonomy constant $C_{Hol}$ is uniform since
$$\hat d(f^{-N}(x_j),\tilde x_j)\le\hat d(f^{-N}(x_0),\tilde x_j)=\hat
d(f^{-N}(x_0),f^{-N}(x_M))\le a_0$$ by~(\ref{v_size}). Notice that
$C_{Hol}M\delta \beta_i^{-N}$ can be arbitrarily big when
$N\to\infty$, while $d(f^{-N}(x_0),f^{-N}(x_M))\le a_0$ which
contradicts to~(\ref{bounded_shift}). Hence the induction step is
established.
\end{proof}

\subsection{Induction step 2: proof of Lemma~\ref{step2} by transitive point argument}
\label{ind_step2} The proof of Lemma~\ref{step2} is carried out in a
way similar to the proofs of Lemmas~4 and~5 from~\cite{GG}. Here we
overview the scheme and deal with the complications that arise due
to higher dimension.

First, using the assumption on the p.~d., we argue that $h$ is
uniformly Lipschitz along $V_m^f$, \ie for any point $x$ the
restriction $h\big|_{V_m^f(x)}\colon V_m^f(x)\to V_m^g(x)$ is a
Lipschitz map with a Lipschitz constant  that does not depend on
$x$.  At this step, the assumption on the p.~d. along $V_m^f$ is
used.

The Lipschitz property implies differentiability at almost every
point with respect to the Lebesgue measure on the leaves of\/
$V_m^f$. The next step is to show that the differentiability of\/
$h$ along $V_m^f$ at a transitive point $x$ implies that $h$ is
$C^{1+\nu}$-differentiable along $V_m^f$. This is done by a direct
approximation argument (see Step~1 in Section 4.3 in~\cite{GG}). The
transitive point $x$ ``spreads differentiability" all over the
torus.

Last but not least, we need to find such a transitive point $x$.
Ideally, for that we would find an ergodic measure $\mu$ with full
support such that the foliation $V_m^f$ is absolutely continuous
with respect to $\mu$. Then, by the Birkhoff Ergodic Theorem, almost
every point would be transitive. Since $V_m^f$ is absolutely
continuous, we would then have that almost every point, with respect
to the Lebesgue measure on the leaves, is transitive. Hence we would
have a full measure set of the points that we are looking for.

Unfortunately, we cannot carry out the scheme described above. The
problem is that the foliation $V_m^f$ is not absolutely continuous
with respect to natural ergodic measures (see~\cite{GG} for detailed
discussion and~\cite{SX} for in-depth analysis of this phenomenon).
Instead, we construct a measure $\mu$ such that almost every point
is transitive and $V_m^f$ is absolutely continuous with respect to
$\mu$. This is clearly sufficient.

The construction follows the lines of Pesin--Sinai's~\cite{PS}
construction of\/ $u$-Gibbs measures. Given a partially hyperbolic
diffeomorphism, they construct a measure such that the unstable
foliation is absolutely continuous with respect to the measure. In
fact, this construction works well for any expanding foliation. We
apply this construction to  $m$-dimensional foliation $W_m^f$.

The construction is described as follows. Let $x_0$ be a fixed point
of\/ $f$. For any $y\in W_m^f(x_0)$, define
$$\rho(y)=\prod_{n\ge 0}\frac{J_m^f(f^{-n}(y))}{J_m^f(x_0)},$$
where $J_m^f$ is the Jacobian of\/ $f\big|_{W_m^f}$.

Let $\mathcal V_0$ be an open bounded neighborhood of\/ $x_0$ in
$W_m^f(x_0)$. Consider a probability measure $\eta_0$ supported on
$\mathcal V_0$ with density proportional to $\rho(\cdot)$. For
$n>0$, define
$$\mathcal V_n=f^n(\mathcal V_0),\; \eta_n=(f^n)_*\eta_0.$$
Let
$$\mu_n=\frac1n\sum_{i=0}^{n-1}\eta_i.$$
By the Krylov--Bogoljubov Theorem, $\{\mu_n; n\ge 0\}$ is weakly
compact and any of its limits is $f$-invariant. Let $\mu$ be an
accumulation point of\/ $\{\mu_n; n\ge 0\}$. This is the measure
that we are looking for.

Foliation $W_m^f$ is absolutely continuous with respect to $\mu$. We
refer to~\cite{PS} or~\cite{GG} for the proof. The proof
of~\cite{GG} requires some minimal modifications that are due to the
higher dimension of\/ $W_m^f$.

Since the foliation $W_m^f$ is conjugate to the linear foliation
$W_m^L$  we have that for any open ball $B$,
$$
\exists R>0\;\;\;{\bigcup_{y\in B} W_m^f(y, R)}=\mathbb T^d,
$$
where $W_m^f(y,R)$ is a ball of radius $R$ inside of the leaf\/
$W_m^f(y)$. Together with absolute continuity, this guarantees that
$\mu$ almost every point is transitive. See~\cite{GG}, Section~4.3,
Step~3 for the proof. We stress that we do not need to know that
$\mu$ has full support in that argument.

It is left to show that the conjugacy $h$ is
$C^{1+\nu}$-differentiable in the direction of\/ $V_m^f$ at $\mu$
almost every point. For this we need to argue that $V_m^f$ is
absolutely continuous with respect to $\mu$.

The foliation $W^f(m,k)$ is Lipschitz inside of a leaf of\/ $W^f$
(again we refer to~\cite{LY}, Section 4.2). Hence
$V_m^f=W^f(m,k)\cap W_m^f$ is Lipschitz inside of a leaf of\/
$W_m^f=W^f\cap W_m^f$. So  $V_m^f$ is absolutely continuous with
respect to the Lebesgue measure on a leaf of\/ $W_m^f$ while $W_m^f$
is absolutely continuous with respect to $\mu$. Therefore $V_m^f$ is
absolutely continuous with respect to $\mu$.


\subsection{Induction step 1 revisited}
\label{step1_revisited}

To carry out the proof of Lemma~\ref{step1} assuming Property
$\mathcal A$ only, we shrink the neighborhood $\mathcal U$ even
more. In addition to~(\ref{angles}) and~(\ref{phd}), we require
$f\in\mathcal U$ to have a narrow spectrum. Namely,
$$
\forall m,\; 1<m\le k \;\;\; \frac{\log \tilde\beta_m}{\log
\beta_m}>\frac{\log \beta_{m-1}}{\log\tilde\beta_m}
$$
and the analogous condition on the contraction rates $\alpha_j$,
$\tilde \alpha_j$. The following condition that we will actually use
is obviously a consequence of the above one.
\begin{equation}
\label{exponents} \forall i<k\;\;\;\text{and}\;\;\;\forall m,
\;i<m\le k, \;\;\;\rho\stackrel{\mathrm{def}}{=}\frac{\log
\tilde\beta_m}{\log \beta_m}>\frac{\log \beta_i}{\log\tilde\beta_m}.
\end{equation}
This inequality can be achieved by shrinking the size of\/ $\mathcal
U$ since $\beta_j$ and $\tilde \beta_j$ get arbitrarily close to
$\lambda_j$, $j=1,\ldots k$.

\begin{remark}
Condition~(\ref{exponents}) greatly simplifies the proof of
Lemma~\ref{step1}. We have yet another, longer, proof (but based on
the same idea) of Lemma~\ref{step1} that works for any $f$ with
Property $\mathcal A$ in $\mathcal U$ as defined in
Section~\ref{scheme}. It will not appear here.
\end{remark}

We start the proof as in Section~\ref{proof_step1}. The first place
where we use Property $\mathcal A'$ is the proof of
Lemma~\ref{monotonicity}. So we reprove induction step 1 with
Property $\mathcal A$ only assuming that we have got everything that
preceded Lemma~\ref{monotonicity}. With Property $\mathcal A$, the
proof of Lemma~\ref{monotonicity} still goes through, although
instead of~(\ref{first}), we get
\begin{equation*}
\label{first_new} \forall z\in \mathbb
T^d\;\;\;\text{either}\;\;\;[z,sh(z)]=
sh(z)\;\;\text{or}\;\;\mathcal O([z,sh(z)],sh(z))=O^+.
\end{equation*}
Thus we still have Lemma~\ref{monotonicity} and the upper
bound~(\ref{bounded_shift}) but not the lower bound
$d_i^f([z,sh(z)],sh(z))>\delta$. This is the reason why we cannot
proceed with the proof of the induction step as at the end of
Section~\ref{proof_step1}.

\begin{proof}[Proof of the induction step.]

As before, we need to show that $U$ subfoliates foliation
${W^f(i+1,m)}$.

Fix an orientation $\mathcal O$ on $V_m^f$ and $V_i^f$. Given
$x\in\mathbb T^d$ and $\varepsilon>0$ choose $\bar x\in V_m^f(x)$
such that $d_m^f(x,\bar x)=\varepsilon$ and $\mathcal O(x,\bar
x)=O^+$. Let $\bar y=U(x)\cap W^f(i,m-1)(\bar x)$ and
$y=V_i^f(x)\cap W^f(i+1,m)(\bar y)$. This way we define an
$\varepsilon$-``rectangle" $\EuScript R=\EuScript R(x,\bar x, y,
\bar y)$ with base point $x$, vertical size $d_m^f(x,\bar
x)=\varepsilon$, and horizontal size $d_i^f(x,y)=\bar\varepsilon$.
\begin{remark}
Notice that we measure vertical size in a way different from the one
in Section~\ref{proof_step1}.
\end{remark}
It is clear that this ``rectangle" is uniquely defined by its
``diagonal" $(x, \bar y)$ (Figure~\ref{9_rectangle} is the picture
of ``rectangle" with diagonal $(x_n,x_{n+1})$). Sometimes we will
use the notation $\EuScript R(x,\bar y)$. Note that by
Lemma~\ref{monotonicity}, $\mathcal O(x,y)$ does not depend on $x$
and $\varepsilon$. It also guarantees that the piece of\/ $U(x)$
between $x$ and $\bar y$ lies ``inside" of\/ $\EuScript R(x,\bar
y)$. The horizontal size $\bar \varepsilon$ might happen to be equal
to zero.

Next we define a set of base points $X_\varepsilon$ such that
$U(x)$, $x\in X_\varepsilon$, has big H\"older slope inside of
corresponding $\varepsilon$-rectangle,
$$
X_\varepsilon=\{x\in\mathbb T^d:\;\;
\bar\varepsilon\le\varepsilon^\delta\},
$$
with some $\delta$ satisfying inequality
$\rho>\delta>\log\beta_i/\log\tilde\beta_m$.

Let $\mu$ be the measure constructed in Section~\ref{ind_step2}.
Recall that $\mu$-almost every point is transitive. The foliation
$W^f(i,m)$ is absolutely continuous with respect to $\mu$. The
latter can be shown in the same way as absolute continuity of\/
$V_m^f$ is shown in Section~\ref{ind_step2}.

We consider two cases.

\smallskip
{\bfseries Case 1.} $\varlimsup\limits_{\varepsilon\to
0}\mu(X_\varepsilon)>0$.
\smallskip

Then choose $\{X_{\varepsilon_n}, n\ge 1\}$, $\varepsilon_n\to 0$ as
$n\to\infty$, such that $\varlimsup_{n\to\infty}
\mu(X_{\varepsilon_n})>0$.

The idea now is to iterate a rectangle with base point in
$X_{\varepsilon_n}$ and vertical size $\varepsilon_n$ until the
vertical size is approximately 1. Since the H\"older slope of the
initial rectangle was big, it will turn out that the horizontal size
of the iterated rectangle is extremely small. This argument will
show that for a set of base points of positive measure, the
horizontal size of rectangles is equal to zero. Hence the leaves
of\/ $U$ lie inside of the leaves of\/ $W^f(i+1,m)$.

Given $n$, let $N=N(n)$ be the largest number such that
$\frac1C\tilde\beta_m^N\varepsilon_n<1$ (constant $C$ here is from
Definition~(\ref{phd})). Take $x\in X_{\varepsilon_n}$ and the
corresponding $\varepsilon_n$-rectangle $\EuScript R(x,y,\bar x,\bar
y)$ and consider its image $\EuScript R(f^N(x),f^N(y),f^N(\bar
x),f^N(\bar y))$. The choice of\/ $N$ provides a lower bound on the
vertical size,
$$
VS\left(\EuScript R(f^N(x),f^N(y),f^N(\bar x),f^N(\bar y))\right)=
d_m^f(f^N(x),f^N(\bar x))\ge\frac{1}{\beta_m},
$$
while the horizontal size can be estimated as follows:
\begin{align*}
HS\left(\EuScript R(f^N(x),f^N(y),f^N(\bar x),f^N(\bar y))\right)&= d_i^f(f^N(x),f^N(y))\\
&\le C\beta_i^N\bar\varepsilon\le C\beta_i^N\varepsilon^\delta\\&\le
C\beta_i^N\left(\frac{C}{\tilde \beta_m^N}\right)^\delta=
C^{1+\delta}\left(\frac{\beta_i}{\tilde\beta_m^\delta}\right)^N.
\end{align*}

Rather than continuing to look at the rectangle  $\EuScript
R(f^N(x),f^N(y),f^N(\bar x),f^N(\bar y))$, we will now consider the
rectangle $\tilde {\EuScript R}(f^N(x))$ with base point $f^N(x)$
and fixed vertical size $1/\beta_m$. Lemma~\ref{monotonicity},
together with the estimate on the vertical size of\, $\EuScript
R(f^N(x),f^N(y),f^N(\bar x),f^N(\bar y))$, guarantees that
horizontal size of\/ $\tilde {\EuScript R}(f^N(x))$ is less than
$C^{1+\delta}\left({\beta_i}/{\tilde\beta_m^\delta}\right)^N$ as
well.

Thus, for every $x\in f^N(X_{\varepsilon_n})$, the horizontal size
of\/ $\tilde{\EuScript R}(x)=\tilde{\EuScript R}(x,z, \tilde x,
\tilde z)$ is less than
$C^{1+\delta}\left({\beta_i}/{\tilde\beta_m^\delta}\right)^N$. Note
that $\left({\beta_i}/{\tilde\beta_m^\delta}\right)^N\to 0$ as
$n\to\infty$ since ${\beta_i}/{\tilde\beta_m^\delta}<1$ and
$N\to\infty$ as $n\to\infty$.

Let $X=\varlimsup_{n\to\infty}f^N(X_{\varepsilon_n})$. Since any
$x\in X$ also belongs to $f^N(X_{\varepsilon_n})$ with arbitrarily
large $N$ we conclude that $\tilde{\EuScript R}(x)$ has zero
horizontal size, \ie $x=z$. Hence by Lemma~\ref{monotonicity} we
conclude that the piece of\/ $U(x)$ from $x$ to $\tilde z$ lies
inside of\/ $W^f(i+1,m)(x)$.

It is a simple exercise in measure theory to show that
$$
\mu(X)\ge\varlimsup_{n\to\infty}\mu(f^N(X_{\varepsilon_n}))=\varlimsup_{n\to\infty}\mu(X_{\varepsilon_n})>0.
$$
Finally recall that $\mu$-almost every point is transitive,
($\overline{\{f^j(x),\;j\ge1\}}=\mathbb T^d$). Hence by taking a
transitive point $x\in X$ and applying a straightforward
approximation argument, we get that $\forall y$ $\;U(y)\subset
W^f(i+1,m)(y)$.

\smallskip
{\bfseries Case 2.} $\varlimsup\limits_{\varepsilon\to
0}\mu(X_\varepsilon)=0$.
\smallskip

In this case, the idea is to use the assumption above to find a
leaf\/ $U(x)$ which is ``flat", \ie arbitrarily close to
$W^f(i,m-1)(x)$.  Since the leaf\/ $U(x)$ has to ``feel" the measure
$\mu$, we need to take it together with a small neighborhood. The
choice of this neighborhood is done by multiple applications of the
pigeonhole principle.

Given a point $\bar y\in U(x)$, denote by $U_{x\bar y}$ the piece
of\/ $U(x)$ between $x$ and $\bar y$. As before, by $\EuScript
R(x,\bar y)$ we denote the rectangle spanned by $x$ and $\bar y$.
Recall that $HS(\EuScript R(x,\bar y))$ and $VS(\EuScript R(x,\bar
y))$ stand for the horizontal and vertical sizes of\/ $\EuScript
R(x,\bar y)$. We will also need to measure the sizes of\/ $U_{x\bar
y}$. Let $HS(U_{x\bar y})=HS(\EuScript R(x,\bar y))$ and
$VS(U_{x\bar y})=VS(\EuScript R(x,\bar y))$.


{\bfseries Iterating Pigeonhole Principle.} Divide $\mathbb T^d$
into finite number of tubes $\EuScript T_1$, $\EuScript T_2,\ldots$
$\EuScript T_q$ foliated by $U$ such that any connected component
of\/ $U(x)\cap\EuScript T_j$, $j=1,\ldots q$, has vertical size
between $S_0$ and $S_1$. The numbers $S_0$ and $S_1$ are fixed,
$0<S_0<S_1$. We also require every tube $\EuScript T_j$ to be
$W^f(i,m-1)$-foliated so it can be represented as
$$
\EuScript T_j=\bigcup_{y\in \text{Transv}}\text{Plaque}(y),
$$
where $\text{Transv}$ is a plaque of\/ $U$ and $\text{Plaque}(y)$
are plaques of\/ $W^f(i,m-1)$.

Given a small number $\tau>0$, we can find an $\varepsilon>0$ such
that $\mu(X_{\varepsilon})<\tau$. Then by the pigeonhole principle
we can choose a tube $\EuScript T_j$ such that $\mu(\EuScript
T_j)\neq 0$ and
$$
\frac{\mu(\EuScript T_j\cap X_\varepsilon)}{\mu(\EuScript
T_j)}<\tau.
$$

The tube $\EuScript T_j$ can be represented as $\EuScript
T_j=\bigcup_{z\in \hat{\EuScript T}_j}W(z)$, where $\hat{\EuScript
T}_j$ is a transversal to $W^f(i,m)$ and $W(z)$, $z\in
\hat{\EuScript T}_j$, are connected plaques of\/ $W^f(i,m)$. By
absolute continuity,
$$
\mu(\EuScript T_j)=\int\limits_{\hat{\EuScript
T}_j}d\hat\mu(z)\int\limits_{W(z)}d\mu_{W(z)},
$$
where $\hat\mu$ is the factor measure on $\hat{\EuScript T}_j$ and
$\mu_{W(z)}$ is the conditional measure on $W(z)$.

Applying the pigeonhole principle again, we choose $W=W(z)$ such
that
$$
\mu_W(W\cap X_\varepsilon)<\tau.
$$
Recall that $\mu_W(W)=1$ by definition of the conditional measure
and $\mu_W$ is equivalent to the induced Riemannian volume on $W$ by
the absolute continuity of\/ $W^f(i,m)$.

The plaque $W$ is subfoliated by plaques of\/ $U$ of sizes between
$S_0$ and $S_1$. Unfortunately, we do not know if\/ $U$ is
absolutely continuous with respect to $\mu_W$. So we construct a
finite partition of\/ $W$ into smaller plaques of\/ $W^f(i,m)$ which
are  thin $U$-foliated tubes.

To construct this partition, we switch to $h(W)$, which is a plaque
of\/ $W^g(i,m)$ subfoliated by the plaques of\/ $h(U)=V_m^g$. The
partition $\{\tilde{\EuScript T}_1, \tilde{\EuScript T}_2,\ldots
\tilde{\EuScript T}_p\}$ will consist of\/ $V_m^g$-tubes inside of\/
$h(W)$ that can be represented as
$$
\tilde{\EuScript T}_j=\bigcup_{z\in\hat{\EuScript
T}_j}V(z),\;\;j=1,\ldots p,
$$
where $\hat{\EuScript T}_j$ is a transversal to $V_m^g$ inside of\/
$h(W)$ and $V(z)$ are plaques of\/ $V_m^g$. For every $j=1,\ldots
p$, choose $z_j\in\hat{\EuScript T}$. Then the tube
$\tilde{\EuScript T}_j$ can also be represented as
$$
\tilde{\EuScript T}_j=\bigcup_{y\in V(z_j)}\tilde P_j(y),
$$
where $\tilde P_j(y)\subset W^g(i,m-1)(y)$ are connected plaques.

Recall that $V_m^g$ is Lipschitz inside of\/ $W^g(i,m)$. Hence for
any $\xi>0$ it is possible to find a partition $\{\tilde{\EuScript
T}_1, \tilde{\EuScript T}_2,\ldots \tilde{\EuScript T}_p\}$,
$p=p(\xi)$, such that
\begin{multline}
\label{tilde_tube_size} \forall j=1,\ldots p\;\;\forall y\in
V(z_j)\;\;\; \exists B_j(\tilde C_1\xi), B_j(\tilde
C_2\xi)\subset W^g(i,m-1)(y)\\
\;\;\;\text{such that}\;\;\;\; B_j(\tilde C_1\xi)\subset\tilde
P_j(y)\subset B_j(\tilde C_2\xi),
\end{multline}
where $B_j(\tilde C_1\xi)$ and  $B_j(\tilde C_2\xi)$ are balls
inside of\/ $(W^g(i,m-1)(y)$, induced Riemannian distance) of radii
$\tilde C_1\xi$ and $\tilde C_2\xi$ respectively. The constants
$\tilde C_1$ and $\tilde C_2$ are independent of\/ $\xi$. Since we
are working in a bounded plaque $h(W)$ they also do not depend on
any other choices but $S_1$.

In the sequel we will need to take $\xi$ to be much smaller than
$\varepsilon$.

Now we pool this partition back into a partition of\/ $W$.
$$
\{{\EuScript T}_1, {\EuScript T}_2,\ldots {\EuScript T}_p\}=\{
h^{-1}(\tilde{\EuScript T}_1), h^{-1}(\tilde{\EuScript T}_2),\ldots
h^{-1}(\tilde{\EuScript T}_p)\}. $$ Although we use the same
notation for this partition, it is clearly different from the
initial partition of\/ $\mathbb T^d$.

Each tube $\EuScript T_j$ can be represented as
\begin{equation}
\label{P_plaque} {\EuScript T_j}=\bigcup_{y\in U(h^{-1}(z_j))}
P_j(y),
\end{equation}
where $P_j(y)=h^{-1}(\tilde P_j(y))\subset W^f(i,m-1)(y)$.


\begin{figure}[htbp]
\begin{center}

\begin{picture}(0,0)%
\includegraphics{11_tubes.pstex}%
\end{picture}%
\setlength{\unitlength}{3947sp}%
\begingroup\makeatletter\ifx\SetFigFont\undefined%
\gdef\SetFigFont#1#2#3#4#5{%
\reset@font\fontsize{#1}{#2pt}%
\fontfamily{#3}\fontseries{#4}\fontshape{#5}%
\selectfont}%
\fi\endgroup%
\begin{picture}(3999,1495)(-236,-794)
\put(226,-736){\makebox(0,0)[lb]{\smash{{\SetFigFont{12}{14.4}{\rmdefault}{\mddefault}{\updefault}{\color[rgb]{0,0,0}$W$}%
}}}}
\put(2701,-736){\makebox(0,0)[lb]{\smash{{\SetFigFont{12}{14.4}{\rmdefault}{\mddefault}{\updefault}{\color[rgb]{0,0,0}$h(W)$}%
}}}}
\put(301, 14){\makebox(0,0)[lb]{\smash{{\SetFigFont{12}{14.4}{\rmdefault}{\mddefault}{\updefault}{\color[rgb]{0,0,0}$\EuScript T_1$}%
}}}}
\put(826, 14){\makebox(0,0)[lb]{\smash{{\SetFigFont{12}{14.4}{\rmdefault}{\mddefault}{\updefault}{\color[rgb]{0,0,0}$\EuScript T_j$}%
}}}}
\put(2401,314){\makebox(0,0)[lb]{\smash{{\SetFigFont{12}{14.4}{\rmdefault}{\mddefault}{\updefault}{\color[rgb]{0,0,0}$\tilde{\EuScript T}_1$}%
}}}}
\put(3151,314){\makebox(0,0)[lb]{\smash{{\SetFigFont{12}{14.4}{\rmdefault}{\mddefault}{\updefault}{\color[rgb]{0,0,0}$\tilde{\EuScript T}_j$}%
}}}}
\put(3751,314){\makebox(0,0)[lb]{\smash{{\SetFigFont{12}{14.4}{\rmdefault}{\mddefault}{\updefault}{\color[rgb]{0,0,0}$\tilde{\EuScript T}_p$}%
}}}}
\put(1693,-169){\makebox(0,0)[lb]{\smash{{\SetFigFont{12}{14.4}{\rmdefault}{\mddefault}{\updefault}{\color[rgb]{0,0,0}$h$}%
}}}}
\put(1426, 71){\makebox(0,0)[lb]{\smash{{\SetFigFont{12}{14.4}{\rmdefault}{\mddefault}{\updefault}{\color[rgb]{0,0,0}$\EuScript T_p$}%
}}}}
\end{picture}%

\end{center}
\caption{We construct the partition $\{{\EuScript T}_1, {\EuScript
T}_2,\ldots {\EuScript T}_p\}$ as a pullback of the partition of\/
$h(W)$ by $V_m^g$-tubes. The foliation $V_m^g$ is Lipschitz and $h$
is continuously differentiable along $W^f(i,m-1)$. This guarantees
that the ``width" of a tube $\EuScript T_j$ is of the same order as
we move along $\EuScript T_j$~(\ref{tube_size}). Hence $\mu_W$ is
``uniformly distributed" along $\EuScript T_j$.}\label{11_tubes}
\end{figure}


By Lemma~\ref{step2}, $h$ is continuously differentiable along
$W^f(i,m-1)$. Moreover, the derivative depends continuously on the
points in $W$. Hence property~(\ref{tilde_tube_size}) persists:
\begin{multline}
\label{tube_size} \forall j=1,\ldots p\;\;\forall y\in
U(h^{-1}(z_j))\;\;\; \exists B_j(C_1\xi), B_j(
C_2\xi)\subset W^f(i,m-1)(y)\\
\;\;\;\text{such that}\;\;\;\; B_j(C_1\xi)\subset P_j(y)\subset
B_j(C_2\xi).
\end{multline}
The constants $C_1$ and $C_2$ differ from $\tilde C_1$ and $\tilde
C_2$ by a finite factor due to the bounded distortion along
$W^f(i,m-1)$ by the differential of\/ $h$.

Applying the pigeonhole principle for the last time, we find
$\EuScript T\in \{{\EuScript T}_1, {\EuScript T}_2,\ldots {\EuScript
T}_p\}$ such that
\begin{equation}
\label{tau_measure} \frac{\mu_W(\EuScript T\cap
X_\varepsilon)}{\mu_W(\EuScript T)}<\tau.
\end{equation}
Take a plaque $U_{x\bar y}$ inside of\/ $\EuScript T$. By
construction,
$$
S_0<VS(U_{x\bar y})<S_1.
$$


{\bfseries Estimating horizontal size of\/ $U_{x\bar y}$ from
below.} We have constructed $U_{x\bar y}$ such that a lot of points
in the neighborhood of\/ $U_{x\bar y}$ $\EuScript T$ lie outside
of\/ $X_\varepsilon$. The corresponding $\varepsilon$-rectangles
$\EuScript R(x)$ have vertical size greater than
$\varepsilon^\delta$. It is clear that we can use this fact to show
that $VS(U_{x\bar y})$ is large.

Choose a sequence $\{x_0=x, x_1,\ldots x_N\}\subset U_{x\bar y}$
such that
$$
VS(\EuScript R(x_0,x_N))\ge S_0 \;\;\; \text{and} \;\;\;
VS(\EuScript R(x_j,x_{j+1}))=\varepsilon,\; j=0,\ldots N-1.
$$

First we estimate the number of rectangles $N$.
\begin{lemma}
\label{holder_holonomy} The holonomy map $\text{Hol}\colon T(a)\to
T(b)$, $b\in W^f(i,m)(a)$, $T(a)\subset V_m^f(a)$, $T(b)\subset
V_m^f(b)$, along $W^f(i,m-1)$ is H\"older-continuous with exponent
$$
\rho\stackrel{\mathrm{def}}{=}\frac{\log \tilde\beta_m}{\log
\beta_m}.
$$
\end{lemma}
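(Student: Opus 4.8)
The plan is to prove the following uniform, purely local statement, from which the lemma follows by a covering argument: there are constants $C>0$ and $\varepsilon_0>0$ such that for every admissible configuration $(a,b,T(a),T(b))$ of bounded size — which is the only situation in which the lemma is used — and all $x,y\in T(a)$ with $d_m^f(x,y)\le\varepsilon_0$ one has $d_m^f(\mathrm{Hol}(x),\mathrm{Hol}(y))\le C\,d_m^f(x,y)^{\rho}$. For $d_m^f(x,y)\ge\varepsilon_0$ there is nothing to prove, since holonomy along $W^f(i,m-1)$ takes bounded transversals to bounded transversals.

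The engine of the proof is the dynamical self-similarity of the holonomy: since $f$ preserves both $W^f(i,m-1)$ and $V_m^f$ and permutes their leaves, $\mathrm{Hol}=f^{-N}\circ\mathrm{Hol}_N\circ f^{N}$, where $\mathrm{Hol}_N$ is the holonomy along $W^f(i,m-1)$ between $V_m^f(f^N a)$ and $V_m^f(f^N b)$. Given $x,y$ with $s_0:=d_m^f(x,y)$ small, let $N$ be the largest integer with $d_m^f(f^N x,f^N y)\le 1$; this is finite because $V_m^f$ is uniformly expanding, and the two-sided rate bounds~(\ref{phd}) keep $d_m^f(f^N x,f^N y)$ in a fixed interval bounded away from $0$ and $1$ while also giving $\beta_m^{N}\ge c/s_0$, hence $\tilde\beta_m^{-N}\le C\,s_0^{\rho}$ with $\rho=\log\tilde\beta_m/\log\beta_m$. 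Applying $f^{-N}$ to the pair $\mathrm{Hol}_N(f^N x),\mathrm{Hol}_N(f^N y)\in V_m^f(f^N b)$ and using $\|Df^{-N}v\|\le C\tilde\beta_m^{-N}\|v\|$ on $E_m^f$, we will obtain
\[
d_m^f(\mathrm{Hol}(x),\mathrm{Hol}(y))\le C\,\tilde\beta_m^{-N}\,d_m^f\!\big(\mathrm{Hol}_N(f^N x),\mathrm{Hol}_N(f^N y)\big)\le C'\,s_0^{\rho},
\]
provided the displacement produced by $\mathrm{Hol}_N$ on unit transversals is bounded independently of $N$.

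Establishing that uniform macroscopic bound is the one nonroutine ingredient. I claim there is $C_0$, depending on nothing, such that $\mathrm{Hol}_N$ maps any $d_m^f$-transversal of diameter $\le 1$ inside $W^f(i,m)$ onto one of diameter $\le C_0$, even though the two $V_m^f$-leaves it connects may be a distance of order $\beta_{m-1}^{N}$ apart. To see this I would transfer the picture inside the leaf to the linear model by the conjugacy $h_f$, $h_f\circ f=L\circ h_f$: by Lemma~\ref{weak_match} (used for $h_f$ exactly as in the proof of Lemma~\ref{distance}) and its routine analogues for the faster flags $W^f(i,k)$, the map $h_f$ carries $W^f(i,m-1)$ onto the foliation of $W^L(i,m)$ by parallel affine subspaces and $V_m^f$ onto the parallel-line foliation $V_m^L$, and $h_f|_{W^f(i,m)}$ is a bi-H\"older quasi-isometry because $h_f-\mathrm{Id}$ is bounded. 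In the linear picture the holonomy between two $V_m^L$-leaves is a translation, hence an isometry, no matter how far apart the leaves sit; pushing this statement back through the globally H\"older maps $h_f^{\pm1}$ yields the desired bound $C_0$ on unit transversals.

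The main obstacle is precisely this macroscopic step: forward iteration is forced on us (it is what makes $d_m^f(f^nx,f^ny)$ grow to unit size), but it simultaneously pushes the two $V_m^f$-leaves apart, so one cannot remain in a single chart and invoke local continuity of the merely H\"older foliation $W^f(i,m-1)$; it is the global linear structure of the leaf that keeps holonomy bounded at all scales, and, combined with the count $\tilde\beta_m^{-N}\le C\,s_0^{\rho}$, fixes the H\"older exponent at exactly $\rho$. Everything else — specifying the admissible class of configurations, verifying that $b$ and the transversals $T(a),T(b)$ stay of bounded size so that all constants are uniform, and the elementary inequality $N\asymp\log(1/s_0)$ — is routine.
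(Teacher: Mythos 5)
Your proof is correct and follows the paper's own argument: iterate $f^N$ until the $V_m^f$-separation reaches unit scale, invoke a uniform macroscopic bound on the $W^f(i,m-1)$-holonomy between $V_m^f$-leaves (which is precisely Lemma~\ref{distance} together with the Remark after it extending it from $W_{m-1}^f$ to $W^f(i,m-1)$; you rederive its content inline via $h_f$ and the boundedness of $h_f-\mathrm{Id}$), and contract back by $f^{-N}$ to read off the exponent $\rho=\log\tilde\beta_m/\log\beta_m$. The only cosmetic difference is that you record the forward estimate $d_m^f(\mathrm{Hol}(x),\mathrm{Hol}(y))\le C\,d_m^f(x,y)^\rho$, using the upper bound from Lemma~\ref{distance}, while the paper records the dual $d_m^f(x,y)\le C_{\mathrm{Hol}}\,d_m^f(\mathrm{Hol}(x),\mathrm{Hol}(y))^\rho$ using the lower bound; since holonomy between $V_m^f$-transversals is its own inverse under swapping $a$ and $b$, the two formulations are the same statement.
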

We postpone the proof until the end of the current section.

Let $\tilde x_j=W^f(i,m-1)(x_j)\cap V_m^f(x_0)$, $j=0,\ldots N$.
Then according to the lemma above,
$$
d_m^f(\tilde x_{j-1},\tilde x_j)\le C_{\text{Hol}} VS(\EuScript
R(x_{j-1},x_j))^\rho=C_{\text{Hol}}\varepsilon^\rho,\;\;\;
j=1,\ldots N,
$$
which allows us to estimate $N$
$$
S_0\le VS(\EuScript R(x_0,x_N))=\sum_{j=1}^N d_m^f(\tilde
x_{j-1},\tilde x_j)\le NC_{\text{Hol}}\varepsilon^\rho.
$$
Hence
\begin{equation}
\label{n1_estimate} N\ge \frac{S_0}{C_{Hol}\varepsilon^\rho}.
\end{equation}

Along with the rectangles $\EuScript R(x_j,x_{j+1})$, let us
consider sets $A(x_j,x_{j+1})\subset\EuScript T$, $j=0,\ldots N-1$,
given by the formula
$$
A(x_j,x_{j+1})=\bigcup_{y\in U_{x_jx_{j+1}}} P(y),
$$
where $P(y)$ are the plaques of\/ $W^f(i,m-1)$ from the
representation~(\ref{P_plaque}) for $\EuScript T$. The sets
$A(x_j,x_{j+1})$ have the same vertical size. The following property
of these sets is a direct consequence of~(\ref{tube_size}) and the
fact that $\mu_W$ is equivalent to the Riemannian volume on $W$.
\begin{equation}
\label{volume_distortion} \exists C_{\text{univ}}\;\;\;\text{such
that}\;\;\; \forall j,\tilde j=1,\ldots N-1\;\;
\frac{1}{C_{\text{univ}}}<\frac{\mu_W(A(x_j,x_{j+1}))}{\mu_W(A(x_{\tilde
j},x_{\tilde j+1}))}<C_{\text{univ}}.
\end{equation}
The constant $C_{\text{univ}}$ depends on $C_1$, $C_2$ and size of\/
$W$, but is independent of\/ $\varepsilon$ and~$\xi$.

Let
$$
A_1=\bigcup_{\substack{j=1\\ j\;\; \text{\scriptsize is
odd}}}^{N-1}A(x_j,x_{j+1})\;\;\;\text{and}\;\;\;
A_2=\bigcup_{\substack{j=1\\ j\;\; \text{\scriptsize is
even}}}^{N-1}A(x_j,x_{j+1})
$$
It follows from~(\ref{tau_measure}) that either
$$
\frac{\mu_W(A_1\cap
X_\varepsilon)}{\mu_W(A_1)}<\tau\;\;\;\text{or}\;\;\;\frac{\mu_W(A_2\cap
X_\varepsilon)}{\mu_W(A_2)}<\tau.
$$
For concreteness, assume that the first possibility holds.

The bounds~(\ref{volume_distortion}) allow us to estimate the number
$N_1$ of sets $A(x_j,x_{j+1})\subset A_1$ that have a point $q_j\in
A(x_j,x_{j+1})$ such that $q_j\notin X_\varepsilon$.
$$
N_1\ge\left\lfloor\frac {N}{2}\right\rfloor-\lfloor
C_{\text{univ}}\tau N\rfloor.
$$
Here $\lfloor N/2\rfloor$ is the total number of sets
$A(x_j,x_{j+1})$ in $A_1$ and $\lfloor C_{\text{univ}}\tau N\rfloor$
is the maximal possible number of sets $A(x_j,x_{j+1})$ in $A_1\cap
X_\varepsilon$. Clearly we can choose $\tau$ and $\varepsilon$
accordingly so $N_1\ge N/3$.

For every $A(x_j,x_{j+1})$ as above, fix $q_j\in A(x_j,x_{j+1})$,
$q_j\notin X_\varepsilon$, and consider rectangle $\EuScript R(q_j)$
of vertical size $\varepsilon$. Then
$$
HS(\EuScript R(q_j))\ge\varepsilon^\delta.
$$

Consider two rectangles $\EuScript R(q_j)$ and $\EuScript
R(q_{\tilde j})$ as above. Since $|j-\tilde j|\ge 2$, they do not
``overlap" vertically if\/ $\xi$ is sufficiently small (although
this is not important to us). They might happen to ``overlap"
horizontally as shown on the Figure~\ref{12_ladder_hard} but the
size of the overlap cannot exceed the diameter of the tube
$\EuScript T$, which, according to~(\ref{tube_size}), is bounded by
$C_2\xi$.

The above considerations result in the following estimate:
\begin{align}
\label{hz_estimate}
HS(U_{x\bar y})&\ge HS(U_{x_0x_N})\ge \frac{1}{C_H}\sum_{j=1}^{N_1} HS(\EuScript R(q_j))-C_HN_1C_2\xi\\
&\ge\frac{1}{C_H} N_1 \varepsilon^\delta-C_HNC_2\xi \ge\frac
{N}{3C_H}\varepsilon^\delta-NC_HC_2\xi\nonumber\\
&\ge\frac{S_0}{3
C_HC_{Hol}}\varepsilon^{\delta-\rho}-NC_HC_2\xi,\nonumber
\end{align}
where $C_H$ is the Lipschitz constant of the holonomy map along
$W^f(i+1,m)$. We used estimate on $N_1$ and
estimate~(\ref{n1_estimate}) on $N$.


\begin{figure}[htbp]
\begin{center}

\begin{picture}(0,0)%
\includegraphics{12_ladder_hard.pstex}%
\end{picture}%
\setlength{\unitlength}{3947sp}%
\begingroup\makeatletter\ifx\SetFigFont\undefined%
\gdef\SetFigFont#1#2#3#4#5{%
\reset@font\fontsize{#1}{#2pt}%
\fontfamily{#3}\fontseries{#4}\fontshape{#5}%
\selectfont}%
\fi\endgroup%
\begin{picture}(5544,4696)(-86,-2051)
\put(1726,1514){\makebox(0,0)[lb]{\smash{{\SetFigFont{12}{14.4}{\rmdefault}{\mddefault}{\updefault}{\color[rgb]{0,0,0}$\sim\xi$}%
}}}}
\put(4951,1364){\makebox(0,0)[lb]{\smash{{\SetFigFont{12}{14.4}{\rmdefault}{\mddefault}{\updefault}{\color[rgb]{0,0,0}$x_N$}%
}}}}
\put(2101,2489){\makebox(0,0)[lb]{\smash{{\SetFigFont{12}{14.4}{\rmdefault}{\mddefault}{\updefault}{\color[rgb]{0,0,0}$HS(U_{x_0x_N})$}%
}}}}
\put(1351,1931){\makebox(0,0)[lb]{\smash{{\SetFigFont{12}{14.4}{\rmdefault}{\mddefault}{\updefault}{\color[rgb]{0,0,0}$\sim\varepsilon^\delta$}%
}}}}
\put(3451,1931){\makebox(0,0)[lb]{\smash{{\SetFigFont{12}{14.4}{\rmdefault}{\mddefault}{\updefault}{\color[rgb]{0,0,0}$\sim\varepsilon^\delta$}%
}}}}
\put(3001,395){\makebox(0,0)[lb]{\smash{{\SetFigFont{12}{14.4}{\rmdefault}{\mddefault}{\updefault}{\color[rgb]{0,0,0}$q_{N_1}$}%
}}}}
\put(3976,359){\makebox(0,0)[lb]{\smash{{\SetFigFont{12}{14.4}{\rmdefault}{\mddefault}{\updefault}{\color[rgb]{0,0,0}$\EuScript R(q_{N_1})$}%
}}}}
\put(1951,-1201){\makebox(0,0)[lb]{\smash{{\SetFigFont{12}{14.4}{\rmdefault}{\mddefault}{\updefault}{\color[rgb]{0,0,0}$\EuScript R(q_1)$}%
}}}}
\put(1033,-1441){\makebox(0,0)[lb]{\smash{{\SetFigFont{12}{14.4}{\rmdefault}{\mddefault}{\updefault}{\color[rgb]{0,0,0}$q_1$}%
}}}}
\put( 76,-1993){\makebox(0,0)[lb]{\smash{{\SetFigFont{12}{14.4}{\rmdefault}{\mddefault}{\updefault}{\color[rgb]{0,0,0}$x_0$}%
}}}}
\put(1537,-817){\makebox(0,0)[lb]{\smash{{\SetFigFont{12}{14.4}{\rmdefault}{\mddefault}{\updefault}{\color[rgb]{0,0,0}$q_2$}%
}}}}
\put(2401,-901){\makebox(0,0)[lb]{\smash{{\SetFigFont{12}{14.4}{\rmdefault}{\mddefault}{\updefault}{\color[rgb]{0,0,0}$\EuScript R(q_2)$}%
}}}}
\put(2926,-409){\makebox(0,0)[lb]{\smash{{\SetFigFont{12}{14.4}{\rmdefault}{\mddefault}{\updefault}{\color[rgb]{0,0,0}$\EuScript T$}%
}}}}
\end{picture}%

\end{center}
\caption{This picture illustrates the key
estimate~(\ref{hz_estimate}). Since the holonomy along $W^f(i+1,m)$
is Lipschitz, the horizontal size of\/ $U_{x_0x_N}$ can be estimated
from below by the sum of horizontal sizes of ``flat" rectangles with
base points $q_j\in A_1\subset\EuScript T$, $j=1,\ldots N_1$. They
might overlap horizontally as shown, but the overlap is of order
$\xi\lll\varepsilon$.}\label{12_ladder_hard}
\end{figure}


Finally, recall that $\delta-\rho<0$, while $\xi$ can be chosen
arbitrarily small independently of\/ $\varepsilon$ (and hence $N$).
Hence by choosing $\varepsilon$ small, we can find $U_{x\bar y}$
with arbitrarily big horizontal size, which contradicts to the
uniform upper bound~(\ref{bounded_shift}) that follows from
compactness. Hence Case 2 is impossible.
\end{proof}

\begin{remark}
Note that we do not need to take $\tau$ arbitrarily small. The
constant $\tau$ just needs to be small enough to provide the
estimate on $N_1$.
\end{remark}

\begin{proof}[Proof of Lemma~\ref{holder_holonomy}]
Take points $x$ and $y\in V_m^f(x)$ such that
\begin{equation}
\label{zvezda} 1\le d_m^f(x,y)\le C\beta_m
\end{equation}
By Lemma~\ref{distance}, there exist constants $c_1$ and $c_2$ such
that
\begin{multline}
\label{lemma_distance} \forall \tilde x,\tilde y, \;\;\tilde y\in
V_m^f(\tilde x), \;\;\tilde x\in W^f(i,m-1)(x), \;\;\tilde y\in
W^f(i,m-1)(y)\;\;\;\;\\
c_1<d_m^f(\tilde x,\tilde y)<c_2.
\end{multline}
Moreover, since $c_1$ and $c_2$ depend only on $\hat d(x,y)$ (see
the remark after the proof of Lemma~\ref{distance}), they can be
chosen independently of\/ $x$ and $y$ as long as $x$ and $y$
satisfy~(\ref{zvezda}).

Take $x, y\in T(a)$ close to each other. Let $N$ be the smallest
integer such that $d_m^f(f^N(x),f^N(y))\ge 1$. Then
\begin{equation}
\label{11} d_m^f(f^N(x),f^N(y))\ge \frac 1C \tilde\beta_m^N
d_m^f(x,y),
\end{equation}
and, obviously,
\begin{equation}
\label{22} d_m^f(f^N(x),f^N(y))\le C\beta_m.
\end{equation}
Hence by taking in~(\ref{lemma_distance}) $\tilde
x=f^N(\text{Hol}(x))$  and $\tilde y=f^N(\text{Hol}(y))$, we get
\begin{equation}
\label{33}
d_m^f\left(f^N(\text{Hol}(x)),f^N(\text{Hol}(y))\right)>c_1.
\end{equation}
On the other hand,
\begin{equation}
\label{44}
d_m^f\left(f^N(\text{Hol}(x)),f^N(\text{Hol}(y))\right)\le
C\beta_m^N d_m^f(\text{Hol}(x),\text{Hol}(y)).
\end{equation}

Combining~(\ref{11}), (\ref{22}), (\ref{33}) and~(\ref{44}), we
finish the proof
\begin{align*}
d_m^f(x,y)&\le \frac{C}{\tilde\beta_m^N}d_m^f(f^N(x),f^N(y))\le\frac{C^2\beta_m}{c_1^\rho\tilde\beta_m^N}\cdot c_1^\rho\\
&<\frac{C^2\beta_m}{c_1^\rho}\cdot\frac
{1}{\tilde\beta_m^N}d_m^f\left(f^N(\text{Hol}(x)),f^N(\text{Hol}(y))\right)^\rho\\&\le
C_{\text{Hol}}\frac{\beta_m^{\rho N}}{\tilde\beta_m^N}d_m^f(\text{Hol}(x),\text{Hol}(y))^\rho\\
&=C_{\text{Hol}}d_m^f(\text{Hol}(x),\text{Hol}(y))^\rho.
\end{align*}
We used~(\ref{exponents}) for the last equality.
\end{proof}

\section{Proof of Theorem C}
\subsection{Scheme of the proof of Theorem C}
\label{schemeC}

The way we choose the neighborhood $\mathcal U$ is the same as in
Theorem A. We look at the $L$-invariant splitting
$$
T\mathbb T^4=E_L^{ss}\oplus E_L^{ws}\oplus E_L^{wu}\oplus E_L^{su},
$$
where $E_L^{ws}$, $E_L^{wu}$ are eigendirections with eigenvalues
$\lambda^{-1}<\lambda$ and $E_L^{ss}\oplus E_L^{su}$ is the Anosov
splitting of\/ $g$. We choose $\mathcal U$ in such a way that for
any $f\in \mathcal U$ the invariant splitting survives,
\begin{equation}
\label{T4splitting} T\mathbb T^4=E_f^{ss}\oplus E_f^{ws}\oplus
E_f^{wu}\oplus E_f^{su},
\end{equation}
with
\begin{equation}
\label{angles2} \max_{x\in\mathbb T^4, \sigma=ss, ws, wu,
su}\left(\measuredangle(E_f^{\sigma}(x),E_L^{\sigma}(x))\right)<\frac\pi
2
\end{equation}
and $f$ is partially hyperbolic in the strongest sense~(\ref{phd})
with respect to the splitting~(\ref{T4splitting}).

Lemma~\ref{integr1}  works for $f\in \mathcal U$. Hence the
distributions $E_f^{ss}$, $E_f^{ws}$, $E_f^{wu}$ and $E_f^{su}$
integrate uniquely to foliations $W_f^{ss}$, $W_f^{ws}$, $W_f^{wu}$
and $W_f^{su}$. Also, as usual, $W_f^s$ and $W_f^u$ stand for
two-dimensional stable and unstable foliations.

Fix $f\in \mathcal U$ and let $H$ be the conjugacy with the model,
$H\circ f=L\circ H$. The distribution $E_L^{ws}\oplus E_L^{wu}$
obviously integrates to the foliation $W_L^c$, which is subfoliated
by $W_L^{ws}$ and $W_L^{wu}$. Applying Lemma~\ref{weak_match} to the
weak foliations, we get $H(W_f^{ws})=W_L^{ws}$ and
$H(W_f^{wu})=W_L^{wu}$. Hence the distribution  $E_f^{ws}\oplus
E_f^{wu}$ integrates to the foliation $W_f^c$, which is subfoliated
by $W_f^{ws}$ and $W_f^{wu}$.

Note that the leaves of\/ $W_f^c$ are embedded two-dimensional tori.

\begin{lemma}
\label{HisC1} The conjugacy $H$ is\, $C^{1+\nu}$along $W_f^{ws}$ and
$W_f^{wu}$. Hence, by the Regularity Lemma, $H$ is $C^{1+\nu}$ along
$W_f^c$.
\end{lemma}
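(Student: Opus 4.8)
The plan is to prove $C^{1+\nu}$-regularity of $H$ along $W_f^{wu}$; the statement for $W_f^{ws}$ then follows by running the identical argument for $f^{-1}$ (its weak unstable foliation is $W_f^{ws}$, its linear model is $L^{-1}$, and the p.~d.\ of $f^{-1}$ and $L^{-1}$ coincide because those of $f$ and $L$ do). Granting both, the last assertion is immediate: $W_f^{ws}$ and $W_f^{wu}$ are continuous, topologically transverse, subfoliate the two-torus leaves of $W_f^c$, and have uniformly $C^{r}$ ($r\ge2$) leaves --- inside a two-dimensional unstable (resp.\ stable) leaf the slow subfoliation is as smooth as $f$ because of the gap $\lambda<\mu$ --- so the Regularity Lemma applied leafwise to $H\colon W_f^c(x)\to W_L^c(H(x))$ gives that $H$ is $C^{1+\nu}$ along $W_f^c$.

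For the $W_f^{wu}$-statement I would follow the scheme of Lemma~\ref{conj_is_C1}, i.e.\ of Lemma~5 of~\cite{GG}; the one point that must be handled differently is that a leaf of $W_f^{wu}$ is dense only in the two-torus $W_f^c(x)$, not in $\mathbb T^4$, so the substitute for Lemma~\ref{dense_leaf} is the coincidence of p.~d. Since $E_f^{wu}$ is $Df$-invariant, at a periodic point $p$ of period $n$ it is an eigendirection of $Df^n(p)$ with eigenvalue $\prod_{i=0}^{n-1}D_f^{wu}(f^i(p))$, and by the rate bounds~(\ref{phd}) this is the unique eigenvalue of $Df^n(p)$ lying in $(\tilde\beta_1^{\,n},\beta_1^{\,n})$. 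As the p.~d.\ of $f$ and $L$ coincide, it equals the eigenvalue of $DL^n(H(p))=\mathrm{diag}(A^n, Dg^n)$ in that same range, namely $\lambda^n$. Hence the Hölder cocycle $\phi=\log D_f^{wu}-\log\lambda$ over $f$ has vanishing periodic sums, and since $f$ is a transitive Anosov diffeomorphism the Liv\v sic theorem produces a Hölder function $u$ on $\mathbb T^4$ with $\phi=u\circ f-u$.

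Set $\widehat D=e^{-u}$ (up to a positive multiplicative constant): a positive Hölder function on $\mathbb T^4$. Differentiating $H\circ f=L\circ H$ along $W_f^{wu}$ gives $D_H^{wu}(f(x))\,D_f^{wu}(x)=\lambda\,D_H^{wu}(x)$, so $\widehat D$ is the only possible value of the leafwise derivative of $H$. It then remains to show that $H$ restricted to each leaf $W_f^{wu}(x)$ is genuinely $C^1$ with derivative $\widehat D(x)$ relative to the affine distance-like function $\tilde d=\tilde d^{wu}$ on $W_f^{wu}$ (properties (D1)--(D3): $\tilde d(f(x),f(y))=D_f^{wu}(x)\,\tilde d(x,y)$, etc.). This is the direct approximation argument of~\cite{GG}: for $y$ on the leaf near $x$ one writes the leafwise difference quotient using the functional equation and (D1)--(D3), pulls $(x,y)$ back by $f^{-n}$ to a fixed scale, and uses the coboundary representation $\phi=u\circ f-u$ together with bounded distortion along the leaf and the a priori bi-Hölder control of $H$ to see that the quotient converges to $\widehat D(x)$; the Hölder dependence of $u$ then transfers to the leafwise derivative as a function on $\mathbb T^4$.

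I expect this last step to be the main obstacle: converting the cocycle identity (which only pins down the derivative provided it exists) into a genuine leafwise $C^1$-statement carrying a uniform modulus of continuity, $H(y)=H(x)+\widehat D(x)\,\tilde d(x,y)+o(\tilde d(x,y))$ with $o(\cdot)$ uniform in $x\in\mathbb T^4$ and in $y$ along the leaf --- this is exactly where the precise form of (D1)--(D3) and the bounded-distortion control of $f^{-n}$ along $W_f^{wu}$ are used, and it is carried out as in~\cite{GG}. The remaining ingredients (the Liv\v sic step, the identification of $E_f^{wu}$ with the $\lambda$-part of the p.~d., the smoothness of the leaves needed for the Regularity Lemma) are comparatively routine. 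As a consistency check one may note that over each periodic point $y_0$ of $g$ of period $\ell$ the center leaf $W_f^c$ over it is $f^{\ell}$-invariant, $f^{\ell}$ restricts to it as a $C^r$ Anosov diffeomorphism of $\mathbb T^2$ topologically conjugate via $H$ to the linear automorphism $A^{\ell}$ and --- again by the coincidence of p.~d.\ --- with the same p.~d., so $H$ is $C^{r-\varepsilon}$ there by the de la Llave--Marco--Moriy\'on theorem; these leaves are dense in $\mathbb T^4$, which recovers the conclusion on a dense subset.
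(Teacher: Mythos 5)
Your Livšic step — identifying the only possible leafwise derivative as (a constant times) $e^{-u}$ from the coincidence of periodic data with $L$ — is correct, as is the reduction to $W_f^{wu}$ via $f^{-1}$ and the leafwise application of the Regularity Lemma. The gap is in the passage from Lipschitz/a.e.-differentiable to genuine $C^{1+\nu}$-differentiability everywhere, which you delegate to ``the direct approximation argument of~\cite{GG}'' with the remark that the p.~d.\ coincidence substitutes for Lemma~\ref{dense_leaf}. It does not: those two ingredients play separate roles. The p.~d.\ coincidence yields the coboundary $u$, hence the Lipschitz bound and the candidate derivative; the density of the expanding leaves is what Lemma~5 of~\cite{GG} (and Lemma~\ref{conj_is_C1}) uses to find a transitive point at which the derivative actually exists and from which it is spread. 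The cocycle identity $R(x,y)=R(f^n x,f^n y)\,e^{u(f^n x)-u(x)}$ keeps the difference quotient bounded, but since $f^n(x)$ wanders over $\mathbb T^4$, it does not force convergence at a given $x$ without a transitive point of differentiability to anchor the approximation — you flag this yourself as ``the main obstacle'' and then do not close it. Here the leaves of $W_f^{wu}$ are never dense in $\mathbb T^4$ (they are confined to the two-tori of $W_f^c$), so the argument of~\cite{GG} does not carry over as stated.

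The paper in fact defers Lemma~\ref{HisC1} to Proposition~\ref{central_smoothness}, whose proof in Section~\ref{moduli} is devoted precisely to this difficulty. The fix is a modified Pesin--Sinai construction: pass to the factor map $\tilde f$ on the space $\mathcal T\cong\mathbb T^2$ of center leaves, take its ergodic invariant measure $\tilde\mu$ (the push-forward of Lebesgue under the conjugacy with $B$), pick a $\tilde\mu$-generic center leaf, and run the Pesin--Sinai pushed-Lebesgue construction along $W_f^{wu}$ inside that leaf. The limiting $f$-invariant measure $\mu$ makes $W_f^{wu}$ absolutely continuous and projects to $\tilde\mu$, and $\mu$-a.e.\ transitivity in $\mathbb T^4$ (rather than merely inside a center leaf) is then proved by a Hopf-type argument combining the Birkhoff theorem, density of $W_f^{wu}$-leaves within each center leaf, $W_f^c$-saturation, and ergodicity of $\tilde f$. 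With that measure in hand, your outline (Lipschitz, a.e.\ differentiability, spreading from a transitive differentiability point, H\"older modulus from H\"older continuity of $u$) goes through. The LMM consistency check over the periodic center tori is a sound observation but controls $H$ only on a dense measure-zero union of tori and cannot replace the measure construction.
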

Proposition~\ref{central_smoothness} is a more general statement
which we prove in Section~\ref{moduli}. So we omit the proof of
Lemma~\ref{HisC1} here.

We establish smoothness of central holonomies.
\begin{lemma}
\label{central_holonomy} Let $T_1$ and $T_2$ be open
$C^{1+\nu}$-disks transverse to $W_f^c$. Then the holonomy map along
$W_f^c$, $H_f^c\colon T_1\to T_2$, is $C^{1+\nu}$-differentiable.
\end{lemma}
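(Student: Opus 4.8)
The plan is to prove the sharper statement that $W_f^c$ is a \emph{$C^{1+\nu}$ foliation} --- i.e. one admitting $C^{1+\nu}$ foliation boxes (so both $C^{1+\nu}$ leaves and $C^{1+\nu}$ holonomy pseudogroup) --- and then to read off the lemma, since any holonomy between $C^{1+\nu}$ transversals is a finite composition of plaque holonomies in such boxes, pre- and post-composed with $C^{1+\nu}$ reparametrizations of the transversals.

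First I would set up the normal hyperbolicity. Since $\mathcal U$ is small, the rate constants of $f$ in~(\ref{phd}) with respect to the splitting $E_f^{ss}\oplus E_f^c\oplus E_f^{su}$, $E_f^c=E_f^{ws}\oplus E_f^{wu}$, are as close as we wish to $\mu^{-1}$, $\lambda^{-1},\lambda$, $\mu$ respectively, and $\mu>\lambda>1$. Hence one may fix $r_0$ with $1<r_0<\log\mu/\log\lambda$ and, after shrinking $\mathcal U$, arrange that $f$ is $r_0$-normally hyperbolic at the ($f$-invariant, by~\cite{HPS}) center foliation $W_f^c$: the normal rates $\approx\mu^{\pm1}$ dominate the $r_0$-th powers of the center rates $\approx\lambda^{\pm1}$ because $\lambda^{r_0}<\mu$. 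The center foliation is plaque-expansive (it is leaf-conjugate to the flat foliation $W_L^c$, which is plaque-expansive). By the Hirsch--Pugh--Shub theory~\cite{HPS}, an $r_0$-normally hyperbolic, plaque-expansive invariant foliation of a $C^{r_0}$ diffeomorphism is a $C^{r_0}$ foliation; taking $1+\nu\le r_0$ gives that $W_f^c$ has a $C^{1+\nu}$ foliation atlas. Given $C^{1+\nu}$ transversals $T_1,T_2$ and $p\in T_1$ with $H_f^c(p)=q\in T_2$, I would cover a compact arc of $W_f^c(p)$ from $p$ to $q$ by finitely many of these boxes; in each box the plaque holonomy is $C^{1+\nu}$, and $H_f^c$ near $p$ is the corresponding finite composition, together with the $C^{1+\nu}$ identifications of $T_1,T_2$ with transverse sections of the boxes. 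Uniformity of the $C^\nu$-bound on $DH_f^c$ (over the whole manifold, not just near a single leaf) follows from compactness of $\mathbb T^4$ and of $T_1,T_2$ and the uniformity of the normal hyperbolicity.

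A second, more hands-on route, closer to the rest of the paper, uses the subfoliation of $W_f^c$ by the transverse one-dimensional weak foliations $W_f^{ws}$ and $W_f^{wu}$: decompose $H_f^c$ into a holonomy along $W_f^{wu}$ followed by a holonomy along $W_f^{ws}$ between appropriate intermediate $C^{1+\nu}$ transversals, establish that each sub-holonomy is $C^{1+\nu}$, and recombine with the Regularity Lemma. To get the $C^{1+\nu}$ sub-holonomies one transfers to the linear model: $H$ takes $W_f^{ws},W_f^{wu}$ to the affine foliations $W_L^{ws},W_L^{wu}$, whose holonomies are smooth, and $H$ is $C^{1+\nu}$ along $W_f^{ws}$ and $W_f^{wu}$ by Lemma~\ref{HisC1}. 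The hard part in either route is precisely the passage from ``$C^{1+\nu}$ along the center leaves / along the one-dimensional weak sub-leaves'' to ``$C^{1+\nu}$ transverse to the center'' --- regularity of the holonomy, not of the leaves. This is exactly where $\mu>\lambda>1$ is indispensable: the domination $\mu>\lambda$ is what makes the normal hyperbolicity super-$1$ (an $r_0>1$ as above exists) and equivalently what upgrades the weak sub-holonomies from merely H\"older to $C^1$. In the degenerate case $\mu=\lambda$ this breaks down, consistently with the fact that there $L$ and its perturbations $\tilde L$ fail to share periodic data in the first place.
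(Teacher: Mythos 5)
Your Route 1 rests on a misreading of the Hirsch--Pugh--Shub theory. The $r$-normal-hyperbolicity theorem of~\cite{HPS} gives $C^r$ \emph{leaves} of the invariant foliation (together with structural stability in the presence of plaque expansiveness), but it does \emph{not} give a $C^r$ foliation atlas, \ie it says nothing about the transverse regularity of the foliation or the smoothness of its holonomy pseudogroup. In fact, for a generic $f\in\mathcal U$ the center foliation $W_f^c$ is only a H\"older foliation (the intermediate distributions $E_f^{ws}\oplus E_f^{wu}$ are merely H\"older), and the center holonomy is not $C^1$. The lemma as stated sits inside the proof of Theorem~C, so the standing hypothesis that $f$ has the same p.~d.\ as $L$ is in force; that hypothesis is exactly what makes the conclusion true, and any proof that never invokes it --- as yours does --- is proving a false statement.

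Route 2 correctly identifies the decomposition of $H_f^c$ into weak stable and weak unstable sub-holonomies, which is indeed what the paper does, but the proposal then hand-waves at the only genuinely hard step. Knowing that $H$ is $C^{1+\nu}$ \emph{along} $W_f^{ws}$ and $W_f^{wu}$ (Lemma~\ref{HisC1}) tells you nothing about the holonomy, because the holonomy is a statement about regularity \emph{transverse} to those foliations. Your assertion that ``the domination $\mu>\lambda$ is what upgrades the weak sub-holonomies from merely H\"older to $C^1$'' is incorrect: the domination $\mu>\lambda$ (together with $f\in C^2$) yields, via~\cite{LY}, that the \emph{strong} unstable subfoliation is Lipschitz inside unstable leaves, but it does not upgrade the weak sub-holonomies to $C^1$, and Lipschitz is not $C^{1+\nu}$ in any case. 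The actual mechanism in the paper is Lemma~\ref{HxisC1}: the map $H_x\colon W_f^{su}(x)\to W_L^{su}(H(x))$ (the conjugacy to the linear model read along strong unstable leaves, postcomposed with the linear $W_L^{wu}$-projection) is shown to be $C^{1+\nu}$ by applying the Livshitz theorem to the strong unstable derivative cocycles of $f$ and $L$ --- the step where the p.~d.\ hypothesis enters --- followed by a transitive-point bootstrap using the SRB measure and the affine distance-like functions $\tilde d_f^{su}$, $\tilde d_L^{su}$. The paper then reduces to one-dimensional transversals by slicing $T_1,T_2$ along $\tilde W_f$ and $\bar W_f$, constructs a shift map $sh$ so that $sh\circ H$ carries $W_f^c\mapsto W_L^c$ and $W_f^{su}\mapsto W_L^{su}$ on a plaque, shows $sh\circ H$ is $C^{1+\nu}$ via the Journ\'e Regularity Lemma by combining Lemma~\ref{HisC1} (center directions) with Lemma~\ref{HxisC1} (strong direction), and finally writes $H_f^{wu}$ as $(sh\circ H)^{-1}\circ H_L^{wu}\circ(sh\circ H)$. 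None of that machinery --- the Livshitz/transfer-function step, the shift map $sh$, or the reduction to one-dimensional transversals --- appears in your proposal, so there is a genuine gap rather than an alternative route.
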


Next we introduce a distance on the leaves of\/  $W_f^{ws}$ and
$W_f^{wu}$ by simply letting $d^\sigma(x, y)=d^\sigma(H(x),H(y))$,
$y\in W_f^\sigma(x)$, $\sigma=ws, wu$. Notice that by
Lemma~\ref{HisC1}, $d^{ws}$ and $d^{wu}$ are induced by a
H\"older-continuous Riemannian metric --- the pullback by
$DH^{-1}|_{W_L^c}$ of the Riemannian metric on $W_L^c$.

Let $x_0$ be the fixed point of\/ $f$ and let $S_0$ be the
two-dimensional torus passing through $x_0$ and tangent to
$E_L^{ss}\oplus E_L^{su}$. Assumption~(\ref{angles2}) guarantees
that $S_0$ is transverse to $W_f^c$.

Now we construct a foliation $S$ that is transverse to $W_f^c$. For
any point $x\in\mathbb T^4$ let $x_1=W_f^c(x)\cap S_0$ and $x_2$ be
some point of intersection of\/ $W_f^{ws}(x_1)$ and $W_f^{wu}(x)$.
Fix $\tilde x\in \mathbb T^4$ and define
\begin{multline*}
S(\tilde x)=\{x: \text{such that}
(x_1, x_2)\;\; \text{and} \;\;(\tilde x_1, \tilde x_2)\;\; \text{have the same orientation in}\;\; W_f^{ws};\\
(x_2, x)\;\; \text{and} \;\;(\tilde x_2, x) \;\;\text{have the same orientation in}\;\; W_f^{wu};\\
d^{ws}(x_1, x_2)=d^{ws}(\tilde x_1, \tilde x_2); \;\; d^{wu}(x_2,
x)=d^{wu}(\tilde x_2, \tilde x) \}.
\end{multline*}


\begin{figure}[htbp]
\begin{center}

\begin{picture}(0,0)%
\includegraphics{13_S.pstex}%
\end{picture}%
\setlength{\unitlength}{3947sp}%
\begingroup\makeatletter\ifx\SetFigFont\undefined%
\gdef\SetFigFont#1#2#3#4#5{%
\reset@font\fontsize{#1}{#2pt}%
\fontfamily{#3}\fontseries{#4}\fontshape{#5}%
\selectfont}%
\fi\endgroup%
\begin{picture}(4749,3999)(-86,-2773)
\put(226,-2386){\makebox(0,0)[lb]{\smash{{\SetFigFont{12}{14.4}{\rmdefault}{\mddefault}{\updefault}{\color[rgb]{0,0,0}$x_0$}%
}}}}
\put(2176,-2461){\makebox(0,0)[lb]{\smash{{\SetFigFont{12}{14.4}{\rmdefault}{\mddefault}{\updefault}{\color[rgb]{0,0,0}$S_0$}%
}}}}
\put(1276,-2386){\makebox(0,0)[lb]{\smash{{\SetFigFont{12}{14.4}{\rmdefault}{\mddefault}{\updefault}{\color[rgb]{0,0,0}$\tilde x_1$}%
}}}}
\put(3376,-2386){\makebox(0,0)[lb]{\smash{{\SetFigFont{12}{14.4}{\rmdefault}{\mddefault}{\updefault}{\color[rgb]{0,0,0}$x_1$}%
}}}}
\put(3901, 89){\makebox(0,0)[lb]{\smash{{\SetFigFont{12}{14.4}{\rmdefault}{\mddefault}{\updefault}{\color[rgb]{0,0,0}$x$}%
}}}}
\put(3676,-586){\makebox(0,0)[lb]{\smash{{\SetFigFont{12}{14.4}{\rmdefault}{\mddefault}{\updefault}{\color[rgb]{0,0,0}$W_f^c(x)$}%
}}}}
\put(1501,-586){\makebox(0,0)[lb]{\smash{{\SetFigFont{12}{14.4}{\rmdefault}{\mddefault}{\updefault}{\color[rgb]{0,0,0}$W_f^c(\tilde x)$}%
}}}}
\put(2926,-736){\makebox(0,0)[lb]{\smash{{\SetFigFont{12}{14.4}{\rmdefault}{\mddefault}{\updefault}{\color[rgb]{0,0,0}$x_2$}%
}}}}
\put(865,-793){\makebox(0,0)[lb]{\smash{{\SetFigFont{12}{14.4}{\rmdefault}{\mddefault}{\updefault}{\color[rgb]{0,0,0}$\tilde x_2$}%
}}}}
\put(1861,167){\makebox(0,0)[lb]{\smash{{\SetFigFont{12}{14.4}{\rmdefault}{\mddefault}{\updefault}{\color[rgb]{0,0,0}$\tilde x$}%
}}}}
\put(289,-1411){\makebox(0,0)[lb]{\smash{{\SetFigFont{12}{14.4}{\rmdefault}{\mddefault}{\updefault}{\color[rgb]{0,0,0}$W_f^{ws}(\tilde x_1)$}%
}}}}
\put(2401,-1411){\makebox(0,0)[lb]{\smash{{\SetFigFont{12}{14.4}{\rmdefault}{\mddefault}{\updefault}{\color[rgb]{0,0,0}$W_f^{ws}(x_1)$}%
}}}}
\put(676,-49){\makebox(0,0)[lb]{\smash{{\SetFigFont{12}{14.4}{\rmdefault}{\mddefault}{\updefault}{\color[rgb]{0,0,0}$W_f^{wu}(\tilde x)$}%
}}}}
\put(2665,-109){\makebox(0,0)[lb]{\smash{{\SetFigFont{12}{14.4}{\rmdefault}{\mddefault}{\updefault}{\color[rgb]{0,0,0}$W_f^{wu}(x)$}%
}}}}
\end{picture}%

\end{center}
\caption{Definition of\/ $S$. Point $x\in S(\tilde x)$.}\label{13_S}
\end{figure}


According to this definition, $S(\tilde x)$ intersects each leaf
of\/ $W_f^c$ exactly once. Also note that, since the distances came
from the model $L$, the definition above does not depend on the
choice of\/ $\tilde x_2$. It is clear that $S$ is a topological
foliation into topological two-dimensional tori. We show that these
tori are in fact regular.
\begin{lemma}
\label{SisC1} Leaves of\/ $S$ are $C^{1+\nu}$-embedded
two-dimensional tori.
\end{lemma}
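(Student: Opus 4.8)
The plan is to realise each leaf $S(\tilde x)$ as the image of the smooth torus $S_0$ under a center--leaf preserving homeomorphism of $\mathbb T^4$ built out of the conjugacy $H$, and then to transfer $C^{1+\nu}$ regularity leaf by leaf from the linear model, in which that homeomorphism degenerates to a rigid translation. The two nontrivial inputs will be Lemma~\ref{HisC1} (the conjugacy is $C^{1+\nu}$ along $W_f^{ws}$, $W_f^{wu}$, $W_f^c$) and Lemma~\ref{central_holonomy} (the central holonomy is $C^{1+\nu}$).

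First I would rewrite the construction of $S$ in terms of shift maps. Fix $\tilde x$ and put $s=d^{ws}(\tilde x_1,\tilde x_2)$, $t=d^{wu}(\tilde x_2,\tilde x)$. Let $\Theta^{ws}_s$ send a point $z$ to the point of $W_f^{ws}(z)$ lying at signed $d^{ws}$--distance $s$ from $z$, and define $\Theta^{wu}_t$ similarly along $W_f^{wu}$. Since $d^{ws}$ and $d^{wu}$ were defined through $H$, and $H$ carries $W_f^{ws},W_f^{wu}$ onto the linear foliations $W_L^{ws},W_L^{wu}$, one gets $\Theta^{ws}_s=H^{-1}\tau^{ws}_sH$ and $\Theta^{wu}_t=H^{-1}\tau^{wu}_tH$, where $\tau^{ws}_s$ and $\tau^{wu}_t$ are rigid translations of $\mathbb T^4$ along the corresponding eigendirections of $L$. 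Unwinding the definition of $S(\tilde x)$ — recall $x_1=W_f^c(x)\cap S_0$ and $x_2=W_f^{ws}(x_1)\cap W_f^{wu}(x)$ — one checks that
$$S(\tilde x)=\Theta^{wu}_t\circ\Theta^{ws}_s(S_0)=\Psi(S_0),\qquad \Psi:=H^{-1}\tau H,\quad \tau:=\tau^{wu}_t\circ\tau^{ws}_s .$$
Because $W_f^{ws}$ and $W_f^{wu}$ subfoliate $W_f^c$, the homeomorphism $\Psi$ preserves $W_f^c$ leafwise; and since $S_0$ is a section of $W_f^c$ (by the product structure of $W_L^c$ together with $C^0$--persistence of sections under the perturbation), so is $S(\tilde x)$, each leaf of $W_f^c$ meeting it exactly once.

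Next I would show that $\Psi$ is $C^{1+\nu}$ along $W_f^{ws}$, along $W_f^{wu}$, and hence along $W_f^c$. By Lemma~\ref{HisC1} the restrictions of $H$ to the leaves of $W_f^{ws}$ and of $W_f^{wu}$ are uniformly $C^{1+\nu}$, while in the model $\tau$ permutes the leaves of $W_L^{ws}$ (resp.\ $W_L^{wu}$) by isometric translations; conjugating back by $H$ therefore makes $\Theta^{ws}_s,\Theta^{wu}_t$, and hence $\Psi$, uniformly $C^{1+\nu}$ on every leaf of $W_f^{ws}$ and of $W_f^{wu}$. Applying the Regularity Lemma inside the two--dimensional leaves of $W_f^c$ to the transverse pair $(W_f^{ws},W_f^{wu})$ then yields that $\Psi$ is $C^{1+\nu}$ along $W_f^c$ — this is exactly Lemma~\ref{HisC1} with $\Psi$ in place of $H$.

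The remaining, essential point is transverse regularity, and it is the main obstacle: along the strong directions $E^{ss}\oplus E^{su}$ (which is the tangent space of $S_0$) the conjugacy $H$ is only H\"older — this mismatch of the strong foliations is precisely the de la Llave phenomenon that keeps $H$ from being $C^1$ — so transverse smoothness cannot be pulled back through $H$ and has to be squeezed out of the $C^{1+\nu}$ central holonomy. Here I would use Lemma~\ref{central_holonomy}: take a $C^{\infty}$ disk $T$ transverse to $W_f^c$; the central holonomy $\mathrm{Hol}^c\colon T\to S_0$ is a $C^{1+\nu}$ diffeomorphism, and for $z\in S(\tilde x)\cap T$ it sends $z$ to $W_f^c(z)\cap S_0=\Psi^{-1}(z)$. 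Thus $\mathrm{Hol}^c$ identifies $S(\tilde x)\cap T$ with an open subset of $S_0$ and realises $S(\tilde x)$ locally as $(\mathrm{Hol}^c)^{-1}$ applied to the section $S_0\to\mathbb T^4,\ x_1\mapsto\Psi(x_1)$. The content of the lemma is then that this section is $C^{1+\nu}$: $\Psi$ displaces $x_1$ only inside the center leaf $W_f^c(x_1)$, it does so $C^{1+\nu}$--ly along that leaf by the previous paragraph, and the center leaf itself depends $C^{1+\nu}$--ly on $x_1\in S_0$ once the leaves have been trivialised by the central holonomy; feeding this into the Regularity Lemma gives that $S(\tilde x)\cap T$ is a $C^{1+\nu}$ graph over $S_0$. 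Since $\tilde x$ and $T$ are arbitrary and the leaves of $S$ are compact, every leaf of $S$ is a $C^{1+\nu}$--embedded two--dimensional torus. The delicate part to get right is exactly this last step: one must combine the leafwise $C^{1+\nu}$ control of $\Psi$ with the $C^{1+\nu}$ central holonomy in a way that nowhere secretly demands regularity of $H$ transverse to $W_f^c$.
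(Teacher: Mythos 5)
Your reduction of $S(\tilde x)$ to $\Psi(S_0)$ with $\Psi=H^{-1}\tau H$, and the observation that $\Psi$ is $C^{1+\nu}$ along $W_f^c$ by conjugating a rigid translation through the center-smooth $H$, are correct and clean; you also correctly identify that the real problem is transverse regularity, since $H$ is merely H\"older along $E_f^{ss}\oplus E_f^{su}$. But the resolution you propose does not close the gap. To apply Lemma~\ref{central_holonomy} to the holonomy from $\Psi(S_0)$ to $S_0$ you would need $\Psi(S_0)$ to already be a $C^{1+\nu}$-disk --- the lemma's hypothesis is exactly what you are trying to prove, so the argument is circular. And the proposed ``feeding into the Regularity Lemma'' step has no content as stated: Journ\'e's lemma needs two explicit transverse foliations of $S_0$ (or of a chart around it) along which the section $x_1\mapsto\Psi(x_1)$ is uniformly $C^{1+\nu}$, and the natural $1$-dimensional foliations of $S_0$ are essentially its intersections with $E_f^{ss}$ and $E_f^{su}$ --- directions in which $\Psi=H^{-1}\tau H$ is only H\"older, not $C^{1+\nu}$. ``$\Psi$ is $C^{1+\nu}$ within each center leaf'' controls intrinsic derivatives of $\Psi$ along $W_f^c$, which is a different quantity from the derivative of the displacement $x_1\mapsto\Psi(x_1)$ as $x_1$ moves transversely to $W_f^c$ inside $S_0$; nothing you have written bounds the latter.

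The paper gets around this by an ingredient your proposal does not invoke: the shift maps $sh$ constructed for Lemma~\ref{holonomy1}, built from $H$, the foliation $U^{su}=H(W_f^{su})$ (resp.\ $U^{ss}$), and the translation structure of the linear model. Restricting attention to a codimension-one leaf of $\tilde W_f$ (resp.\ $\bar W_f$), the composition $sh\circ H$ straightens both $W_f^{su}$ and $W_f^c$ simultaneously, and --- crucially --- is shown to be $C^{1+\nu}$ on that leaf by an application of the Regularity Lemma to the pair $(W_f^{su},W_f^c)$ using Lemmas~\ref{HisC1} and~\ref{HxisC1}. One then checks that $sh\circ H$ carries the curve $\tilde T_1\subset S_1$ to a Euclidean parallel translate of $sh\circ H(\tilde T_0)\subset sh\circ H(S_0)$, so the image is $C^{1+\nu}$, and pushing back through $(sh\circ H)^{-1}$ makes $\tilde T_1$ a $C^{1+\nu}$-curve; the same with $\bar W_f$ handles $\bar T_1$, and Journ\'e's lemma inside $S_1$ finishes. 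It is precisely the $sh$-correction that tames the H\"older-only behaviour of $H$ in the strong direction, and your argument has no substitute for it. You should either import the shift-map machinery or find an alternative mechanism that genuinely produces $C^{1+\nu}$-control transverse to $W_f^c$; Lemma~\ref{central_holonomy} by itself does not.
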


Let $f_0\colon S_0\to S_0$ be the factor map of\/ $f$,
$f_0(x)=W_f^c(f(x))\cap S_0$. Lemma~\ref{central_holonomy}
guarantees that $f_0$ is a $C^{1+\nu}$-diffeomorphism. Every
periodic point of\/ $f_0$ lifts to a periodic point of\/ $f$.
Applying Lemma~\ref{central_holonomy} again, we see that the p.~d.
of\/ $f_0$ are the same as the strong stable and unstable p.~d. of\/
$f$ which is the same as the p.~d. of\/ $g$. Hence there is a
$C^{1+\nu}$-diffeomorphism $h_0$ homotopic to identity such that
$h_0\circ f_0=g\circ h_0$.

Let $f_c\colon W_f^c(x_0)\to W_f^c(x_0)$ be the restriction of\/ $f$
to $W_f^c(x_0)$. Obviously the p.~d. of\/ $f_c$ and $A$ are the
same. Hence there is a $C^{1+\nu}$-diffeomorphism $h_c$ homotopic to
identity such that $h_c\circ f_c=A\circ h_c$.

We are ready to construct the conjugacy $h\colon\mathbb T^4\to
\mathbb T^2\times\mathbb T^2$.
$$
h(x)=\left(h_c(S(x)\cap W_f^c(x_0)), h_0(W_f^c(x)\cap S_0)\right).
$$
The homeomorphism $h$ maps the central foliation into the vertical
foliation and the foliation $S$ into the horizontal foliation.

\begin{remark}
Notice that at this point we do not know if\/ $h$ is a
$C^{1+\nu}$-diffeomorphism, although $h_c$ and $h_0$ are
$C^{1+\nu}$-differentiable.
\end{remark}

\begin{lemma}
\label{hisWC1} Homeomorphism $h$ is $C^{1+\nu}$-differentiable along
$W_f^c$.
\end{lemma}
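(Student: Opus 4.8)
The plan is to reduce the assertion to the already-established regularity of $h_c$ and of $H$ along $W_f^c$. Fix a central leaf $W_f^c(z)$ and write $\pi_z\colon W_f^c(z)\to W_f^c(x_0)$, $\pi_z(x)=S(x)\cap W_f^c(x_0)$, for the holonomy of the foliation $S$ onto the fixed central leaf through $x_0$; this is well defined since $S(x)$ meets every central leaf exactly once. Since the second coordinate $h_0(W_f^c(\cdot)\cap S_0)$ of $h$ is constant on $W_f^c(z)$, we have $h|_{W_f^c(z)}=(h_c\circ\pi_z,\,\mathrm{const})$. As $h_c$ is a $C^{1+\nu}$-diffeomorphism, it suffices to prove that each $\pi_z$ is a $C^{1+\nu}$-diffeomorphism and that the leafwise derivative $D(h|_{W_f^c})$ is H\"older-continuous on $\mathbb T^4$.

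The key point is that $S$ was built out of the \emph{flat} model distances $d^{ws},d^{wu}$, so conjugating by $H$ turns $\pi_z$ into an affine map. Recall $H(W_f^{ws})=W_L^{ws}$, $H(W_f^{wu})=W_L^{wu}$, and that the central direction of the model is the constant plane spanned by the $\lambda^{-1}$- and $\lambda$-eigendirections of $A$, so that each model central leaf is canonically identified with $\mathbb T^2$ and $W_L^{ws},W_L^{wu}$ become the two linear eigendirection foliations of this $\mathbb T^2$. Working in coordinates $(s,u)$ on these leaves and unwinding the definition of $S(\tilde x)$ — the orientation conditions together with $d^{ws}(x_1,x_2)=d^{ws}(\tilde x_1,\tilde x_2)$ and $d^{wu}(x_2,x)=d^{wu}(\tilde x_2,\tilde x)$ say precisely that certain signed displacements in the flat model agree — one finds by a direct computation that, under the above identification,
$$
H(\pi_z(x))=H(x)+\vec c(z)\qquad\text{for all }x\in W_f^c(z),
$$
where $\vec c(z)$ depends on $z$ (through the point $H(W_f^c(z)\cap S_0)$) but not on $x$. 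Equivalently $\pi_z=\big(H|_{W_f^c(x_0)}\big)^{-1}\circ T_{\vec c(z)}\circ\big(H|_{W_f^c(z)}\big)$ with $T_{\vec c(z)}$ a translation of the model central torus.

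Now Lemma~\ref{HisC1} says $H$ is $C^{1+\nu}$ along $W_f^c$ with derivative H\"older-continuous on $\mathbb T^4$; a translation is smooth; hence each $\pi_z$ is a $C^{1+\nu}$-diffeomorphism. Differentiating the displayed identity along the leaf, and using $DT_{\vec c(z)}=\mathrm{Id}$ together with $T_{\vec c(z)}(H(x))=H(\pi_z(x))$, gives
$$
D\big(h|_{W_f^c}\big)(x)=Dh_c(\pi_z(x))\circ\big[D(H|_{W_f^c})(\pi_z(x))\big]^{-1}\circ D(H|_{W_f^c})(x).
$$
Here $D(H|_{W_f^c})$ is H\"older on $\mathbb T^4$ (Lemma~\ref{HisC1}) and $Dh_c$ is H\"older ($h_c$ being $C^{1+\nu}$); moreover $x\mapsto\pi_z(x)=S(x)\cap W_f^c(x_0)$ is H\"older-continuous on $\mathbb T^4$, since by the displayed formula it equals $H^{-1}$ composed with a leafwise translation composed with $H$, and $H$, $H^{-1}$ and the central holonomy $x\mapsto W_f^c(x)\cap S_0$ (which is even $C^{1+\nu}$ on transversals by Lemma~\ref{central_holonomy}) are all H\"older. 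Hence $D(h|_{W_f^c})$ is H\"older on $\mathbb T^4$, which is the claim.

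The main obstacle is the identification of the $S$-holonomy with an affine map. A priori $S$ is only known to have $C^{1+\nu}$-embedded leaves (Lemma~\ref{SisC1}), and a transverse foliation by smooth leaves need not have $C^1$ holonomy; it is exactly the definition of $S$ through the model distances $d^{ws},d^{wu}$ that makes $H\circ\pi_z\circ H^{-1}$ a translation. After that the argument is routine chain-rule bookkeeping of H\"older exponents, the only mildly technical point being the H\"older-continuity of the $S$-holonomy as a map of $\mathbb T^4$.
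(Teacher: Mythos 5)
Your proposal is correct and rests on the same key observation as the paper's proof, namely that because $S$ was built out of the pulled-back model distances $d^{ws},d^{wu}$, the $S$-holonomy $pr|_{W_f^c(z)}=\pi_z$ onto $W_f^c(x_0)$ becomes, after conjugation by $H$, a rigid translation of the model central torus. The paper packages this observation slightly differently: instead of writing $\pi_z=(H|_{W_f^c(x_0)})^{-1}\circ T_{\vec c(z)}\circ H|_{W_f^c(z)}$ and invoking the chain rule, it notes that the restriction of the first component $h_c\circ pr$ of $h$ to each one-dimensional weak leaf $W_f^{ws}(x)$ is an isometry onto $W_L^{ws}(h(x))$ with respect to $d^{ws}$, which by Lemma~\ref{HisC1} is induced by a H\"older-continuous Riemannian metric, so the restriction is automatically $C^{1+\nu}$; the same argument is run for $W_f^{wu}$; and then Journ\'e's Regularity Lemma combines the two one-dimensional directions. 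Your version works directly with the two-dimensional central leaves and so bypasses that final invocation of Journ\'e (it is still used inside Lemma~\ref{HisC1}, so nothing is gained globally). The mathematical content is the same; you supply a bit more of the bookkeeping (H\"older continuity of the leafwise derivative across leaves, via the dependence of $\vec c(z)$ and of $\pi_z$ on the leaf) that the paper leaves implicit, and you correctly identify that it is the translation identity, not merely the $C^{1+\nu}$ smoothness of the leaves of $S$ from Lemma~\ref{SisC1}, that powers the argument.
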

\begin{proof}
The projection $x\mapsto S(x)\cap
W_f^c(x_0)\stackrel{\mathrm{def}}{=}pr(x)$ projects a weak stable
leaf\/ $W_f^{ws}(x)$ into $W_f^{ws}(pr(x))$. Moreover, it is clear
from the definition of\/ $S$ that the restriction of this projection
to $W_f^{ws}(x)$ is an isometry with respect to the distance
$d^{ws}$. According to the formula for the first component of\/ $h$,
we compose this projection with $h_c$, which is an isometry when
restricted to the leaf\/ $W_f^{ws}(pr(x))$ by the definition of\/
$d^{ws}$. The diffeomorphism $h_c$ straightens the weak stable
foliation into a foliation by straight lines $W_L^{ws}$. Hence
$h(W_f^{ws})=W_L^{ws}$ and $h$ is an isometry as a map
$(W_f^{ws}(x), d^{ws})\mapsto (W_L^{ws}(h(x)), \;\text{Riemannian
metric})$. Thus $h$ is $C^{1+\nu}$ along $W_f^{ws}$.

Everything above can be repeated for the weak unstable foliation.
Applying the Regularity Lemma, we get the desired statement.
\end{proof}

\begin{lemma}
\label{hisSC1} The homeomorphism $h$ is $C^{1+\nu}$-differentiable
along $S$.
\end{lemma}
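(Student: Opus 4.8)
The plan is to exploit the explicit formula $h(x)=\bigl(h_c(S(x)\cap W_f^c(x_0)),\,h_0(W_f^c(x)\cap S_0)\bigr)$ and to examine $h$ one leaf of $S$ at a time. First I would note that the first coordinate is locally constant along $S$: if $x$ and $x'$ lie on the same leaf $S(\tilde x)$, then $S(x)=S(\tilde x)=S(x')$, so $S(x)\cap W_f^c(x_0)=S(x')\cap W_f^c(x_0)$. Hence, restricted to $S(\tilde x)$, the map $h$ is the pair (constant, $h_0\circ\mathrm{Hol}$), where $\mathrm{Hol}\colon S(\tilde x)\to S_0$, $\mathrm{Hol}(x)=W_f^c(x)\cap S_0$, is exactly the holonomy of the central foliation $W_f^c$ from the leaf $S(\tilde x)$ to $S_0$ (bijective since each central leaf meets $S(\tilde x)$ and meets $S_0$ exactly once; note that a short computation shows $S_0=S(x_0)$, so $S_0$ is itself a leaf of $S$).

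Next I would argue that this composition is $C^{1+\nu}$ on each leaf. By Lemma~\ref{SisC1}, $S(\tilde x)$ is a $C^{1+\nu}$-embedded torus, transverse to $W_f^c$ by construction, and $S_0=S(x_0)$ is likewise a $C^{1+\nu}$ torus transverse to $W_f^c$ (using also \eqref{angles2}). Covering $S(\tilde x)$ and $S_0$ by finitely many transverse $C^{1+\nu}$-disks and invoking Lemma~\ref{central_holonomy} on each, we conclude that $\mathrm{Hol}$ is $C^{1+\nu}$. Since $h_0$ is a $C^{1+\nu}$-diffeomorphism, $h_0\circ\mathrm{Hol}$ is $C^{1+\nu}$, and therefore $h|_{S(\tilde x)}$ is $C^{1+\nu}$ for every leaf.

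What remains — and what I expect to be the only genuine point — is the H\"older dependence of $D(h|_S)$ on the base point over all of $\mathbb T^4$, which is part of what "$C^{1+\nu}$ along $S$" means here. One has $D(h|_S)(x)=Dh_0(\mathrm{Hol}(x))\circ D\,\mathrm{Hol}_{S(x)}(x)$, where $\mathrm{Hol}_{S(x)}\colon S(x)\to S_0$ is the central holonomy; $Dh_0$ is H\"older on $S_0$ and $x\mapsto\mathrm{Hol}(x)$ is continuous, so the issue is that the derivative of the central holonomy at $x$ must depend H\"older-continuously on $x$ as the source leaf $S(x)$ varies. I would extract this from the uniform $C^{1+\nu}$-estimate underlying Lemma~\ref{central_holonomy} (the $C^{1+\nu}$-norm of the holonomy is controlled by the transverse geometry, which is uniformly bounded in our situation) together with the H\"older dependence of the leaves of $S$ on the transverse parameter furnished by the proof of Lemma~\ref{SisC1}. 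This bookkeeping step is the delicate part; everything else is formal.

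Finally, once Lemma~\ref{hisSC1} is established, it combines with Lemma~\ref{hisWC1} via the Regularity Lemma, applied to the transverse foliations $W_f^c$ and $S$ (each with uniformly $C^{1+\nu}$ leaves), to give that $h$ is a $C^{1+\nu}$-diffeomorphism of\/ $\mathbb T^4$, which then conjugates $f$ to a diffeomorphism of type~\eqref{tildeLgeneral} since $h$ carries $W_f^c$ to the vertical foliation and $S$ to the horizontal one.
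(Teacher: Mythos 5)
Your proposal is correct and follows essentially the same route as the paper: the restriction of $h$ to a leaf $S(\tilde x)$ factors as $h_0$ composed with the central holonomy from $S(\tilde x)$ to $S_0$ (the paper writes this as $H_L^c\circ h_0\circ H_f^c$, where $H_L^c$ is just a translation in the flat target), leaf-wise regularity then comes from Lemmas~\ref{central_holonomy} and~\ref{SisC1}, and the H\"older dependence of the derivative on the base point is deferred to the proof of Lemma~\ref{SisC1}, exactly as you do.
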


\begin{proof}
The restriction of\/ $h$ to $S_0$ is just $h_0$. The restriction
of\/ $h$ to some other leaf\/ $S(x)$ can be viewed as composition of
the holonomy $H_f^c$, $h_0$ and the holonomy $H_L^c$. Hence this
restriction is $C^{1+\nu}$-differentiable as well. We need to make
sure that the derivative of\/ $h$ along $S$ is H\"older-continuous
on $\mathbb T^4$. For this we need only show that the derivative
of\/ $H_f^c\colon S(x)\to S_0$ depends H\"older-continuously on $x$.
This assertion will become clear in the proof of Lemma~\ref{SisC1}.
\end{proof}
Now by the Regularity Lemma, we conclude that $h$ is a
$C^{1+\nu}$-diffeomorphism.

Let $\tilde L=h\circ f\circ h^{-1}$. Clearly the foliations
$W_L^{ws}$ and $W_L^{wu}$ are $\tilde L$-invariant. By construction,
$h$ and $h^{-1}$ are isometries when restricted to the leaves of the
weak foliations. Recall that $f$ stretches by a factor $\lambda$ the
distance $d^{wu}$ on $W_f^{wu}$ and contracts by a factor
$\lambda^{-1}$ the distance $d^{ws}$ on $W_f^{ws}$. Hence, if we
consider the restriction of\/ $\tilde L$ on a fixed vertical
two-torus $W_L^c(x)\mapsto W_L^c(\tilde L(x))$, then it acts by a
hyperbolic automorphism $A$.

Also, it is obvious from the construction of\/ $h$ that the factor
map of\/ $\tilde L$ on a horizontal two-torus is $g$. These
observations show that $\tilde L$ is of the form
$$
\tilde L=(Ax+\vec \varphi(y), g(y)). \eqno{(4)}
$$
Note that we do not have to additionally argue that $\vec\varphi$ is
smooth since we know that $\tilde L$ is a
$C^{1+\nu}$-diffeomorphism.

\begin{remark}
An observant reader would notice that our choice of\/ $h$ and hence
$\tilde L$ is far from being unique. The starting point of the
construction of\/ $h$ is the torus $S_0$. Although we have chosen a
concrete $S_0$, in fact, the only thing we need from $S_0$ is
transversality to $W_f^c$. This is not surprising. Many
diffeomorphisms of type~(\ref{tildeLgeneral}) are $C^1$-conjugate to
each other. In the linear case this is controlled by the
invariants~(\ref{condition}).
\end{remark}

In the rest of this section we prove Lemmas~\ref{central_holonomy}
and~\ref{SisC1}.

\subsection{A technical Lemma}

Before we proceed with proofs of Lemmas~\ref{central_holonomy}
and~\ref{SisC1}, we establish a crucial technical lemma which is a
corollary of Lemma~\ref{HisC1}.

Let $U^{\sigma}=H(W_f^{\sigma})$, $\sigma=ss, su$. These are
foliations by H\"older-continuous curves.

\begin{lemma}
\label{technical} Fix $x\in \mathbb T^4$ and $y\in W_L^c(x)$. Let
$\vec v$ be a vector connecting $x$ and $y$ inside of\/ $W_L^c(x)$.
Then
$$
U^{\sigma}(y)=U^{\sigma}(x)+\vec v.
$$
In other words, the foliation $U^{\sigma}$ is invariant under
translations along $W_L^c$, $\sigma=ss, su$.
\end{lemma}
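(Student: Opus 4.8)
The plan is to pin down $U^\sigma$ by its only robust feature — invariance under $L$ — and then play this against the product structure of the model and the regularity of $H$ along the \emph{weak} foliations furnished by Lemma~\ref{HisC1}. Pass to the universal cover, where $W_L^c$ is the foliation by affine planes parallel to the first $\mathbb R^2$-factor and $W_L^{ws},W_L^{wu}$ are the linear subfoliations of $W_L^c$ along the stable and unstable eigendirections of $A$; translation ``by $\vec v$'' along $W_L^c$ is then the genuine translation $R_{\vec v}$ by a vector $\vec v\in E_L^{ws}\oplus E_L^{wu}$. Since $H$ also conjugates $f^{-1}$ to $L^{-1}$, and $L^{-1}$ is again of product form with $\mu>\lambda>1$ and the same central foliation, while $W_f^{ss}$ is the strong unstable foliation of $f^{-1}$, the case $\sigma=ss$ is exactly the case $\sigma=su$ for the pair $(f^{-1},L^{-1})$; so fix $\sigma=su$. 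By Lemma~\ref{weak_match}, and since $H$ carries unstable leaves to unstable leaves, $U^{su}=H(W_f^{su})$ subfoliates $W_L^u=H(W_f^u)$, and $U^{su}$ is $L$-invariant: $L(U^{su}(z))=U^{su}(Lz)$. Finally $R_{\vec v}=R_{\vec v_{wu}}\circ R_{\vec v_{ws}}$ with commuting factors, $\vec v_{wu}\in E_L^{wu}$, $\vec v_{ws}\in E_L^{ws}$, so it is enough to prove $R_{\vec v}$-invariance of $U^{su}$ for $\vec v\in E_L^{wu}$ and for $\vec v\in E_L^{ws}$ separately.

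\emph{The direction $\vec v\in E_L^{wu}$.} Translating in the $A$-unstable direction fixes each unstable leaf $W_L^u(z)=W_A^u(z)\times W_g^u(z)$, so $y:=x+\vec v$ lies on $W_L^u(x)$ and we must compare $U^{su}(y)$ with $R_{\vec v}(U^{su}(x))$ \emph{inside} $W_L^u(x)$. There $U^{su}$ is topologically transverse to $W_L^{wu}$ (the factor $W_A^u$), because $W_f^{su}$ and $W_f^{wu}$ are transverse inside $W_f^u$ and $H$ is a homeomorphism; hence each leaf of $U^{su}|_{W_L^u(x)}$ is a graph $\Gamma$ over $W_g^u(x)$ in the $W_A^u$-direction. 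Parametrising these leaves by their $W_A^u$-height $t$ at the base point $x$, we must show $\Gamma_t=\Gamma_0+t$. Since $L$ acts as $(\lambda\,\cdot\,,g)$ on the coordinates of $W_A^u\times W_g^u$, the graph function of $L^{-n}(\Gamma_t)$ over $W_g^u(g^{-n}x)$ is $q\mapsto\lambda^{-n}\Gamma_t(g^nq)$, whose height at the base point $g^{-n}x$ equals $\lambda^{-n}t$. Writing $\Delta_n$ for the difference of the graph functions of $L^{-n}(\Gamma_t)$ and $L^{-n}(\Gamma_0)$ and evaluating at $g^{-n}q$,
\[
\Gamma_t(q)-t=\lambda^n\big[\Delta_n(g^{-n}q)-\Delta_n(g^{-n}x)\big].
\]
Now $L^{-n}(\Gamma_t)$ and $L^{-n}(\Gamma_0)$ are $U^{su}$-leaves through points at $W_L^{wu}$-distance $\lambda^{-n}|t|$ apart, while $g^{-n}q$ lies within $W_g^u$-distance $\le C\mu^{-n}$ of $g^{-n}x$ (backward iteration of $g$ contracts unstable leaves at rate $\approx\mu^{-1}<\lambda^{-1}$). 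Lemma~\ref{HisC1} — $H$ is $C^{1+\nu}$ along $W_f^{ws}$ and $W_f^{wu}$, hence along $W_f^c$ — implies the leaves of $U^{su}$ are uniformly Lipschitz as graphs over the $W_L^{su}$-plaques, the direction in which $H$ is \emph{a priori} merely H\"older being the strong one, which is transverse to (not the base of) these graphs. Thus $|\Delta_n(g^{-n}q)-\Delta_n(g^{-n}x)|\le C(\lambda^{-n}|t|+\mu^{-n})$, and since $\mu>\lambda$ the right-hand side above tends to $0$ as $n\to\infty$, forcing $\Gamma_t\equiv\Gamma_0+t$, i.e. $U^{su}(y)=R_{\vec v}(U^{su}(x))$.

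\emph{The direction $\vec v\in E_L^{ws}$, and the main obstacle.} Here $R_{\vec v}$ carries $W_L^u(x)$ to the \emph{parallel} unstable leaf $W_L^u(y)=R_{\vec v}(W_L^u(x))$, and on the cover $L^nR_{\vec v}=R_{\lambda^{-n}\vec v}L^n$ since $\vec v$ is $A$-contracted. Pushing forward, the desired identity becomes the comparison of $U^{su}(L^nx+\lambda^{-n}\vec v)$ with $R_{\lambda^{-n}\vec v}(U^{su}(L^nx))$ for the exponentially small vector $\lambda^{-n}\vec v$, so it suffices to show that the transverse defect between $U^{su}(p+\vec w)$ and $R_{\vec w}(U^{su}(p))$, measured along $W_L^{wu}$, is $o(|\vec w|)$ as $\vec w\to0$ in $E_L^{ws}$, uniformly in $p$; then, letting $n\to\infty$ along a dense orbit $\{L^nx\}$ and using compactness, the defect vanishes. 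Proving this $o(|\vec w|)$ estimate — quantifying how the foliation $U^{su}$ varies as the base point slides in the weak-stable direction — is the main technical point; the leverage is again Lemma~\ref{HisC1}, now through $H$ being $C^{1+\nu}$ along $W_f^{ws}$ whose image $W_L^{ws}$ is linear, so that the only non-differentiability of $H$ remains confined to the strong directions and never competes on equal footing with the exponential rates $\lambda^{\pm n},\mu^{\pm n}$. Combining the two directions yields $R_{\vec v}$-invariance of $U^{su}$ along all of $W_L^c$, and the time-reversed argument gives the same for $U^{ss}$.
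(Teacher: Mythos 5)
There is a genuine gap, on two counts.

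\textbf{The Lipschitz claim for the $U^{su}$-leaves is unjustified and is in fact the weak link.} You assert that Lemma~\ref{HisC1} ``implies the leaves of $U^{su}$ are uniformly Lipschitz as graphs over the $W_L^{su}$-plaques,'' reasoning that the non-$C^1$ direction of $H$ is ``transverse to (not the base of) these graphs.'' This is not correct. Write $H|_{W_f^u(p)}$ in product coordinates $(s,t)$ on $W_f^u(p)$ (with $s$ along $W_f^{su}$, $t$ along $W_f^{wu}$) mapping to affine coordinates $(s',t')$ on $W_L^u(H(p))$. Because $H(W_f^{wu})=W_L^{wu}$, the $s'$-coordinate depends only on $s$, so a $U^{su}$-leaf $H(\{t=t_0\})$ is the graph $s'\mapsto t'(s,t_0)$ with $s=\sigma^{-1}(s')$. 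Lemma~\ref{HisC1} controls $\partial_t t'$, i.e.\ the regularity of $H$ in the \emph{weak} direction, but the graph's modulus of continuity is governed by the $s$-dependence of $t'$ and by $\sigma^{-1}$, both of which come from $H$ restricted to $W_f^{su}$ — exactly where $H$ is a priori only H\"older. In fact the natural H\"older exponent of these graphs is $\log\lambda/\log\mu<1$, and when you plug it into your estimate, $\lambda^n\cdot(\mu^{-n})^{\log\lambda/\log\mu}=1$: the rates cancel and nothing goes to zero. The paper's proof avoids this entirely. Instead of a graph estimate, it compares two $W_L^{ws}$-distances directly: set $c=d(z,\tilde z)/d(x,y)$, observe this ratio is $L$-invariant, then iterate so that the strong components collapse exponentially faster than the weak ones and use only the \emph{continuity} of $DH$ along the weak foliation (which Lemma~\ref{HisC1} does give) to conclude $c=1$. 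Your $E_L^{wu}$-case should be rewritten in that form; the graph-Lipschitz route as stated does not close.

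\textbf{You also do not need the $\vec v\in E_L^{ws}$ case at all, and you leave it unproved.} You correctly identify it as ``the main obstacle,'' but you only describe a desired $o(|\vec w|)$ estimate without establishing it. The paper sidesteps this: having proved the identity for the dense subset $W_L^{wu}(x)\subset W_L^c(x)$, it invokes continuity of the foliation $U^\sigma$ to extend to all $y\in W_L^c(x)$. (If $y_n\to y$ with $y_n\in W_L^{wu}(x)$, then $U^{su}(y_n)\to U^{su}(y)$ locally uniformly, while $U^{su}(y_n)=U^{su}(x)+\vec v_n$ with $\vec v_n\to\vec v$.) Once you fix the estimate in the weak-unstable direction along the lines of the paper's ratio argument, the density-plus-continuity step finishes the proof and the $E_L^{ws}$ analysis can be dropped.
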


\begin{proof}
For concreteness, we take $\sigma=ss$. The proof in the case where
$\sigma=su$ is the same.

First let us assume that $y\in W_L^{ws}(x)$. This allows us to
restrict our attention to the stable leaf\/ $W_L^s(x)$, since
$U^{ss}(x)$ and $U^{ss}(y)$ lie inside of\/ $W_L^s(x)$. Pick a point
$z\in U^{ss}(x)$ and let $\tilde z= W_L^{ws}(z)\cap U^{ss}(y)$. We
only need to show that $d(x,y)=d(z,\tilde z)$, where $d$ is the
Riemannian distance along weak stable leaves. The simple idea of the
proof of Claim 1 from~\cite{GG} works here. We briefly outline the
argument.

Let $c=d(z,\tilde z)/d(x,y)$. Obviously
\begin{equation}
\label{num1} \forall n \;\;\;\;\; \frac{d(L^n(z), L^n(\tilde
z))}{d(L^n(x), L^n(y))}=c.
\end{equation}
Since $H^{-1}(z)\in W_f^{ss}(x)$, $H^{-1}(\tilde z)\in W_f^{ss}(y)$,
and strong stable leaves contract exponentially faster than weak
stable leaves, we have
\begin{multline}
\label{num2} \forall \varepsilon>0\;\;\; \exists N:\forall
n>N:\;\;\left|\frac{d\left(H^{-1}(L^n(z)), H^{-1}(L^n(\tilde
z))\right)}{d\left(H^{-1}(L^n(x)),
H^{-1}(L^n(y))\right)}-1\right|\\=
\left|\frac{d\left(f^{n}(H^{-1}(z)), f^{n}(H^{-1}(\tilde
z))\right)}{d\left(f^{n}(H^{-1}(x)),
f^{n}(H^{-1}(y))\right)}-1\right|<\varepsilon.
\end{multline}
On the other hand, since the derivative of\/ $H$ along $W_f^{ws}$ is
continuous, the ratios
$$
\frac{d(L^n(z), L^n(\tilde z))}{d\left(H^{-1}(L^n(z)),
H^{-1}(L^n(\tilde z))\right)}\quad\text{and}\quad\frac{d(L^n(x),
L^n(y))}{d\left(H^{-1}(L^n(x)), H^{-1}(L^n(y))\right)}
$$
are arbitrarily close when $n\to+\infty$. Together
with~(\ref{num2}), this shows that the constant $c$
from~(\ref{num1}) is arbitrarily close to 1. Hence $c=1$.

Finally, recall that for any $x$ the leaf\/ $W_L^{ws}(x)$ is dense
in $W_L^c(x)$. Hence by continuity, we get the statement of the
lemma for any $y\in W_L^c(x)$.
\end{proof}
Lemma~\ref{technical} leads to some nontrivial structural
information about $f$ which is of interest on its own.
\begin{proposition}
\label{ws_su_integrability} The distributions $E_f^{wu}\oplus
E_f^{ss}$ and $E_f^{ws}\oplus E_f^{su}$ are integrable.
\end{proposition}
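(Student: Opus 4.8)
The plan is to build the integral foliations by transporting the structure from the linear model through the conjugacy $H$. Lemma~\ref{technical} is the engine: it makes the H\"older foliations $U^{ss}=H(W_f^{ss})$ and $U^{su}=H(W_f^{su})$ invariant under translations along $W_L^c$. I stress that one should not expect to obtain these foliations by a graph--transform/invariant--manifold argument, because $E_f^{wu}\oplus E_f^{ss}$ is not normally hyperbolic: its rates $\{\lambda,\mu^{-1}\}$ interleave with the rates $\{\lambda^{-1},\mu\}$ of the complementary distribution. So the model is genuinely needed.

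First I would fix a vector $\vec v_{wu}$ spanning $E_L^{wu}$ and, for $x\in\mathbb T^4$, put $\mathcal S^{ss}(x)=U^{ss}(x)+\mathbb R\vec v_{wu}$. Since $\mathbb R\vec v_{wu}\subset TW_L^c$, Lemma~\ref{technical} gives $U^{ss}(x)+t\vec v_{wu}=U^{ss}(x+t\vec v_{wu})$, hence the sets $\mathcal S^{ss}(x)$ are consistent (if $y\in\mathcal S^{ss}(x)$ then $\mathcal S^{ss}(y)=\mathcal S^{ss}(x)$) and partition $\mathbb T^4$ into topological surfaces. Pulling back by $H$ and using $H(W_f^{wu})=W_L^{wu}$, $H(W_f^{ss})=U^{ss}$ together with Lemma~\ref{technical}, one computes
\begin{equation*}
\mathcal L^{ss}(x):=H^{-1}\big(\mathcal S^{ss}(H(x))\big)=\bigcup_{z\in W_f^{wu}(x)}W_f^{ss}(z)=\bigcup_{w\in W_f^{ss}(x)}W_f^{wu}(w).
\end{equation*}
Thus the $\mathcal L^{ss}(x)$ form a topological foliation, and through every point $p$ of a leaf the leaf contains both the $C^r$ curve $W_f^{ss}(p)$ (tangent to $E_f^{ss}(p)$) and the $C^r$ curve $W_f^{wu}(p)$ (tangent to $E_f^{wu}(p)$), and these are uniformly transverse as distinct summands of the continuous splitting (\ref{T4splitting}).

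The main step is the regularity of the leaves. Near $p_0\in\mathcal L^{ss}(x)$, using the uniform transversality of $W_f^{wu}$ and $W_f^{ss}$ and continuity of the distributions, $\mathcal L^{ss}(x)$ is the graph of a continuous map $g$ over the plane $\Pi(p_0)=E_f^{wu}(p_0)\oplus E_f^{ss}(p_0)$. Projecting to $\Pi(p_0)$, the $W_f^{wu}$-- and $W_f^{ss}$--plaques inside $\mathcal L^{ss}(x)$ yield two transverse continuous foliations of the base with uniformly $C^r$ leaves, and $g$ restricted to the leaves of either one is uniformly $C^{1+\nu}$ (in fact $C^r$, being the transverse component of a $C^r$ leaf of $W_f^{ss}$ or $W_f^{wu}$ reparametrized by the projection), with along--leaf derivative depending H\"older--continuously on the base point because $E_f^{ss}$ and $E_f^{wu}$ are H\"older--continuous ($f$ is $C^r$, $r\ge 2$). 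The Regularity Lemma then gives $g\in C^{1+\nu}$, so $\mathcal L^{ss}(x)$ is a $C^{1+\nu}$--embedded submanifold; its tangent plane at $p$ contains $E_f^{wu}(p)$ and $E_f^{ss}(p)$, hence equals $E_f^{wu}(p)\oplus E_f^{ss}(p)$, and $\mathcal L^{ss}$ integrates $E_f^{wu}\oplus E_f^{ss}$.

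For the second distribution I would run the identical argument with $U^{su}$ in place of $U^{ss}$ and a vector $\vec v_{ws}$ spanning $E_L^{ws}$ in place of $\vec v_{wu}$: since $U^{su}$ is translation--invariant along $W_L^c\supset W_L^{ws}$, the sets $\mathcal S^{su}(x)=U^{su}(x)+\mathbb R\vec v_{ws}$ form a topological foliation, and $H^{-1}(\mathcal S^{su}(H(x)))=\bigcup_{z\in W_f^{ws}(x)}W_f^{su}(z)=\bigcup_{w\in W_f^{su}(x)}W_f^{ws}(w)$ is, by the same regularity argument, a $C^{1+\nu}$ foliation integrating $E_f^{ws}\oplus E_f^{su}$. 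The delicate point — and the only one — is the $C^{1+\nu}$ regularity of the leaves in the previous paragraph; this is exactly where the rigidity provided by Lemma~\ref{technical}, combined with the smoothness of the dynamical subfoliations $W_f^{ss}$, $W_f^{wu}$, $W_f^{ws}$, $W_f^{su}$, cannot be bypassed, a direct analytic approach being obstructed by the failure of normal hyperbolicity noted above.
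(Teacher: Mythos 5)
Your argument is correct and follows the same route as the paper: Lemma~\ref{technical} shows $U^{ss}$ (resp.\ $U^{su}$) is invariant under $W_L^c$-translations, hence in particular under $W_L^{wu}$-translations (resp.\ $W_L^{ws}$-translations), which makes $W_L^{wu}$ and $U^{ss}$ ``integrate together''; pulling back by $H$ produces the integral foliation of $E_f^{wu}\oplus E_f^{ss}$. The paper's proof is a two-sentence sketch that stops there; you have additionally spelled out the $C^{1+\nu}$ regularity of the leaves via the Journ\'e Regularity Lemma and correctly noted why a graph-transform argument is unavailable (the rates interleave, so the summand is not normally hyperbolic), both of which are accurate and consistent with the paper's implicit reasoning.
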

\begin{proof}
It follows from Lemma~\ref{technical} that the foliations $W_L^{wu}$
and $U^{ss}$ integrate together. Thus the foliations $W_f^{wu}$ and
$W_f^{ss}$ integrate to a foliation with tangent distribution
$E_f^{wu}\oplus E_f^{ss}$.
\end{proof}


\subsection{Smoothness of central holonomies}
We assume that the holonomy map $H_f^c\colon T_1\to T_2$ is a
bijection. It can be represented as a composition of holonomies
along $W_f^{ws}$ and $W_f^{wu}$. Indeed, let us work on the
universal cover and consider two open three-dimensional submanifolds
of\/ $\mathbb R^4$: $\;M_1=\bigcup_{x\in T_1}W_f^{wu}(x)$ and
$M_2=\bigcup_{x\in T_2}W_f^{ws}(x)$. Let $T_3=M_1\cap M_2$.
Obviously, $T_3$ is a smooth two-dimensional open submanifold. Also,
it is easy to see that $T_3$ is connected since we are working on
the universal cover. Then $H_f^c\colon T_1\to T_2$ is the
composition of\/ $H_f^{wu}\colon T_1\to T_3$ and $H_f^{ws}\colon
T_3\to T_2$.

So, it is sufficient to study the holonomy map along $W_f^{wu}$
$H_f^{wu}\colon T_1\to T_2$. The study of holonomies along
$W_f^{ws}$ is the same.

First we make a reduction that will allow us to work with
one-dimensional transversals instead of two-dimensional
transversals. Let $\tilde W_f$ and $\tilde W_L$ be the integral
foliations of\/ $E_f^{ws}\oplus E_f^{wu}\oplus E_f^{su}$ and
$E_L^{ws}\oplus E_L^{wu}\oplus E_L^{su}$, respectively. Also, let
$\bar W_f$ and $\bar W_L$ be the integral foliations of\/
$E_f^{ss}\oplus E_f^{ws}\oplus E_f^{wu}$ and $E_L^{ss}\oplus
E_L^{ws}\oplus E_L^{wu}$, respectively.

Any transversal $T$ to $W_f^c$ can be foliated by connected
components of intersections with leaves of\/ $\tilde W_f$. Call this
foliation $\tilde T$. This is a well-defined one-dimensional
foliation since $T$ is two-dimensional while the leaves of\/ $\tilde
W_f$ are three-dimensional and both $T$ and $\tilde W_f$ are
transverse to $W_f^{ss}$. The holonomy map $H_f^{wu}\colon T_1\to
T_2$ maps $\tilde T_1$ into $\tilde T_2$ since $W_f^{wu}$
subfoliates $\tilde W_f$.

Analogously, any transversal $T$ can be foliated by connected
components of intersections with leaves of\/ $\bar W_f$. Call this
foliation $\bar T$.  Then $H_f^{wu}(\bar T_1)=\bar T_2$ since
$W_f^{wu}$ subfoliates $\bar W_f$.

Hence we can consider restrictions of\/ $H_f^{wu}$ to the leaves
of\/ $\tilde T_1$ and $\bar T_1$.

\begin{lemma}
The restriction of the holonomy $H_f^{wu}$ to a leaf of\/ $\tilde
T_1$, $H_f^{wu}\colon\tilde T_1(x)\to \tilde T_2(H_f^{wu}(x))$, is
$C^{1+\nu}$-differentiable. \label{holonomy1}
\end{lemma}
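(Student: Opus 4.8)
The plan is to transfer the statement to the linear model through the conjugacy $H$ and then exploit the rigidity of holonomies along a linear foliation. By Proposition~\ref{ws_su_integrability} the foliations $W_f^{su}$ and $W_f^c$ integrate jointly to $\tilde W_f$, and since $H(W_f^c)=W_L^c$ while $H(W_f^{su})=U^{su}$ is invariant under translations along $W_L^c$ (Lemma~\ref{technical}), the image $H(\tilde W_f)$ is the \emph{affine} foliation $\tilde W_L$ whose leaves are the translates of $E_L^{ws}\oplus E_L^{wu}\oplus E_L^{su}$. As $W_f^{wu}$ subfoliates $\tilde W_f$, the conjugacy $H$ carries the restricted holonomy $H_f^{wu}\colon\tilde T_1(x)\to\tilde T_2(H_f^{wu}(x))$ to the linear holonomy $H_L^{wu}$ along $W_L^{wu}$ between the H\"older curves $H(\tilde T_1(x))$ and $H(\tilde T_2(H_f^{wu}(x)))$, both lying in the fixed affine $3$-plane $H(\tilde W_f(x))$.

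On that $3$-plane I would choose affine coordinates $(\xi,\eta,\zeta)$ in which the leaves of $W_L^{wu}$ are the lines parallel to the $\zeta$-axis, $W_L^c=\{\eta=\mathrm{const}\}$ and $W_L^{ws}=\{\eta=\zeta=\mathrm{const}\}$. Transversality of $\tilde T_i(x)$ to $W_f^{wu}$ makes $H(\tilde T_i(x))$ transverse to the $\zeta$-lines, so each of these curves is locally a graph over its projection $\pi\colon(\xi,\eta,\zeta)\mapsto(\xi,\eta)$, and in these projected parameters the linear holonomy $H_L^{wu}$ is simply the identity. Hence $H_f^{wu}|_{\tilde T_1(x)}=(\pi\circ H|_{\tilde T_2(H_f^{wu}(x))})^{-1}\circ(\pi\circ H|_{\tilde T_1(x)})$, so it suffices to show that $\pi\circ H$ restricts to a $C^{1+\nu}$-diffeomorphism of each $\tilde T_i(x)$ onto its image. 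Here the $\eta$-component of $\pi\circ H$ is constant along the leaves of $W_f^c$ (it labels the central leaf), and the $\xi$-component, read inside a single central leaf, is $C^{1+\nu}$ by Lemma~\ref{HisC1}; together with transversality this already gives that $\pi\circ H|_{\tilde T_i(x)}$ is an immersion wherever it is $C^1$.

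The main obstacle is therefore the \emph{transverse} regularity: proving that $\pi\circ H$, which is only H\"older in the directions transverse to $W_f^c$, becomes $C^{1+\nu}$ once restricted to the curve $\tilde T_i(x)\subset\tilde W_f$. I would treat this by a bootstrap argument in the spirit of Section~4.3 of~\cite{GG}. First, using that under $f^{-1}$ the strong unstable foliation $W_f^{su}$ is exponentially dominated by $W_f^{wu}$ inside $\tilde W_f$, together with the Lipschitz control of $W_f^{su}$ within $W_f^{u}$ from~\cite{LY} and the identity form of $H_L^{wu}$, one shows that $\pi\circ H|_{\tilde T_i(x)}$ is uniformly Lipschitz, hence differentiable Lebesgue-a.e. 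Then one propagates differentiability from a single well-chosen point along the whole transversal by renormalising with $f^{\pm n}$: the maps $f^{n}\circ\big(\pi\circ H|_{f^{-n}\tilde T_i(x)}\big)\circ f^{-n}$ are $C^{1+\nu}$ approximations of $\pi\circ H|_{\tilde T_i(x)}$ whose derivatives converge uniformly with a H\"older rate, thanks to the $f$-invariance of $W_f^c$, $W_f^{wu}$, $W_f^{su}$ and to the $C^{1+\nu}$-control along $W_f^c$ supplied by Lemma~\ref{HisC1}. This yields the $C^{1+\nu}$ regularity of $\pi\circ H|_{\tilde T_i(x)}$, hence of $H_f^{wu}|_{\tilde T_1(x)}$; combined with the analogous statement for $\bar T_1$ and Journ\'e's Regularity Lemma it gives Lemma~\ref{central_holonomy}.
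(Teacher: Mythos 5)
Your observation that $H(\tilde W_f)=\tilde W_L$ (the affine foliation tangent to $E_L^{ws}\oplus E_L^{wu}\oplus E_L^{su}$) is correct and neat: it follows from Lemma~\ref{technical} together with the fact that $U^{su}$ is a graph over $W_L^{su}$, and it is used implicitly in the paper when the shift map $sh$ is shown to land in $\tilde W_L(H(x))$. Your reduction is also correct: if $\pi$ is the affine projection killing the $W_L^{wu}$-coordinate, then $\pi\circ sh=\pi$ (with $sh$ as defined in the paper), so $\pi\circ H$ coincides with $\pi\circ(sh\circ H)$, and the identity $H_f^{wu}|_{\tilde T_1(x)}=(\pi\circ H|_{\tilde T_2})^{-1}\circ(\pi\circ H|_{\tilde T_1})$ is exactly what the paper's commutative diagram encodes.

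The genuine gap is in the last paragraph, where you try to establish $C^{1+\nu}$ regularity of $\pi\circ H$ directly on a fixed transversal $\tilde T_i(x)$ by a bootstrap and transitive-point argument. Two problems. First, the curve $\tilde T_i(x)$ is not $f$-invariant, so the renormalization $f^n\circ(\pi\circ H|_{f^{-n}\tilde T_i(x)})\circ f^{-n}$ does not make sense as written: the projections $\pi$ on different leaves $\tilde W_f(f^{-n}x)$ live on different affine $3$-planes, and the target side should involve $L^n$, not $f^n$; there is no canonical way to compare $\pi\circ H$ on $f^{-n}\tilde T_i(x)$ with $\pi\circ H$ on $\tilde T_i(x)$. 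The transitive-point argument of Section 4.3 of \cite{GG} propagates differentiability along an $f$-invariant foliation precisely because the foliation is invariant; it cannot be applied to an arbitrary transversal. Second, and more fundamentally, you never invoke Lemma~\ref{HxisC1} (the $C^{1+\nu}$-regularity of $H_x\colon W_f^{su}(x)\to W_L^{su}(H(x))$, obtained from the Livshitz Theorem and a transitive-point argument using the SRB measure, which are both tied to the $f$-invariance of $W_f^{su}$). That lemma is exactly the ``transverse regularity'' you identify as the main obstacle, and your sketch is in effect an attempt to reprove its content on a non-invariant curve, where the machinery does not apply.

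The paper's route resolves this cleanly: it proves that $sh\circ H$ is a $C^{1+\nu}$-diffeomorphism of the whole $3$-dimensional leaf by applying Journ\'e's Regularity Lemma to the $f$-invariant transverse subfoliations $W_f^c$ and $W_f^{su}$ inside $\tilde W_f$, using Lemma~\ref{HisC1} for the central direction and Lemma~\ref{HxisC1} for the strong unstable direction. Once $sh\circ H$ is known to be $C^{1+\nu}$ on the leaf, its restriction to the curve $\tilde T_i(x)$ is automatically $C^{1+\nu}$ and the holonomy factors through the (manifestly smooth) linear holonomy $H_L^{wu}$. Your proposal should be repaired by replacing the bootstrap in the final paragraph with: (i) quote Lemma~\ref{HxisC1}; (ii) apply Journ\'e to $W_f^c$ and $W_f^{su}$ to get smoothness of $sh\circ H$ on the leaf; (iii) then restrict.
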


\begin{lemma}
The restriction of the holonomy $H_f^{wu}$ to a leaf of\/ $\bar
T_1$, $H_f^{wu}\colon\bar T_1(x)\to \bar T_2(H_f^{wu}(x))$, is
$C^{1+\nu}$-differentiable. \label{holonomy2}
\end{lemma}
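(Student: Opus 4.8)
The plan is to reduce the statement to Lemma~\ref{holonomy1} by exploiting the symmetry between the weak stable and weak unstable directions. Recall that $\bar T_1(x)$ is the connected component of $T_1\cap\bar W_f(x)$, where $\bar W_f$ is the integral foliation of $E_f^{ss}\oplus E_f^{ws}\oplus E_f^{wu}$; inside $\bar W_f(x)$ the foliation $W_f^{wu}$ subfoliates, so the holonomy $H_f^{wu}\colon\bar T_1(x)\to\bar T_2(H_f^{wu}(x))$ is well-defined. First I would transfer the picture to the linear model via the conjugacy $H$: using $H(W_f^{wu})=W_L^{wu}$ from Lemma~\ref{weak_match} and the fact that $H$ is $C^{1+\nu}$ along $W_f^c$ (Lemma~\ref{HisC1}), it suffices to understand the holonomy along $W_L^{wu}$ between the images $H(\bar T_1(x))$ and $H(\bar T_2)$. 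The key structural input here is Lemma~\ref{technical} (and Proposition~\ref{ws_su_integrability}): $U^{ss}=H(W_f^{ss})$ is invariant under translations along $W_L^c$, which means that $\bar W_f$ is carried by $H$ into a foliation whose leaves are, in the $W_L^c$-direction, literally translates of a single model picture. This rigidity makes the holonomy along $W_L^{wu}$ inside $H(\bar W_f)$ an affine (indeed isometric, in the $d^{wu}$-metric) map, so it is $C^{1+\nu}$ once we compose back with $H^{-1}$ and use that $H$ is $C^{1+\nu}$ along the weak foliations and along $W_f^c$.

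More precisely, the steps I would carry out are: (1) fix $x$ and a point $x'\in\bar T_1(x)$, push forward to $\xi=H(x)$, $\xi'=H(x')$ lying in a common leaf of $H(\bar W_f)$; (2) show that the holonomy $\bar T_1(x)\to\bar T_2$ along $W_f^{wu}$ corresponds, under $H$, to sliding along $W_L^{wu}$ inside $H(\bar W_f)(\xi)$, and that by Lemma~\ref{technical} the leaves $U^{ss}$ appearing inside this set are translates of one another along $W_L^c$; (3) conclude that the $W_L^{wu}$-holonomy between $H(\bar T_1(x))$ and $H(\bar T_2)$ preserves the $d^{wu}$-distance exactly — it is the restriction of a translation — hence is a $C^\infty$ (in fact affine) map in the $d^{wu}$-parametrization; (4) pull back by $H$ on both ends: since $H$ restricted to $W_f^{wu}$-leaves is $C^{1+\nu}$ with H\"older derivative on the manifold (Lemma~\ref{HisC1}), and since $\bar T_1(x)$, $\bar T_2$ are $C^{1+\nu}$ one-dimensional submanifolds transverse to $W_f^{wu}$, the composite holonomy $H_f^{wu}\colon\bar T_1(x)\to\bar T_2(H_f^{wu}(x))$ is $C^{1+\nu}$, with the derivative depending H\"older-continuously on $x$ because all the ingredients do.

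The main obstacle I expect is step (4): keeping careful track of the fact that $H$ itself is only $C^{1+\nu}$ along the weak foliations and not globally $C^1$, so that one cannot simply compose diffeomorphisms. The argument must be organized so that the only differentiations performed are along leaves of $W_f^{wu}$ (and the transversals $\bar T_i$, which are genuinely $C^{1+\nu}$), never in a direction where $H$ fails to be smooth. Concretely, one should parametrize $\bar T_1(x)$ by a $C^{1+\nu}$ chart, write the holonomy as: chart $\to$ leaf of $W_f^{wu}$ $\to$ (via $H$) leaf of $W_L^{wu}$ $\to$ translation inside $H(\bar W_f)$ $\to$ (via $H^{-1}$) leaf of $W_f^{wu}$ $\to$ chart of $\bar T_2$, and verify each arrow is $C^{1+\nu}$ with H\"older-in-$x$ derivative using only Lemmas~\ref{HisC1} and~\ref{technical}. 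A secondary technical point, exactly as in the proof of Lemma~\ref{holonomy1}, is the passage from pointwise differentiability to H\"older regularity of the derivative transverse to the flow; this is handled by the same bounded-distortion/approximation estimate already used there, so I would invoke that argument rather than repeat it. Once both Lemma~\ref{holonomy1} and Lemma~\ref{holonomy2} are in hand, the Regularity Lemma applied to the transverse foliations $\tilde T$ and $\bar T$ on the two-dimensional transversal yields the full strength of Lemma~\ref{central_holonomy}.
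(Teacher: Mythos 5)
Your overall route --- transfer to the linear model with $H$, take Lemma~\ref{technical} as the structural input, run the same machinery as in Lemma~\ref{holonomy1} with $\bar W_f$ in place of $\tilde W_f$ --- is exactly the paper's. But step (3) contains a genuine gap. You assert that the $W_L^{wu}$-holonomy between $H(\bar T_1(x))$ and $H(\bar T_2)$ ``preserves the $d^{wu}$-distance exactly --- it is the restriction of a translation --- hence is a $C^\infty$ (in fact affine) map in the $d^{wu}$-parametrization.'' This is not a meaningful statement: the holonomy is a map between two one-dimensional transversals, whereas $d^{wu}$ is a distance along the $W_L^{wu}$-leaves transverse to them, so there is no $d^{wu}$-parametrization of the transversals for the holonomy to preserve. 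More substantively, after applying $H$ the curves $H(\bar T_i)$ are merely H\"older, and the linear $W_L^{wu}$-holonomy between two H\"older transversals carries no regularity a priori; translation-invariance of $U^{ss}$ from Lemma~\ref{technical} does not by itself upgrade this to smoothness of the holonomy.

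What is missing is the shift map $sh$, which is the entire mechanism behind Lemma~\ref{holonomy1} and which the paper explicitly signals must be adapted so that it ``maps $U^{ss}$ into $W_L^{ss}$.'' Concretely, on a leaf of $\bar W_L = H(\bar W_f)$ one sets $y(z)=W_L^c(H(x))\cap U^{ss}(z)$ and $sh(z)=W_L^{ss}(y(z))\cap W_L^{ws}(z)$; Lemma~\ref{technical} guarantees this acts as a rigid translation on each $W_L^c$-plaque, carries $U^{ss}$ onto $W_L^{ss}$, and preserves the $W_L^{wu}$-subfoliation. One then applies the Journ\'e Regularity Lemma inside the $\bar W_f$-leaf to conclude $sh\circ H$ is $C^{1+\nu}$: along $W_f^c$ this is Lemma~\ref{HisC1}, and along $W_f^{ss}$ one needs a strong-stable analogue of Lemma~\ref{HxisC1} (the map $W_f^{ss}(x)\to W_L^{ss}(H(x))$ obtained by composing $H$ with the $W_L^{ws}$-projection), which your proposal also leaves unaddressed --- the paper keeps it implicit but it is a real input. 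Only then do $\hat T_i = sh\circ H(\bar T_i)$ become honest $C^{1+\nu}$ curves, and $H_f^{wu}$ factors as $(sh\circ H)^{-1}\circ H_L^{wu}\circ(sh\circ H)$, the middle map being smooth simply because $W_L^{wu}$ is a linear foliation --- a smooth holonomy between smooth transversals, not a translation. Your steps (1), (2), (4) are on target; the proof hinges on what should replace step (3).
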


Note that $\tilde T_i$ and $\bar T_i$ are transverse since $T_i$ is
transverse to $W_f^{c}$, $i=1, 2$. Hence, by the Regularity Lemma,
the holonomy $H_f^{wu}\colon T_1\to T_2$ is
$C^{1+\nu}$-differentiable.

To prove Lemmas~\ref{holonomy1} and~\ref{holonomy2} we need to
establish regularity of\/ $H$ in the strong unstable direction.

Given $x\in \mathbb T^4$, define $H_x\colon W_f^{su}(x)\to
W_L^{su}(H(x))$ by the following composition:
$$
W_f^{su}(x)\stackrel{H}{\longrightarrow}
U^{su}(H(x))\stackrel{H_L^{wu}}{\longrightarrow}W_L^{su}(H(x)).
$$
First, we map $W_f^{su}(x)$ into a H\"older-continuous curve
$U^{su}(H(x))\subset W_L^u(H(x))$ and then we project it on
$W_L^{su}(H(x))$ along the linear foliation $W_L^{wu}$, as shown on
the Figure~\ref{14_Hx}.


\begin{figure}[htbp]
\begin{center}

\begin{picture}(0,0)%
\includegraphics{14_Hx.pstex}%
\end{picture}%
\setlength{\unitlength}{3947sp}%
\begingroup\makeatletter\ifx\SetFigFont\undefined%
\gdef\SetFigFont#1#2#3#4#5{%
\reset@font\fontsize{#1}{#2pt}%
\fontfamily{#3}\fontseries{#4}\fontshape{#5}%
\selectfont}%
\fi\endgroup%
\begin{picture}(5804,1749)(64,-598)
\put(459,277){\makebox(0,0)[lb]{\smash{{\SetFigFont{14}{16.8}{\rmdefault}{\mddefault}{\updefault}{\color[rgb]{0,0,0}$W_f^{wu}(x)$}%
}}}}
\put(3717,-490){\makebox(0,0)[lb]{\smash{{\SetFigFont{14}{16.8}{\rmdefault}{\mddefault}{\updefault}{\color[rgb]{0,0,0}$\bar x$}%
}}}}
\put(1130,803){\makebox(0,0)[lb]{\smash{{\SetFigFont{14}{16.8}{\rmdefault}{\mddefault}{\updefault}{\color[rgb]{0,0,0}$W_f^u(x)$}%
}}}}
\put(3721,372){\makebox(0,0)[lb]{\smash{{\SetFigFont{14}{16.8}{\rmdefault}{\mddefault}{\updefault}{\color[rgb]{0,0,0}$W_L^{wu}(\bar x)$}%
}}}}
\put(4576,839){\makebox(0,0)[lb]{\smash{{\SetFigFont{14}{16.8}{\rmdefault}{\mddefault}{\updefault}{\color[rgb]{0,0,0}$W_L^u(\bar x)$}%
}}}}
\put(4669,431){\makebox(0,0)[lb]{\smash{{\SetFigFont{14}{16.8}{\rmdefault}{\mddefault}{\updefault}{\color[rgb]{0,0,0}$U^{su}(\bar x)$}%
}}}}
\put(4867,-505){\makebox(0,0)[lb]{\smash{{\SetFigFont{14}{16.8}{\rmdefault}{\mddefault}{\updefault}{\color[rgb]{0,0,0}$W_L^{su}(\bar x)$}%
}}}}
\put(433,-469){\makebox(0,0)[lb]{\smash{{\SetFigFont{14}{16.8}{\rmdefault}{\mddefault}{\updefault}{\color[rgb]{0,0,0}$x$}%
}}}}
\put(1429,-229){\makebox(0,0)[lb]{\smash{{\SetFigFont{14}{16.8}{\rmdefault}{\mddefault}{\updefault}{\color[rgb]{0,0,0}$W_f^{su}(x)$}%
}}}}
\put(4484,-181){\makebox(0,0)[lb]{\smash{{\SetFigFont{14}{16.8}{\rmdefault}{\mddefault}{\updefault}{\color[rgb]{0,0,0}$H_L^{wu}$}%
}}}}
\put(2776,314){\makebox(0,0)[lb]{\smash{{\SetFigFont{12}{14.4}{\rmdefault}{\mddefault}{\updefault}{\color[rgb]{0,0,0}$H$}%
}}}}
\end{picture}%

\end{center}
\caption{The definition of\/ $H_x$. Here $\bar
x\stackrel{def}{=}H(x)$.}\label{14_Hx}
\end{figure}


\begin{lemma}
\label{HxisC1} For any $x\in\mathbb T^4$, the map $H_x$ is
$C^{1+\nu}$-differentiable.
\end{lemma}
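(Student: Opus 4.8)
The plan is to identify each target leaf $W_L^{su}(\bar x)$, $\bar x=H(x)$, with $\mathbb R$ via the isometry $\EuScript I^{su}(\bar x)$, and then show that the family $\{H_x\}$ intertwines the restriction of $f$ to the one-dimensional foliation $W_f^{su}$ with the restriction of $g$ to its unstable foliation on $\mathbb T^2$; since, by the hypothesis on periodic data, these two $C^r$ expanding cocycles have the same periodic derivative data, the conclusion will follow from the one-dimensional conjugacy-regularity mechanism already used for Lemma~\ref{conj_is_C1}.

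First I would establish the functional equation. Because $L(x,y)=(Ax,g(y))$ has product form, an unstable leaf $W_L^u(\bar x)$ is the product of an $A$-unstable line in $\mathbb T^2$ with a $g$-unstable leaf in $\mathbb T^2$, and the holonomy $H_L^{wu}$ projects it onto $W_L^{su}(\bar x)$ by replacing the first coordinate by that of $\bar x$; hence $H_L^{wu}\circ L=L\circ H_L^{wu}$ on unstable leaves, and $L$ sends $W_L^{su}(\bar x)$ to $W_L^{su}(L\bar x)$ by the $g$-component of $L$, fixing base points and preserving the fixed orientations. By Lemma~\ref{weak_match} applied to $H$ we have $H(W_f^u)=W_L^u$, and by definition $H(W_f^{su}(z))=U^{su}(H(z))$, so $L(U^{su}(\bar x))=U^{su}(L\bar x)$; moreover a quasi-isometry argument shows $U^{su}(\bar x)$ meets each $W_L^{wu}$-leaf of $W_L^u(\bar x)$ exactly once, so that $H_x=H_L^{wu}\circ H|_{W_f^{su}(x)}$ is a bijection onto $W_L^{su}(\bar x)$. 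Combining these facts with $H\circ f=L\circ H$ yields
$$
H_{f(x)}\circ f\big|_{W_f^{su}(x)}=L\big|_{W_L^{su}(\bar x)}\circ H_x ,
$$
that is, $H_x$ conjugates $f|_{W_f^{su}(x)}\colon W_f^{su}(x)\to W_f^{su}(f(x))$ to a one-dimensional, $C^r$, uniformly expanding map which under $\EuScript I^{su}$ is the unstable dynamics of $g$.

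It remains to upgrade this topological conjugacy. Let $D_f^{su}$ denote the derivative of $f$ along the leaves of $W_f^{su}$; since those leaves are $C^r$ and $W_f^{su}$ is Lipschitz inside $W_f^u$ (\cite{LY}, Section~4.2), $D_f^{su}$ is a positive H\"older function on $\mathbb T^4$. For a periodic point $p$ of period $n$ the product $\prod_{j=0}^{n-1}D_f^{su}(f^jp)$ is the strong unstable eigenvalue of $Df^n(p)$, which by the hypothesis that $f$ has the same p.~d.\ as $L$ equals the corresponding eigenvalue of $DL^n(H(p))$, namely the unstable eigenvalue of $g^n$ at the image point. Thus the two expanding cocycles intertwined by $H_x$ are $C^r$, $r\ge2$, depend H\"older-continuously on the transverse point, and have matching periodic data — precisely the ingredients needed for the argument proving Lemma~\ref{conj_is_C1} (Lemma~5 of \cite{GG}): the matching periodic data together with bounded distortion first give that $H_x$ is uniformly bi-Lipschitz along $W_f^{su}$, and then, using the affine distance-like function on $W_f^{su}$ with its properties (D1)--(D3) — and with transitivity of $f$ in place of the density of a single $W_f^{su}$-leaf, which fails in this product setting — one bootstraps to leafwise $C^{1+\nu}$ regularity whose derivative depends H\"older-continuously on the point of $\mathbb T^4$. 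Since $H_L^{wu}$ is linear in the product coordinates, hence smooth, this proves the lemma. I expect the functional equation, not the analytic bootstrap (which is inherited from \cite{GG}), to be the delicate point: the commutation $H_L^{wu}\circ L=L\circ H_L^{wu}$ uses the product structure of $L$ in an essential way and would fail for a general partially hyperbolic $L$, and the base-point and orientation choices must be threaded through consistently so that $H_x$ genuinely intertwines the two cocycles; granting this, the reduction to the known one-dimensional regularity statement is routine.
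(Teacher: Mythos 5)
Your proposal is correct and takes essentially the same route as the paper: you identify the cocycle functional equation $H_{f(x)}\circ f|_{W_f^{su}(x)}=L|_{W_L^{su}(\bar x)}\circ H_x$ (used implicitly in the paper through the chain of ratios), invoke the periodic-data matching to obtain a Livshitz transfer function, use the affine distance-like function and bounded distortion to get uniform Lipschitz regularity, and then pass to $C^{1+\nu}$ by a transitive-point/approximation argument, correctly flagging that density of a single $W_f^{su}$-leaf must be replaced here (the paper uses the SRB measure $\mu^u$ together with absolute continuity of $W_f^{su}$ and a transitive $\mu^u$-generic point) and that the H\"older-continuous transfer function furnishes the H\"older dependence of the derivative.
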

\begin{proof}
Let us first show that $H_x$ is uniformly Lipshitz with a constant
that does not depend on $x$. Denote by $d$, $d_f^{su}$, $d_L^u$ and
$d_L^{su}$ the Riemannian distances on the universal cover $\mathbb
R^4$ along the leaves of\/ $W_f^{su}$, along the leaves of\/
$W_L^u$, and along the leaves of\/ $W_L^{su}$, respectively. First,
we show that $H_x$ is Lipshitz if the points are far enough apart.
Assume that $y, z \in W_f^{su}(x)$ and $d_f^{su}(y,z)\ge 1$. Then on
the universal cover,
\begin{align*}
d_L^{su}(H_x(y), H_x(z))&\stackrel{1}{\le} c_1d_L^{u}(H_x(y), H_x(z))\\
&\stackrel{2}{\le} c_1 c_2\inf\{d_L^u(\tilde y, \tilde z): \tilde
y\in W_L^{wu}(H_x(y)), \tilde z \in W_L^{wu}(H_x(z))
\}\\&\stackrel{3}{\le}
c_1 c_2 d_L^u(H(x), H(y))\\
&\stackrel{4}{\le} c_1 c_2 c_3 d(H(x), H(y))\\&\stackrel{5}{\le}
c_1c_2c_3c_4 d(y,z)\\&\stackrel{6}{\le} c_1c_2c_3c_4 d_f^{su}(y,z).
\end{align*}
The first and fourth inequalities hold since $W_L^{su}$ and $W_L^u$
are quasi-isometric. The second inequality holds with a universal
constant $c_2$ due to the uniform transversality of\/ $W_L^{wu}$ and
$W_f^{su}$. Inequalities 3 and 6 are obvious. The fifth inequality
holds since $d_f^{su}(y,z)\ge 1$ and the lift of the conjugacy
satisfies
$$
H(x+\vec m)=H(x)+\vec m, \;\; x\in \mathbb R^4,\;\; \vec m\in
\mathbb Z^4.
$$
Here we slightly abuse notation by denoting the lift and the map
itself by the same letter.

Now we need to show that $H_x$ is Lipschitz if\/ $y$ and $z$ are
close on the leaf. Notice that $H_x$ is the composition of\/ $H_y$
and the holonomy $H_L^{wu}\colon W_L^{su}(H(y))\to W_L^{su}(H(x))$,
which is just a translation. Hence, to show that $H_x$ is Lipschitz
at $y$ we only need to show that $H_y$ is Lipshitz at $y$.

So we fix $x$ and $y$ on $W_L^{su}(x)$ close to $x$ and show that
$d_L^{su}(H_x(x), H_x(y))\le c\, d_f^{su}(x, y)$. The argument here
is an adapted argument from the proof of Lemma 4 from~\cite{GG}. The
two major tools here are the Livshitz Theorem and the affine
distance-like functions $\tilde d_f^{su}$ and $\tilde d_L^{su}$ on
$W_f^{su}$ and $W_L^{su}$, respectively. We used the same
distance-like function on the foliation $V_i^f$ in the proof of
Lemma~\ref{abcd}. Recall the properties of\/ $\tilde d_f^{su}$:
\begin{itemize}
\item[(D1)] $\tilde d_f^{su}(x,y)=d_f^{su}(x,y)+o(d_f^{su}(x,y))$,
\item[(D2)] $\tilde d_f^{su}(f(x),f(y))=D_f^{su}(x)\tilde d_f^{su}(x,y)$,
\item[(D3)] $\forall K>0 \;\exists C>0$ such that
\begin{equation*}
\frac1C\tilde d_f^{su}(x,y)\le d_f^{su}(x,y)\le C\tilde
d_f^{su}(x,y)
\end{equation*}
whenever $d_f^{su}(x, y)<K$.
\end{itemize}

Consider the H\"older-continuous functions $D_f^{su}(\cdot)$ and
$D_L^{su}(H(\cdot))$. The assumption on the p.~d. of\/ $f$ and $L$
guarantee that the products of these derivatives along periodic
orbits coincide. Thus we can apply the Livshitz Theorem and get the
H\"older-continuous positive transfer function $P$ such that
\begin{equation*}
\forall n>0 \;\;\;\;\;
\prod_{i=0}^{n-1}\frac{D_L^{su}\left(H(f^i(x))\right)}{D_f^{su}(f^i(x))}=\frac{P(x)}{P(f^n(x))}.
\end{equation*}
Choose the smallest $N$ such that $d_f^{su}(f^N(x), f^N(y))\ge 1$.
Then
\begin{align*}
\frac{\tilde d_L^{su}(H_x(x),H_x(y))}{\tilde d_f^{su}(x,y)}&=
\prod_{i=0}^{N-1}\frac{D_L^{su}\left(L^i(H_x(x))\right)}{D_f^{wu}(f^i(x))}\cdot
\frac {\tilde d_L^{su}\left(L^N(H_x(x)),L^N(H_x(y))\right)} {\tilde d_f^{su}(f^N(x),f^N(y))}\\
&=\frac{P(x)}{P(f^N(x))}\cdot\frac {\tilde
d_L^{su}\left(H_{f^N(x)}(f^N(x)), H_{f^N(x)}(f^N(y))\right)} {\tilde
d_f^{su}(f^N(x),f^N(y))}\\&\le \frac{P(x)}{P(f^N(x))}\cdot
c_1c_2c_3c_4.
\end{align*}
The function $P$ is uniformly bounded away from zero and infinity.
Hence, together with (D3), this shows that $H_x$ is Lipschitz at $x$
uniformly in $x$ and hence is uniformly Lipschitz.

Next we apply the transitive point argument. Consider the SRB
measure $\mu^u$ which is the equilibrium state for the potential
minus the logarithm of the unstable jacobian of\/ $f$. It is well
known that $W_f^u$ is absolutely continuous with respect to $\mu^u$.
On a fixed leaf of\/ $W_f^u$, the foliation $W_f^{su}$ is absolutely
continuous with respect to the Lebesgue measure on the leaf (for
proof see~\cite{LY}, Section 4.2; they prove that the unstable
foliation is Lipschitz with center-unstable leaves, but the proof
goes through for strong unstable foliation within unstable leaves).
Hence $W_f^{su}$ is absolutely continuous with respect to $\mu^u$.

We know that $H_x$ is Lipschitz and hence almost everywhere
differentiable on $W_f^{su}(x)$. It is clear from the definition
that $H_x$ is differentiable at $y$ if and only if\/ $H_y$ is
differentiable at $y$. Thus it makes sense to speak about
differentiability at a point on a strong unstable leaf without
referring to a particular map $H_x$. The absolute continuity of\/
$W_f^{su}$ allows to conclude that $H_x$ is differentiable at $x$
for $\mu^u$-almost every $x$.

Since $\mu^u$ is ergodic and has full support we can consider a
transitive point $\bar x$ such that $H_{\bar x}$ is differentiable
at $\bar x$. Now $C^1$-differentiability of\/ $H_x$ for any
$x\in\mathbb T^4$ can be shown by an approximation argument: we
approximate the target point by iterates of\/ $\bar x$. The argument
is the same as the proof of Step~1, Lemma~5 from~\cite{GG} with
minimal modifications, so we omit it. This argument shows even more,
namely,
$$
D(H_x)(x)=\frac{P(x)}{P(\bar x)}D(H_{\bar x})(\bar x).
$$
Note that $D(H_x)(y)=D(H_y)(y)$. Hence $H_x$ maps the Lebesgue
measure on the leaf\/ $W_f^{su}(x)$ into an absolutely continuous
measure, $dy\mapsto \frac{P(y)}{P(\bar x)}d\text{Leb}$. Recall that
$P$ is H\"older-continuous. Hence $H_x$ is
$C^{1+\nu}$-differentiable.
\end{proof}

\begin{proof}[Proof of Lemma~\ref{holonomy1}]
We work in a ball $B$ inside of the leaf\/ $\tilde W_f(x)$ that
contains $\tilde T_1(x)$ and $\tilde T_2(H_f^{wu}(x))$. Recall that
$B$ is subfoliated by $W_f^{c}$ and $W_f^{su}$. We apply the
conjugacy map $H$  to the ball $B$. It maps $W_f^{su}$ and $W_f^{c}$
into $U^{su}$ and $W_L^{c}$, respectively. We  construct a shift map
$sh\colon H(B)\to \tilde W_L(H(x))$ in such a way that, for any $z$,
the leaf\/ $W_L^{c}(z)$ is $sh$-invariant and the action of\/ $sh$
on the leaf is a rigid translation.

Given a point $z\in H(B)$, let $y(z)=W_L^c(H(x))\cap U^{su}(z)$.
Define
$$
sh(z)=W_L^{su}(y(z))\cap W_L^{wu}(z).
$$
Clearly $sh(U^{su}(H(x)))=W_L^{su}(H(x))$. Moreover, by
Lemma~\ref{technical}, $sh(U^{su})=W_L^{su}$.


\begin{figure}[htbp]
\begin{center}

\begin{picture}(0,0)%
\includegraphics{15_shift.pstex}%
\end{picture}%
\setlength{\unitlength}{3947sp}%
\begingroup\makeatletter\ifx\SetFigFont\undefined%
\gdef\SetFigFont#1#2#3#4#5{%
\reset@font\fontsize{#1}{#2pt}%
\fontfamily{#3}\fontseries{#4}\fontshape{#5}%
\selectfont}%
\fi\endgroup%
\begin{picture}(5688,3660)(-311,-3051)
\put(2176,-2761){\makebox(0,0)[lb]{\smash{{\SetFigFont{12}{14.4}{\rmdefault}{\mddefault}{\updefault}{\color[rgb]{0,0,0}$H(B)$}%
}}}}
\put(1201,-2611){\makebox(0,0)[lb]{\smash{{\SetFigFont{12}{14.4}{\rmdefault}{\mddefault}{\updefault}{\color[rgb]{0,0,0}$H(x)$}%
}}}}
\put(751,-1711){\makebox(0,0)[lb]{\smash{{\SetFigFont{12}{14.4}{\rmdefault}{\mddefault}{\updefault}{\color[rgb]{0,0,0}$W_L^c(H(x))$}%
}}}}
\put(3451,-1861){\makebox(0,0)[lb]{\smash{{\SetFigFont{12}{14.4}{\rmdefault}{\mddefault}{\updefault}{\color[rgb]{0,0,0}$W_L^c(z)$}%
}}}}
\put(433,-1036){\makebox(0,0)[lb]{\smash{{\SetFigFont{12}{14.4}{\rmdefault}{\mddefault}{\updefault}{\color[rgb]{0,0,0}$y(z)$}%
}}}}
\put(2176,-361){\makebox(0,0)[lb]{\smash{{\SetFigFont{12}{14.4}{\rmdefault}{\mddefault}{\updefault}{\color[rgb]{0,0,0}$U^{su}(z)$}%
}}}}
\put(1273,-1036){\makebox(0,0)[lb]{\smash{{\SetFigFont{12}{14.4}{\rmdefault}{\mddefault}{\updefault}{\color[rgb]{0,0,0}$W_L^{su}(y(z))$}%
}}}}
\put(3793,-325){\makebox(0,0)[lb]{\smash{{\SetFigFont{12}{14.4}{\rmdefault}{\mddefault}{\updefault}{\color[rgb]{0,0,0}$z$}%
}}}}
\put(3541,-1036){\makebox(0,0)[lb]{\smash{{\SetFigFont{12}{14.4}{\rmdefault}{\mddefault}{\updefault}{\color[rgb]{0,0,0}$sh(z)$}%
}}}}
\end{picture}%

\end{center}
\caption{The definition of the shift.}\label{15_sh}
\end{figure}


The shift $sh$ is designed such that the composition $sh\circ H$
maps the foliation $W_f^{c}$ into $W_L^{c}$ and the foliation
$W_f^{su}$ into $W_L^{su}$. According to Lemma~\ref{HisC1}, $sh\circ
H$ is $C^{1+\nu}$-differentiable along $W_f^{c}$. Also notice that
the restriction of\/ $sh\circ H$ to a strong unstable leaf\/
$W_f^{su}$ is nothing but $H_y$ composed with constant parallel
transport along $W_L^{wu}$. Recall that $H_y$ is
$C^{1+\nu}$-differentiable by Lemma~\ref{HxisC1}. Hence, by the
Regularity Lemma, we conclude that $sh\circ H$ is
$C^{1+\nu}$-diffeomorphism.

Therefore $\hat T_1=sh\circ H(\tilde T_1(x))$ and  $\hat T_2=sh\circ
H(\tilde T_2(H_f^{wu}(x)))$ are smooth curves inside of\/ $H(B)$ and
the holonomy map $H_f^{wu}$ can be represented as a composition as
shown on the commutative diagram
$$
\begin{CD}
\tilde T_1(x) @>H_f^{wu}>> \tilde T_2(H_f^{wu}(x))\\
@Vsh\circ HVV @Vsh\circ HVV\\
\hat T_1 @>H_L^{wu}>> \hat T_2
\end{CD}
\medskip
$$
The holonomy $H_L^{wu}$ is smooth since $W_L^{wu}$ is a foliation by
straight lines. Hence $H_f^{wu}$ is $C^{1+\nu}$-differentiable.
\end{proof}

\begin{remark}
Notice that this argument completely avoids dealing with the
geometry of transversals, \ie their relative position to the
foliations.
\end{remark}

\begin{proof}[Proof of Lemma~\ref{holonomy2}]
We use exactly the same argument as in the previous proof. Notice
that the picture is not completely symmetric compared to the picture
in Lemma~\ref{holonomy1} since we are dealing with the weak unstable
holonomy. Nevertheless the argument goes through by looking at
transversals $\bar T_1(x)$ and $\bar T_2(H_f^{wu}(x))$ on the leaf
of\/ $\bar W_f$. The shift map must be constructed in such a way
that it maps $U^{ss}$ into $W_L^{ss}$.
\end{proof}

\begin{proof}[Proof of Lemma~\ref{SisC1}]
In this proof we exploit the same idea of composing $H$ with some
shift map. We fix $S_1=S(x_1)\in S$ which is, a priori, just an
embedded topological torus. We assume that $x_1\in W_f^{wu}(x_0)$.
It is easy to see that this is not restrictive.

Foliate $S_0$ and $S_1$ by $\tilde T_0$, $\bar T_0$ and $\tilde
T_1$, $\bar T_1$, respectively, by taking intersections with leaves
of\/ $\tilde W_f$ and $\bar W_f$. To prove the lemma we only have to
show that the leaves of\/ $\tilde T_1$ and $\bar T_1$ are
$C^{1+\nu}$-differentiable curves.

We restrict our attention to a leaf of\/ $\tilde W_f$. Construct the
shift map $sh$ in the same way as in Lemma~\ref{holonomy1}. Fix an
$x\in S_0$ and let $\hat T_0=sh\circ H(\tilde T_0(x))$, $\hat
T_1=sh\circ H(\tilde T_1(H_f^{wu}(x)))$.

$\hat T_0$ is a $C^{1+\nu}$-curve since $sh\circ H$ is
$C^{1+\nu}$-diffeomorphism. By the definition of\/ $S_1$,
$$\forall y\in \tilde T_0 \;\;\;d^{wu}(y, H_f^{wu}(y))=d^{wu}(x, H_L^{wu}(x)).$$
Recalling the definition of\/ $d^{wu}$, we see that the conjugacy
$H$ acts as an isometry on a weak unstable leaf. Obviously $sh$ is
an isometry when restricted to a weak unstable leaf as well.
Therefore
$$
\forall y\in \hat T_0 \;\;\;d(y, H_f^{wu}(y))=d(sh\circ H(x),
H_L^{wu}(sh\circ H(x))),
$$
where $d$ is the Riemannian distance along $W_L^{wu}$. Hence $\hat
T_1$ is smooth as a parallel translation of\/ $\hat T_0$. We
conclude that $\tilde T_1(H_f^{wu}(x))=(sh\circ H)^{-1}(\hat T_1)$
is $C^{1+\nu}$-curve.

Repeating the same argument for $\bar T_0(x)$ and $\bar
T_1(H_f^{wu}(x))$, we can show that  $\bar T_1(H_f^{wu}(x))$  is
$C^{1+\nu}$-curve. Hence the lemma is proved.
\end{proof}

\section{Proof of Theorem D}
\label{moduli}
\subsection{Scheme of the proof of Theorem D}
We choose $\mathcal U$ in the same way as in~\ref{schemeC}. The only
difference is that $L$ is given by~(\ref{L}) not
by~(\ref{Lgeneral}).

Given $f\in\mathcal U$ we denote by $W_f^c$ the two-dimensional
central foliation. Take $f$ and $g$ in $\mathcal U$. Then they are
conjugate, $h\circ f=g\circ h$.
\begin{proposition}
\label{central_smoothness} Assume that $f$ and  $g$ have the same
p.~d. Then $h(W_f^c)=W_g^c$ and the conjugacy $h$ is
$C^{1+\nu}$-differentiable along $W_f^c$.
\end{proposition}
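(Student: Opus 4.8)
The plan is to split the statement into its two assertions, handle the foliation‑matching first, and then reduce the $C^{1+\nu}$ claim to the two one‑dimensional weak foliations that subfoliate $W_f^c$.

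First I would check that $h(W_f^c)=W_g^c$. This is immediate from the Hirsch--Pugh--Shub theorem~\cite{HPS}, by which a conjugacy carries central foliations to central foliations; alternatively, applying Lemma~\ref{weak_match} to the pairs $(f,g)$ and $(f^{-1},g^{-1})$ gives $h(W_f^{wu})=W_g^{wu}$ and $h(W_f^{ws})=W_g^{ws}$, and since (as in Section~\ref{schemeC}) the leaves of\/ $W_f^c$ are exactly the two‑tori swept out by the leaves of\/ $W_f^{ws}$ and $W_f^{wu}$, the homeomorphism $h$ must send $W_f^c$ onto $W_g^c$. The foliations $W_f^{ws}$ and $W_f^{wu}$ are uniformly smooth, subfoliate $W_f^c$, and are transverse inside every leaf of\/ $W_f^c$. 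Hence, once we know $h$ is $C^{1+\nu}$ along each of them, the Regularity Lemma applied leafwise to $h\colon W_f^c(x)\to W_g^c(h(x))$ gives that $h$ is $C^{1+\nu}$ along $W_f^c$. It therefore suffices to treat $W_f^{wu}$; the case of\/ $W_f^{ws}$ is identical after replacing $f$ and $g$ by $f^{-1}$ and $g^{-1}$.

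For the weak unstable direction I would first establish that $h$ is uniformly Lipschitz along $W_f^{wu}$, following the Lipschitz part of the proof of Lemma~\ref{HxisC1} (equivalently, Lemma~4 of~\cite{GG}). The assumption on the p.~d. gives that the products of\/ $D_f^{wu}$ along the periodic orbits of\/ $f$ coincide with the products of\/ $D_g^{wu}$ along the corresponding orbits of\/ $g$, the weak unstable line being the invariant subspace of the return map with the slowest expansion. By the Livshitz Theorem there is then a H\"older positive function $P$ on $\mathbb T^4$ with
\begin{equation*}
\prod_{i=0}^{n-1}\frac{D_g^{wu}(g^i(h(x)))}{D_f^{wu}(f^i(x))}=\frac{P(x)}{P(f^n(x))},\qquad n>0.
\end{equation*}
Combining this with the affine distance‑like functions $\tilde d_f^{wu}$, $\tilde d_g^{wu}$ (properties (D1)--(D3)) and with the lift relation $h(x+\vec m)=h(x)+\vec m$, $\vec m\in\mathbb Z^4$, which makes $h$ uniformly Lipschitz between points at distance $\ge1$ on a leaf, one iterates forward by $f$ until the images are $\ge1$ apart and pulls the estimate back; this bounds the Lipschitz constant of\/ $h$ restricted to $W_f^{wu}(x)$ uniformly in $x$.

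The main obstacle is the last step, the bootstrap from Lipschitz to $C^{1+\nu}$: the proof of~\cite{GG} used minimality of the weak unstable foliation, which fails here since the leaves of\/ $W_L^{wu}$ are confined to the horizontal two‑tori. I would replace that argument by the transitive‑point argument used for $H$ in Section~\ref{schemeC} (Lemma~\ref{HxisC1}). Namely, let $\mu^u$ be the SRB measure of\/ $f$, the equilibrium state of minus the logarithm of the unstable Jacobian; then $W_f^u$ is absolutely continuous with respect to $\mu^u$, while $W_f^{wu}$ is Lipschitz --- hence absolutely continuous --- inside the leaves of\/ $W_f^u$ by~\cite{LY}, Section~4.2, so $W_f^{wu}$ is absolutely continuous with respect to $\mu^u$. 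Since $h$ is Lipschitz along $W_f^{wu}$ it is differentiable there $\mu^u$‑almost everywhere, and as $\mu^u$ is ergodic with full support I would fix a transitive point $\bar x$ at which $h$ restricted to $W_f^{wu}(\bar x)$ is differentiable at $\bar x$. Spreading differentiability by approximation (Step~1 in Section~4.3 of~\cite{GG}) --- approximating an arbitrary $x$ by iterates $f^{n_j}(\bar x)$ and using the conjugacy together with the transfer relation above --- then yields that $h$ is differentiable along $W_f^{wu}$ at every point, with
\begin{equation*}
D\bigl(h|_{W_f^{wu}}\bigr)(x)=\frac{P(x)}{P(\bar x)}\,D\bigl(h|_{W_f^{wu}}\bigr)(\bar x).
\end{equation*}
Since $P$ is H\"older on $\mathbb T^4$ and bounded away from $0$ and $\infty$, the derivative of\/ $h$ along $W_f^{wu}$ is a positive H\"older function on $\mathbb T^4$, so $h$ is $C^{1+\nu}$ along $W_f^{wu}$; the same holds along $W_f^{ws}$, and the Regularity Lemma completes the proof. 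The only points requiring care beyond routine bookkeeping are this absolute‑continuity chain for $W_f^{wu}$ with respect to $\mu^u$ (done exactly as in Section~\ref{schemeC}) and the verification that the approximation argument transports the transfer factor $P(x)/P(\bar x)$ correctly.
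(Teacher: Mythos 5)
Your reduction to the two one--dimensional weak foliations, the use of Lemma~\ref{weak_match} (or~\cite{HPS}) for $h(W_f^c)=W_g^c$, the Livshitz transfer function, and the uniform Lipschitz estimate via the affine distance-like functions all match the paper. The gap is in the absolute-continuity step. You claim that $W_f^{wu}$ is Lipschitz --- hence absolutely continuous --- inside the leaves of\/ $W_f^u$ by~\cite{LY}, Section~4.2, and therefore absolutely continuous with respect to the SRB measure $\mu^u$. This is false. The result of~\cite{LY} gives Lipschitz regularity for the \emph{fast} (strong) unstable foliation inside unstable (or center-unstable) leaves; it is $W_f^{su}$, not $W_f^{wu}$, that is Lipschitz inside $W_f^u$. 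The \emph{slow} unstable subfoliation is in general only H\"older inside $W_f^u$ and is typically not absolutely continuous with respect to Lebesgue on unstable leaves, let alone with respect to $\mu^u$. The paper itself stresses this obstruction (Section~\ref{ind_step2}: ``the foliation $V_m^f$ is not absolutely continuous with respect to natural ergodic measures''), which is precisely why the paper does not use the SRB measure here.

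The paper's proof instead builds a bespoke measure $\mu$ by a Pesin--Sinai construction pushed forward from a piece of a single weak-unstable leaf $W_f^{wu}(x_0)$, with initial density proportional to $\rho(x_0,\cdot)$. This construction produces a measure whose conditionals on $W_f^{wu}$-plaques are absolutely continuous essentially by design, sidestepping the failure of absolute continuity for naturally occurring measures. There is then a second nontrivial step you do not address: since $W_f^{wu}$-leaves are confined to horizontal two-tori, one must still argue that $\mu$-a.e.\ point is transitive. The paper chooses $x_0$ on a leaf that is $\tilde\mu$-typical for the factor dynamics $\tilde f$ on the leaf space $\mathcal T\cong\mathbb T^2$, and then combines a Hopf-type argument along $W_f^{wu}$ with ergodicity of\/ $\tilde f$ to spread positive Birkhoff averages to a full-measure set. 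Simply invoking ergodicity and full support of\/ $\mu^u$ misses both the absolute continuity obstruction and the need for this two-step (leafwise Hopf plus factor ergodicity) transitivity argument.
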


\begin{remark}
In the proof we only need coincidence of the p.~d. in the central
direction.
\end{remark}

After we have differentiability along the central foliation, the
strong stable and unstable foliation moduli come into the picture.

\begin{lemma}
\label{strong_into_strong} Assume that $f$ and $g$ have the same
p.~d. and the same strong unstable foliation moduli. Then
$h(W_f^{su})=W_g^{su}$.
\end{lemma}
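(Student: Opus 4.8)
The plan is to reduce the statement to an identity between the moduli $\Phi_f^u$ and $\Phi_g^u$, and then to upgrade the partial coincidence in~(\ref{moduli1})--(\ref{moduli2}) to full coincidence of the moduli by a rigidity argument modelled on Section~\ref{counterexample} and on Lemmas~4--5 of~\cite{GG}.

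First I would record the elementary reduction. Let $h_f$ and $h_g$ be the conjugacies to the linear model; since $h_g\circ h$ also conjugates $f$ to $L$ and is close to the identity, uniqueness gives $h_g\circ h=h_f$, i.e.\ $h=h_g^{-1}\circ h_f$. Writing $\mathcal C_f(x)=h_f\big(W_f^{su}(h_f^{-1}(x))\big)\subset W_L^u(x)$, and likewise $\mathcal C_g(x)$, one checks directly that $h(W_f^{su})=W_g^{su}$ is equivalent to $\mathcal C_f(x)=\mathcal C_g(x)$ for every $x$. In the product coordinates $\big(\EuScript I^{su}(x),\EuScript I^{wu}\big)$ on a lift of the leaf $W_L^u(x)$, each $\mathcal C_f(x)$ is the graph of $\Phi_f^u(x,\cdot)$ over $W_L^{su}(x)$; hence $\mathcal C_f=\mathcal C_g$ if and only if $\Phi_f^u\equiv\Phi_g^u$. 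So it suffices to show that the hypotheses force $\Phi_f^u\equiv\Phi_g^u$.

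Set $\Delta=\Phi_f^u-\Phi_g^u$. It is continuous, vanishes at $t=0$, and is uniformly H\"older in $t$ on bounded ranges, because $h_f,h_g$ and their inverses are uniformly H\"older and the foliations $W_L^{su},W_L^{wu},W_L^c$ are quasi-isometric. From $\mathcal C_f(Lx)=L\,\mathcal C_f(x)$, together with the facts that $L$ stretches $W_L^{su}$ by $\mu$ and $W_L^{wu}$ by $\lambda$, one obtains the equivariance $\Phi_f^u(Lx,\mu t)=\lambda\,\Phi_f^u(x,t)$, and similarly for $g$, so that
\begin{equation*}
\Delta(Lx,\mu t)=\lambda\,\Delta(x,t).
\end{equation*}
Iterating this relation, in case~(\ref{moduli1}) the vanishing of $\Delta(\cdot,t_0)$ propagates to $\Delta(\cdot,\mu^n t_0)\equiv0$ for all $n\in\mathbb Z$ (using that $L$ is onto), and in case~(\ref{moduli2}) to $\Delta(L^n x_0,\cdot)\equiv0$ on $\mu^n I$ for all $n$.

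The main obstacle is to conclude $\Delta\equiv0$ from this partial vanishing. The scaling relation together with the H\"older bound is not enough by itself, precisely because the critical H\"older exponent here is (roughly) $\log\lambda/\log\mu$ --- this is exactly what makes the counterexamples of Section~\ref{counterexample} possible, since $\Delta(x_0,t)=\lambda^n\Delta(x_0,t/\mu^n)$ only forces $\Delta(x_0,\cdot)\equiv0$ when the exponent exceeds $\log\lambda/\log\mu$. To close the gap one must use that $\mathcal C_f$ and $\mathcal C_g$ are genuine images of the strong unstable foliations and that $f$ and $g$ have the same periodic data. Following the scheme of Section~\ref{counterexample}, I would show that the graph function $\Phi_f^u$ is governed, along the leaves of $W_f^{su}$, by a twisted cohomological equation whose twist is the ratio of the weak and strong unstable return derivatives; the coincidence of periodic data lets one apply the Livshitz theorem to compare the two cohomological equations up to a H\"older coboundary, exactly as in the proofs of Lemmas~4--5 of~\cite{GG} and of Proposition~\ref{central_smoothness}. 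The partial coincidence in~(\ref{moduli1}) or~(\ref{moduli2}) then pins this coboundary down by a Fourier/transfer-operator argument on a single unstable leaf, in the spirit of Propositions~\ref{proposition1}--\ref{propos3}, forcing $\Delta\equiv0$ and hence $\Phi_f^u\equiv\Phi_g^u$. By the reduction of the first step this gives $h(W_f^{su})=W_g^{su}$, as desired.
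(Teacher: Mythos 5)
Your reduction and preliminary observations are sound: setting $U=h^{-1}(W_g^{su})$, the identity $h=h_g^{-1}\circ h_f$ shows $h(W_f^{su})=W_g^{su}$ is equivalent to $h_f(W_f^{su})=h_g(W_g^{su})$, i.e.\ to $\Phi_f^u\equiv\Phi_g^u$; and the equivariance $\Delta(Lx,\mu t)=\lambda\Delta(x,t)$ together with the H\"older bound is indeed insufficient, for exactly the reason you name. The problem is the last paragraph, which is where all the content has to be, and what you sketch there does not close the gap.

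The Fourier/transfer-operator argument in Propositions~\ref{proposition1}--\ref{propos3} is tied to the algebraic structure: the central dynamics is a fixed toral automorphism $B$, $\varphi$ is a genuine function on $\mathbb T^2$, and the cohomological equation $\varphi(y)+\psi(By)=\lambda\psi(y)$ can be solved termwise in Fourier modes. For arbitrary $f,g\in\mathcal U$ nothing of the sort is available: the relevant ``cohomological equation'' would be twisted by non-constant derivatives of $f$ along $E_f^{wu}$ and $E_f^{su}$, the unknown is a H\"older graph living in the nonlinear unstable leaves, and there is no Fourier basis adapted to a single leaf. The Livshitz theorem gives you that the logarithmic derivative cocycles of $f$ and $g$ along the strong unstable direction are cohomologous once their periodic data agree, but that does not, by itself, pin down the mismatch $\Delta$; indeed the counterexamples of Section~\ref{counterexample} have identical strong-unstable periodic data and $\Delta\not\equiv0$. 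You correctly observe that the moduli hypothesis must enter, but you give no mechanism by which the single datum in~(\ref{moduli1}) or~(\ref{moduli2}) propagates to $\Delta\equiv 0$.

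The mechanism the paper uses is propagation along the \emph{weak} unstable foliation, and it is the one ingredient your proposal never touches. Lemma~\ref{last} (the analogue of Claim~1 in~\cite{GG}, proved using Proposition~\ref{central_smoothness}) says that if $U(a)$ and $W_f^{su}(a)$ meet at a point $b\ne a$, then sliding $a$ along $W_f^{wu}$ preserves that coincidence: the intersection locus of $U$ and $W_f^{su}$ is invariant under weak unstable holonomy. Your equivariance $\Delta(Lx,\mu t)=\lambda\Delta(x,t)$ propagates vanishing of $\Delta$ only along $L$-orbits and rescales the fiber variable $t$ by $\mu^n$; starting from~(\ref{moduli1}) this only gives $\Delta=0$ on $\{(x,\mu^n t_0)\}$, a null set in $t$. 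The missing second direction of propagation is precisely the weak unstable one, and it is not an equivariance of $\Delta$ under $L$ but a consequence of the $C^{1+\nu}$ smoothness of $h$ along $W_f^c$ (Proposition~\ref{central_smoothness}), which rigidifies the cross-ratio of $W_f^{su}$ and $U$ under $W_f^{wu}$-holonomy. With Lemma~\ref{last} in hand the proof is short: in case~(\ref{moduli2}) there is an arc $\EuScript C\subset W_f^{su}(x)\cap U(x)$; saturating it by $W_f^{wu}$ makes $U$ and $W_f^{su}$ agree on $\EuScript S=\bigcup_{a\in\EuScript C}W_f^{wu}(a)$, and pushing forward by $f^n$ expands $\EuScript C$ so that $\bigcup_n f^n(\EuScript S)$ is dense, forcing $U=W_f^{su}$; case~(\ref{moduli1}) feeds into the same engine via the isometric structure of $\EuScript I^{su}$ at the fixed point. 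Without Lemma~\ref{last}, or some substitute furnishing propagation transverse to $W_L^{su}$, your argument cannot conclude.
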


Now the proof of Theorem D follows immediately. Coincidence of the
p.~d. in the strong unstable direction guarantees
$C^{1+\nu}$-differentiability of\/ $h$ along $W_f^{su}$. This can be
done by a transitive-point argument with the SRB-measure in the same
way as the proof of Lemma~\ref{step2}. Then we repeat everything for
the strong stable foliation. After this, we apply the Journ\'e
Regularity Lemma twice to conclude that $h$ is
$C^{1+\nu}$-differentiable.

In particular, this argument shows that in the counterexample of de
la Llave, the strong stable and unstable foliations are not
preserved by the conjugacy. We can make use of this fact by
extending the counterexample for diffeomorphisms of the form
$(x,y)\mapsto(Ax+\vec \varphi(y), g(y))$. Namely, take $L=(Ax,By)$
and $\tilde L=(Ax+\vec \varphi(y), By)$ as in~(\ref{L})
and~(\ref{tildeL}), respectively. We know that the strong foliations
of\/  $L$ and $\tilde L$ do not match. The strong foliations depend
continuously on the diffeomorphism in the $C^1$-topology. If we
consider diffeomorphisms $L'(x,y)=(Ax, g(y))$ and $\tilde
L'(x,y)=(Ax+\vec\varphi(y), g(y))$ with $g$ sufficiently $C^1$-close
to $B$, then the conjugacy between $L'$ and $\tilde L'$ is $C^0$
close to the conjugacy between $L$ and $\tilde L$. Therefore the
strong foliations of\/ $L'$ and $\tilde L'$ do not match as well. We
conclude that $L'$ and $\tilde L'$ are not $C^1$-conjugate as well.

We do not know how to show that the counterexample extends to the
whole neighborhood $\mathcal U$.

\begin{con}
For any $f\in\mathcal U$ there exists $g\in\mathcal U$ with the same
p.~d. which is not $C^1$-conjugate to $f$.
\end{con}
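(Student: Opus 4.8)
Fix $f\in\mathcal U$. Recall (from the discussion preceding Theorem~D) that the strong unstable foliation modulus $\Phi_f^u$ is a modulus of $C^1$-conjugacy, because strong unstable leaves are singled out by their rate of backward convergence, which is preserved by any $C^1$-conjugacy, and the conjugacy near the identity is unique. Hence it suffices to produce $g\in\mathcal U$ with the same p.~d. as $f$ for which the conjugacy $h$ ($h\circ f=g\circ h$) does \emph{not} satisfy $h(W_f^{su})=W_g^{su}$ (equivalently $\Phi_g^u\neq\Phi_f^u$); by Lemma~\ref{strong_into_strong} this automatically fails the hypothesis needed for a smooth conjugacy, and it is in fact incompatible with $h$ being $C^1$. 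So the task is: given any $f$, tilt its strong unstable (or strong stable) foliation relative to the central foliation by a small perturbation that keeps the p.~d.

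\textbf{The perturbation.} The plan is to carry de la Llave's shear into the variable-coefficient regime. Since $r\ge 2$, the extremal one-dimensional bundle $E_f^{wu}$ is $C^{r-1}$; let $X_f$ be a $C^{r-1}$ vector field spanning it, $\phi_f^{t}$ its flow (which preserves each leaf of $W_f^{wu}$), and for a $C^r$ function $\tau\colon\mathbb T^4\to\mathbb R$ that is small in $C^1$ set
$$g=g_\tau=\phi_f^{\tau}\circ f\in\mathcal U\subset\mathrm{Diff}^r(\mathbb T^4).$$
Two points must be verified: (i) $g$ has the same p.~d. as $f$, and (ii) for a generic (indeed for an explicit) choice of $\tau$ the conjugacy $h$ does not match the strong unstable foliations.

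\textbf{Step (ii).} Granting (i), Proposition~\ref{central_smoothness} gives $h(W_f^c)=W_g^c$ with $h$ of class $C^{1+\nu}$ along $W_f^c$; thus the only obstruction to $h\in C^1$ lies transverse to the central foliation. Straightening $W_f^c$ and $W_f^{su}$ near the relevant leaves — using the normal forms from the proof of Theorem~C and of Proposition~\ref{central_smoothness} — the conjugacy equation, read along the $W^{su}$-direction, reduces to a cohomological equation over the expanding base dynamics of exactly the shape of~(\ref{psi}), whose solution is Lipschitz along the strong unstable direction precisely when a distributional identity of the form~(\ref{condition}) holds for the $W^{su}$-derivative of the pushed-forward shear. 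Expanding $\tau$ in Fourier modes adapted to the straightening and arguing as after~(\ref{phiexample}), this identity fails for every $\tau$ outside an infinite-codimension set; choosing such a $\tau$ (arbitrarily $C^1$-small) yields $h(W_f^{su})\neq W_g^{su}$, hence the desired $g$.

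\textbf{The main obstacle, Step (i).} This is the crux and the reason the statement is only conjectural. In the product model the shear $x\mapsto x+\varphi(y)v$ has derivative equal to the identity, so at every periodic orbit $Dg^{n}$ differs from $Df^{n}$ by a strictly upper-triangular term in a $Df$-invariant basis adapted to the flag $E^{ss}\subset E^{ss}\oplus E^{ws}\subset\cdots$, with the \emph{same} diagonal; since $\mu>\lambda>1$ all eigenvalues are simple, such matrices are diagonalizable, and the p.~d. is unchanged (this is the computation around~(\ref{Jordan_block})). For non-product $f$ the flow $\phi_f^{t}$ along $W_f^{wu}$ has nontrivial derivative — the weak unstable leaves carry no global flat coordinate — so $Dg^{n}$ at a periodic orbit need not be block-triangular with the original diagonal, and the p.~d. can move. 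Two natural remedies: (a) perform the shear in the dynamical affine coordinate on the leaves of $W_f^{wu}$ attached to the affine distance-like function $\tilde d_1$ of Section~6, which is as near to translation-invariant as the geometry permits, and control the resulting defect in the periodic data; or (b) allow $g_\tau$ to change the p.~d. and then restore it by a further small, compactly localized perturbation, which requires a local periodic-data realization lemma in $\mathcal U$. In either route one must also ensure, as in the Fourier condition above, that the perturbation genuinely tilts $W^{su}$ rather than merely conjugating $f$ to itself. A clean implementation of (i), combined with Step~(ii), would establish the conjecture; this is precisely the gap the authors point to when they say they cannot extend the counterexample to all of $\mathcal U$.
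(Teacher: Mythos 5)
This statement is stated in the paper as a \emph{conjecture}: immediately before it the text says explicitly that we do not know how to show that the counterexample extends to the whole neighborhood $\mathcal U$, and no proof is given. Your proposal does not close that gap either, and you say so yourself: Step (i) — showing that the leafwise shear $g=\phi_f^{\tau}\circ f$ along $W_f^{wu}$ preserves the periodic data of a general nonlinear $f$ — is left unresolved, and it is exactly the obstruction that keeps the statement conjectural. For the product (or skew-product) models of Sections~2 and~7 the shear has derivative that is unipotent-above-the-diagonal in an invariant flag with unchanged diagonal, which is why the computation around~(\ref{Jordan_block}) works there; for a general $f\in\mathcal U$ the flow of a spanning vector field of $E_f^{wu}$ has nonconstant derivative along the orbit, the return derivative at a periodic point acquires genuinely new diagonal (eigenvalue) data, and neither of your two suggested remedies (shearing in the affine leaf coordinate attached to $\tilde d_1$, or a corrective localized perturbation restoring the p.~d.) is carried out or known to work.

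There is also a secondary problem in your Step (ii). The reduction of the conjugacy equation to a cohomological equation of the shape~(\ref{psi}), with the Lipschitz criterion~(\ref{condition}), uses a global skew-product structure over the strong (base) dynamics. In the paper this structure is available either because $L$ is literally a product~(\ref{L}), or, via Theorem~C, for diffeomorphisms having the \emph{same p.~d. as} $L$ of the form~(\ref{Lgeneral}); an arbitrary $f\in\mathcal U$ has neither property, so "straightening $W_f^c$ and $W_f^{su}$ \dots reduces to a cohomological equation of exactly the shape of~(\ref{psi})" is an unjustified step, not a routine one. What the paper does prove in this direction (end of Section~8) is weaker than the conjecture: starting from the known mismatch of strong foliations for $(L,\tilde L)$, a continuity argument extends the counterexample to pairs $L'=(Ax,g(y))$, $\tilde L'=(Ax+\vec\varphi(y),g(y))$ with $g$ $C^1$-close to $B$ — i.e., only to this special family, not to every $f\in\mathcal U$. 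So your proposal is an honest strategy outline that correctly locates the difficulty, but it is not a proof of the statement, and no proof exists in the paper to compare it with.
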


\begin{proof}[Proof of Lemma~\ref{strong_into_strong}]
Let $U=h^{-1}(W_g^{su})$. We need to show that $U=W_f^{su}$. The
main tool is the following statement.
\begin{lemma}
\label{last} Consider a point $a\in\mathbb T^4$. Suppose that there
is a point $b\ne a$, $b\in W_f^{su}(a)\cap U(a)$. Let $c\in
W_f^{wu}(a)$ and $d=W_f^{wu}(b)\cap W_f^{su}(c)$, $e=W_f^{wu}(b)\cap
U(c)$. Then $d=e$.
\end{lemma}
This means that the ``intersection structure" of\/ $U$ and
$W_f^{su}$ is invariant under the shifts along $W_f^{wu}$. We refer
to~\cite{GG} for the proof. Claim 1 in~\cite{GG} is exactly the same
statement in the context of\/ $\mathbb T^3$. The proof uses
Proposition~\ref{central_smoothness}.

According to the definition of strong unstable foliation moduli, we
have to distinguish two cases.

First assume~(\ref{moduli2}). It follows that there is a curve
$\EuScript C\subset W_f^{su}(x)$ that corresponds to the interval
$I$ such that $\EuScript C\subset U$ as well. Let
$$
\EuScript S=\bigcup_{a\in\EuScript C} W_f^{wu}(a).
$$
Obviously $\EuScript S\subset W_f^u(x)$. It follows from
Lemma~\ref{last} that $W_f^{su}=U$ when restricted to $\EuScript S$.
Then $W_f^{su}=U$ when restricted to $f^n(\EuScript S)$, $n>0$ as
well. It remains to notice that $\bigcup_{n>0} f^n(\EuScript S)$ is
dense in $\mathbb T^4$ since $\text{length}(f^n(\EuScript
C))\to\infty$ as $n\to\infty$. Hence $W_f^{su}=U$.

Now let us consider the second case. Namely, assume~(\ref{moduli1}).
Let $x_0$ be a fixed point. Define $x_1=\EuScript
I^{su}(x_0)^{-1}(t)$. Then by~(\ref{moduli1})  $x_1\in
W_f^{su}(x_0)\cap U(x_0)$. We continue to define a sequence $\{x_k;
k\ge 0\}$ inductively. Given $x_k$, define $x_{k+1}=\EuScript
I^{su}(x_k)^{-1}(t)$. Then for any $k$, $\;x_{k+1}\in
W_f^{su}(x_k)\cap U(x_k)= W_f^{su}(x_0)\cap U(x_0)$. Obviously
$f^{-n}(x_k)\in W_f^{su}(x_0)\cap U(x_0)$  as well.

The map $\EuScript I^{su}(x_0)$ is an isometry, hence
$d_f^{su}(x_k,x_{k+1})$ does not depend on $k$. Therefore the set
$\{f^{-n}(x_k); n\ge 0, k\ge 0\}$ is dense in $W_f^{su}(x_0)$, which
guarantees that $W_f^{su}(x_0)=U(x_0)$. We can proceed as in the
first case now to conclude that $W_f^{su}=U$.
\end{proof}

\subsection{Smoothness along the central foliation: proof of Proposition~\ref{central_smoothness}}
We apply the transitive point argument as in the proof of
Lemma~\ref{step2}. The technical difficulty that we have to deal
with is that the leaves of\/ $W^c$ are not dense in $\mathbb T^4$.

The conjugacy $h$ preserves the weak stable and unstable foliations.
By the Regularity Lemma we only need to show
$C^{1+\nu}$-differentiability of\/ $h$ along these one-dimensional
foliations. For concreteness, we work with the weak unstable
foliation $W_f^{wu}$.

For the transitive point argument to work, we have to find an
invariant measure $\mu$ such that $\mu$-a.~e. point is transitive
($\overline{\{f^n(x); n\ge 0\}}=\mathbb T^4$) and $W_f^{wu}$ is
absolutely continuous with respect to $\mu$. Provided that we have
such a measure $\mu$, $\;C^{1+\nu}$-differentiability of\/ $h$ along
$W_f^{wu}$ is proved in the same way as Lemma~5 from~\cite{GG}.

We modify the construction from the proof of Lemma~\ref{step2}.
Consider the space $\mathcal T$ of the leaves of\/ $W_f^c$. Clearly
this is a topological space homeomorphic to a two-torus. Let $\tilde
f\colon\mathcal T\to\mathcal T$ be the factor dynamics of\/ $f$.
Since the conjugacy to the linear model $L$ maps the central leaves
to the central leaves, $\tilde f$ is conjugate to the automorphism
$B\colon\mathbb T^2\to\mathbb T^2$, $\tilde h\circ B=\tilde
f\circ\tilde h$. Then the measure $\tilde \mu=\tilde
h_*(\text{Lebesgue})$ is $\tilde f$-invariant and ergodic.

Pick a point $x_0$ on a $\tilde\mu$-typical central leaf. Let
$\mathcal V_0$ be an open bounded neighborhood of\/ $x_0$ in
$W_f^{wu}(x_0)$. Given $x$ and $y\in W_f^{wu}(x)$, let
$$
\rho(x,y)=\prod_{n\ge
0}\frac{D_f^{wu}(f^{-n}(y))}{D_f^{wu}(f^{-n}(x))}.
$$
Consider a probability measure $\eta_0$ supported on $\mathcal V_0$
with density proportional to $\rho(x_0,\cdot)$. For $n>0$ define
$$\mathcal V_n=f^n(\mathcal V_0),\; \eta_n=(f^n)_*\eta_0.$$
Let
$$\mu_n=\frac1n\sum_{i=0}^{n-1}\eta_i.$$
An accumulation point of\/ $\{\mu_n; n\ge 0\}$ is the measure $\mu$
that we are looking for.

By the choice of\/ $x_0$ the projection of\/ $\mu$ to $\mathcal T$
is $\tilde\mu$.

The foliation $W_f^{wu}$ is absolutely continuous with respect to
$\mu$. We refer to~\cite{PS} or~\cite{GG} for the proof.
In~\cite{GG}, $x_0$ is a fixed point but we do not use it in the
proof of absolute continuity.

Now we have to argue that $\mu$-a.~e. point is transitive. We fix a
ball in $\mathbb T^4$ and we show that a.~e. point visits the ball
infinitely many times. Then to prove transitivity, we only need to
cover $\mathbb T^4$ by a countable collection of balls such that
every point is contained in an arbitrarily small ball.

So let us fix a ball $B'$ and a slightly smaller ball $B$, $B\subset
B'$. Let $\psi$ be a nonnegative continuous function supported on
$B'$  and equal to $1$ on $B$. By the Birkhoff Ergodic Theorem,
\begin{equation}
\label{ergodictheorem} E(\psi|\mathcal
I)=\lim_{n\to\infty}\frac1n\sum_{i=0}^{n-1}\psi\circ f^i
\end{equation}
where $\mathcal I$ is the $\sigma$-algebra of\/ $f$-invariant sets.

Let $A=\{x: E(\psi|\mathcal I)(x)=0\}$. Then $\mu(A\cap B)=0$ since
$\int_A\psi d\mu=\int_A E(\psi|\mathcal I)d\mu=0$. Hence
\begin{equation*}
E(\psi|\mathcal I)(x)>0\;\; \text{for} \;\;\mu-\;\text{a.~e.}\;\;
x\in B.
\end{equation*}

Let $\tilde B\subset B$ be a slightly smaller ball and let
$W^c(\tilde B)=\bigcup_{x\in \tilde B}W_f^c(x)$. Since weak unstable
leaves are dense in the corresponding central leaves it is possible
to find $R>0$ such that
$$
W^c(\tilde B)\subset \bigcup_{x\in B} W_f^{wu}(x, R).
$$
Applying the standard Hopf argument,  for $\mu$-a.~e. $x$, the
function $E(\psi|\mathcal I)$ is constant on $W(x, R)$. Now the
absolute continuity of\/ $W_f^{wu}$ together with the above
observations show that
\begin{equation*}
E(\psi|\mathcal I)(x)>0\;\; \text{for} \;\;\mu-\text{a.~e.}\;\; x\in
W^c(\tilde B).
\end{equation*}
Obviously
\begin{equation*}
\forall n\;\;\;\; E(\psi|\mathcal I)(x)>0\;\; \text{for}
\;\;\mu-\text{a.~e.}\;\; x\in f^n(B).
\end{equation*}
Repeat the same argument to get
\begin{equation*}
\forall n\;\;\;\; E(\psi|\mathcal I)(x)>0\;\; \text{for}
\;\;\mu-\text{a.~e.}\;\; x\in W^c(f^n(\tilde B)).
\end{equation*}
Let $\EuScript O(\tilde B)=\bigcup_{n\in\mathbb Z}f^n(\tilde B)$ and
$W^c(\EuScript O(\tilde B))=\bigcap_{x\in\EuScript O(\tilde B)}
W_f^c(x)$. Then
\begin{equation*}
E(\psi|\mathcal I)(x)>0\;\; \text{for} \;\;\mu-\text{a.~e.}\;\; x\in
W^c(\EuScript O(\tilde B)).
\end{equation*}
Set $W^c(\EuScript O(\tilde B))$ is $W_f^c$-saturated. Hence
$\mu(W^c(\EuScript O(\tilde B)))$ is equal to the
$\tilde\mu$-measure of its projection $\text{proj}(W^c(\EuScript
O(\tilde B)))=\text{proj}(\EuScript O(\tilde B))$ on $\mathcal T$.
Set $\text{proj}(\EuScript O(\tilde B))$ is an open $\tilde
f$-invariant set. By ergodicity of\/ $\tilde f$, it has full
measure. Hence $\mu(W^c(\EuScript O(\tilde B)))=1$ and
\begin{equation*}
E(\psi|\mathcal I)(x)>0\;\; \text{for} \;\;\mu-\text{a.~e.}\;\;
x\in\mathbb T^4.
\end{equation*}
According to~(\ref{ergodictheorem}) this means that $\mu$-a.~e. $x$
visits $B'$ infinitely many times.

 \thebibliography{XXXXX}
 \bibitem[BDU02]{BDU}
B. Christian, D. Lorenzo, U. Ra\'ul. Minimality of strong stable and
unstable foliations for partially hyperbolic diffeomorphisms. J.
Inst. Math. Jussieu 1 (2002), no. 4, 513--541.
\bibitem[HPS77]{HPS} M. Hirsch, C. Pugh, M. Shub.  Invariant
manifolds. Lecture Notes in Math., 583, Springer-Verlag, (1977).
\bibitem[GG08]{GG} A. Gogolev, M. Guysinsky. $C^1$-differentiable conjugacy of Anosov diffeomorphisms on three dimensional
torus. DCDS-A, 22 (2008), no. 1/2,  183 - 200.
\bibitem[F04]{F} Y. Fang. Smooth rigidity of uniformly quasiconformal Anosov flows. Ergodic Theory Dynam. Systems 24 (2004), no. 6, 1937-1959.
\bibitem[JPL]{JPL} M. Jiang, Ya. Pesin, R. de la Llave. On the integrability of intermediate distributions for Anosov diffeomorphisms. Ergodic Theory Dynam. Systems, 15 (1995), no. 2, 317-331.
\bibitem[J88]{J} J.-L. Journ\'e. A regularity lemma for functions of several variables. Rev. Mat. Iberoamericana, 4 (1988), no. 2, 187-193.
\bibitem[KH95]{KH} A. Katok, B. Hasselblatt. Introduction to the modern theory of dynamical systems. Cambridge University Press, (1995).
\bibitem[KN08]{KN} A. Katok, V. Nitica. Differentiable rigidity of higher rank abelian group actions. In preparation, (2008).
\bibitem[KS07]{KS} B. Kalinin, V. Sadovskaya. On Anosov diffeomorphisms with asymptotically conformal periodic data. Preprint (2007).
\bibitem[KS03]{KS2} B. Kalinin, V. Sadovskaya. On local and global rigidity of quasiconformal Anosov diffeomorphisms. Journal of the Institute of Mathematics of Jussieu, 2 (2003), no. 4, 567-582.
\bibitem[L04]{Lconformal2} R. de la Llave. Further rigidity properties of conformal Anosov systems. Ergodic Theory Dynam. Systems 24 (2004), no. 5, 1425-1441.
\bibitem[L02]{Lconformal1} R. de la Llave. Rigidity of higher-dimensional conformal Anosov systems. Ergodic Theory Dynam. Systems 22 (2002), no. 6, 1845-1870.
\bibitem[L92]{L} R. de la Llave. Smooth conjugacy and S-R-B measures for uniformly and non-uniformly hyperbolic systems. Commun. Math. Phys., 150 (1992), 289-320.
\bibitem[LMM88]{LMM} R. de la Llave, J.M. Marco, R. Moriy\'on. Invariants for smooth conjugacy of hyperbolic dynamical systems, I-IV. Commun. Math. Phys., 109, 112, 116 (1987, 1988).
\bibitem[LY85]{LY} F. Ledrappier, L.-S. Young. The metric entropy of diffeomorphisms, I. Characterization of measures satisfying Pesin's entropy formula. Annals of Math. (2), 122 (1985) no. 3, 509-539.
\bibitem[Ma78]{Ma}
R. Ma\~n\'e, Contributions to the stability conjecture. Topology 17
(1978), no. 4, 383--396.
\bibitem[Pes04]{P} Ya. Pesin.  Lectures on partial hyperbolicity and stable ergodicity. EMS, Zurich, (2004).
\bibitem[PS83]{PS} Ya. Pesin, Ya. Sinai. Gibbs measures for partially hyperbolic attractors. Ergodic Theory Dynam. Systems, 2 (1983), no. 3-4, 417-438.
\bibitem[PS06]{PujalS} E. Pujals, M. Sambarino. A sufficient condition for robustly minimal foliations. Ergodic Theory Dynam. Systems, 26 (2006), no. 1, 281-289
\bibitem[S05]{S} V. Sadovskaya. On uniformly quasiconformal Anosov systems. Math. Research Letters, vol. 12 (2005), no. 3, 425-441.
\bibitem[SX08]{SX} R. Saghin, Zh. Xia. Geometric expansion, Lyapunov exponents and foliations. Preprint, (2008).
\end{document}